\DeclareMathOperator{\N}{\mathsf{N}}
\DeclareMathOperator{\fl}{\mathsf{fl}}
\DeclareMathOperator{\DMod}{\mathsf{DMod}}
\DeclareMathOperator{\APerf}{\mathsf{APerf}}
\DeclareMathOperator{\cn}{\mathsf{cn}}
\DeclareMathOperator{\Hom}{\mathsf{Hom}}
\DeclareMathOperator{\St}{\mathsf{st}}
\DeclareMathOperator{\qcqs}{\mathsf{qcqs}}
\DeclareMathOperator{\Rings}{\mathsf{Rng}}
\DeclareMathOperator{\DK}{\mathsf{DK}}
\DeclareMathOperator{\AbGrp}{\mathsf{AbGrp}}
\DeclareMathOperator{\A}{\mathsf{A}}
\DeclareMathOperator{\Grpd}{\infty-\mathsf{Gpd}}
\renewcommand{\P}{\mathsf{P}}
\newcommand{\Hc}{\mathcal{H}}
\DeclareMathOperator{\Vect}{Vect}
\DeclareMathOperator{\colim}{\mathsf{colim}}
\renewcommand{\lim}{\mathsf{lim}}
\DeclareMathOperator{\Perf}{\mathsf{Perf}}
\DeclareMathOperator{\coker}{coker}
\newcommand{\C}{\mathsf{C}}
\newcommand\triplerightarrow{\mathrel{\substack{\textstyle\rightarrow\\[-0.6ex]
                      \textstyle\rightarrow \\[-0.6ex]
                      \textstyle\rightarrow}} }
\DeclareMathOperator{\id}{id}
\DeclareMathOperator{\Frac}{Frac}
\DeclareMathOperator{\Set}{Set}
\DeclareMathOperator{\Mod}{\mathsf{Mod}}
\DeclareMathOperator{\Gpd}{\mathsf{Grpd}}
\DeclareMathOperator{\Loc}{\mathsf{Loc}}
\newcommand{\Mm}{\mathsf{M}}
\DeclareMathOperator{\Ind}{\mathsf{Ind}}
\DeclareMathOperator{\Fun}{\mathsf{Fun}}
\DeclareMathOperator{\op}{\mathsf{op}}
\DeclareMathOperator{\Ab}{\mathbb{A}}
\DeclareMathOperator{\Aff}{Aff}
\DeclareMathOperator{\QCoh}{\mathsf{QCoh}}
\DeclareMathOperator{\QC}{\mathsf{QC}}
\DeclareMathOperator{\Coh}{\mathsf{Coh}}
\DeclareMathOperator{\Cat}{\mathsf{Cat}}
\DeclareMathOperator{\inftyCat}{\infty-\mathsf{Cat}}
\DeclareMathOperator{\F}{\mathcal{F}}
\DeclareMathOperator{\image}{im}
\DeclareMathOperator{\Map}{\mathsf{Map}}
\DeclareMathOperator{\G}{\mathbb{G}}
\DeclareMathOperator{\GL}{GL}
\DeclareMathOperator{\M}{M}
\DeclareMathOperator{\Ob}{\mathbb{O}}
\DeclareMathOperator{\Y}{\mathcal{Y}}
\DeclareMathOperator{\D}{\mathsf{D}}
\DeclareMathOperator{\Sch}{\mathsf{Sch}}
\DeclareMathOperator{\Gg}{\mathcal{G}}
\DeclareMathOperator{\Spec}{Spec}
\DeclareMathOperator{\Hhom}{\underline{\Hom}}
\DeclareMathOperator{\Oo}{\mathcal{O}}
\begin{document}
\newtheorem{definition}{Definition}[section]
\newtheorem{theorem}[definition]{Theorem}
\newtheorem{proposition}[definition]{Proposition}
\newtheorem{corollary}[definition]{Corollary}
\newtheorem{conj}[definition]{Conjecture}
\newtheorem{lemma}[definition]{Lemma}
\newtheorem{rmk}[definition]{Remark}
\newtheorem{cl}[definition]{Claim}
\newtheorem{example}[definition]{Example} 
\newtheorem{claim}[definition]{Claim}
\newtheorem{ass}[definition]{Assumption}
\newtheorem{warning}[definition]{Dangerous Bend}
\newtheorem{porism}[definition]{Porism}

\author{Michael Groechenig}

\title{Adelic Descent Theory\let\thefootnote\relax\footnotetext{\url{m.groechenig@ic.ac.uk}, Department of Mathematics, Imperial College London, UK}}
\maketitle 

\abstract{A result of Andr\'e Weil allows one to describe rank $n$ vector bundles on a smooth complete algebraic curve up to isomorphism via a double quotient of the set $\mathrm{GL}_n(\mathbb{A})$ of regular matrices over the ring of ad\`eles (over algebraically closed fields, this result is also known to extend to $G$-torsors for a reductive algebraic group $G$). In the present paper we develop analogous adelic descriptions for vector and principal bundles on arbitrary Noetherian schemes, by proving an adelic descent theorem for perfect complexes. We show that for Beilinson's co-simplicial ring of ad\`eles $\mathbb{A}_X^{\bullet}$, we have an equivalence $\mathsf{Perf}(X)  \simeq |\mathsf{Perf}(\mathbb{A}_X^{\bullet})|$ between perfect complexes on $X$ and cartesian perfect complexes for $\mathbb{A}_X^{\bullet}$. Using the Tannakian formalism for symmetric monoidal $\infty$-categories, we conclude that a Noetherian scheme can be reconstructed from the co-simplicial ring of ad\`eles. We view this statement as a scheme-theoretic analogue of Gelfand--Naimark's reconstruction theorem for locally compact topological spaces from their ring of continuous functions. Several results for categories of perfect complexes over (a strong form of) flasque sheaves of algebras are established, which might be of independent interest.}

\tableofcontents
 
\section*{Introduction}

For a smooth, irreducible, complete, algebraic curve $X$, we denote by $F$ the field of rational functions, by $\Ob$ the product $\prod_{x \in X_{cl}} \widehat{\Oo}_x$ ranging over closed points, and 
$$\Ab = \sideset{}{'}\prod_{x\in X_{cl}} \widehat{F}_x = \{(f_x)_{x \in X_{cl}}|f_x \in \widehat{\Oo}_x\text{ for almost all }x\}.$$
This object is called the ring of ad\`eles. Andr\'e Weil was probably the first to appreciate the close connection between ad\`eles and the geometry of curves (see the letter \cite{weil1938algebraischen} to Hasse where the case of line bundles is discussed, and \cite{zbMATH03028558} for the closely related notion of matrix divisors).

\begin{theorem}[Weil]\label{thm:weil}
Let $X$ be an algebraic curve, defined over an algebraically closed field $k$, and let $G$ be a reductive algebraic group. We then have an equivalence between the groupoid of $G$-torsors on $X$, $BG(X)$, and the groupoid defined by the double quotient $[G(F)\setminus G(\Ab) \;/\; G(\Ob)].$
\end{theorem}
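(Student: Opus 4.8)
The plan is to realise the double quotient as the groupoid of $G$-torsors that are trivial over the generic point, and then to invoke that on a curve over an algebraically closed field every $G$-torsor is of this kind. The geometric mechanism is a Beauville--Laszlo fracture square, so the first thing I would set up is the following. For a non-empty finite set $S \subseteq X_{cl}$ write $U = X \setminus S$, $\widehat X_S = \coprod_{x \in S} \Spec \widehat{\Oo}_x$ and $\widehat U_S = \coprod_{x \in S} \Spec \widehat F_x$; since $\Spec \widehat F_x = \Spec \widehat{\Oo}_x \setminus \{x\}$ one has $\widehat U_S = \widehat X_S \times_X U$. The Beauville--Laszlo glueing theorem, in the form valid for torsors under an affine group scheme, asserts that $BG$ sends the evident square relating $U$, $\widehat U_S$, $\widehat X_S$ and $X$ to a pullback of groupoids
\[ BG(X) \ \simeq\ BG(U)\ \times_{\ \prod_{x \in S} BG(\widehat F_x)}\ \prod_{x \in S} BG(\widehat{\Oo}_x). \]
I stress that this is \emph{not} a formal consequence of fpqc descent, as $\Spec \widehat{\Oo}_x \times_X \Spec \widehat{\Oo}_x$ is strictly larger than $\Spec \widehat{\Oo}_x$; it is exactly the content of Beauville--Laszlo that this cover may be contracted to a two-term glueing.

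Next I would build the comparison functor. Regarding $[G(F) \setminus G(\Ab)\, /\, G(\Ob)]$ as the action groupoid of $G(F) \times G(\Ob)$ on $G(\Ab)$, with a morphism $(\gamma, h) \colon g \to \gamma g h^{-1}$, define $\Phi$ into $BG(X)$ by sending $g = (g_x)_x$ to the torsor glued, via the fracture square, from the trivial torsor on $U = X \setminus S$ and the trivial torsors on the $\Spec \widehat{\Oo}_x$ along (left translation by) $g_x$ over $\Spec \widehat F_x$, where $S$ is any finite set containing $\{x : g_x \notin G(\widehat{\Oo}_x)\}$. Enlarging $S$ changes the glued torsor only up to a canonical isomorphism, so $\Phi(g)$ is well defined; a morphism $(\gamma, h)$ induces an isomorphism of glued torsors acting through $\gamma$ on the $U$-chart and through $h$ on the formal discs, and these assignments are functorial. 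For essential surjectivity, let $\mathcal P$ be a $G$-torsor on $X$. By Tsen's theorem the function field $F$ is $C_1$, whence $H^1(F, G) = \ast$ (Steinberg's theorem for simply connected $G$, and Harder's dévissage for general connected reductive $G$; for $G = \GL_n$ it is the evident fact that a vector bundle is generically trivial), so $\mathcal P$ is trivial over some $U = X \setminus S$; and for $x \in S$ the ring $\widehat{\Oo}_x$ is complete local with algebraically closed residue field $k$ and $G$ is smooth, so Hensel's lemma lifts a $k$-point of the (trivial) special fibre and $\mathcal P|_{\Spec \widehat{\Oo}_x}$ is trivial. Recording the discrepancy over each $\Spec \widehat F_x$ between a chosen trivialisation over $U$ and a chosen trivialisation over $\Spec \widehat{\Oo}_x$ then produces $g \in G(\Ab)$ with $\Phi(g) \cong \mathcal P$.

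For full faithfulness, given $g, g' \in G(\Ab)$ I would choose $S$ large enough to define both $\Phi(g)$ and $\Phi(g')$. By the fracture square, an isomorphism $\Phi(g) \to \Phi(g')$ is precisely an automorphism $\gamma \in G(\Oo(U))$ of the trivial torsor on $U = X \setminus S$ together with elements $h_x \in G(\widehat{\Oo}_x)$ ($x \in S$) satisfying, over each $\Spec \widehat F_x$, the glueing compatibility, which in the standard normalisation reads $g_x' = \gamma g_x h_x^{-1}$. Since $G(\Oo(U)) = \{\gamma \in G(F) : \gamma \in G(\widehat{\Oo}_x) \text{ for all } x \notin S\}$, setting $h_x := \gamma$ for $x \notin S$ — which lies in $G(\widehat{\Oo}_x)$ exactly because $\gamma \in G(\Oo(U))$ — extends such a pair uniquely to a morphism $(\gamma, h) \colon g \to g'$ in the double quotient, and conversely; this yields the bijection $\Hom(g, g') \xrightarrow{\ \sim\ } \Hom(\Phi g, \Phi g')$. (In particular the automorphism group of the trivial bundle comes out as $G(\Oo(X)) = G(k) = G(F) \cap G(\Ob)$, as it must.)

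The hard part is a single non-formal ingredient in the essential-surjectivity step: the generic triviality $H^1(F, G) = \ast$ for an arbitrary connected reductive $G$, which rests on Tsen's theorem together with Steinberg's and Harder's work. Everything else is formal manipulation of the Beauville--Laszlo square; but whichever route one takes to the theorem — including a Tannakian one building $\Phi$ out of symmetric monoidal functors $\mathrm{Rep}(G) \to \Perf(-)$ — this input, together with the triviality of $G$-torsors over $\widehat{\Oo}_x$, is what pins the adelic data down to honest double cosets. Finally, I note that for $G = \GL_n$ the statement is a direct corollary of the adelic descent equivalence $\Perf(X) \simeq |\Perf(\mathbb A_X^{\bullet})|$ established below: over a curve the cosimplicial adelic ring has two terms, the cartesian/realisation condition becomes the homotopy pullback $B\GL_n(F) \times_{B\GL_n(\Ab)} B\GL_n(\Ob)$, and its $\pi_0$ and automorphism groups recover exactly the double-quotient groupoid.
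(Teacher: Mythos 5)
The paper states Theorem \ref{thm:weil} as classical background and never proves it; the closest it comes is the remark after Corollary \ref{cor:BG2}, where Weil's double-quotient formula for curves is \emph{deduced} from the full adelic descent theorem, Tannakian reconstruction, and the cocycle description of $G$-bundles. Your proof is therefore a genuinely different (and much more elementary) route: the standard direct argument via the Beauville--Laszlo fracture square $BG(X) \simeq BG(U) \times_{\prod_{x\in S} BG(\widehat F_x)} \prod_{x\in S} BG(\widehat{\Oo}_x)$, combined with the two non-formal triviality inputs (generic triviality $H^1(F,G)=\ast$ via Tsen/Steinberg/Harder, and triviality over the complete local rings $\widehat{\Oo}_x$ via smoothness of $G$ and the algebraically closed residue field). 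This is correct in structure and is the proof one would expect; what the paper's route buys instead is uniformity over arbitrary Noetherian schemes and arbitrary special (or Noetherian affine) group schemes, at the price of the entire descent machinery, while your route isolates exactly where reductivity and $k = \bar{k}$ enter. Your closing observation that the $\GL_n$ case drops out of $\Perf(X)\simeq|\Perf(\Ab_X^\bullet)|$ is precisely the paper's Paragraph \ref{specialcase}.

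One small repair is needed in the full-faithfulness step: having identified an isomorphism $\Phi(g)\to\Phi(g')$ with a pair $\bigl(\gamma,(h_x)_{x\in S}\bigr)$, $\gamma\in G(\Oo(U))$, satisfying $g_x'=\gamma g_x h_x^{-1}$ for $x\in S$, the unique extension to a morphism in the double quotient is given by $h_x := g_x'^{-1}\gamma g_x$ for $x\notin S$, not by $h_x := \gamma$; your formula only agrees with this when $g_x=g_x'=1$ off $S$. Since $g_x, g_x'$ and $\gamma$ all lie in $G(\widehat{\Oo}_x)$ for $x\notin S$, the corrected $h_x$ does too, and the asserted bijection on Hom-sets goes through verbatim. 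This is a slip rather than a gap.
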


Weil's theorem is central to the Geometric Langlands Programme, as it connects the arithmetic conjectures to their geometric counterpart. For a survey of this connection see \cite{frenkel2007lectures}. The interplay of Weil's result with conformal field theory is discussed by Witten \cite[Section V]{witten1988quantum}. 

In this article we present a generalisation of Weil's theorem to arbitrary Noetherian schemes. We will deduce it from an adelic descent result for the perfect complexes. 
The co-simplicial ring $\Ab_X^{\bullet}$ was introduced by Beilinson in \cite{MR565095}, as a generalisation of the theory of ad\`eles for curves. A similar construction has also been obtained by Parshin for algebraic surfaces. If $X$ is a curve, the co-simplicial ring is given by the diagram
\begin{equation*}
\xymatrix@C=1em{
F \times \mathbb{O}_X \ar@<-0.7ex>[r] \ar@<0.7ex>[r] & \mathbb{A}_X \times F \times \mathbb{O}_X \ar[l] \ar@<-1.4ex>[r] \ar[r] \ar@<1.4ex>[r] & \ar@<-0.7ex>[l] \ar@<0.7ex>[l]\cdots,
}
\end{equation*}
which captures the adelic rings $F$, $\Ob_X$, and $\Ab_X$, and the various maps between them, used to formulate Weil's Theorem \ref{thm:weil}.

\begin{theorem}[Adelic Descent]\label{thm:main}
Let $X$ be a Noetherian scheme. We denote by $\Ab_X^{\bullet}$ Beilinson's co-simplicial ring of ad\`eles (see Definition \ref{defi:adeles}). We have an equivalence of symmetric monoidal $\infty$-categories $\Perf(X)_{\otimes}  \simeq |\Perf(\Ab_X^{\bullet})_{\otimes}|$, where the right hand side denotes the $\infty$-category of cartesian $\Ab_X^{\bullet}$-modules.
\end{theorem}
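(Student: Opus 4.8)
The plan is to realise the equivalence through an explicit base-change functor, reducing its verification to two points — full faithfulness and essential surjectivity — with the genuinely non-formal content concentrated in the latter. Throughout I will exploit the three structural features of Beilinson's co-simplicial ring that exhibit it as a (co-simplicial) strongly flasque sheaf of $\Oo_X$-algebras, in the sense of the results established earlier: (i) each term $\mathbb{A}_X^p$ is flat over $\Oo_X$ and satisfies flat base change, $\mathbb{A}_X^p\otimes_{\Oo_X}\mathcal{F}\simeq\mathbb{A}_X^p(\mathcal{F})$; (ii) each $\mathbb{A}_X^p$ is $\Gamma(X,-)$-acyclic, so $R\Gamma(X,\mathbb{A}_X^p\otimes_{\Oo_X}-)$ is exact; and (iii) the augmented complex $\Oo_X\to\mathbb{A}_X^0\to\mathbb{A}_X^1\to\cdots$ is a resolution, i.e.\ $\Oo_X\xrightarrow{\ \sim\ }\lim_{\Delta}\mathbb{A}_X^\bullet$, the normalized complex being concentrated in degrees $\le\dim X$. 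The comparison functor is $F\colon\Perf(X)_\otimes\to|\Perf(\mathbb{A}_X^\bullet)_\otimes|$, $\mathcal{E}\mapsto(\mathbb{A}_X^\bullet\otimes_{\Oo_X}\mathcal{E})$. Flatness together with the results on perfect complexes over strongly flasque algebras shows $\mathbb{A}_X^p\otimes_{\Oo_X}\mathcal{E}$ is again perfect; the identities $\mathbb{A}_X^q\otimes_{\mathbb{A}_X^p}(\mathbb{A}_X^p\otimes_{\Oo_X}\mathcal{E})\simeq\mathbb{A}_X^q\otimes_{\Oo_X}\mathcal{E}$ show $F$ lands in the cartesian objects; and $F$ is strong symmetric monoidal because base change along a map of commutative rings preserves tensor products. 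Hence it suffices to prove the underlying functor of $F$ is an equivalence.

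Full faithfulness is where the adelic resolution enters. For $\mathcal{E},\mathcal{E}'\in\Perf(X)$ the internal hom $\Hhom_{\Oo_X}(\mathcal{E},\mathcal{E}')$ is again perfect, and dualizability of $\mathcal{E}$ gives $\Hhom_{\mathbb{A}_X^p}(\mathbb{A}_X^p\otimes_{\Oo_X}\mathcal{E},\mathbb{A}_X^p\otimes_{\Oo_X}\mathcal{E}')\simeq\mathbb{A}_X^p\otimes_{\Oo_X}\Hhom_{\Oo_X}(\mathcal{E},\mathcal{E}')$. Thus the mapping spectrum computed on the right-hand side is
\[
\lim_{\Delta}\ R\Gamma\bigl(X,\ \mathbb{A}_X^\bullet\otimes_{\Oo_X}\Hhom_{\Oo_X}(\mathcal{E},\mathcal{E}')\bigr),
\]
and the $\Gamma$-acyclicity of each $\mathbb{A}_X^p$ (property (ii)) identifies this with the hypercohomology of the adelic complex of the perfect complex $\Hhom_{\Oo_X}(\mathcal{E},\mathcal{E}')$, which by Beilinson's resolution — extended from coherent sheaves to bounded complexes of coherent sheaves by the standard hypercohomology spectral sequence — is $R\Gamma(X,\Hhom_{\Oo_X}(\mathcal{E},\mathcal{E}'))=\Map_{\Perf(X)}(\mathcal{E},\mathcal{E}')$. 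So $F$ is fully faithful.

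Essential surjectivity is the main obstacle, and it is not formal: the adeles constitute neither a finitely presented nor a descendable cover, so ordinary faithfully flat or \v{C}ech descent cannot be invoked for the co-simplicial ring $\mathbb{A}_X^\bullet$ itself. Given a cartesian $M^\bullet\in|\Perf(\mathbb{A}_X^\bullet)|$ I set $\mathcal{E}:=\lim_{\Delta}M^\bullet$, formed in $\QCoh(X)$ after restriction of scalars along $\Oo_X\to\mathbb{A}_X^\bullet$; by (iii) the normalized totalization is a finite limit, so $\mathcal{E}\in\mathsf{D}_{\mathrm{qc}}(X)$ is bounded (the case of infinite-dimensional Noetherian $X$ being reduced to the finite-dimensional one separately). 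Since $-\otimes_{\Oo_X}\mathbb{A}_X^p$ is exact and the totalization is a finite limit, base change commutes with it and, unwinding the cartesian structure of $M^\bullet$ against the resolution identity $\lim_{\Delta}(\mathbb{A}_X^\bullet\otimes_{\Oo_X}\mathbb{A}_X^p)\simeq(\lim_{\Delta}\mathbb{A}_X^\bullet)\otimes_{\Oo_X}\mathbb{A}_X^p\simeq\mathbb{A}_X^p$, one obtains $\mathcal{E}\otimes_{\Oo_X}\mathbb{A}_X^p\simeq M^p$, i.e.\ $F(\mathcal{E})\simeq M^\bullet$. Finally $\mathcal{E}$ is perfect: $\mathbb{A}_X^0$ is faithfully flat over $\Oo_X$ and $\mathcal{E}\otimes_{\Oo_X}\mathbb{A}_X^0\simeq M^0$ is perfect, and on a Noetherian scheme each defining property of a perfect complex — boundedness, coherence of the cohomology sheaves, and finite Tor-amplitude — descends along a faithfully flat morphism.

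The genuinely hard step is therefore the effectivity of descent, namely that $\lim_{\Delta}M^\bullet$ really recovers the datum $M^\bullet$ after base change, and underlying it the quasi-coherent descent statement $\QCoh(X)\simeq\lim_{\Delta}\QCoh(X,\mathbb{A}_X^p)$ for the adelic co-simplicial ring; the Perf-level statement then follows by cutting this equivalence down to the perfect objects, using that perfectness descends along $\Oo_X\to\mathbb{A}_X^0$ as above. I expect to prove the quasi-coherent descent by induction on the Krull dimension of $X$: Beilinson's description of how the adelic complex interacts with a closed–open decomposition $Z\hookrightarrow X\hookleftarrow U$ — relating $\mathbb{A}_X^\bullet$ to the adeles of $U$ and to those of the formal completion of $X$ along $Z$, the latter of strictly smaller dimension — reduces the descent on $X$ to the inductive hypothesis on the strata together with the zero-dimensional case, which is trivial since there $\mathbb{A}_X^\bullet$ is concentrated in degree $0$ and equal to $\Oo_X$; the strong flasqueness established earlier is precisely what keeps these reductions exact at the level of $\infty$-categories of modules. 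Once the underlying equivalence is in place, the symmetric monoidal refinement is automatic, since $F$ was already constructed as a strong symmetric monoidal functor.
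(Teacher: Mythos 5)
Your comparison functor and your full-faithfulness argument are essentially the paper's: tensoring with $\A_X^{\bullet}$, reducing $\Map$-spectra to $\lim_{\Delta}R\Gamma(X,\A_X^{\bullet}\otimes\Hhom(\Ec,\Ec'))$ by dualizability, and invoking Beilinson's flasque resolution. The gap is in essential surjectivity. The step ``unwinding the cartesian structure of $M^{\bullet}$ against the resolution identity, one obtains $\Ec\otimes_{\Oo_X}\Ab_X^p\simeq M^p$'' is precisely the effectivity statement you are trying to prove, not a consequence of anything established. The identity $\lim_{\Delta}(\A_X^{\bullet}\otimes_{\Oo_X}\A_X^p)\simeq\A_X^p$ only tells you what happens to objects already of the form $\A_X^{\bullet}\otimes_{\Oo_X}\F$; for a general cartesian $M^{\bullet}$ there is no reason that $\lim_{\Delta}(M^{\bullet}\otimes_{\Oo_X}\A_X^p)\simeq M^p$, because $\A_X^{\bullet}$ is \emph{not} the \v{C}ech conerve of $\Oo_X\to\A_X^0$ (already $\A_X^1\neq\A_X^0\otimes_{\Oo_X}\A_X^0$), so the augmented co-simplicial object does not acquire an extra degeneracy after base change to $\A_X^p$ — which is the mechanism that makes this step automatic in ordinary faithfully flat descent. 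You correctly flag this as ``the genuinely hard step,'' but your fallback — prove $\QCoh(X)\simeq\lim_{\Delta}\QCoh(\A_X^{\bullet})$ by induction on dimension and cut down to perfect objects — runs into an obstruction the paper makes explicit: the unbounded/quasi-coherent version of adelic descent is expected to be \emph{false}, because modules over infinite product rings are pathological (e.g.\ for $R=\prod_iR_i$ the module $R/\!\sim$, identifying tuples agreeing almost everywhere, is nonzero but dies after every projection $R\to R_i$). So the descent cannot be established at the $\QCoh$ level first; it has to be proved directly for the compact objects.

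What the paper does instead is work in $\QC:=\Ind\Perf$, produce the right adjoint $\int_X$ to $-\otimes_{\Oo_X}\A_X^{\bullet}$ by the adjoint functor theorem, and then verify the two halves of Lemma \ref{lemma:weakBB} separately: the unit is an equivalence by Beilinson's resolution (this is your full-faithfulness step), and $\int_X$ is \emph{conservative on} $\Perf(\A_X^{\bullet})$ — proved via Bhatt's classification of perfect complexes over product rings together with Thomason--Trobaugh's $\Perf_I(R)\simeq\Perf_{\widehat I}(\widehat R_I)$ to propagate nonvanishing from a point to a closed specialization. Finally, perfection of $\int_XM^{\bullet}$ is obtained by a Noetherian induction on supports: a simplicial homotopy contracts $\A_{X,|Z|}^{\bullet}(\Oo_{\eta})$ onto the generic point $\eta$ of a stratum, giving generic perfection; a spreading-out lemma (using that vanishing after $-\otimes\Oo_{\eta}$ propagates to an open neighbourhood, via filtered-colimit arguments for perfect complexes) lets one split off a perfect piece over an open set and pass to a cofibre supported on a smaller closed subset. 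Your proposed induction on Krull dimension via closed--open decompositions gestures at this stratification, but without the conservativity input (Bhatt plus Thomason--Trobaugh), the homotopy-contraction argument at generic points, and the restriction to $\Ind\Perf$ rather than all quasi-coherent modules, the induction does not close up.
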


This theorem also holds for almost perfect complexes, as we show in Corollary \ref{cor:APerf}.
According to Lieblich, the study of perfect complexes is the mother of all moduli problems (see the abstract of \cite{lieblich2005moduli}). The Tannakian formalism enables us to make this philosophical principle precise. Using the results of Bhatt \cite{Bhatt:2014aa} and Bhatt--Halpern-Leistner \cite{bhatt2015tannaka}, we obtain a descent result for $G$-torsors (we may replace $BG$ by more general algebraic stacks).

\begin{theorem}\label{thm:main2}
Let $X$ be a Noetherian scheme, the geometric realisation of the simplicial affine scheme $\Spec \Ab_X^{\bullet}$ in the category of Noetherian algebraic stacks with quasi-affine diagonal, is canonically equivalent to $X$. In particular, we have $BG(X)  \simeq |BG(\Spec \Ab_X^{\bullet})|$, if $G$ is a Noetherian affine algebraic group scheme.
Let $G$ be a special group scheme (for example $G = \GL_n$). We denote by $G(\Ab_X^1)^{\text{cocycle}}$ the subset consisting of $\phi \in \G(\Ab_X^1)$ satisfying the cocycle condition $\phi_{02} = \phi_{01}\circ \phi_{12}$ in $G(\Ab_X^2)$. There is an equivalence of groupoids
$BG(X)  \simeq [G(\Ab_X^1)^{\text{cocycle}}/G(\Ab_X^0)].$
\end{theorem}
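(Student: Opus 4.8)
The plan is to deduce all three statements from the adelic descent Theorem~\ref{thm:main} together with the Tannakian reconstruction theorem for symmetric monoidal $\infty$-categories of Bhatt \cite{Bhatt:2014aa} and Bhatt--Halpern-Leistner \cite{bhatt2015tannaka}. Write $\mathcal{S}$ for the $\infty$-category of Noetherian algebraic stacks with quasi-affine diagonal; since the diagonal of a Noetherian scheme is a quasi-compact immersion, hence quasi-affine, we have $X \in \mathcal{S}$. Beilinson's construction is equipped with a canonical map of simplicial schemes $\Spec\Ab_X^{\bullet} \to X$, $X$ being constant, so to identify $X$ with the geometric realisation $|\Spec\Ab_X^{\bullet}|$ inside $\mathcal{S}$ it suffices to verify that for every $Y \in \mathcal{S}$ the induced map $\Map(X,Y) \to \lim_{\Delta}\Map(\Spec\Ab_X^n,Y)$ is an equivalence. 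Here I would invoke Tannaka duality in the form: for a target with quasi-affine diagonal and a quasi-compact quasi-separated source $T$ (both $X$ and the affine schemes $\Spec\Ab_X^n$ qualify), one has $\Map(T,Y) \simeq \Fun^{\otimes}(\Perf(Y),\Perf(T))$, the space of exact symmetric monoidal functors. Applying this level-wise rewrites the right-hand side as $\lim_{\Delta}\Fun^{\otimes}(\Perf(Y),\Perf(\Ab_X^n))$; since mapping spaces commute with limits this equals $\Fun^{\otimes}\bigl(\Perf(Y),\lim_{\Delta}\Perf(\Ab_X^n)\bigr)$, and $\lim_{\Delta}\Perf(\Ab_X^n)_{\otimes} = |\Perf(\Ab_X^{\bullet})_{\otimes}|$ is identified symmetric-monoidally with $\Perf(X)_{\otimes}$ by Theorem~\ref{thm:main}; a final application of Tannaka duality gives $\Fun^{\otimes}(\Perf(Y),\Perf(X)) \simeq \Map(X,Y)$, and one checks the composite is inverse to the canonical map. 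The second assertion is then immediate: for $G$ a Noetherian affine algebraic group scheme $BG$ lies in $\mathcal{S}$ (its diagonal is affine), so the universal property just established gives $BG(X) = \Map(X,BG) \simeq \lim_{\Delta}BG(\Spec\Ab_X^n) = |BG(\Spec\Ab_X^{\bullet})|$.

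For the third assertion, suppose $G$ is special, so that every $G$-torsor over a scheme is Zariski-locally trivial. The crucial input I would establish is that every $G$-torsor on $X$ restricts to a \emph{trivial} torsor on $\Spec\Ab_X^{0}$; granting this, the fact that the structure map $\Spec\Ab_X^n \to X$ factors through $\Spec\Ab_X^{0}$ (via a coface of the cosimplicial ring) shows that the restriction to every $\Spec\Ab_X^n$ is trivial as well. Hence the level-wise fully faithful inclusion $B\Gamma^{\bullet} \hookrightarrow BG(\Spec\Ab_X^{\bullet})$ of the classifying cosimplicial groupoid of the cosimplicial group $\Gamma^{\bullet} := G(\Ab_X^{\bullet})$ into the cosimplicial groupoid of all $G$-torsors induces, after passing to cartesian objects, an equivalence onto $|BG(\Spec\Ab_X^{\bullet})| \simeq BG(X)$. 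It then remains to identify $|B\Gamma^{\bullet}|$ with the quotient groupoid $[G(\Ab_X^{1})^{\text{cocycle}}/G(\Ab_X^{0})]$. Since the second assertion already guarantees that $|B\Gamma^{\bullet}|$ is $1$-truncated, it is determined by its set of isomorphism classes of objects and its automorphism groups, which I would read off from the low cosimplicial degrees in the standard way: the objects are the $\phi \in G(\Ab_X^{1})$ satisfying $\phi_{02} = \phi_{01}\circ\phi_{12}$ in $G(\Ab_X^{2})$, up to the twisted conjugation action of $G(\Ab_X^{0})$, and the automorphism group of the trivial cocycle is the equaliser $\{\gamma \in G(\Ab_X^{0}) : d^{0}\gamma = d^{1}\gamma\}$. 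These match the corresponding invariants of $[G(\Ab_X^{1})^{\text{cocycle}}/G(\Ab_X^{0})]$, and naturality promotes the match to a canonical equivalence of groupoids.

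I expect the main obstacle to be the triviality statement used in the third part — that an arbitrary $G$-torsor on a general Noetherian $X$ restricts trivially to $\Spec\Ab_X^{0}$ — since for higher-dimensional $X$ the ring $\Ab_X^{0}$ is a large, non-split product of complete local rings, whose Picard group and non-abelian $H^{1}$ are not obviously trivial; this is precisely the point at which the structural results for perfect complexes over the strongly flasque adelic algebras proved in the earlier sections, combined with specialness of $G$, are meant to be used. A minor additional subtlety is that replacing the abstract totalisation $|B\Gamma^{\bullet}|$ by the explicit $2$-cocycle presentation is justified only because the second part already forces that totalisation to be $1$-truncated, so that the higher cosimplicial degrees contribute nothing.
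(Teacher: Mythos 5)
Your overall architecture (adelic descent plus Tannaka duality, then specialisation to $BG$ and a cocycle computation) matches the paper's, but there is a genuine gap in the first step. You invoke Tannaka duality in the form $\Map(T,\Y)\simeq\Fun^{\otimes}(\Perf(\Y),\Perf(T))$ for an arbitrary Noetherian stack $\Y$ with quasi-affine diagonal. That form of the duality (Corollary \ref{perfect_reconstruction}, going back to \cite{Bhatt:2014aa}) requires $\QC(\Y)$ to be compactly generated, and the paper explicitly warns that this can fail for $\Y=BG$ with $G$ reductive in positive characteristic --- precisely a case the theorem is meant to cover. The paper instead uses the Bhatt--Halpern-Leistner duality phrased in terms of $\APerf_{\cn}(-)_{\otimes}$ (Theorem \ref{thm:bhattdhl}(b), applied in Theorem \ref{thm:stacky_reconstruction}), and this forces a non-formal upgrade of the descent theorem from $\Perf$ to $\APerf$ and $\APerf_{\cn}$ (Corollary \ref{cor:APerf}). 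That upgrade is where real work happens: one must show that $-\otimes_{\Oo_X}\A_X^{0}$ detects pseudo-coherence and connectivity, which rests on conservativity of $\A_X^0(-)$ on quasi-coherent sheaves (Lemma \ref{lemma:A_conservative}), on the fact that a locally finitely generated flasque module is globally finitely generated, and on the identification $\Hc^i(M\otimes_{\Oo_X}\A_{X,T})\simeq\A_{X,T}(\Hc^i(M))$. None of this appears in your proposal, so as written the argument does not reach the stated generality.

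On the third part, you correctly isolate the key input --- that a $G$-torsor on $X$ becomes trivial over $\Spec\Ab_X^0$ --- but you leave it unproven and speculate that it requires the l\^ache-sheaf machinery. In fact it is elementary and independent of those results: by specialness the torsor is Zariski-locally trivial, hence trivial over each local ring $\Oo_x$, hence over each completion $\widehat{\Oo}_x$, and since $\Ab_X^0=\prod_{x}\widehat{\Oo}_x$ the factorwise trivialisations assemble to a trivialisation over the product ring (for $G=\GL_n$ this is just the statement that finitely generated projective modules over local rings are free). Your reduction of the totalisation to the truncated cocycle description, justified by $1$-truncatedness, is consistent with the paper's Corollary \ref{cor:Y}, which achieves the same by computing the limit in the $2$-category of groupoids.
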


In characteristic $0$, the assumption that $G$ be Noetherian can often be dropped. We refer the reader to Corollary \ref{perfect_reconstruction}. We refer the reader to Paragraph \ref{specialcase} for a more detailed discussion of the adelic description of $G$-bundles on Noetherian schemes $X$. The case of punctured surfaces has also been considered by Garland and Patnaik in \cite{garlandgeometry}. In \cite{MR697316}, Parshin used adelic cocycles for $G$-bundles as above, to obtain formulae for Chern classes in adelic terms.

As a further consequence of the adelic descent formalism, we obtain an analogue of Gelfand--Naimark's reconstruction theorem for locally compact topological spaces \cite{gelfand}. Recall that \emph{loc. cit.} shows that a locally compact topological space can be reconstructed from the ring of continuous functions. It is well-known that a similar result cannot hold for non-affine schemes. However, our result implies that a Noetherian scheme $X$ can be reconstructed from the co-simplicial ring of ad\`eles.

\begin{theorem}\label{thm:main4}
The functor $\Ab^{\bullet}\colon \Sch^{N} \hookrightarrow (\Rings^{\Delta})^{\op}$ from the category of Noetherian schemes to the dual category of co-simplicial commutative ring, has an explicit left-inverse, sending $R^{\bullet}$ to $|\Spec R^{\bullet}|$.
\end{theorem}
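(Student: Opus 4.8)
The statement is a functorial repackaging of Theorem \ref{thm:main2}, so the plan is to reduce to that theorem and then attend to naturality. First I would make the candidate left-inverse $L$ precise: a co-simplicial commutative ring $R^{\bullet}$ determines a simplicial affine scheme $\Spec R^{\bullet}$, and we set $L(R^{\bullet}) := |\Spec R^{\bullet}| = \colim_{\Delta^{\op}} \Spec R^{\bullet}$, the geometric realisation taken in the $2$-category of Noetherian algebraic stacks with quasi-affine diagonal. A morphism $R^{\bullet}\to S^{\bullet}$ induces $\Spec S^{\bullet}\to \Spec R^{\bullet}$ and hence a map on realisations, so $L$ is functorial wherever the realisations involved exist; in particular it is defined on the essential image of $\Ab^{\bullet}$, which is all that is needed for a left-inverse. (If one insists on $L$ being defined on all of $(\Rings^{\Delta})^{\op}$, one forms the colimit in a sufficiently large $\infty$-category of stacks and observes a posteriori that on the relevant objects it is a Noetherian scheme.)

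Next I would recall from Definition \ref{defi:adeles} that $X \mapsto \Ab_X^{\bullet}$ is genuinely functorial, i.e. a morphism $f\colon X \to Y$ of Noetherian schemes induces a morphism of co-simplicial rings $\Ab_Y^{\bullet}\to \Ab_X^{\bullet}$, compatibly with the canonical augmentations $\Oo_Y \to \Ab_Y^{\bullet}$ and $\Oo_X \to \Ab_X^{\bullet}$. Applying $\Spec$ and passing to geometric realisations, the augmentation produces for every Noetherian $X$ a canonical morphism $\epsilon_X\colon |\Spec \Ab_X^{\bullet}| \to X$, and naturality in $f$ is immediate from the compatibility of the augmentations. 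Theorem \ref{thm:main2} is exactly the assertion that every $\epsilon_X$ is an equivalence; hence $\epsilon$ is a natural equivalence $L \circ \Ab^{\bullet}\xrightarrow{\ \sim\ } \id_{\Sch^N}$ of functors $\Sch^N \to \Stack$.

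It remains to rigidify this into an honest left-inverse. The Noetherian schemes span a full subcategory of $\Stack$ whose mapping spaces are discrete, i.e. a $1$-category, canonically identified with $\Sch^N$; since $L\circ \Ab^{\bullet}$ lands, via the equivalence $\epsilon$, in this subcategory, it factors through $\Sch^N$ and is there naturally isomorphic to $\id_{\Sch^N}$, which one may then rectify to equality. In particular $\Ab^{\bullet}$ is faithful, and a Noetherian scheme is recovered from its co-simplicial ring of ad\`eles by the explicit formula $R^{\bullet}\mapsto |\Spec R^{\bullet}|$. The only genuinely non-formal point -- everything else being bookkeeping -- is that the equivalence of Theorem \ref{thm:main2} is the one induced by the adelic augmentation and is natural in $X$; this is precisely what upgrades a pointwise reconstruction statement to a left-inverse, and it is visible from the construction of that equivalence, which is assembled from $\Spec$ of the augmentation maps together with the descent result of Theorem \ref{thm:main}.
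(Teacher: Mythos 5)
Your proposal is correct and follows essentially the same route as the paper: there the statement is proved as Corollary \ref{cor:main4}, deducing the left-inverse from the pointwise reconstruction $X \simeq |\Spec \Ab_X^{\bullet}|$ together with functoriality of $X \mapsto \Ab_X^{\bullet}$ and of the augmentation (Lemma \ref{lemma:functoriality}), exactly as you do. The only (minor) divergence is that the paper forms the colimit in the category of quasi-compact quasi-separated schemes via Theorem \ref{thm:schematic_reconstruction}, which needs only the $\Perf$ descent statement and Bhatt's Tannaka duality for qcqs algebraic spaces, whereas you invoke the Noetherian-stacks version of Theorem \ref{thm:main2}, which additionally rests on the almost-perfect refinement of Corollary \ref{cor:APerf}; both yield the same conclusion.
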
 

It is instructive to meditate on the differences and similarities with Gelfand--Naimark's theorem. While their result gets by with plain rings, our Theorem \ref{thm:main4} requires a diagram of rings (see Corollary \ref{cor:schematic_reconstruction} for a precise statement to which extent the co-simplicial structure is needed). However, the necessary condition of local compactness for topological spaces is not unlike the restriction that the scheme be Noetherian. 

For a quasi-compact and quasi-separated scheme $X$ we may choose a finite cover by affine open subschemes $\{U_i\}_{i = 1,\dots,n}$. The coproduct $U = \coprod_{i = 1}^nU_i$ is then still an affine scheme, and we have a map $U \rightarrow X$. Choosing a finite affine covering for $U \times_X U$, and iterating this procecure, we arrive at a simplicial affine scheme $U_{\bullet} \rightarrow X$, which yields a hypercovering of $X$. The coordinate ring yields a co-simplicial ring $\Gamma(U_{\bullet})$ associated to $X$. However this construction is a priori not functorial, since it depends on the chosen coverings. Nonetheless, using the construction $X \mapsto X^Z$ introduced by Bhatt and Scholze \cite{bhatt2013pro}, we obtain another functor as in Theorem \ref{thm:main4} (the author thanks Bhargav Bhatt for bringing this to his attention). 

Our Theorem \ref{thm:main} relies heavily on Beilinson's \cite{MR565095}, which constructs a functor, sending a quasi-coherent sheaf $\F$ on $X$ to an $\Ab_X^{\bullet}$-module $\Ab_X^{\bullet}(\F)$. Beilinson observes that the latter co-simplicial module gives rise to a chain complex, computing the cohomology of $\F$. This chain complex can be obtained by applying the Dold-Kan correspondence, or taking the alternating sum of the face maps in each degree: $[\Ab^0(\F) \to^{\partial_0 - \partial_1} \Ab^1(\F) \cdots]$. Beilinson's result can be stated as:
$$H^i(X,\F)  \simeq H^i(\DK(\Ab_X^{\bullet}(\F))).$$
The reason is that the sheaves $\A_X^k\colon U \mapsto \Ab_U^{k}(\F)$ are flasque, and hence it remains to show that the corresponding complex of sheaves defines a flasque resolution of $\F$. The details are explained in \cite{MR1138291}. Since morphisms in $\Perf(X)$ are closely related to sheaf cohomology, it is not difficult to deduce from Beilinson's observation the existence of a fully faithful functor $\Perf(X) \hookrightarrow |\Perf(\Ab_X^{\bullet})|$, amounting to a sort of a cohomological descent result. Our proof of the Adelic Descent Theorem is hence mainly concerned with establishing that this functor is essentially surjective, that is, establishing descent for objects in those $\infty$-categories. In heuristic terms, our theorem asserts that a perfect complex $M$ on $X$ can be described by an iterative formal glueing procedure from the adelic parts $\Ab_X^{\bullet}(M)$. 

Our main theorem uses the language of stable $\infty$-categories. Replacing $\Perf(X)$ and $\Perf(\Ab_X^{\bullet})$ by their homotopy categories would render the result wrong. However, the theorem could be formulated in more classical language. The $\infty$-category of cartesian perfect modules over a co-simplicial ring, such as $\Perf(\Ab_X^{\bullet})$, has a model, as is discussed in \cite[1.2.12]{MR2394633} by To\"en--Vezzosi.

Since the construction of $\Ab_X^{\bullet}$ involve iterative completion and localisation procedures, the formal descent result of Beauville--Laszlo \cite{MR1320381}, and Ben-Bassat--Temkin \cite{MR3003930} (for quasi-coherent sheaves in each case) are closely related. These theorems allow one to glue sheaves on a scheme $X$ with respect to the formal neighbourhood of a closed subvariety $Y$, and its open complement $X \setminus Y$. Beauville--Laszlo developed such a descent theory for an affine scheme $X$, and a closed subvariety $Y$ given by a principal ideal. This result was motivated by the study of conformal blocks \cite{MR1289330}.
The second article does not require the restrictions of $X$ to be affine, and $Y$ to be principal, however utilises the theory of Berkovich spaces, in order to formulate the glueing result. 
Our Theorem \ref{thm:main} gives a very similar descent theory, but uses all closed subvarieties at once and avoids rigid geometry.
 
One of the key properties of the ad\`eles allowing one to establish effectivity of adelic descent data is a strengthening of the theory of flasque sheaves of algebras. 

\begin{theorem}\label{thm:main3}
Let $X$ be a quasi-compact topological space, and $\A$ a l\^ache sheaf of algebras (see Definition \ref{defi:lache}), with ring of global sections $R = \Gamma(\A)$. The global section functor $\Gamma\colon \Mod(\A) \rightarrow \Mod(\A)$ restricts to a symmetric monoidal equivalence $\Perf(\A)_{\otimes}  \simeq \Perf(R)_{\otimes}$.
\end{theorem}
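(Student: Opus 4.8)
The plan is to deduce the theorem from formal properties of the adjunction
\[
\iota = \A\otimes_{\underline R}(-)\colon \Mod(R) \;\rightleftarrows\; \Mod(\A) \;\colon\; \Gamma ,
\]
where $\underline R$ denotes the pullback of $R$ along $X\to\mathrm{pt}$ (so that there is a canonical ring map $\underline R\to\A$), the induction functor $\iota$ is strong symmetric monoidal, and its right adjoint $\Gamma$ is lax symmetric monoidal. First I would record the formal reductions. Recall that $\Perf(R)$, resp. $\Perf(\A)$, is the smallest stable idempotent-complete subcategory containing the monoidal unit; that $\iota$ preserves finite colimits, shifts and retracts unconditionally; and --- once the cohomological input below is available --- that $\Gamma$ does the same on the relevant subcategories. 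It therefore suffices to prove that the unit $\id\to\Gamma\iota$ and the counit $\iota\Gamma\to\id$ are equivalences, and by naturality together with the generation statement it is enough to check this after evaluating on the unit objects $R$ and $\A$. Granting this, $\iota$ and $\Gamma$ restrict to mutually inverse equivalences $\Perf(R)\simeq\Perf(\A)$ (in particular $\Gamma$ lands in $\Perf(R)$, since once $\Gamma$ is exact on $\Perf(\A)$ and sends $\A$ to $R$ it carries the thick closure of $\A$ into that of $R$), and $\Gamma$ inherits --- refining its lax structure --- the symmetric monoidal structure of the inverse equivalence $\iota$.

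The argument thus rests on a single cohomological claim, which I expect to be the main obstacle: every perfect $\A$-module $Q$ is $\Gamma$-acyclic, i.e. the natural map $R\Gamma(X,Q)\to\Gamma(X,Q)$ is an equivalence. Definition \ref{defi:lache} of a l\^ache sheaf of algebras is tailored precisely to supply this: it strengthens ordinary flasqueness so that not only $\A$ itself but a suitably large class of $\A$-modules --- one containing all perfect complexes --- consists of flasque sheaves, and flasque sheaves of abelian groups have vanishing higher cohomology on any topological space. Quasi-compactness of $X$ is what allows one to pass from the purely local description of a perfect complex (locally a bounded complex of finite free $\A$-modules) to membership in this class; and since $\Gamma$-acyclicity is a vanishing condition, it automatically descends to retracts in the derived category. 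Exactness of $\Gamma$ on $\Perf(\A)$ is then a consequence: in a cofibre sequence of perfect $\A$-modules all three terms are flasque, and on flasque modules $\Gamma$ agrees with $R\Gamma$ and so preserves the sequence.

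It remains to unwind the unit and the counit. For the unit, $\Gamma\iota R=\Gamma(\A)=R$ by hypothesis, and propagating this identity along shifts, finite colimits and retracts --- legitimate because $\Gamma$ is exact on the perfect $\A$-modules $\iota M'$ --- yields $\Gamma\iota M\simeq M$ for all $M\in\Perf(R)$; in particular $\iota M$ is a perfect $\A$-module with global sections $M$, so $\Gamma|_{\Perf(\A)}$ is essentially surjective. For the counit, $\iota\Gamma\A=\A\otimes_{\underline R}\underline R\to\A$ is an equivalence, hence $\iota\Gamma Q\to Q$ is an equivalence for every $Q$ in the thick closure of $\A$, i.e. for all $Q\in\Perf(\A)$, because $\iota\Gamma$ is exact and the counit is a natural transformation. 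By the standard criterion for the adjunction $\iota\dashv\Gamma$, the counit being an equivalence on $\Perf(\A)$ is precisely the assertion that $\Gamma$ is fully faithful there --- equivalently, that $\Map_{\Mod(\A)}(\A,Q)=R\Gamma(X,Q)\to\Gamma(X,Q)=\Map_{\Mod(R)}(R,\Gamma Q)$ is an equivalence, the case of general source $P$ following by dualising the perfect module $P$. Fully faithful together with essentially surjective gives the equivalence $\Perf(\A)\simeq\Perf(R)$, symmetric monoidal by the concluding remark of the first paragraph.
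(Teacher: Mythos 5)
There is a genuine gap, and it sits exactly at the hard point of the theorem. Your argument rests on the assertion that $\Perf(\A)$ is the smallest stable idempotent-complete subcategory of $\Mod(\A)$ containing the unit $\A$, so that checking the counit $\iota\Gamma\A\to\A$ on $\A$ alone and propagating through shifts, cones and retracts covers all of $\Perf(\A)$. For a sheaf of algebras on a topological space this identification is false in general: a perfect $\A$-complex is by definition only \emph{locally} equivalent to a bounded complex of finitely generated projective $\A$-modules (see Definition \ref{defi:Dminus} and the proof of Corollary \ref{cor:eq_lache}), and the thick closure of the unit can be strictly smaller --- already for $\A=\Oo_{\mathbb{P}^1}$ the thick closure of $\Oo$ does not contain $\Oo(1)$. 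The statement that for a l\^ache $\A$ every perfect complex does lie in the image of $\iota=-\otimes_R\A$ is essentially equivalent to the essential surjectivity of $\Loc$, i.e.\ to the theorem itself, so it cannot be taken as an input. Your unit computation and the essential surjectivity of $\Gamma$ are fine (there the relevant thick closure is taken inside $\Perf(R)$, where the claim is true), but the counit argument, and hence full faithfulness of $\Gamma$ on all of $\Perf(\A)$, is unsupported. The dualisation step offered for a general source $P$ has the same circularity: it needs $\Gamma$ to commute with $\otimes$ and duals on perfect complexes, which is again part of the conclusion.

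The paper closes this gap with a conservativity argument that your proposal has no substitute for. One works in the category $\D^-(\A)$ of complexes of flasque modules locally equivalent to complexes of projectives, shows via Corollary \ref{cor:cohomology_flasque} that all cohomology sheaves of such objects are flasque, and deduces (Lemma \ref{lemma:Gamma_conservative}) that $\Gamma$ is conservative: a nonzero flasque sheaf has nonzero global sections. Given $M^{\bullet}\in\D^-(\A)$ one then takes a projective replacement $P^{\bullet}\to\Gamma(M^{\bullet})$ over $R$, forms the adjoint map $\Loc(P^{\bullet})\to M^{\bullet}$, and concludes it is an equivalence because $\Gamma$ of it is and $\Gamma$ is conservative; only afterwards are perfect complexes carved out via dualisability. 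Your identification of the cohomological input (l\^acheness forces perfect complexes to consist of flasque sheaves, whence $\Gamma$-acyclicity and exactness of $\Gamma$, with quasi-compactness entering through Lemma \ref{lemma:loc_fg}) is correct and matches the paper, but without the conservativity step the passage from ``locally standard'' to ``globally in the image of $\Loc$'' is missing.
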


We show in Lemma \ref{lemma:adeles_section} that the ad\`eles $\Ab_X^k$ are l\^ache sheaves of algebras. The derived equivalence underlying our theorems decomposes into two parts:
$$\Perf(X)  \simeq |\Perf(\A_X^{\bullet})|  \simeq |\Perf(\Ab_X^{\bullet})|.$$
The second equivalence is deduced from Theorem \ref{thm:main3}. The first equivalence can be established by local verifications. 
\\
\\
\noindent\textit{Acknowledgements:} The author first conjectured the main theorem during a conversation with Bertrand To\"en. I thank him for his interest and encouragement, which subsequently led to the present work. Moreover I thank H\'el\`ene Esnault and Gabriele Vezzosi for interesting discussions while this article was prepared. I am grateful to Dennis Gaitsgory for alerting me of an erroneous lemma contained in a preliminary version of this paper. Through joint work with Oliver Br\"aunling and Jesse Wolfson I was introduced to the fascinating world of ad\`eles and its curious interplay with algebraic geometry. It is a pleasure to thank them for all the mathematics they taught me. Their influence on my thinking permeates the entire text. I am grateful for EPSRC funding received under P/G06170X/1.

\section{A reminder of Beilinson-Parshin ad\`eles}

In \cite{MR565095} Beilinson generalised the notion of ad\`eles to arbitrary Noetherian schemes, and studied the connection ad\`eles bear with coherent cohomology. We will briefly review his definition, and the main properties of relevance to us. Except for the assertion that ad\`eles are flasque sheaves (Corollary \ref{cor:adeles_flasque}), we will not provide a proof for those statements, and refer the reader instead to Huber's \cite{MR1138291}. Examples can be found in Morrow's survey article about ad\`eles and their relation to higher local fields \cite{morrow}.

\subsection{Recollection}

Henceforth we denote by $X$ a Noetherian scheme, with underlying topological space $|X|$ and structure sheaf $\Oo_X$.

\begin{definition}\label{defi:X_k}
Let $X$ be a scheme with underlying topological space $|X|$. For $x,y \in |X|$ we write $x \leq y$ for $x \in \overline{\{y\}}$; this defines a partial ordering on the set $|X|$. We denote the set $\{(x_0,\dots,x_k)\in |X|^{\times k+1}|x_0\leq \cdots \leq x_k\}$ by $|X|_k$. 
\end{definition}

One sees that $|X|_k$ is in fact the set of $k$-simplices of a simplicial set $|X|_{\bullet}$. This simplicial structure will be reflected in a co-simplicial structure for Beilinson-Parshin ad\`eles. 

\begin{definition}\label{defi:X_simplicial}
The simplicial set $|X|_{\bullet}\colon \Delta^{\op} \rightarrow \Set$ is defined to be the functor, sending $[n] \in \Delta^{\op}$ to the set of ordered maps $[n] \rightarrow |X|$, with respect to the partial ordering $\leq$ on $X$ defined in Definition \ref{defi:X_k}.
\end{definition}

Following \cite{MR565095} we define ad\`eles with respect to a subset $T \subset |X|_k$. The case of interest to us, will be $T = |X|_k$, but the recursive nature of the definition necessitates a definition for general subsets $T \subset |X|_k$. We begin with the following preliminary definitions.

\begin{definition}\label{defi:pre_adeles}
Let $X$ be a Noetherian scheme, and $k \in \mathbb{N}$ a non-negative integer.
\begin{itemize}
\item[(a)] For $x \in |X|$ and $T \subset |X|_k$ we define ${}_xT = \{(x_0 \leq \cdots \leq x_{k-1}) \in |X|^{\times k-1}|(x_0 \leq \cdots \leq x_{k-1} \leq x) \in T\}$.
\item[(b)] For $x \in |X|$ we denote by $\Oo_x$ the local ring at $x$ with maximal ideal $\mathfrak{m}_x$. We have the canonical morphism $j_{rx}\colon\Spec \Oo_x/\mathfrak{m}_x^r \rightarrow X$.
\end{itemize}
\end{definition}

It is convenient to define ad\`eles in a higher-dimensional situation as sheaves of $\Oo_X$-modules.

\begin{definition}\label{defi:adeles}
Let $X$ be a Noetherian scheme. The ad\`eles are the unique family of exact functors $\A_{X,T}(-)\colon \QCoh(X) \rightarrow \Mod(\Oo_X)$, indexed by subsets $T \subset |X|_k$, satisfying the following conditions:
\begin{itemize}
\item[(a)] The functor $\A_{X,T}(-)$ commutes with directed colimits.
\item[(b)] For $\F \in \Coh(X)$, and $k = 0$ we have $\A_{X,T}(\F) = \prod_{x \in T}\varprojlim_{r \geq 0} (j_{rx})_*(j_{rx})^*\F$.
\item[(c)] For $\F \in \Coh(X)$ and $k \geq 1$ we have $\A_{X,T}(\F) = \prod_{x\in |X|} \varprojlim_{r \geq 0}\A({}_xT,(j_{rx})_*(j_{rx})^*\F)$
\end{itemize}
\end{definition}

We refer the reader to \cite{MR1138291} for a detailed verification that the above family of functors is well-defined and exact. The ring of ad\'eles with respect to $T \subset |X|_n$ is defined by taking global sections of the sheaf of rings $\A_{X,T}(\Oo_X)$. Moreover it is important to emphasise that the sheaves of $\Oo_X$-modules $\A_{X,T}(\F)$ are in general not quasi-coherent.

\begin{definition}\label{defi:ring_adeles}
We denote the abelian group $\Gamma(X,\A_{X,T}(\F))$ by $\Ab_{X,T}(\F)$; and reserve the notation $\Ab_{X,T}$ for $\Ab_{X,T}(\Oo_X)$. By construction $\Ab_{X,T}(\F)$ is an $\Ab_{X,T}$-module.
\end{definition}

As we already alluded to, the most interesting case for us is when $T = |X|_k$. We reserve a particular notation for this situation.

\begin{definition}
We denote the sheaf $\A(|X|_k, \F)$ by $\A^k_X(\F)$. The abelian group $\Gamma(X,\A^k_X(\F))$ will be denoted by $\Ab_X^k(\F)$.
\end{definition}

As the superscript indicates, these sheaves can be assembled into a co-simplicial object. The proof of this can be found in \cite[Theorem 2.4.1]{MR1138291}.

\begin{proposition}\label{prop:co-simplicial}
Let $X$ be a Noetherian scheme, and $T_{\bullet} \subset |X|_{\bullet}$ a simplicial subset. There is an exact functor $\A_{X,T_{\bullet}}^{\bullet}\colon\QCoh(X) \rightarrow \Fun(\Delta,\Mod(\Oo_X))$, which commutes with directed colimits, and maps $([k],\F)$ to $\A_{X,T_k}(\F)$. We denote the functor $\Gamma(X,\A_{X,T_{\bullet}}^{\bullet}(-))\colon \QCoh(X) \rightarrow \Fun(\Delta,\AbGrp)$ by $\Ab_{X,T_{\bullet}}^{\bullet}(-)$; it is exact and commutes with directed colimits. The notation $\Ab_X^{\bullet}$ is reserved for the functor corresponding to the case $T_{\bullet} = |X|_{\bullet}$.
\end{proposition}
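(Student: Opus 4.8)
The plan is to first reduce to coherent sheaves, then build the co-simplicial structure by induction on simplicial degree using the recursion of Definition \ref{defi:adeles}, and finally verify the co-simplicial identities, which will be the only substantial step. Since $X$ is Noetherian, $\QCoh(X)=\Ind(\Coh(X))$ and, by Definition \ref{defi:adeles}(a), each $\A_{X,T}$ preserves directed colimits; so it is enough to equip the assignment $[k]\mapsto\A_{X,T_k}(\F)$, for $\F\in\Coh(X)$, with a co-simplicial structure natural in $\F$, and then extend by directed colimits. Exactness and preservation of directed colimits of a functor with values in $\Fun(\Delta,\Mod(\Oo_X))$ or $\Fun(\Delta,\AbGrp)$ are tested levelwise, so: exactness of $\A_{X,T_\bullet}^{\bullet}$ holds by Definition \ref{defi:adeles}; exactness of $\Ab_{X,T_\bullet}^{\bullet}=\Gamma(X,\A_{X,T_\bullet}^{\bullet}(-))$ follows because the sheaves $\A_{X,T_k}(\F)$ are flasque (Corollary \ref{cor:adeles_flasque}), so $\Gamma$ preserves the short exact sequences produced by the exact functors $\A_{X,T_k}$; preservation of directed colimits by $\A_{X,T_\bullet}^{\bullet}$ is Definition \ref{defi:adeles}(a), and by $\Ab_{X,T_\bullet}^{\bullet}$ it follows additionally from the fact that $\Gamma(X,-)$ commutes with directed colimits of abelian sheaves on the Noetherian, hence coherent, space $|X|$. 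The case $T_\bullet=|X|_\bullet$ yields $\Ab_X^{\bullet}$.

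To produce the coface maps $\partial^{i}\colon\A_{X,T_{k-1}}(\F)\to\A_{X,T_k}(\F)$ and codegeneracy maps $s^{i}\colon\A_{X,T_{k+1}}(\F)\to\A_{X,T_k}(\F)$ I would induct on $k$, using the recursion
$$\A_{X,T_k}(\F)=\prod_{x\in|X|}\varprojlim_{r}\A_{X,\,{}_xT_k}\big((j_{rx})_{*}(j_{rx})^{*}\F\big),$$
and noting that for a simplicial subset $T_\bullet\subseteq|X|_\bullet$ and fixed $x$ the family $[j]\mapsto{}_x(T_{j+1})$ is again a simplicial subset of $|X|_\bullet$ (indeed of $|\overline{\{x\}}|_\bullet$), so the inductive hypothesis supplies a co-simplicial object $\A_{X,({}_xT)_\bullet}(-)$ one degree lower. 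I would first also set up the auxiliary restriction maps $\A_{X,S}(\F)\to\A_{X,S'}(\F)$ for nested $S'\subseteq S\subseteq|X|_k$, defined recursively as the projection of products for $k=0$ and via the restrictions for ${}_xS'\subseteq{}_xS$ inside $\prod_x\varprojlim_r$ for $k\geq1$. The structure maps then split into two cases. An operation on $|X|_\bullet$ not involving the most generic (last) vertex --- that is, $\partial^{i}$ and $s^{i}$ for $i<k$ --- is \emph{internal}: on each factor of the product it is the structure map of the same index $i$ of $\A_{X,({}_xT)_\bullet}(-)$ given by the inductive hypothesis, applied inside $\varprojlim_r$. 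The two \emph{extremal} operations, deleting the last vertex ($\partial^{k}$) and repeating it ($s^{k}$), are built by hand: $\partial^{k}$ is, componentwise in $x$, the composite of the restriction map along ${}_xT_k\subseteq T_{k-1}$ with the map induced by the unit $\F\to(j_{rx})_{*}(j_{rx})^{*}\F$ and by passing to $\varprojlim_r$, while $s^{k}$ is built in the same style from the inclusion of index sets ${}_xT_k\hookrightarrow{}_xT_{k+1}$ obtained by appending $x$, together with those units.

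The hard part --- and the main obstacle --- will be verifying that the maps so defined satisfy the co-simplicial identities. Each structure map interleaves a reindexed product $\prod_{x\in|X|}$, a cofiltered limit $\varprojlim_r$, and the unit of $(j_{rx})_{*}(j_{rx})^{*}$, and one must show these interact correctly --- for instance, that the extremal maps commute appropriately past the internal ones and that the recursion is compatible with composing structure maps one level up. This is precisely the bookkeeping carried out in \cite[Theorem~2.4.1]{MR1138291}, whose argument I would follow. Naturality in $\F\in\Coh(X)$ is immediate, since products, limits, restriction maps, and the units $\id\Rightarrow(j_{rx})_{*}(j_{rx})^{*}$ are all natural; extending to $\QCoh(X)$ then proceeds as in the first reduction, which completes the proof of Proposition \ref{prop:co-simplicial}.
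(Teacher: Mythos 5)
The paper offers no proof of this proposition at all: it defers entirely to Huber's \cite[Theorem 2.4.1]{MR1138291}, which is exactly the reference your sketch ultimately relies on for the only substantial step, the verification of the co-simplicial identities. Your outline of the recursive construction (reduction to coherent sheaves, the observation that $[j]\mapsto {}_x(T_{j+1})$ is again a simplicial subset, internal structure maps supplied by the inductive hypothesis, extremal ones built from restriction maps and the units of $(j_{rx})_*(j_{rx})^*$, levelwise exactness, and flasqueness to make $\Gamma$ exact) is correct and consistent with how that reference proceeds, so the two approaches coincide in substance.
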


Let $X$ be an irreducible Noetherian scheme of dimension $1$, and $\F$ a coherent sheaf on $X$. We will discuss how the definitions above recover the classical theory of ad\`eles for algebraic curves.  Following classical conventions, we denote by 
$$\Ab_X(\F) = \sideset{}{'}\prod_{x\in |X|_{cl}} \widehat{\F}_x \otimes_{\widehat{\Oo}_x}\Frac \widehat{\Oo}_X,$$
the restricted product ranging over all closed points $x \in |X|_{cl}$. We denote by $\Ob_X(\F) = \prod_{x\in X_{cl}} \widehat{\F}_x$; and by $F_X(\F)$ the $\Oo$-module $\F_{\eta}$, where $\eta$ is the generic point of $X$. With respect to this notation we may identify the co-simplicial $\Oo$-module $\Ab_X^{\bullet}(\F)$ with 
\begin{equation*}
\xymatrix@C=1em{
F_X(\F) \times \mathbb{O}_X(\F) \ar@<-0.7ex>[r] \ar@<0.7ex>[r] & \mathbb{A}_X(\F) \times F_X(\F) \times \mathbb{O}_X(\F) \ar[l] \ar@<-1.4ex>[r] \ar[r] \ar@<1.4ex>[r] & \ar@<-0.7ex>[l] \ar@<0.7ex>[l]\cdots,
}
\end{equation*}
where $F_X(\F) \rightarrow \Ab_X(\F)$ is the diagonal inclusion, and $\Ob_X(\F) \rightarrow \Ab_X(\F)$ the canonical map. Embracing the usual redundancies in co-simplicial objects, that is the continual re-appearance of factors already seen at a lower degree level, we observe that Beilinson's $\Ab_X^{\bullet}$ captures the classical theory of ad\`eles. 

It is also helpful to understand the co-simplicial structure in the local case. Let $\sigma\colon [n] \rightarrow |X|_{\leq}$ be an element of $|X|_n$. We denote the ring of ad\`eles $\Ab_{X,\{\sigma\}}$ corresponding to $T = \{\sigma\} \subset |X|_n$ by $\Ab_{X,\sigma}$. Proposition \ref{prop:co-simplicial} implies that for every map $f\colon [m] \rightarrow [n]$ in $\Delta$ we have a ring homomorphism $\Ab_{X,\sigma \circ f} \rightarrow \Ab_{X,\sigma}$. The following assertion is also proven in \cite[Theorem 2.4.1]{MR1138291}.

\begin{lemma}\label{lemma:local_factors}
Let $X$ and $T_{\bullet}$ be a in Proposition \ref{prop:co-simplicial}. The co-simplicial ring $\Ab_{X,T}^{\bullet}$ injects into the co-simplicial ring 
$$[n] \mapsto \prod_{\sigma\colon [n] \rightarrow |X|_{\leq}} \Ab_{X,\sigma}.$$
\end{lemma}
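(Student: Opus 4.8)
The plan is to induct on the simplicial degree $n$, reducing at each step to the recursive structure of Definition \ref{defi:adeles}. The base case $n=0$ is essentially by construction: by Definition \ref{defi:adeles}(b), for $\F$ coherent the sheaf $\A_{X,T_0}(\F)$ is the \emph{product} $\prod_{x \in T_0}\varprojlim_r (j_{rx})_*(j_{rx})^*\F$, and taking global sections commutes with this product, so $\Ab_{X,T_0} = \prod_{x \in T_0}\Ab_{X,\{x\}}$ already on the nose; the general quasi-coherent case follows since $\A_{X,T_0}(-)$ commutes with directed colimits (Definition \ref{defi:adeles}(a)) and every quasi-coherent sheaf is a filtered colimit of its coherent subsheaves. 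For the inductive step, I would use Definition \ref{defi:adeles}(c): for $\F$ coherent and $k \geq 1$,
\[
\A_{X,T_k}(\F) = \prod_{x \in |X|}\varprojlim_{r\geq 0}\A({}_xT_k, (j_{rx})_*(j_{rx})^*\F).
\]
Applying global sections (which commutes with the product over $x$ and with the inverse limit over $r$) and then the inductive hypothesis to the degree-$(k-1)$ subset ${}_xT_k \subset |X|_{k-1}$, each factor $\Ab({}_xT_k, -)$ injects into $\prod_{\tau\colon [k-1]\to |X|_{\leq}}\Ab_{X,\tau}(-)$ applied to $(j_{rx})_*(j_{rx})^*\F$; reassembling the product over $x$ and the limit over $r$, and matching a pair $(x, \tau)$ with $\tau(k-1)\leq x$ to the simplex $\sigma\colon [k]\to |X|_{\leq}$ obtained by appending $x$, gives the desired injection in degree $k$. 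One then checks that this identification is compatible with the co-simplicial structure maps, i.e. that the square relating $\Ab_{X,T}^{\bullet}$, the local product, and a face/degeneracy map $f\colon [m]\to[n]$ commutes — this is where one invokes \cite[Theorem 2.4.1]{MR1138291}, which establishes precisely the functoriality of the $\Ab_{X,\sigma}$ in $\sigma$ and the relation $\Ab_{X,\sigma\circ f}\to\Ab_{X,\sigma}$.

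The key point making each step an \emph{injection} rather than merely a map is that all the operations involved — products over subsets of $|X|$, inverse limits over $r$, and the global section functor applied to the (not necessarily quasi-coherent, but still reasonable) sheaves $\A_{X,T}(\F)$ — are left exact, and in fact the sheaf $\A_{X,T}(\F)$ is by Definition \ref{defi:adeles}(b)–(c) literally built as an iterated product-and-limit of the local data; so the map to $\prod_\sigma \Ab_{X,\sigma}$ is not just injective but exhibits $\Ab_{X,T}$ as a sub-object cut out by the conditions "$x_0 \leq \cdots \leq x_k$ lies in $T$" and the various compatibilities of the completions. I would make this precise by noting that the only reason the inclusion is not an equality is that on the left one has, at each stage, a single inverse limit $\varprojlim_r$ over the thickenings of a point, whereas distributing the limit past the product over lower-dimensional simplices would require the product to be finite; for coherent $\F$ this is automatic degree by degree after the colimit reduction, so the discrepancy is exactly the usual "almost all coordinates integral" restricted-product phenomenon, already visible in the curve case described after Proposition \ref{prop:co-simplicial}.

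The main obstacle I anticipate is bookkeeping rather than conceptual: one must be careful that the recursive formula in Definition \ref{defi:adeles}(c) indexes the product over \emph{all} $x \in |X|$ (not just those occurring as a last vertex of some simplex in $T_k$), so that the passage from pairs $(x,\tau)$ to simplices $\sigma$ correctly accounts for the constraint $\tau \in {}_xT_k$, which by definition of ${}_xT_k$ is exactly the statement that the concatenated tuple lies in $T_k$ (in particular $\tau(k-1)\leq x$). A secondary subtlety is that $\A_{X,T}(\F)$ is in general not quasi-coherent, so one cannot blithely commute $\Gamma(X,-)$ with the relevant limits using quasi-coherence; instead one uses that $\Gamma$ always commutes with arbitrary products and with inverse limits of sheaves, which suffices here since no filtered \emph{colimit} needs to pass through $\Gamma$ after the initial reduction to coherent $\F$. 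Granting these points and the cited functoriality from \cite{MR1138291}, the induction closes and yields the claimed injection of co-simplicial rings.
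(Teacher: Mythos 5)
Your plan is sound, and it is worth noting that the paper itself does not prove this lemma at all: it simply cites \cite[Theorem 2.4.1]{MR1138291}. So you are doing genuine work where the paper defers to Huber, and your route --- unwinding the recursive Definition \ref{defi:adeles} by induction on the simplicial degree, using that $\Gamma$, products over $|X|$, and $\varprojlim_r$ are all limits and hence preserve injections, and matching pairs $(x,\tau)$ with $\tau\in{}_xT$ to simplices $\sigma=\tau\ast x$ --- is exactly the natural self-contained argument. Your bookkeeping points are the right ones: the recursion in (c) runs over all $x\in|X|$, and $\A_{X,\{\sigma\}}(\F)\simeq\varprojlim_r\A_{X,\{\tau\}}((j_{rx})_*(j_{rx})^*\F)$ precisely because ${}_y\{\sigma\}$ is empty unless $y$ is the last vertex.

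Two refinements. First, the reduction to coherent $\F$ is not optional and is the one place where "commutes with directed colimits" is too glib: the recursion in Definition \ref{defi:adeles}(c) feeds the sheaves $(j_{rx})_*(j_{rx})^*\F$ into the lower-degree functor, and these are quasi-coherent but \emph{not} coherent when $x$ is non-closed (e.g.\ $\Oo_x/\mathfrak{m}_x^r$ is not finitely generated over $\Oo_X$), so even for $\F=\Oo_X$ your inductive hypothesis must cover quasi-coherent inputs. Since filtered colimits do not commute with infinite products, injectivity does not follow formally; you need to write $\F=\colim_i\F_i$ as a union of coherent \emph{subsheaves}, use exactness of $\A_{X,T}(-)$ and $\A_{X,\sigma}(-)$ to see that a nonzero element of $\Ab_{X,T}(\F)$ already lives, and remains nonzero, in some $\Ab_{X,T}(\F_i)$, apply the coherent case there, and then push forward along the injection $\Ab_{X,\sigma}(\F_i)\hookrightarrow\Ab_{X,\sigma}(\F)$. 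Second, your aside attributing the failure of surjectivity to distributing $\varprojlim_r$ past the product is off: $\varprojlim_r$ commutes with arbitrary products (both are limits); the restricted-product phenomenon comes from the filtered \emph{colimit} in the quasi-coherent extension failing to commute with the product --- which is also why the first point deserves the care described above. Neither issue changes the conclusion, but the first should be made explicit in a written-up proof.
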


We also need the following observation, which is a consequence of the definitions of ad\`eles.

\begin{rmk}\label{rmk:zero-dimensional}
If $\F$ is a quasi-coherent sheaf on $X$, set-theoretically supported on a finite union of closed points $Z \subset X$, then we have $\F  \simeq \A_X^k(\F)$.
\end{rmk}

Another observation which we will need, is that for an affine Noetherian scheme $X$, the  functor $$\Ab_{X,T}(-)\colon \QCoh(X) \rightarrow \Mod(\Gamma(\Oo_X))$$ can be expressed as $- \otimes_{\Gamma(\Oo_X)} \Ab_{X,T}$. This is the case, since $\Ab_{X,T}(-)$, and $- \otimes -$ commute with filtered colimits. Since $\Ab_{X,T}(-)$ is an exact functor, we see that $\Ab_{X,T}$ is a flat algebra over $\Gamma(\Oo_X)$. We record this for later use.

\begin{lemma}\label{lemma:adeles_flat}
Let $X = \Spec R$ be an affine Noetherian scheme, then $\Ab_{X,T}$ is a flat $R$-algebra.
\end{lemma}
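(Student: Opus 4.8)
The plan is to reduce the flatness statement to the two structural facts about the adele functor already recorded above: that $\A_{X,T}(-)$ is exact, and that it commutes with directed colimits. First I would observe that on an affine Noetherian scheme $X = \Spec R$, every quasi-coherent sheaf $\F$ is the sheafification of its $R$-module of global sections $M = \Gamma(X,\F)$, and $M$ is a filtered colimit $M \simeq \colim_\alpha M_\alpha$ of its finitely generated submodules $M_\alpha$, each of which corresponds to a coherent subsheaf $\F_\alpha \subset \F$. Since taking global sections of quasi-coherent sheaves on an affine scheme is exact and commutes with filtered colimits, this identification is compatible with the colimit presentation; so it suffices to understand $\Ab_{X,T}(\F)$ for coherent $\F$ and then pass to the colimit.

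Next I would construct the natural transformation $- \otimes_R \Ab_{X,T} \Rightarrow \Ab_{X,T}(-)$ of functors $\QCoh(X) \to \Mod(R)$. It is the map adjoint to $\Oo_X \mapsto \Ab_{X,T}$ together with the $R$-module structure; concretely, for $\F$ quasi-coherent with global sections $M$, the map $M \otimes_R \Ab_{X,T} \to \Ab_{X,T}(\F)$ is induced by multiplication, using that $\Ab_{X,T}(\F)$ is an $\Ab_{X,T}$-module. To see this is an isomorphism, note both sides are functors that are right-exact (indeed exact on the target side) and commute with directed colimits; the source does because $- \otimes_R -$ does, the target does by Definition \ref{defi:adeles}(a). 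A right-exact functor on $\QCoh(\Spec R)$ commuting with filtered colimits is determined by its value on free modules of finite rank, where both sides manifestly agree (a finite direct sum of copies of $\Ab_{X,T}$). Hence the natural transformation is an equivalence of functors.

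Finally, with the identification $\Ab_{X,T}(-) \simeq - \otimes_R \Ab_{X,T}$ in hand, the exactness of $\Ab_{X,T}(-)$ asserted in Definition \ref{defi:adeles} translates directly into the statement that $- \otimes_R \Ab_{X,T}$ is exact, i.e.\ that $\Ab_{X,T}$ is a flat $R$-module; since it is also an $R$-algebra by Definition \ref{defi:ring_adeles}, it is a flat $R$-algebra. I do not expect a serious obstacle here: the only mild subtlety is checking that the comparison map really is the tensor-product map and is natural, which amounts to unwinding the adjunction and the $\Ab_{X,T}$-module structure; everything else is formal once one invokes that $\Ab_{X,T}(-)$ is exact and continuous, both of which are granted by the cited results of Huber \cite{MR1138291}.
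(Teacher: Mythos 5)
Your proposal is correct and follows essentially the same route as the paper: identify $\Ab_{X,T}(-)$ with $-\otimes_R \Ab_{X,T}$ using that both are right-exact and commute with filtered colimits (hence are determined by their agreement on finite free modules), and then read off flatness from the exactness of $\Ab_{X,T}(-)$ granted by Definition \ref{defi:adeles}. The paper states this argument more tersely in the paragraph preceding the lemma; your Eilenberg--Watts-style elaboration of the comparison map is exactly the detail being suppressed there.
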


\subsection{Functoriality}

If $f\colon X \rightarrow Y$ is a morphism of Noetherian schemes, we have an induced map of partially ordered sets $|X|_{\leq} \rightarrow |Y|_{\leq}$. Indeed, $x \in \overline{\{y\}}$ implies $f(x) \in \overline{\{f(y)\}}$. Additionally, we have an induced morphism of local rings $\Oo_{Y,f(x)} \rightarrow \Oo_{X,x}$. These observations are building blocks of a functoriality property satisfied by ad\`eles. To the best knowledge of the author, this property has not yet been recorded in the literature.

\begin{lemma}\label{lemma:functoriality}
Let $f\colon X \rightarrow Y$ be a morphism of Noetherian schemes, and $\F \in \QCoh(Y)$ a quasi-coherent sheaf on $Y$. For $T \subset |X|_n$, and $f(T) \subset T' \subset |Y|_n$ we have a morphism $\A_{Y,T'}(\F) \rightarrow f_*\A_{X,T}(f^*\F)$ in $\Mod(\Oo_Y)$, fitting into a commutative diagram
\[
\xymatrix{
\F \ar[r] \ar[d] & f_*f^*\F \ar[d] \\
\A_{Y,T'}(\F) \ar[r] & f_*\A_{X,T}(f^*\F).
}
\]
Moreover, this construction induces a map of augmented co-simplicial objects in $\Mod(\Oo_Y)$ from $\F \rightarrow \A_Y^{\bullet}(\F)$ to $f_*f^*\F \rightarrow f_*\A_X^{\bullet}(f^*\F)$.
\end{lemma}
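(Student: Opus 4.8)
The plan is to construct the morphism by induction on the simplicial degree $n$, following the recursive shape of Definition~\ref{defi:adeles}. First I reduce to the case $\F \in \Coh(Y)$: the functor $\A_{Y,T'}(-)$ commutes with filtered colimits by Definition~\ref{defi:adeles}(a), $f^{*}$ preserves all colimits, and $f_{*}$ commutes with filtered colimits of $\Oo_X$-modules because $X$ has Noetherian underlying space; hence $\F \mapsto f_{*}\A_{X,T}(f^{*}\F)$ commutes with filtered colimits, a natural transformation on $\Coh(Y)$ extends uniquely to $\QCoh(Y)$, and compatibility with $\F \to f_{*}f^{*}\F$ is preserved. The geometric input is elementary: for $x \in |X|$ with $y := f(x)$, the homomorphism $\Oo_{Y,y} \to \Oo_{X,x}$ is local, hence carries $\mathfrak{m}_{y}^{r}$ into $\mathfrak{m}_{x}^{r}$ and induces $g_{r}\colon \Spec \Oo_{X,x}/\mathfrak{m}_{x}^{r} \to \Spec \Oo_{Y,y}/\mathfrak{m}_{y}^{r}$ with $f \circ j_{rx} = j_{ry} \circ g_{r}$, compatibly in $r$. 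For $n = 0$ this already suffices: since $f(T) \subseteq T'$, the morphism
\[
\A_{Y,T'}(\F) = \prod_{y \in T'} \varprojlim_{r} (j_{ry})_{*}(j_{ry})^{*}\F \longrightarrow \prod_{x \in T} \varprojlim_{r} f_{*}(j_{rx})_{*}(j_{rx})^{*}f^{*}\F = f_{*}\A_{X,T}(f^{*}\F)
\]
is defined by sending the $x$-component to the $f(x)$-component via the unit $(j_{ry})_{*}(j_{ry})^{*}\F \to f_{*}(j_{rx})_{*}(j_{rx})^{*}f^{*}\F$ of the adjunction attached to $f \circ j_{rx}$, and commutativity of the stated square is a formal pasting of adjunction units.

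For $n \ge 1$ I use Definition~\ref{defi:adeles}(c). Write $\mathcal{G}_{r,y} := (j_{ry})_{*}(j_{ry})^{*}\F$ and $\mathcal{M}_{r,x} := (j_{rx})_{*}(j_{rx})^{*}f^{*}\F$, both quasi-coherent (so that $\A_{Y,{}_{y}T'}$ and $\A_{X,{}_{x}T}$ apply); then $\A_{Y,T'}(\F) = \prod_{y}\varprojlim_{r}\A_{Y,{}_{y}T'}(\mathcal{G}_{r,y})$, and, since $f_{*}$ preserves products and limits, $f_{*}\A_{X,T}(f^{*}\F) = \prod_{x}\varprojlim_{r} f_{*}\A_{X,{}_{x}T}(\mathcal{M}_{r,x})$. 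The definition of ${}_{x}T$ together with the fact that $f$ preserves $\le$ gives $f({}_{x}T) \subseteq {}_{f(x)}T'$, so the inductive hypothesis is available in degree $n-1$. The base-change transformation $f^{*}(j_{ry})_{*} \Rightarrow (j_{rx})_{*}g_{r}^{*}$ for the commutative square furnishes a canonical map $\widetilde\alpha_{r}\colon f^{*}\mathcal{G}_{r,y} \to \mathcal{M}_{r,x}$ (using $j_{ry}\circ g_{r} = f\circ j_{rx}$), and the map on the $x$-component (with $y = f(x)$) is the composite
\[
\A_{Y,{}_{y}T'}(\mathcal{G}_{r,y}) \xrightarrow{\ \mathrm{IH}\ } f_{*}\A_{X,{}_{x}T}\bigl(f^{*}\mathcal{G}_{r,y}\bigr) \xrightarrow{\,f_{*}\A_{X,{}_{x}T}(\widetilde\alpha_{r})\,} f_{*}\A_{X,{}_{x}T}(\mathcal{M}_{r,x}).
\]
Passing to $\varprojlim_{r}$ and assembling over $x$ (the $x$-component receiving the $f(x)$-component) produces the morphism in degree $n$; naturality in $\F$ and commutativity of the square with $\F \to f_{*}f^{*}\F$ descend from the corresponding facts in degree $n-1$ together with the naturality of the base-change and unit maps.

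For the ``moreover'', $f$ preserves $\le$, so $f(|X|_{k}) \subseteq |Y|_{k}$ and the construction yields maps $\A_{Y}^{k}(\F) \to f_{*}\A_{X}^{k}(f^{*}\F)$ in every degree, with the augmentations $\F \to \A_{Y}^{\bullet}(\F)$ and $f_{*}f^{*}\F \to f_{*}\A_{X}^{\bullet}(f^{*}\F)$ matched degreewise by the square above. It remains to check that these maps commute with the coface and codegeneracy operators of $\A_{Y}^{\bullet}(\F)$ and $\A_{X}^{\bullet}(f^{*}\F)$, and I expect this to be the main obstacle: the co-simplicial structure of \cite[Theorem~2.4.1]{MR1138291} is itself defined by a recursion built from the morphisms $j_{rx}$ and the adjunction units used above, so the required compatibility should propagate through that recursion, but matching up the base-change and counit transformations for $(f^{*},f_{*})$, the units for $((j_{rx})^{*},(j_{rx})_{*})$, and the transition maps of the recursion is a delicate bookkeeping exercise. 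As an alternative one can embed, via Lemma~\ref{lemma:local_factors}, both co-simplicial objects into the local models $[n] \mapsto \prod_{\sigma\colon [n] \to |X|_{\le}} \Ab_{X,\sigma}(\F)$, whose structure maps are literal projections and transition maps of iterated completions and localisations, and verify the compatibility there; everything else is a lengthy but routine diagram chase.
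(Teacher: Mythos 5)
Your proposal is correct and follows essentially the same route as the paper's proof: induction on the simplicial degree $n$, with the base case coming from the local homomorphisms $\Oo_{Y,f(x)} \rightarrow \Oo_{X,x}$ and the inductive step assembled from the recursive Definition~\ref{defi:adeles}(c) using $f({}_xT) \subseteq {}_{f(x)}T'$ and re-indexing the products along $x \mapsto f(x)$. You are in fact somewhat more explicit than the paper (the reduction to coherent sheaves, the base-change maps $f^{*}\mathcal{G}_{r,y} \to \mathcal{M}_{r,x}$, and the honest flagging of the co-simplicial compatibility check, which the paper's proof also leaves implicit).
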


\begin{proof}
The morphism $\A_{Y,T'}(\F) \rightarrow f_*\A_{X,T}(f^*\F)$ is constructed by induction on $n$ (where $T \subset |X|_n$). For $n = 0$ and $\F \in \Coh(Y)$, we have $\A_{X,T}(f^*\F) = \prod_{x \in T} \lim_{r \geq 0} (j_{rx})_*j_{rx}^*f^*\F$, and $\A_{Y,T'} \twoheadrightarrow \A_{Y,f(T)}(\F) = \prod_{x \in f(T)}  \lim_{r \geq 0} (j_{rx})_*j_{rx}^*\F$. We have a map $$\F \otimes_{\Oo_Y} \Oo_{Y,f(x)} \rightarrow f^*\F \otimes_{\Oo_X}\Oo_{X,x}$$ for each $x \in T$, which defines the required map for $T \subset |X|_0$.

Assume that the morphism $\A_{Y,f(T)}(\F) \rightarrow f_*\A_{X,T}(f^*\F)$ has been constructed for all $T \subset |X|_k$, where $k \leq n$. Let $T \subset |X|_{n+1}$. For every $x \in X$, we then have a well-defined map $\A_{Y,{}_{f(x)}f(T)}(\F) \rightarrow f_*\A_{X,{}_xT}(f^*\F)$, since $f({}_xT) \subset {}_{f(x)}f(T) \subset {}_{f(x)}T'$. This induces a morphism
$$ \prod_{x \in |X|}\lim_{r \geq 0} \A_{Y,{}_{f(x)}f(T)}((j_{rf(x)})_*j_{rf(x)}^*f^*\F) \rightarrow \prod_{x \in |X|} \lim_{r \geq 0} \A_{X,{}_xT}((j_{rx})_*j_{rx}^*f^*\F),$$
which we precompose with $$\prod_{y \in |Y|}\lim_{r \geq 0} \A_{Y,{}_yT'}((j_{ry})_*j_{ry}^*f^*\F) \twoheadrightarrow \prod_{x \in |X|}\lim_{r \geq 0} \A_{Y,{}_{f(x)}f(T)}((j_{rf(x)})_*j_{rf(x)}^*f^*\F)$$ to obtain the required morphism $\A_{Y,T'}(\F) \rightarrow f_*\A_{X,T}(f^*\F)$.
\end{proof}

Setting $\F = \Oo_Y$ in the assertion above, we obtain the following consequence.

\begin{corollary}
For a morphism of Noetherian schemes, we obtain a map of augmented co-simplicial objects 
\[
\xymatrix{
\Oo_Y \ar[r] \ar[d] & f_*\Oo_X \ar[d] \\
\A_Y^{\bullet}(\Oo_Y) \ar[r] & f_*\A_X(\Oo_X)
}
\]
in sheaves of algebras on the topological space $|Y|$.
\end{corollary}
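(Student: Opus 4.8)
The plan is to read off everything from Lemma~\ref{lemma:functoriality} applied to the structure sheaf, and then to upgrade the resulting morphism of co-simplicial $\Oo_Y$-modules to one of co-simplicial sheaves of algebras by inspecting the inductive construction carried out in the proof of that lemma. Concretely, I would set $\F = \Oo_Y$ in Lemma~\ref{lemma:functoriality}, using the canonical identification $f^*\Oo_Y \cong \Oo_X$. Since $f$ carries chains to chains we have $f(|X|_n) \subseteq |Y|_n$, so taking $T = |X|_n$ and $T' = |Y|_n$ the lemma produces, for each $n$, a morphism $\A_Y^n(\Oo_Y) \to f_*\A_X^n(\Oo_X)$ of $\Oo_Y$-modules, together with compatibility with the face and degeneracy maps and with the augmentations (the left vertical augmentation being $\Oo_Y \to f_*\Oo_X = f_*f^*\Oo_Y$). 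It then remains to check that each of these morphisms is a homomorphism of sheaves of algebras: once this is known, compatibility with the co-simplicial structure and the augmentation is automatically compatible with the algebra structures, because the relevant face, degeneracy and augmentation maps are themselves algebra homomorphisms on both sides, and because $f_*$ is lax symmetric monoidal and hence sends the co-simplicial sheaf of algebras $\A_X^\bullet(\Oo_X)$ on $|X|$ to a co-simplicial sheaf of algebras on $|Y|$.

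For the multiplicativity I would strengthen the inductive statement in the proof of Lemma~\ref{lemma:functoriality} to: if $\F$ is a quasi-coherent sheaf of $\Oo_Y$-algebras, then $\A_{Y,T'}(\F) \to f_*\A_{X,T}(f^*\F)$ is a morphism of sheaves of algebras; specialising $\F = \Oo_Y$ gives the claim. The induction goes through because every elementary constituent of the morphism is a ring homomorphism. In degree $0$ the map is assembled from the projection $\A_{Y,|Y|_0} \twoheadrightarrow \A_{Y,f(|X|_0)}(\F)$ onto a sub-product of completed stalks — a ring map — together with, for each $x \in |X|$, the completed local homomorphism obtained from $\Oo_{Y,f(x)} \to \Oo_{X,x}$ by passing to $\varprojlim_r$ of the ring maps on infinitesimal neighbourhoods, again a ring map. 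In degree $n+1$ the morphism is the composite of a projection $\prod_{y \in |Y|}(\cdots) \twoheadrightarrow \prod_{x \in |X|}(\cdots)$ along $f$ with the product over $x \in |X|$ of the maps $\varprojlim_r \A_{Y,{}_{f(x)}f(T)}\bigl((j_{rf(x)})_*j_{rf(x)}^*f^*\F\bigr) \to \varprojlim_r \A_{X,{}_xT}\bigl((j_{rx})_*j_{rx}^*f^*\F\bigr)$; since $(j_{rx})_*j_{rx}^*$ of a sheaf of algebras is again a sheaf of algebras, since the latter maps are ring homomorphisms by the induction hypothesis, and since products and cofiltered limits of rings (resp. ring maps) are rings (resp. ring maps), the composite is a ring homomorphism. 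This closes the induction and yields the desired levelwise ring maps.

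The step I expect to require the most care is the bookkeeping in the inductive step, and in particular making precise the implicit point that Beilinson--Huber's construction of the \emph{ring} structure on $\A_{X,T}(\Oo_X)$ (not merely its $\Oo_X$-module structure) is natural enough that the functoriality morphism of Lemma~\ref{lemma:functoriality} automatically respects it. One can either do this directly, tracking the algebra structure through each clause of Definition~\ref{defi:adeles}, or sidestep it using Lemma~\ref{lemma:local_factors}: the co-simplicial ring $\A_{Y,|Y|_\bullet}^\bullet$ embeds into $[n] \mapsto \prod_{\sigma\colon[n]\to|Y|_{\leq}}\A_{Y,\sigma}$, where each $\A_{Y,\sigma}$ is built from $\Oo_Y$ by an explicit finite sequence of localisations and completions for which multiplicativity of the maps induced by $f$ is transparent; since the functoriality morphism is compatible with these local decompositions (both source and target being subrings of such products) and $f$ sends chains to chains, multiplicativity of the global map follows. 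Beyond this point the argument is routine, the genuinely substantive input — that $f$ induces compatible maps on posets of points and on local rings — having already been recorded in Lemma~\ref{lemma:functoriality}.
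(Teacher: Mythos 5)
Your proposal is correct and follows the same route as the paper, which simply obtains the corollary by setting $\F = \Oo_Y$ in Lemma \ref{lemma:functoriality} and gives no further proof. Your additional verification that the levelwise maps are ring homomorphisms (by strengthening the induction, or via the local factors of Lemma \ref{lemma:local_factors}) fills in a multiplicativity check that the paper leaves implicit, but does not constitute a different approach.
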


Taking global sections, we obtain a functor from Noetherian schemes to co-simplicial rings.

\begin{definition}
We denote the functor $(\Sch^{\N})^{\op} \rightarrow (\Rings^{\Delta})$, sending a Noetherian scheme $X$ to the co-simplicial ring $\Ab_X^{\bullet}(\Oo_X)$ by $\Ab^{\bullet}$.
\end{definition}

As we have alluded to in Theorem \ref{thm:main4}, and will prove as Corollary \ref{cor:main4}, this functor has a left-inverse.

\subsection{Taking a closer look at the flasque sheaf of ad\`eles}

In this subsection we give a close analysis of flasqueness of the sheaf $\A_{X,T}(\F)$. We will show that the restriction map $\Ab_{X,T}(\F) \rightarrow \Ab_{U,T\cap |U|_n}(\F)$ is not only surjective, but admits an $\Ab_{X,T}(\Oo_X)$-linear section. As a consequence we obtain that $\A_{X,T}(\Oo_X)$ is what we call a \emph{l\^ache} sheaf of algebras in Definition \ref{defi:lache} (see also Corollary \ref{cor:adeles_lache}). A similar statement is contained in \cite[Proposition 2.1.5]{MR1138291}, however the $\A_{X,T}(\Oo_X)$-linearity is not investigated in \emph{loc. cit.}
 
\begin{lemma}\label{lemma:adeles_section}
Let $X$ be a Noetherian scheme, $T \subset |X|_n$, and $\F$ a quasi-coherent sheaf on $X$. For every open subset $U \subset X$ the restriction map $\Ab_{X,T}(\F) \rightarrow \Ab_U(T,\F)$ has a section, which is moreover $\Ab_{X,T}(\Oo_X)$-linear and functorial in $\F$.
\end{lemma}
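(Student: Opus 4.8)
The plan is to construct the section by induction on $n$, following the recursive structure of Definition~\ref{defi:adeles}, and reducing first to the case where $\F$ is coherent. Since $\A_{X,T}(-)$ commutes with directed colimits (Definition~\ref{defi:adeles}(a)), and every quasi-coherent sheaf is a filtered colimit of its coherent subsheaves, it suffices to produce the section for $\F \in \Coh(X)$, compatibly with the transition maps; the colimit of the resulting sections then gives the section in general, and $\Ab_{X,T}(\Oo_X)$-linearity and functoriality in $\F$ are inherited from the colimit. So fix $\F \in \Coh(X)$ and an open $U \subset X$.

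First I would treat the base case $n = 0$. Here $\Ab_{X,T}(\F) = \prod_{x \in T} \varprojlim_{r} \Gamma(X,(j_{rx})_*j_{rx}^*\F) = \prod_{x \in T} \widehat{\F}_x$ (the completion of the stalk along $\mathfrak{m}_x$), and $\Ab_{U}(T \cap |U|_0, \F)$ is the product over those $x \in T$ which lie in $U$. The restriction map is simply the projection onto this sub-product of factors, so the obvious splitting---include a tuple supported on $T \cap |U|$ by extending it by $0$ on the complementary factors---is a section. It is visibly $\prod_{x\in T}\widehat{\Oo}_x$-linear, hence $\Ab_{X,T}(\Oo_X)$-linear (the $\Ab_{X,T}(\Oo_X)$-action factors through the componentwise action of this product ring), and functorial in $\F$.

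For the inductive step, suppose the section has been constructed, with the stated properties, for all subsets of $|X'|_k$ with $k \leq n$ and all Noetherian $X'$. Let $T \subset |X|_{n+1}$. By Definition~\ref{defi:adeles}(c),
\[
\Ab_{X,T}(\F) \;=\; \prod_{x \in |X|} \varprojlim_{r \geq 0} \Ab_{X, {}_xT}\bigl((j_{rx})_*(j_{rx})^*\F\bigr),
\]
and likewise $\Ab_{U}(T \cap |U|_{n+1}, \F) = \prod_{x \in |U|} \varprojlim_{r} \Ab_{U, {}_x(T\cap|U|_{n+1})}\bigl((j_{rx}')_*(j_{rx}')^*(\F|_U)\bigr)$, where $j'_{rx}$ is the corresponding map for $U$. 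The key point is that for $x \in |U|$, one has ${}_x(T \cap |U|_{n+1}) = ({}_xT) \cap |U|_n$, and the thickened sheaf $(j_{rx})_*(j_{rx})^*\F$ restricts on $U$ to the analogous sheaf for $x \in U$ (since $\Spec \Oo_x/\mathfrak{m}_x^r$ is unchanged and the map factors through $U$ when $x \in U$). Thus the $x$-factor of the restriction map is precisely the restriction map $\Ab_{X, {}_xT}(\F_{r,x}) \to \Ab_{U, ({}_xT)\cap|U|_n}(\F_{r,x}|_U)$ for the coherent sheaf $\F_{r,x} := (j_{rx})_*(j_{rx})^*\F$ on $X$, which lies in the range of the inductive hypothesis since ${}_xT \subset |X|_n$. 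Applying the inductive section for each such $x \in |U|$ and each $r$, and compatibly in $r$ (the inductive section is functorial in the coherent sheaf, hence commutes with the transition maps of the inverse system), yields a section on the $x$-factor for $x \in U$; for $x \notin U$ we extend by $0$. Taking the product over $x \in |X|$ and passing to the inverse limit gives the required section of $\Ab_{X,T}(\F) \to \Ab_{U}(T\cap|U|_{n+1}, \F)$. Its $\Ab_{X,T}(\Oo_X)$-linearity follows because, via Lemma~\ref{lemma:local_factors} and the factorwise description above, the $\Ab_{X,T}(\Oo_X)$-module structure is componentwise in $x$ and $r$, and on each component the inductive section is linear over the relevant adelic ring (in particular over the image of $\Ab_{X,T}(\Oo_X)$); functoriality in $\F$ is likewise inherited componentwise.

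The main obstacle I anticipate is bookkeeping rather than conceptual: one must check carefully that the identification ${}_x(T\cap|U|_{n+1}) = ({}_xT)\cap|U|_n$ and the restriction-compatibility of the thickenings $(j_{rx})_*(j_{rx})^*\F$ are exactly what is needed to line up the $x$-factor of the restriction map $\Ab_{X,T}(\F)\to\Ab_U(T\cap|U|_{n+1},\F)$ with an instance of the inductive hypothesis, and---crucially for the linearity claim---that the $\Ab_{X,T}(\Oo_X)$-action on $\Ab_{X,T}(\F)$ is genuinely componentwise with respect to the product-over-$x$ and inverse-limit-over-$r$ decomposition, so that a section built componentwise is automatically linear. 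This is where the slightly delicate interplay with Lemma~\ref{lemma:local_factors} enters, and it is precisely the $\Ab_{X,T}(\Oo_X)$-linearity that, per the remarks before the lemma, is not addressed in \cite{MR1138291} and must be verified by hand.
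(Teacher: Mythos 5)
Your proposal is correct and follows essentially the same route as the paper: induction on $n$ via the recursive product-over-points description of Definition~\ref{defi:adeles}, applying the inductive section on the factors indexed by $x\in U$ and extending by $0$ on the factors indexed by $x\notin U$, with the base case $n=0$ being extension by zero on $\prod_{x\in T}$ and with functoriality in $\F$ carried along in the induction precisely so that the section passes to the inverse limit over $r$. The only cosmetic difference is that you make the reduction to coherent $\F$ via directed colimits explicit up front, which the paper states more tersely.
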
 
\begin{proof}
We denote the inclusion $U \hookrightarrow X$ by $j$, and will construct a section to the map of sheaves $\A_{X,T}(\F) \rightarrow j_*  \A_{U,T}(\F)$. Recall that for a coherent sheaf $\F$ we have an equivalence 
$\A_{X,T}(\F)  \simeq \prod_{x \in X} \varprojlim_r \A_{X,{}_xT}(j_{rx}^*\F),$ and $j_*\A_{U,T\cap|U|_n}(\F)  \simeq \prod_{x \in U} \varprojlim_r \A_{U,{}_xT\cap|U|_{n-1}}(j_{rx}^*\F).$
Suppose that we have already constructed a section $\phi_{\F}$ for $\A_{X,T'}(\F) \rightarrow j_*\A_{U,T'}(\F)$ for $T' \subset |X|_{n-1}$, such that for each map $\F \rightarrow \Gg$ we obtain a commutative square
\[
\xymatrix{
j_*\A_{U,T'}(\F) \ar[r]^{\phi_{\F}} \ar[d] & \A_{X,T'}(\F) \ar[d] \\
j_*\A_{U,T'}(\Gg) \ar[r]^{\phi_{\Gg}} & \A_{X,T'}(\Gg).
}
\]
We can then map the limit $\prod_{x \in U} \varprojlim_r \A_U({}_xT;j_{rx}^*\F)$ to $\prod_{x \in X} \varprojlim_r \A_U({}_xT;j_{rx}^*\F)$, by defining the components of the map to be $0$ for $x \in X\setminus U$, and given by the section $\phi$ otherwise.

By induction we see that we may assume that $T\subset |X|_0$. We may also assume that $\F$ is coherent. Hence, $\A_{X,T}(\F)$ is equal to the product $\prod_{x \in T} \varprojlim_r j_{rx}^*\F$, and $\A_U(T,\F)$ to $\prod_{x \in T\cap U} \varprojlim_r j_{rx}^*\F$. The natural restriction map is given by the canonical projection. A canonical section with the required properties is given by the identity map for components corresponding to $x \in U \cap T$, and the map $0$ for $x \in T \setminus U$.
\end{proof}

As a corollary one obtains the following observation of Beilinson.

\begin{corollary}\label{cor:adeles_flasque}
The sheaves $\Ab_{X,T}(\F)$ are flasque.
\end{corollary}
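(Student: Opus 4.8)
The plan is to deduce flasqueness of $\A_{X,T}(\F)$ directly from Lemma \ref{lemma:adeles_section}. Recall that a sheaf $\mathcal{G}$ on a topological space is flasque precisely when, for every open $U \subset X$, the restriction map $\Gamma(X,\mathcal{G}) \to \Gamma(U,\mathcal{G})$ is surjective (and then, by the sheaf axiom applied on any open $V$, restriction $\Gamma(V,\mathcal{G}) \to \Gamma(U,\mathcal{G})$ for $U \subset V$ is surjective as well). So it suffices to exhibit, for each open $U$, a set-theoretic right inverse to $\Ab_{X,T}(\F) = \Gamma(X,\A_{X,T}(\F)) \to \Gamma(U,\A_{X,T}(\F))$.

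First I would identify $\Gamma(U,\A_{X,T}(\F))$ with $\Ab_{U, T \cap |U|_n}(\F)$. This is immediate from the construction of the ad\`eles as a sheaf: by Definition \ref{defi:adeles} the sheaf $\A_{X,T}(\F)$ is built, degree by degree, out of products over points $x$ lying in $X$ of completions of stalks, and restricting to an open $U$ simply discards the factors indexed by points not in $U$; one checks on coherent $\F$ using parts (b), (c) of the definition and then passes to directed colimits via part (a). Concretely, $\A_{X,T}(\F)|_U \simeq \A_{U, T\cap|U|_n}(\F)$ as sheaves on $|U|$, so taking global sections over $U$ gives the claimed identification, and under it the restriction map $\Gamma(X,\A_{X,T}(\F)) \to \Gamma(U,\A_{X,T}(\F))$ becomes exactly the restriction map $\Ab_{X,T}(\F) \to \Ab_{U,T\cap|U|_n}(\F)$ studied in Lemma \ref{lemma:adeles_section}.

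Then I would simply invoke Lemma \ref{lemma:adeles_section}: that lemma provides a section of $\Ab_{X,T}(\F) \to \Ab_{U}(T,\F)$, which in particular is surjective. Hence $\Gamma(X,\A_{X,T}(\F)) \to \Gamma(U,\A_{X,T}(\F))$ is surjective for every open $U$, which is the definition of flasqueness. (One gets more than needed here — the section is $\Ab_{X,T}(\Oo_X)$-linear and functorial — but for Corollary \ref{cor:adeles_flasque} only surjectivity matters.) I do not expect any serious obstacle; the only point requiring a moment's care is the compatibility of the sheaf-restriction functor with the recursive product description of $\A_{X,T}$, i.e.\ that restricting to $U$ really does correspond to passing from $T$ to $T \cap |U|_n$ on the index set — and this is precisely the content that Lemma \ref{lemma:adeles_section} was set up to exploit.
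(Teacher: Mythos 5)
Your proposal is correct and is exactly the paper's intended argument: the corollary is stated immediately after Lemma \ref{lemma:adeles_section} as a direct consequence, since the section constructed there makes the restriction map $\Ab_{X,T}(\F) \to \Ab_{U,T\cap|U|_n}(\F)$ surjective. Your extra care in identifying $\Gamma(U,\A_{X,T}(\F))$ with $\Ab_{U,T\cap|U|_n}(\F)$ is a reasonable elaboration of a point the paper leaves implicit.
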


In \cite{MR565095} Beilinson continues to observe that for any quasi-coherent sheaf $\F$ the canonical augmentation $\F \rightarrow \A_X^{\bullet}(\F)$ induces an equivalence $\F  \simeq |\A_X^{\bullet}(\F)|$. A detailed proof is given by Huber \cite[Theorem 4.1.1]{MR1138291}. 

\begin{theorem}[Beilinson]\label{thm:beilinson}
Let $X$ be a Noetherian scheme and $\F$ a quasi-coherent sheaf on $X$. The augmentation $\F \rightarrow \A_X^{\bullet}(\F)$ defines a co-simplicial resolution of $\F$ by flasque $\Oo_X$-modules. Applying the global sections functor $\Gamma(X,-)$ we obtain $R\Gamma(X,\F) = |\Ab_X^{\bullet}(\F)|$, where the co-simplicial realisation $|\cdot|$ is taken in the derived $\infty$-category $\D(\AbGrp)$ of abelian groups.
\end{theorem}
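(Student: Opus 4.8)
The plan is to separate the statement into an acyclicity part, which is essentially already available, and an exactness part, which is the real content. By Corollary~\ref{cor:adeles_flasque} the sheaves $\A_X^k(\F)$ are flasque, hence acyclic for $\Gamma(X,-)$ on $|X|$; consequently the first half of the theorem amounts precisely to the claim that the augmentation $\F\to\A_X^\bullet(\F)$ is a resolution, i.e.\ that the cochain complex of $\Oo_X$-modules associated to $\A_X^\bullet(\F)$ by the Dold--Kan correspondence,
\[
0\longrightarrow \F\longrightarrow \A_X^0(\F)\xrightarrow{\ \partial^0\ }\A_X^1(\F)\xrightarrow{\ \partial^1\ }\A_X^2(\F)\longrightarrow\cdots, \qquad \partial^n=\textstyle\sum_i(-1)^i\partial_i,
\]
is exact. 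Once this is known, $\DK(\A_X^\bullet(\F))$ (the subcomplex $\A_X^0(\F)\to\A_X^1(\F)\to\cdots$) is a resolution of $\F$ by $\Gamma$-acyclic sheaves concentrated in non-negative degrees, so $R\Gamma(X,\F)\simeq\Gamma\bigl(X,\DK(\A_X^\bullet(\F))\bigr)=\DK\bigl(\Gamma(X,\A_X^\bullet(\F))\bigr)=\DK(\Ab_X^\bullet(\F))$, and the right-hand side is by definition the realisation $|\Ab_X^\bullet(\F)|$ in $\D(\AbGrp)$; here $\Gamma(X,-)$ commutes with $\DK$ since $\DK$ is built degreewise from finite limits.

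Everything thus reduces to exactness of the displayed complex of sheaves. First I would reduce to $\F$ coherent: each functor $\A_{X,T}(-)$ commutes with directed colimits by Definition~\ref{defi:adeles}(a), on the Noetherian scheme $X$ every quasi-coherent sheaf is the directed colimit of its coherent subsheaves, and a directed colimit of exact sequences of sheaves is exact. Exactness of a complex of sheaves is local and can be tested on stalks, which together with the recursive shape of Definition~\ref{defi:adeles} makes localisation and d\'evissage available. I would then induct on $d=\dim\supp\F$. For $d=0$ the base case is immediate: Remark~\ref{rmk:zero-dimensional} gives $\A_X^k(\F)\simeq\F$ for all $k$, compatibly with the co-simplicial structure, so $\A_X^\bullet(\F)$ is the constant co-simplicial sheaf on $\F$, its normalisation is $\F$ placed in degree $0$, and the augmented complex is $\F\xrightarrow{\ \id\ }\F\to 0\to\cdots$.

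For the inductive step ($d>0$) I would run a d\'evissage isolating, inside the recursion of Definition~\ref{defi:adeles}, the factors indexed by the generic points $\eta_1,\dots,\eta_s$ of the $d$-dimensional components of $\supp\F$. On the one hand these factors are governed by the finite-length modules $\F_{\eta_j}$ over $\Oo_{X,\eta_j}$ and assemble into a co-simplicial sheaf of a simple, essentially $0$-dimensional nature, whose augmented complex can be analysed by a direct computation; on the other hand, the complementary factors involve only adèles of coherent sheaves supported on the proper closed subsets $\overline{\{x\}}$ with $x\neq\eta_j$ — the passage $T\mapsto{}_xT$ replaces chains in $X$ by chains in $\overline{\{x\}}$ — and hence fall under the inductive hypothesis. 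Organising these two pieces into a short exact sequence of (suitably augmented) co-simplicial sheaves and invoking the long exact cohomology sequence then yields exactness for $\F$. I expect this inductive step to be the main obstacle: the delicate point is to arrange the comparison sequence compatibly with all coface and codegeneracy maps, which forces one to carry along the auxiliary subsets $T\subset|X|_k$ produced by the recursion of Definition~\ref{defi:adeles} and to keep track of the completions $(j_{rx})_*(j_{rx})^*\F$. For the complete argument I would follow Huber's treatment in \cite[Section~4, esp.\ Theorem~4.1.1]{MR1138291}, which carries out exactly this strategy, refining Beilinson's original argument in \cite{MR565095}.
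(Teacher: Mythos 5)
Your proposal is correct and follows essentially the same route as the paper: the paper establishes flasqueness of the ad\`eles (Lemma \ref{lemma:adeles_section} and Corollary \ref{cor:adeles_flasque}) and then, exactly as you do, reduces the theorem to the exactness of the augmented complex $\F \rightarrow \A_X^0(\F) \rightarrow \A_X^1(\F) \rightarrow \cdots$, for which it simply cites Huber \cite[Theorem 4.1.1]{MR1138291} rather than reproving it. Your sketch of the reduction to coherent sheaves via directed colimits and the induction on the dimension of the support is a fair summary of Huber's argument, so deferring the remaining d\'evissage to that reference is exactly what the paper itself does.
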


It is instructive to test the general considerations above on the special case of algebraic curves. For the rest of this subsection we will thus assume that $X$ is an algebraic curve. We denote by $\A_X$ the sheaf, assigning to an open subset $U \subset X$ the ring of ad\`eles $\Ab_U$. Similarly we have sheaves $\mathsf{F}_X$, and $\mathsf{O}_X$, of rational functions, and integral ad\`eles.

The sheaves $\A_X$, and $\mathsf{O}_X$ satisfy the conclusion of Lemma \ref{lemma:adeles_section}, because a section over $U \subset X$ can be extended by $0$, outside of $U$. Since $\mathsf{F}_X(U) = \mathsf{F}_X(X)$, as long as $U \neq \emptyset$, we see that the conclusion of Lemma \ref{lemma:adeles_section} is trivially satisfied for $\mathsf{F}$. Beilinson's Theorem \ref{thm:beilinson} is in the present situation tantamount to the assertion that the complex 
$$[\Oo_X \rightarrow \mathsf{F}_X \oplus \mathsf{O}_X \rightarrow \A_X]$$
is exact. In other words, we observe that a rational function without any poles on $U \subset X$, defines a regular function on $U$. While this is a tautology in the $1$-dimensional case, the general setting of Noetherian schemes requires more subtle arguments from commutative algebra. We refer the reader to the proof of \cite[Theorem 4.1.1]{MR1138291} for more details.
 
\section{Perfect complexes and l\^ache sheaves of algebras}

In this section we introduce the notion of l\^ache sheaves of algebras, and prove Theorem \ref{thm:main3}.

\subsection{\emph{L\^ache} sheaves of algebras}

The main example of a l\^ache sheaf of algebras $\A$ is Beilinson's sheaf of ad\`eles. This is the content of Corollary \ref{cor:adeles_lache} below.

\subsubsection{Flasque sheaves} 

In this section we record a few well-known lemmas on flasque sheaves for the convenience of the reader.

\begin{lemma}\label{lemma:loc_flasque}
If $\F$ is a sheaf on $X$, such that every point $x \in X$ has an open neighbourhood $U$ with $\F|_U$ flasque, then $\F$ is a flasque sheaf.
\end{lemma}

\begin{proof}
Let $V \subset X$ be an open subset, and $s \in \F(V)$ a section. We claim that there exists $t \in \F(X)$ with $t|_V = s$. Consider the set $I$ of pairs $(W,t)$, where $W \subset X$ is an open subset, and $t \in \F(W)$, such that $t|_V = s$. Inclusion of open subsets induces a partial ordering on $I$, where we say that $(W,t) \leq (W',t')$ if $W \subset W'$, and $t'|_W = t$. Moreover, $I$ is inductively ordered, that is, for every totally ordered subposet $J \subset I$, there exists a common upper bound $i \in I$, such that we have $i \geq j$ for all $j \in J$. Indeed, denoting the pair corresponding to $j \in J$ by $(W_j,t_j)$, we have $W_j \subset W_k$ for $j \leq k$ in $J$, and $t_k|_{W_j} = t_j$. If we define $W = \bigcup_{j \in J} W_j$, the fact that $\F$ is a sheaf allows us to define a section $t \in \F(W)$ with $t|_{W_j} = t_j$. In particular, $(W,t) \in I$ is a common upper bound for the elements of $J$.

Zorn's lemma implies that the poset $I$ has a maximal element $(W,t)$. It remains to show that $W = X$. Assume that there exists $x \in X \setminus W$. By assumption, $x$ has an open neighbourhood $U$, such that $\F|_U$ is flasque. In particular, there exists a section $r \in \F(U)$, such that $r|_{U \cap W} = t|_{W \cap W}$. By virtue of the sheaf property we obtain a section $t' \in \F(W \cup U)$, satisfying $t'|_W = t$, which contradicts maximality of $(W,t)$.
\end{proof}

\begin{lemma}\label{lemma:loc_fg}
If $X$ is a quasi-compact topological space, and $\A$ a sheaf of algebras, then every locally finitely generated $\A$-module $\Mm$, which is flasque, is globally finitely generated, that is there exists a surjection $\A^n \rightarrow \Mm$.
\end{lemma}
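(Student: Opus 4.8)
The plan is to combine quasi-compactness with flasqueness in the evident way. First I would unwind local finite generation: by hypothesis there is an open cover $\{U_\alpha\}$ of $X$ such that each restriction $\Mm|_{U_\alpha}$ is generated by finitely many sections of $\Mm(U_\alpha)$. Since $X$ is quasi-compact, I pass to a finite subcover $U_1,\dots,U_m$, and for each $i$ I fix generators $s_{i,1},\dots,s_{i,n_i}\in\Mm(U_i)$, that is, sections for which the induced $\A|_{U_i}$-linear map $(\A|_{U_i})^{n_i}\to\Mm|_{U_i}$ is an epimorphism of sheaves on $U_i$.

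Next I would invoke the flasqueness of $\Mm$: the restriction map $\Mm(X)\to\Mm(U_i)$ is surjective, so each $s_{i,j}$ lifts to a global section $t_{i,j}\in\Mm(X)$ with $t_{i,j}|_{U_i}=s_{i,j}$. Setting $n=n_1+\cdots+n_m$, the global sections $\{t_{i,j}\}$ determine an $\A$-linear map $\varphi\colon\A^n\to\Mm$. To finish, I check that $\varphi$ is an epimorphism of sheaves of $\A$-modules, which may be tested on stalks: any $x\in X$ lies in some $U_i$, and the composite of $(\A|_{U_i})^{n_i}\hookrightarrow\A^n|_{U_i}$ with $\varphi|_{U_i}$ is precisely the map attached to $s_{i,1},\dots,s_{i,n_i}$, hence surjective on the stalk $\Mm_x$; therefore $\varphi_x$ is surjective. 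As $x$ was arbitrary, $\varphi$ is surjective, and $\Mm$ is globally finitely generated.

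There is no real obstacle here; the one substantive point is that flasqueness is exactly the property needed to promote the finitely many \emph{local} generating sets to a single \emph{global} one, since without it one could only produce a surjection after restricting to an open subset. The only care required is to read ``locally finitely generated'' as the existence of an open cover trivialising a finite generating set, and to verify surjectivity of $\varphi$ at the level of stalks (equivalently, on the cover $\{U_i\}$) rather than on global sections; both are routine.
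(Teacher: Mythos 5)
Your argument is correct and follows the same route as the paper: pass to a finite subcover by quasi-compactness, extend the finitely many local generators to global sections using flasqueness, and observe that the resulting map $\A^n\to\Mm$ is surjective (checked on stalks, as you note). The paper's proof is just a terser version of exactly this.
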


\begin{proof}
For every point $x \in X$ there exists a neighbourhood $U_x$, such that $\Mm|_{U_x}$ is finitely generated. Since $X$ is quasi-compact, we may choose a finite subcover $X = \bigcup_{i = 1}^n U_i$, and generating sections $(s_{ij})_{j=1,\dots n_i}$. Because $\Mm$ is assumed to be flasque, we may extend each $s_{ij}$ to a global section $t_{ij}$, and see that this finite subset of $\Gamma(X,\Mm)$ generates $\Mm$.
\end{proof}

\begin{lemma}\label{lemma:resolution_flasque}
Assume that we have a short exact sequence of $\A$-modules with
$$0 \rightarrow \Mm_{2} \rightarrow \Mm_1 \rightarrow \Mm_0 \rightarrow 0,$$
with $\Mm_i$ flasque for $i > 0$, then $\Mm_0$ is flasque as well.
\end{lemma}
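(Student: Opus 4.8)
The plan is to deduce flasqueness of $\Mm_0$ from the long exact sequence in sheaf cohomology together with the standard fact that flasque sheaves are acyclic. Concretely, for any open $U \subset X$ the short exact sequence restricts to a short exact sequence of sheaves on $U$, and we apply the global sections functor $\Gamma(U,-)$. Since $\Mm_1 \to \Mm_0$ is surjective as a map of sheaves, surjectivity of $\Gamma(U,\Mm_1) \to \Gamma(U,\Mm_0)$ is equivalent to the vanishing of the connecting map into $H^1(U,\Mm_2)$, hence it suffices to know $H^1(U,\Mm_2) = 0$.

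The key input is therefore that a flasque sheaf has vanishing higher cohomology on every open subset. First I would recall (or invoke) the classical argument: given a flasque sheaf $\Mm_2$, embed it into an injective sheaf $\Ii$, with cokernel $\Qq$; since $\Mm_2$ and $\Ii$ are flasque (injectives are flasque), $\Qq$ is flasque by the surjectivity-of-sections argument just described; the long exact sequence then gives $H^1(U,\Mm_2) \hookrightarrow H^1(U,\Ii) = 0$ and, for $i \geq 2$, $H^i(U,\Mm_2) \cong H^{i-1}(U,\Qq)$, so one concludes by induction that $H^i(U,\Mm_2) = 0$ for all $i \geq 1$. Note this uses only that the ambient category is a category of modules over a sheaf of rings (or sheaves of abelian groups), which has enough injectives, and that flasqueness is characterised by surjectivity of restriction maps — so no Noetherian or quasi-compactness hypothesis on $X$ is needed.

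With $H^1(U,\Mm_2) = 0$ in hand, the restriction maps $\Gamma(X,\Mm_0) \to \Gamma(U,\Mm_0)$ factor: given $s \in \Mm_0(U)$, lift it to $\tilde s \in \Mm_1(U)$ using $H^1(U,\Mm_2)=0$, extend $\tilde s$ to a global section $t \in \Mm_1(X)$ using flasqueness of $\Mm_1$, and push $t$ forward to $\Mm_0(X)$; its restriction to $U$ equals $s$ by commutativity. This shows $\Mm_0$ is flasque.

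The only mild subtlety — and the step I would be most careful about — is the justification that flasque sheaves are $\Gamma(U,-)$-acyclic, since the excerpt does not state it; but this is entirely standard (it is the two-out-of-three stability of flasqueness under short exact sequences, bootstrapped against an injective resolution), and it is really the heart of the matter, the rest being the diagram chase above. Everything is local-free and functorial in $U$, so no further hypotheses are invoked.
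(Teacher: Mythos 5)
Your proof is correct and follows essentially the same route as the paper: flasqueness of $\Mm_2$ gives $H^1(U,\Mm_2)=0$, hence $\Mm_1(U)\to\Mm_0(U)$ is surjective, and composing with the surjection $\Mm_1(X)\to\Mm_1(U)$ coming from flasqueness of $\Mm_1$ yields surjectivity of $\Mm_0(X)\to\Mm_0(U)$. One small caution about your aside on acyclicity: the step showing that $\Ii/\Mm_2$ is flasque cannot literally reuse ``the surjectivity-of-sections argument just described,'' since that argument presupposes $H^1(U,\Mm_2)=0$, which is what is being established; the standard non-circular version first proves surjectivity of $\Ii(U)\to(\Ii/\Mm_2)(U)$ directly by a Zorn's-lemma lifting argument using only flasqueness of the subsheaf $\Mm_2$.
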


\begin{proof}
Since flasque sheaves have no higher cohomology, we have $H^1(X,\Mm_2) = 0$, and therefore the following commutative diagram has exact rows
\[
\xymatrix{
0 \ar[r] & \Mm_2(X) \ar[r] \ar[d] & \Mm_1(X) \ar[r] \ar[d] & \Mm_0(X) \ar[r] \ar[d] & 0 \\
0 \ar[r] & \Mm_2(U) \ar[r] & \Mm_1(U) \ar[r] & \Mm_0(U) \ar[r] & 0.
}
\]
Commutativity of the right hand square, and the fact that $\Mm_1(X) \twoheadrightarrow \Mm_1(U) \twoheadrightarrow \Mm_0(U)$ is surjective, implies surjectivity of $\Mm_0(X) \twoheadrightarrow \Mm_0(U)$.
\end{proof}

\begin{lemma}
Let $\A$ be an arbitrary sheaf of algebras on a topological space $X$. Consider the abelian category of sheaves of $\A$-modules. The full subcategory, given by $\A$-modules $\Mm$, such that $\Mm$ is a flasque sheaf, is extension-closed. 
\end{lemma}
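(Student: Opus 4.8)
The statement to prove is that for an arbitrary sheaf of algebras $\A$ on a topological space $X$, the full subcategory of $\A$-modules which are flasque as sheaves is extension-closed in the abelian category of $\A$-modules.

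The plan is to reduce the statement directly to Lemma~\ref{lemma:resolution_flasque}. Suppose we are given a short exact sequence of $\A$-modules
\[
0 \rightarrow \Mm' \rightarrow \Mm \rightarrow \Mm'' \rightarrow 0
\]
with $\Mm'$ and $\Mm''$ flasque. We must show $\Mm$ is flasque. The standard argument is a short diagram chase: for an open subset $U \subset X$ and a section $s \in \Mm(U)$, we wish to extend $s$ to a global section. First, restrict to the exact sequence of underlying sheaves of abelian groups and evaluate on $X$ and on $U$; since $\Mm'$ is flasque its higher sheaf cohomology vanishes, so both rows of the resulting commutative square
\[
\xymatrix{
0 \ar[r] & \Mm'(X) \ar[r] \ar[d] & \Mm(X) \ar[r] \ar[d] & \Mm''(X) \ar[r] \ar[d] & 0 \\
0 \ar[r] & \Mm'(U) \ar[r] & \Mm(U) \ar[r] & \Mm''(U) \ar[r] & 0
}
\]
are exact. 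Let $\overline{s} \in \Mm''(U)$ be the image of $s$. By flasqueness of $\Mm''$, lift $\overline{s}$ to a global section $\overline{t} \in \Mm''(X)$, and then lift $\overline{t}$ through the surjection $\Mm(X) \twoheadrightarrow \Mm''(X)$ to some $t_0 \in \Mm(X)$. Then $s - t_0|_U$ maps to $0$ in $\Mm''(U)$, hence comes from a section $s' \in \Mm'(U)$; by flasqueness of $\Mm'$, extend $s'$ to $t' \in \Mm'(X)$ and set $t = t_0 + t'$. One checks $t|_U = s$, so $\Mm(X) \twoheadrightarrow \Mm(U)$ is surjective and $\Mm$ is flasque.

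Alternatively, and perhaps more cleanly for the writeup, one can observe that this is an immediate formal consequence of Lemma~\ref{lemma:resolution_flasque} together with the trivial fact that $\Mm''$ being flasque gives the needed vanishing: indeed Lemma~\ref{lemma:resolution_flasque} is stated with $\Mm_i$ flasque for $i > 0$, i.e. $\Mm_1 = \Mm$ and $\Mm_2 = \Mm'$ flasque, and its conclusion is that $\Mm_0 = \Mm''$ is flasque --- which is the wrong direction. So the correct reference is the diagram-chase above: the hypothesis that $\Mm''$ is flasque is what one exploits (to produce the global lift $\overline{t}$), whereas in Lemma~\ref{lemma:resolution_flasque} flasqueness of the quotient was the conclusion. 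Thus the proof really is the self-contained chase, not a black-box citation.

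There is essentially no obstacle here; the only point requiring the slightest care is that flasqueness is a property of the underlying sheaf of abelian groups, so all the cohomology vanishing and exactness statements are applied at that level, and one must note that the extensions $t'$, $\overline{t}$ produced are automatically $\A$-linear data simply because they are sections of the relevant $\A$-modules --- no compatibility needs to be imposed, since being an $\A$-module is structure carried by the sheaves themselves, not an extra condition on individual sections. Hence the subcategory is extension-closed.
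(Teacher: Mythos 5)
Your proof is correct and follows essentially the same route as the paper: both set up the commutative diagram of sections over $X$ and $U$ with exact rows (using vanishing of $H^1$ of the flasque subsheaf) and conclude surjectivity of the middle vertical map; the paper invokes the Snake Lemma or a diagram chase, and you simply write the chase out explicitly. Your side remark that Lemma~\ref{lemma:resolution_flasque} points in the wrong direction and cannot be cited as a black box is accurate.
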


\begin{proof}
Assume that we have a short exact sequence of $\A$-modules $\Mm_1 \hookrightarrow \Mm_2 \twoheadrightarrow \Mm_3$, with $\Mm_i$ flasque for $i = 1$ and $i = 3$. Since flasque sheaves are acyclic, we see that for every open subset $U \subset X$ we have a short exact sequence of abelian groups $\Mm_1(U) \hookrightarrow \Mm_2(U) \twoheadrightarrow \Mm_3(U)$. In particular, we obtain a commutative diagram with exact rows
\[
\xymatrix{
0 \ar[r] & \Mm_1(X) \ar[r] \ar@{->>}[d] & \Mm_2(X) \ar[r] \ar[d] & \Mm_3(X) \ar[r] \ar@{->>}[d] & 0 \\
0 \ar[r] & \Mm_1(U) \ar[r] & \Mm_2(U) \ar[r] & \Mm_3(U) \ar[r] & 0
}
\]
with the left and right vertical arrows being surjective. The Snake Lemma, or a simple diagram chase reveal that the vertical map in the middle also has to be surjective. This proves that $\Mm_2$ is a flasque sheaf.
\end{proof}

\begin{definition}
We denote the exact category, given by the extension-closed full subcategory of $\Mod(\A)$ consisting of modules whose underlying sheaf is flasque, by $\Mod_{\fl}(\A)$.
\end{definition}

We refer the reader to \cite{keller1996derived} and \cite{MR2606234} for the notion of derived categories of exact categories. We also emphasise that we use the notation $\D(-)$ to denote the stable $\infty$-category obtained by applying the dg-nerve construction of \cite[Section 1.3.1]{Lurie:ha} to the dg-category of \cite{keller1996derived}. The embedding $\Mod_{\fl}(\A) \hookrightarrow \Mod(\A)$ induces an exact functor of derived categories. 

It is important to emphasise that for a substantial part of this text we will not need to delve deeply into the theory of stable $\infty$-categories. The homotopy category of a stable $\infty$-category is naturally triangulated. In order to check that a functor $F \colon \C \rightarrow \D$ is fully faithful, essentially surjective, or an equivalence, it suffices to prove the same statement for its homotopy category (that is a classical triangulated category). This is essentially a consequence of the Whitehead Lemma. Distinguished triangles $X \rightarrow Y \rightarrow Z \rightarrow \Sigma X$ correspond to so-called \emph{bi-cartesian squares}
\[
\xymatrix{
X \ar[r] \ar[d] & Y \ar[d] \\
0 \ar[r] & Z,
}
\]
that is, commutative diagrams, which are cartesian and co-cartesian.

\begin{lemma}\label{lemma:c2}
The canonical functor 
$\D^+(\Mod_{\fl}(\A)) \rightarrow \D^+(\A)$, induced by the exact functor $\Mod_{\fl}(\A) \hookrightarrow \Mod(\A)$, is fully faithful.
\end{lemma}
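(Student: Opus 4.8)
The plan is to reduce the statement to a cohomological computation, exactly as one does in the classical Grothendieck setting where flasque sheaves compute derived functors. Concretely, I want to show that for bounded-below complexes $M, N$ with $N$ having flasque terms, the natural map
\[
\Hom_{\D^+(\Mod_{\fl}(\A))}(M,N) \longrightarrow \Hom_{\D^+(\A)}(M,N)
\]
is a bijection. The standard way to organize this is via the notion of a \emph{resolving subcategory}: the full subcategory $\Mod_{\fl}(\A) \subset \Mod(\A)$ is extension-closed (already proven in the preceding lemma), it is closed under taking cokernels of monomorphisms (Lemma \ref{lemma:resolution_flasque}), and — crucially — every $\A$-module embeds into a flasque one. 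For the last point one may use the Godement-type construction: the sheaf $\prod_{x \in X} (i_x)_* M_x$ (product of skyscrapers at all points, with $M_x$ the stalk) is flasque, carries a natural $\A$-module structure, and receives a monomorphism from $M$. Hence every bounded-below $\A$-module complex admits a quasi-isomorphism to a bounded-below complex of flasque $\A$-modules.

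With these ingredients in place, the proof follows a well-trodden homological-algebra path. First I would invoke the general principle (see e.g.\ \cite{keller1996derived}, or Lurie's treatment of derived $\infty$-categories of exact categories) that if $\mathcal{B} \subset \mathcal{A}$ is an extension-closed, cokernel-closed subcategory of an abelian category such that every object of $\mathcal{A}$ embeds into an object of $\mathcal{B}$, then $\D^+(\mathcal{B}) \to \D^+(\mathcal{A})$ is fully faithful (in fact an equivalence onto its image). The verification amounts to two points: (i) a quasi-isomorphism between bounded-below flasque complexes admits a homotopy inverse within $\Mod_{\fl}(\A)$, which one gets by the usual mapping-cone argument together with the fact that an acyclic bounded-below complex of flasque modules is "flasque-acyclic" (splits into short exact sequences with flasque terms, by Lemma \ref{lemma:resolution_flasque}); and (ii) any morphism in $\D^+(\A)$ out of $M$ into a flasque complex $N$ can be represented by an actual chain map after replacing $M$ by a flasque resolution, using that $N$ is "injective relative to $\Mod_{\fl}(\A)$" in the derived sense. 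Since the embedding preserves mapping cones and the zero object, checking the statement on homotopy categories suffices, as noted in the excerpt.

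The main obstacle — and the only place where genuine content beyond bookkeeping enters — is establishing that an exact bounded-below complex of flasque $\A$-modules is "nice", i.e.\ that it breaks up into short exact sequences $0 \to Z^i \to M^i \to Z^{i+1} \to 0$ in which all three terms are flasque. This is not automatic: one knows $M^i$ is flasque and must deduce the same for the syzygies $Z^i$. Starting from the bottom (the complex is bounded below), $Z^i = 0$ for $i \ll 0$, so the first nonzero $Z^i$ equals a subsheaf $\ker(M^i \to M^{i+1})$; but since the complex is exact this is all of $M^{i-1}$'s image, and an inductive application of Lemma \ref{lemma:resolution_flasque} to $0 \to Z^i \to M^i \to Z^{i+1} \to 0$ propagates flasqueness upward one degree at a time. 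Once this is in hand, the acyclic flasque complex is contractible in $\Mod_{\fl}(\A)$ (a null-homotopy is built degreewise using flasqueness, exactly as injectives are used in the classical argument), and full faithfulness drops out.
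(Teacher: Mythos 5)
Your overall route coincides with the paper's: both reduce to Keller's criterion (\cite[Theorem 12.1]{keller1996derived}) for full faithfulness of derived categories of exact subcategories, and both verify it with the same two ingredients --- every $\A$-module embeds into a flasque one via the Godement/discontinuous-sections sheaf $\prod_{x}(i_x)_*\Mm_x$, and the quotient of a flasque module by a flasque submodule is again flasque (Lemma \ref{lemma:resolution_flasque}). Up to that point the proposal matches the paper's proof essentially verbatim.

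However, your hands-on unwinding of the criterion contains one false step. You assert that a quasi-isomorphism between bounded-below complexes of flasque modules admits a homotopy inverse, equivalently that an acyclic bounded-below complex of flasque modules is contractible, with a null-homotopy built degreewise ``exactly as injectives are used in the classical argument.'' This is not true: the classical null-homotopy for injectives is produced by lifting maps along monomorphisms, using exactness of $\Hom(-,I)$, and flasque modules have no such lifting property --- flasqueness concerns restriction of sections over open sets, not injectivity against monomorphisms of $\A$-modules. What your inductive argument actually establishes (correctly) is the weaker statement that such an acyclic complex is \emph{strictly} exact in the exact category $\Mod_{\fl}(\A)$, i.e.\ it decomposes into short exact sequences $0 \rightarrow Z^i \rightarrow \Mm^i \rightarrow Z^{i+1} \rightarrow 0$ all of whose terms are flasque. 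That weaker statement is precisely what the Keller formalism requires: it shows that a chain map whose cone is acyclic in $\Mod(\A)$ is already inverted in $\D^+(\Mod_{\fl}(\A))$, and full faithfulness then follows from the localization calculus, not from contractibility. The argument is therefore reparable by deleting the contractibility claim and routing the final step through the exact-category localization you already cite; as written, the concluding ``full faithfulness drops out'' rests on a false premise.
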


\begin{proof}
According to a theorem of Keller \cite[Theorem 12.1]{keller1996derived} it suffices to check that every short exact sequence of $\A$-modules $\Mm_1 \hookrightarrow \Mm_2 \twoheadrightarrow \Mm_3$ with $\Mm_1$ flasque, fits into a commutative diagram with exact rows
\[
\xymatrix{
0 \ar[r] & \Mm_1 \ar[r] \ar@{=}[d] & \Mm_2 \ar[r] \ar[d] & \Mm_3 \ar[r] \ar[d] & 0 \\
0 \ar[r] & \Mm_1 \ar[r] & \Mm_2' \ar[r] & \Mm_3' \ar[r] & 0
}
\]
with $\Mm_2'$ and $\Mm_3'$ flasque. In order to produce this diagram, we recall that every $\A$-module $\Mm$ can be embedded into a flasque $\A$-module. Indeed, the sheaf of discontinuous sections, that is, $\Mm^{dc}(U) = \prod_{x \in U} \Mm_x$ provides such an embedding. Let $\Mm_2 \hookrightarrow \Mm_2'$ be an embedding of $\Mm_2$ into a flasque $\A$-module. Then, the quotient $\Mm_3'=\Mm_2'/\Mm_1$ is also flasque.
\end{proof} 

\subsubsection{Flasque sheaves of algebras}

In this paragraph we will ponder over what can be said about quasi-coherent sheaves of $\A$-modules, if the sheaves of algebras $\A$ itself is known to be flasque. Recall that an $\A$-module $\Mm$ is \emph{quasi-coherent}, if every point $x \in X$ has a neighbourhood $U \subset X$, such that the restriction $\Mm|_U$ can be represented as a cokernel of a morphism $\A^{\oplus J}|_U \rightarrow \A^{\oplus I}|_U$ of free $\A$-modules.

\begin{rmk}\label{rmk:qcoh}
For a general sheaf of algebras $\A$ the category $\QCoh(\A)$ of quasi-coherent $\A$-modules is in general not closed under taking kernels in the abelian category of $\A$-modules $\Mod(\A)$. In particular, one does not expect $\QCoh(\A)$ to be abelian in general. If the restriction maps $\A(V) \rightarrow \A(U)$ for $U \subset V$ belonging to a specific subbase for the topology, are known to be flat, $\QCoh(\A)$ can be shown to be abelian. This assumption is too strong for the sheaves of algebras we care about in this article.
\end{rmk}

We see from Lemma \ref{lemma:loc_flasque} that every locally free or locally projective sheaf of $\A$-modules is flasque. In general one cannot expect every quasi-coherent sheaf of $\A$-modules to be flasque. However, we will see in the next paragraph that there are certain flasque sheaves of algebras, for which this is true.

\begin{lemma}\label{lemma:c2_projective}
Let $\A$ be a sheaf of algebras on $X$, such that every free $\A$-module is flasque. We denote by $\P(\A)$ the exact category given by the idempotent completion of free $\A$-modules, and refer to its objects projective $\A$-modules. The functor $\D^-(\P(\A)) \hookrightarrow \D^-(\Mod_{\fl}(\A))$, induced by the inclusion $\P(\A) \hookrightarrow \Mod_{\fl}(\A)$, is fully faithful.
\end{lemma}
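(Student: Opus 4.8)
The plan is to recognise the objects of $\P(\A)$ as \emph{projective objects of the exact category $\Mod_{\fl}(\A)$} and then run the standard argument that a bounded‑above complex of projectives computes its own image in the derived category. This is the $\D^-$‑counterpart of the mechanism behind Lemma \ref{lemma:c2}: there the l\^ache hypothesis supplied ``enough injectives'' (flasque modules are $\Gamma$‑acyclic, so every module coresolves), and here the same hypothesis makes free $\A$‑modules behave like projectives once one has passed to the flasque subcategory. As a preliminary I would record that $\P(\A)\subset\Mod_{\fl}(\A)$: free $\A$‑modules are flasque by assumption, a direct summand of a flasque sheaf is flasque (apply the splitting idempotent sectionwise), and $\Mod(\A)$ is idempotent complete, so every object of the idempotent completion $\P(\A)$ is an honest direct summand of a free module; endowing $\P(\A)$ with its split exact structure, the inclusion $\P(\A)\hookrightarrow\Mod_{\fl}(\A)$ sends split exact sequences to admissible exact sequences and is therefore exact, so it induces a functor $\D^-(\P(\A))\to\D^-(\Mod_{\fl}(\A))$.

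The key step is the claim that for every $P\in\P(\A)$ the functor $\Hom_{\A}(P,-)$ sends admissible epimorphisms of $\Mod_{\fl}(\A)$ to surjections of abelian groups. Passing to direct summands and using $\Hom_{\A}(\A^{(I)},\Mm)\cong\prod_{I}\Gamma(X,\Mm)$, this reduces to the observation that if $\Mm_2\twoheadrightarrow\Mm_3$ is a surjection of $\A$‑modules with flasque kernel $\Mm_1$ (that is, an admissible epimorphism of $\Mod_{\fl}(\A)$), then $\Gamma(X,\Mm_2)\to\Gamma(X,\Mm_3)$ is onto, because $H^1(X,\Mm_1)=0$ for $\Mm_1$ flasque. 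Granting this, each $P\in\P(\A)$ is projective in $\Mod_{\fl}(\A)$; in particular every admissible short exact sequence of $\Mod_{\fl}(\A)$ with all three terms in $\P(\A)$ splits (lift the identity of the quotient), so the exact structure inherited by $\P(\A)$ coincides with the split one, and $\D^-(\P(\A))\simeq K^-(\P(\A))$ is the homotopy category of bounded‑above complexes in $\P(\A)$.

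It then remains to prove that $K^-(\P(\A))\to\D^-(\Mod_{\fl}(\A))$ is fully faithful. Let $P^{\bullet}$ be a bounded‑above complex of objects of $\P(\A)$ and $A^{\bullet}$ a bounded‑above complex which is acyclic in the exact category $\Mod_{\fl}(\A)$, i.e.\ built from admissible short exact sequences $0\to Z^{n}\to A^{n}\to Z^{n+1}\to 0$ with the $Z^{n}$ flasque. Any chain map $f\colon P^{\bullet}\to A^{\bullet}$ is null‑homotopic: one constructs $h^{n}\colon P^{n}\to A^{n-1}$ by descending induction on $n$, starting in the range where $P^{n}=0$; at each stage $f^{n}-h^{n+1}d$ maps to $0$ in $A^{n+1}$, hence factors through the cycle object $Z^{n}(A)$, and since $P^{n}$ is projective in $\Mod_{\fl}(\A)$ this factorisation lifts along the admissible epimorphism $A^{n-1}\twoheadrightarrow Z^{n}(A)$ to the desired $h^{n}$. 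Thus $\Hom_{K(\Mod_{\fl}(\A))}(P^{\bullet},A^{\bullet})=0$. Because the mapping cone of a quasi‑isomorphism between bounded‑above complexes is again bounded‑above and acyclic, this vanishing forces the localisation map $\Hom_{K^-(\Mod_{\fl}(\A))}(P^{\bullet},Q^{\bullet})\to\Hom_{\D^-(\Mod_{\fl}(\A))}(P^{\bullet},Q^{\bullet})$ to be bijective for every $Q^{\bullet}\in K^-(\Mod_{\fl}(\A))$, in particular for $Q^{\bullet}\in K^-(\P(\A))$, which is the assertion.

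The only genuinely delicate point is the projectivity step of the second paragraph and the structural fact underlying it: free $\A$‑modules are \emph{not} projective objects of $\Mod(\A)$ in general, since $\Gamma(X,-)$ need not be exact there. It is precisely the passage to the flasque subcategory $\Mod_{\fl}(\A)$ --- legitimate by the l\^ache hypothesis, and carried out at the level of derived categories in Lemma \ref{lemma:c2} --- that rescues the argument, which is why this statement is phrased as an embedding into $\D^-(\Mod_{\fl}(\A))$ and placed after Lemma \ref{lemma:c2}. Everything else is formal homological algebra over an exact category in the sense of \cite{keller1996derived}: the homotopy construction above is just the usual proof that $K^-$ of the projectives realises the bounded‑above derived category, transported to this setting.
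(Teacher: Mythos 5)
Your proof is correct, but it takes a different route from the one in the paper. The paper invokes the dual of Keller's criterion \cite[Theorem 12.1]{keller1996derived}: it suffices to show that every admissible short exact sequence $\Mm_1 \hookrightarrow \Mm_2 \twoheadrightarrow \Mm_3$ in $\Mod_{\fl}(\A)$ with $\Mm_3 \in \P(\A)$ is dominated by one with first two terms in $\P(\A)$. This is done by producing a surjection $\A^{\oplus I} \twoheadrightarrow \Mm_2$ indexed by all pairs $(U,s)$ with $s \in \Mm_2(U)$ (using flasqueness of $\Mm_2$ to extend each $s$ globally), composing with $\Mm_2 \twoheadrightarrow \Mm_3$, and observing that the resulting surjection onto the projective $\Mm_3$ splits, so its kernel again lies in $\P(\A)$. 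You instead prove directly that every object of $\P(\A)$ is a projective object of the exact category $\Mod_{\fl}(\A)$ --- via $\Hom_{\A}(\A^{\oplus I},-) \cong \prod_I \Gamma(X,-)$ and the vanishing of $H^1$ on flasque kernels --- and then run the classical null-homotopy argument identifying $K^-(\P(\A))$ with its image in $\D^-(\Mod_{\fl}(\A))$. Both arguments ultimately rest on the same two facts ($\Gamma$ is exact on admissible sequences of flasque modules; summands of free modules are flasque), and your reduction $\D^-(\P(\A)) \simeq K^-(\P(\A))$ via splitness of the inherited exact structure is sound. Your version is more self-contained (it unwinds what Keller's criterion is doing in this special case and does not even need the ``enough projectives'' step of constructing the surjection $\A^{\oplus I}\twoheadrightarrow \Mm_2$), while the paper's is shorter given the citation and runs parallel to the proof of Lemma \ref{lemma:c2}. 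One small inaccuracy in your framing, though it does not affect the proof: neither this lemma nor Lemma \ref{lemma:c2} assumes that $\A$ is l\^ache --- the only hypothesis used here is that free $\A$-modules are flasque, and your own argument indeed uses nothing more; the l\^ache condition only enters later, when one wants $\Gamma$ to be an equivalence on $\D^-(\A)$.
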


\begin{proof}
We will apply the dual of the result in Keller \cite[Theorem 12.1]{keller1996derived}, by which it suffices to check that every short exact sequence of flasque $\A$-modules $\Mm_1 \hookrightarrow \Mm_2 \twoheadrightarrow \Mm_3$ with $\Mm_3$ projective, fits into a commutative diagram with exact rows
\[
\xymatrix{
0 \ar[r] & \Mm'_1 \ar[r] \ar[d] & \Mm'_2 \ar[r] \ar[d] & \Mm_3 \ar[r] \ar@{=}[d] & 0 \\
0 \ar[r] & \Mm_1 \ar[r] & \Mm_2 \ar[r] & \Mm_3 \ar[r] & 0
}
\]
with $\Mm'_1$ and $\Mm'_2$ projective. Since $\Mm_2 \in \Mod_{\fl}(\A)$ is flasque by assumption, there exists a surjection $\A^{\oplus I} \rightarrow \Mm_2$ for some index set $I$. Indeed we can take $I = \{(U,s)|U\subset X \text{ open, }s\in\Mm_2(U)\}$. Choosing an extension $t_{(U,s)} \in \Mm_2(X)$, satisfying $t_{(U,s)}|_U = s$ for every element of $I$, we obtain a surjective morphism of $\A$-module $\A^{\oplus I} \rightarrow \Mm_2$.

Let $\Mm'_2 = \A^{\oplus I}$, and define $\M'_1$ to be the kernel of the composition $\Mm'_2 \rightarrow \Mm_2 \rightarrow \Mm_3$. Since $\Mm_3$ is a direct summand of a free $\A$-module, and $\Mm_2'$ is flasque, there exists a splitting to this surjection. Therefore $\Mm'_1$ belongs to $\P(\A)$, since it is a direct summand of $\Mm'_2$. This concludes the proof.
\end{proof}

\subsubsection{Definition of l\^ache sheaves of algebras} 

A sheaf $\A$ on a topological space $X$ is called \emph{flasque}, if for every open subset $U \subset X$ the restriction map $\A(X) \rightarrow \A(U)$ is surjective. If $\A$ carries additionally the structure of a sheaf of algebras, there is a strengthening of this condition.
\begin{definition}\label{defi:lache}
A sheaf of algebras $\A$ on $X$ is called \emph{l\^ache}, if for every open subset $U \subset X$, and every map of free $\A_U$-modules $\A^{\oplus J}_U \xrightarrow{f} \A^{\oplus I}_U$ the kernel $\ker f$ is a flasque sheaf on $U$.
\end{definition}

To see that there are non-trivial l\^ache sheaves of algebras, we let $X$ be a topological space where every open subset is also closed, in the following example.

\begin{example}
Let $X$ be a topological space, where every open subset is also closed. Then every sheaf of abelian groups $\F$ is flasque. If $U \subset X$ is open, and $s \in \F(U)$, then using the sheaf property of $\F$ we see that there is a unique section $t \in \F(X)$, such that $t|_U = s$ and $t|_{X \setminus U} = 0$. This is possible because $X \setminus U$ is open by assumption. Hence, every sheaf of algebras on $X$ is l\^ache.
\end{example}

The Lemma below implies that for a l\^ache sheaf of algebras $\A$, and a morphism $f \colon \A^{\oplus J} \rightarrow \A^{\oplus I}$ the sheaves $\image f$ and $\coker f$ are flasque as well.

\begin{lemma}\label{lemma:123}
Let $V_1 \xrightarrow{f} V_2$ be a morphism of flasque sheaves, such that $\ker f$ is flasque. Then, the sheaves $\image f$, and $\coker f$ are flasque.\end{lemma}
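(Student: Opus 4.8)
The plan is to deduce both assertions from Lemma \ref{lemma:resolution_flasque} by passing through the epi--mono factorisation of $f$. First I would write down the short exact sequence of sheaves
\[
0 \to \ker f \to V_1 \to \image f \to 0.
\]
By hypothesis $\ker f$ is flasque, and $V_1$ is flasque; thus the two non-quotient terms are flasque, and Lemma \ref{lemma:resolution_flasque} (applied with $\Mm_2 = \ker f$, $\Mm_1 = V_1$, $\Mm_0 = \image f$) yields that $\image f$ is flasque.

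Having established flasqueness of $\image f$, I would then invoke the second short exact sequence
\[
0 \to \image f \to V_2 \to \coker f \to 0.
\]
Here $\image f$ is flasque by the previous step and $V_2$ is flasque by hypothesis, so Lemma \ref{lemma:resolution_flasque} applies once more and shows that $\coker f$ is flasque, which completes the argument.

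There is no real obstacle here beyond bookkeeping: the one point deserving attention is the order of the two applications, since flasqueness of $\coker f$ is obtained from that of $\image f$, which must be dealt with first. One should also note that the exact sequences above live in the ambient abelian category of sheaves (of abelian groups, or of $\A$-modules, as the case may be), and that Lemma \ref{lemma:resolution_flasque} is available precisely because flasque sheaves are acyclic for global sections, so that no further hypotheses on $X$ are needed.
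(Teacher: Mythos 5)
Your proof is correct and is essentially identical to the paper's: both arguments apply Lemma \ref{lemma:resolution_flasque} first to $\ker f \hookrightarrow V_1 \twoheadrightarrow \image f$ and then to $\image f \hookrightarrow V_2 \twoheadrightarrow \coker f$, in that order. Nothing is missing.
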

\begin{proof}
We have a short exact sequence $\ker f \hookrightarrow V_1 \twoheadrightarrow \image f$, since the first two sheaves are flasque, so is the third (Lemma \ref{lemma:resolution_flasque}). The same argument applies to the short exact sequence $\image f \hookrightarrow V_2 \twoheadrightarrow \coker f$, and implies that $\coker f$ is flasque.
\end{proof}
We can further generalise the assertion.
\begin{lemma}\label{lemma:lache_projective}
Let $V_1 \xrightarrow{f} V_2$ be a morphism of projective quasi-coherent $\A$-modules (that is, direct summands of free modules), where $\A$ is l\^ache. Then the sheaves $\ker f$, $\image f$, and $\coker f$ are flasque.
\end{lemma}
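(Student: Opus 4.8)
The plan is to reduce the projective case to the free case already handled by Lemma~\ref{lemma:123}, by exploiting that a direct summand of a flasque sheaf is flasque. First I would choose, for $i = 1,2$, a free $\A$-module $\A^{\oplus I_i}$ together with a splitting $\A^{\oplus I_i} \simeq V_i \oplus W_i$. The point of passing to free modules is that $\A$ is l\^ache, so the kernel of any map between free $\A$-modules is flasque, and then Lemma~\ref{lemma:123} tells us the image and cokernel are flasque as well. To leverage this I would replace $f \colon V_1 \to V_2$ by the map $\tilde f \colon \A^{\oplus I_1} \to \A^{\oplus I_2}$ given in block form by $\tilde f = \begin{pmatrix} f & 0 \\ 0 & \id_{W} \end{pmatrix}$ after first enlarging $W_1$ and $W_2$ so that they become isomorphic (e.g. replace $I_i$ by $I_1 \sqcup I_2$ and use $V_i \oplus W_i \oplus V_{3-i} \oplus W_{3-i}$); more cleanly, set $g = f \oplus \id_{V_1} \oplus \id_{W_1} \oplus \id_{W_2} \colon V_1 \oplus V_1 \oplus W_1 \oplus W_2 \to V_2 \oplus V_1 \oplus W_1 \oplus W_2$, which is a map between free $\A$-modules once we identify source and target with $\A^{\oplus I_1}\oplus\A^{\oplus I_1}\oplus\cdots$ appropriately. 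The essential observation is just that $\ker g = \ker f$, $\image g = \image f \oplus V_1 \oplus W_1 \oplus W_2$, and $\coker g = \coker f$, up to adding flasque summands.

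Concretely, here is the cleanest packaging. Pick splittings $\A^{\oplus I_1} \simeq V_1 \oplus W_1$ and $\A^{\oplus I_2} \simeq V_2 \oplus W_2$. Form the map of free $\A$-modules
\[
F \colon \A^{\oplus I_1} \oplus \A^{\oplus I_2} \longrightarrow \A^{\oplus I_2} \oplus \A^{\oplus I_1},
\qquad
F = (f \oplus \id_{W_2}) \oplus \id_{W_1}
\]
where I view the source as $(V_1 \oplus W_1) \oplus (V_2 \oplus W_2)$ and regroup as $(V_1 \oplus W_2) \oplus (W_1 \oplus V_2)$, sending $(V_1 \oplus W_2)$ into the target copy of $V_2 \oplus W_2$ via $f \oplus \id$, and sending $W_1 \oplus V_2$ isomorphically onto the remaining free summand $V_1 \oplus \ldots$ — the bookkeeping is routine and the only content is: $F$ is a map between \emph{free} $\A$-modules, hence by the l\^ache hypothesis $\ker F$ is flasque, and then $\image F$ and $\coker F$ are flasque by Lemma~\ref{lemma:123}. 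Since $F$ is, up to direct-summing on identity maps of $W_1$, $W_2$, $V_2$, a version of $f$, we get $\ker F \simeq \ker f$, $\coker F \simeq \coker f$, and $\image F \simeq \image f \oplus (\text{free summands})$. A direct summand of a flasque sheaf is flasque (this follows because the functor of sections on each open $U$ commutes with the projection idempotent, so surjectivity of restriction for the whole module passes to the summand), which gives flasqueness of $\ker f$, $\image f$, $\coker f$.

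The only mildly delicate point, and the step I would be most careful about, is the claim that a retract of a flasque sheaf is flasque and, dually, that a direct summand of a free (hence flasque, by the standing hypothesis that free $\A$-modules are flasque — note this is part of being l\^ache, since $\ker(\A^{\oplus J} \xrightarrow{0} \A^{\oplus I}) = \A^{\oplus J}$) module appears as a summand in a sequence to which Lemma~\ref{lemma:123} applies. For the retract statement: if $M$ is a summand of a flasque $N$ via $i \colon M \hookrightarrow N$, $p \colon N \twoheadrightarrow M$ with $p i = \id$, and $s \in M(U)$, lift $i(s) \in N(U)$ to $t \in N(X)$ by flasqueness of $N$; then $p(t) \in M(X)$ restricts to $p(i(s)) = s$ on $U$. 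This is immediate and requires no finiteness or Noetherian hypotheses. With these pieces in place the proof is just the assembly above; no genuine obstacle remains beyond organizing the block-matrix bookkeeping so that the identifications $\ker F \simeq \ker f$ etc. are manifest.
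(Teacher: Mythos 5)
Your overall strategy --- stabilise $f$ to a map between free $\A$-modules, apply the l\^ache hypothesis and Lemma \ref{lemma:123} there, and then extract $\ker f$, $\image f$, $\coker f$ as direct summands using the (correctly proved) fact that a retract of a flasque sheaf is flasque --- is exactly the paper's strategy. But the one step you dismiss as ``routine bookkeeping'' is precisely where the argument breaks. In your first version, the source $V_1\oplus V_1\oplus W_1\oplus W_2$ decomposes as $(V_1\oplus W_1)\oplus(V_1\oplus W_2)$, and $V_1\oplus W_2$ has no reason to be free, so $g$ is not a map between free modules. In your ``cleanest packaging'', after regrouping the source as $(V_1\oplus W_2)\oplus(W_1\oplus V_2)$ you need to send $W_1\oplus V_2$ isomorphically onto the complementary target summand $V_1\oplus W_1$; no such isomorphism exists in general (and the displayed formula $F=(f\oplus\id_{W_2})\oplus\id_{W_1}$ does not even prescribe where the $V_2$-summand of the source goes). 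So as written neither candidate for $F$ is a morphism of free $\A$-modules, and the l\^ache hypothesis cannot be invoked.

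The paper closes exactly this gap with an Eilenberg swindle: it considers $f\oplus\id$ on $V_i\oplus(W_1\oplus W_2\oplus V_1\oplus V_2)^{\oplus\mathbb{N}}$, where the infinite repetition absorbs the mismatched complements so that source and target become genuinely free while the kernel is unchanged. A cheaper repair, entirely within your own toolkit, is to take $F=f\oplus 0\colon V_1\oplus W_1\to V_2\oplus W_2$, i.e.\ $(v,w)\mapsto(f(v),0)$, which \emph{is} a map of free modules; its kernel is $\ker f\oplus W_1$, hence flasque by the l\^ache property, and your retract lemma gives flasqueness of $\ker f$. Then $V_1$ and $V_2$ are themselves flasque (retracts of free, hence flasque, modules), so Lemma \ref{lemma:123} applied directly to $f$ handles the image and cokernel. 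Either way, some device beyond a finite regrouping of the given summands is needed, and supplying that device is the actual content of the proof.
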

\begin{proof}
Since every projective quasi-coherent $\A$-module is a direct summand of a free $\A$-module, there exist quasi-coherent $\A$-modules $W_1$, and $W_2$, such that $V_i \oplus W_i$ are free $\A$-modules for $i = 1,2$. The induced map
$$f \oplus \id \colon V_1 \oplus (W_1 \oplus W_2 \oplus V_1 \oplus V_2)^{\oplus \mathbb{N}} \rightarrow V_2 \oplus (W_1 \oplus W_2 \oplus V_1 \oplus V_2)^{\oplus \mathbb{N}}$$
has the same kernel $\ker f  \simeq \ker (f \oplus \id)$. However, the Eilenberg swindle
$$(W_1 \oplus W_2 \oplus V_1 \oplus V_2)^{\oplus \mathbb{N}}  \simeq (W_1)^i \oplus (W_2)^j \oplus (W_1 \oplus W_2 \oplus V_1 \oplus V_2)^{\oplus \mathbb{N}}$$
allows us to see that the two sides are in fact free $\A$-modules. Therefore, the defining property of l\^ache sheaf of algebras implies that $\ker f$ is flasque. Lemma \ref{lemma:123} yields that $\image f$ and $\coker f$ are flasque sheaves.
\end{proof}

The considerations above imply in particular that every quasi-coherent $\A$-module of a l\^ache sheaf of algebras $\A$ is flasque. However, we have to keep in mind that the category of quasi-coherent sheaves is not abelian in general, as we pointed out in Remark \ref{rmk:qcoh}.
We have the following corollary to Lemma \ref{lemma:lache_projective}.
\begin{corollary}\label{cor:cohomology_flasque}
If $\Mm^{\bullet}$ is locally equivalent to an object of $\D(\P(\A))$, then its cohomology sheaves $\Hc^i(\Mm^{\bullet})$ are flasque.
\end{corollary}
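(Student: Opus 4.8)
The plan is to reduce to the case where $\Mm^{\bullet}$ is globally represented by a complex of projective $\A$-modules, and then to exhibit each cohomology sheaf as an extension of flasque sheaves. First I would observe that being flasque is a local property (Lemma \ref{lemma:loc_flasque}), that the formation of the cohomology sheaves $\Hc^i(\Mm^{\bullet})$ commutes with restriction to an open subset $U \subset X$ (kernels and cokernels of maps of sheaves are computed locally), and that if $\A$ is l\^ache then so is $\A|_U$, directly from Definition \ref{defi:lache}. Therefore, choosing an open cover $\{U_\alpha\}$ over which $\Mm^{\bullet}$ becomes equivalent to an object of $\D(\P(\A|_{U_\alpha}))$, it suffices to prove that $\Hc^i(\Mm^{\bullet})|_{U_\alpha}$ is flasque for each $\alpha$, i.e.\ to treat the case in which $\Mm^{\bullet}$ is equivalent in $\D(\A)$ to a cochain complex $\cdots \to P^{i-1} \xrightarrow{d^{i-1}} P^{i} \xrightarrow{d^{i}} P^{i+1} \to \cdots$ whose terms $P^{i}$ are objects of $\P(\A)$, hence in particular projective quasi-coherent $\A$-modules. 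Since an equivalence in $\D(\A)$ induces isomorphisms on cohomology sheaves, we then have $\Hc^i(\Mm^{\bullet})  \simeq \ker(d^{i})/\image(d^{i-1})$, computed in the abelian category $\Mod(\A)$.

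Next I would invoke Lemma \ref{lemma:lache_projective}: the maps $d^{i-1}$ and $d^{i}$ are morphisms of projective quasi-coherent $\A$-modules, so the $\A$-modules $\ker(d^{i})$ and $\image(d^{i-1})$ are flasque. Finally, the short exact sequence of $\A$-modules
\[
0 \rightarrow \image(d^{i-1}) \rightarrow \ker(d^{i}) \rightarrow \Hc^i(\Mm^{\bullet}) \rightarrow 0,
\]
whose first two terms are flasque, shows by Lemma \ref{lemma:resolution_flasque} that $\Hc^i(\Mm^{\bullet})$ is flasque. This settles the global case, and the general statement follows from the reduction above together with Lemma \ref{lemma:loc_flasque}.

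I do not expect any serious obstacle here; the only points demanding a modicum of care are the compatibility of the cohomology sheaves with passage to an open cover (so that the local hypothesis can be used) and the stability of the l\^ache property under restriction to opens, both of which are immediate from the definitions. The substantive content is entirely contained in Lemmas \ref{lemma:lache_projective} and \ref{lemma:resolution_flasque}.
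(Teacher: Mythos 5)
Your proof is correct and follows exactly the paper's own argument: reduce to the case $\Mm^{\bullet} \in \D(\P(\A))$ via Lemma \ref{lemma:loc_flasque}, write $\Hc^i$ as $\ker d^i / \image d^{i-1}$, apply Lemma \ref{lemma:lache_projective} to see these are flasque, and conclude with Lemma \ref{lemma:resolution_flasque}. The only difference is that you spell out the routine compatibilities (cohomology sheaves commuting with restriction, stability of the l\^ache property under restriction) that the paper leaves implicit.
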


\begin{proof}
We have seen in Lemma \ref{lemma:loc_flasque} that a sheaf is flasque if and only if it is locally flasque. Therefore we may assume $M \in \D(\P(\A))$. Let us choose an explicit presentation by a complex $(V^{\bullet},d)$, where each $V^i$ is a projective $\A$-module. We have $\Hc^i(M)  \simeq \frac{\ker d^i}{\image d^{i-1}}$. By Lemma \ref{lemma:lache_projective}, $\ker d^i$, and $\image d^{i-1}$ are flasque. By Lemma \ref{lemma:resolution_flasque}, the quotient $\Hc^i(M)$ is flasque.
\end{proof}

\subsubsection{A criterion for being l\^ache}

In this paragraph we observe that every sheaf of algebras $\A$, which admits linear sections to the restriction maps $\A(X) \rightarrow \A(U)$, is in fact l\^ache. As a consequence, we obtain that the sheaf of ad\`eles on a Noetherian scheme is l\^ache (Corollary \ref{cor:adeles_lache}).

\begin{definition}
A sheaf of algebras $\A$ is called \emph{very flasque}, if for every open subset $U \subset X$ there exists an $\A(X)$-linear section $\phi_U$ of the restriction map $r_U \colon \A(X) \rightarrow \A(U)$.
\end{definition}

Typically the section $\phi_U$ is given by a map which extends $s \in \A(U)$ by $0$ outside of $U$, as in the following example.

\begin{example}
Let $X$ be a topological space where every open subset is closed, and $\A$ an arbitrary sheaf of algebras, then $\A$ is very flasque.
\end{example}
\begin{proof}
Since $X$ is totally disconnected, every open set $U \subset X$ is also closed. Thus $X\setminus U$ is also open, and we may define $\phi\colon \A(U) \rightarrow \A(X)$ to be the map which sends $s \in \A(U)$ to the unique section $\widehat{s} \in \A(X)$, such that $\widehat{s}|_U = s$, and $\widehat{s}|_{X\setminus U} = 0$. This map is indeed $\A(X)$-linear.
\end{proof}

In hindsight we have shown in Lemma \ref{lemma:adeles_section} that for every quasi-coherent sheaf of algebras $\F$ on a Noetherian scheme $X$, the sheaves of algebras $\Ab_{X,T}(\F)$ are very flasque. See also Corollary \ref{cor:adeles_lache} below, where an important consequence of this observation is recorded.

The next lemma is the aforementioned criterion for a sheaf of algebras being l\^ache.

\begin{lemma}\label{lemma:lache}
A very flasque sheaf of algebras $\A$ is l\^ache.
\end{lemma}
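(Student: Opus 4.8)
The plan is to show directly that, for a very flasque sheaf of algebras $\A$ on a topological space $X$, the kernel of any map of free $\A_U$-modules is flasque. First I would reduce to the case $U = X$: if $\phi_U$ is an $\A(X)$-linear section of $\A(X) \to \A(U)$, then the restriction of $\A$ to any open $U$ is again very flasque (compose with $\phi$'s for opens inside $U$), so it suffices to treat a map $f\colon \A^{\oplus J} \to \A^{\oplus I}$ of free $\A$-modules on $X$ itself and show $\ker f$ is a flasque sheaf, i.e.\ that $\Gamma(X,\ker f) \to \Gamma(V,\ker f)$ is surjective for every open $V \subset X$.

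Next, I would identify sections of $\ker f$ with compatible tuples. A section of $\A^{\oplus I}$ over an open $W$ is a tuple $(a_i)_{i\in I}$ with $a_i \in \A(W)$ and $a_i = 0$ for all but finitely many $i$ (since $\A^{\oplus I}$ is the sheafification of the constant presheaf direct sum, but over quasi-compact opens — or more carefully using that $\A^{\oplus I}$ is a colimit — one must be a little careful about the finiteness; I would handle this by working with the presheaf direct sum first and noting that the relevant finiteness is preserved). A section of $\ker f$ over $W$ is then such a tuple in the kernel of the induced map on sections. Given a section $s = (s_j)_{j \in J} \in \Gamma(V,\ker f) \subset \Gamma(V, \A^{\oplus J})$, I would apply the $\A(X)$-linear section $\phi_V\colon \A(V) \to \A(X)$ componentwise to obtain a tuple $\tilde s = (\phi_V(s_j))_{j} \in \Gamma(X, \A^{\oplus J})$, which still has finite support.

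The key computation is then that $\tilde s$ lies in $\Gamma(X,\ker f)$, i.e.\ $f(\tilde s) = 0$ in $\Gamma(X,\A^{\oplus I})$. Here I use that $f$ is given over $X$ by a (locally finite, row-finite) matrix $(f_{ij})$ with entries in $\A(X)$, and that $\A(X)$-linearity of $\phi_V$ together with $\A(X)$-equivariance of the restriction map gives, for each $i$,
\[
\sum_j f_{ij}\,\phi_V(s_j) = \phi_V\!\Big(\sum_j f_{ij}|_V \cdot s_j\Big) = \phi_V\big(f(s)_i\big) = \phi_V(0) = 0,
\]
using $f(s) = 0$ over $V$. Finally $\tilde s|_V = s$ because $\phi_V$ is a section of the restriction map $\A(X) \to \A(V)$, applied componentwise. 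Hence restriction $\Gamma(X,\ker f) \to \Gamma(V,\ker f)$ is surjective, proving $\ker f$ flasque and $\A$ l\^ache.

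The main obstacle I anticipate is the bookkeeping around infinite free modules: $\A^{\oplus I}$ is a sheafification, so a section over a general open is only \emph{locally} a finitely-supported tuple, and one must check that applying $\phi_V$ componentwise is well-defined as a global section and that the sum $\sum_j f_{ij}\phi_V(s_j)$ makes sense (row-finiteness of the matrix of $f$). For this I would either restrict attention to the quasi-compact situation where finite support is automatic, or argue that $\ker f$ is a \emph{local} question (Lemma~\ref{lemma:loc_flasque}) and pass to opens on which everything is finitely supported — then the componentwise argument above goes through verbatim. Everything else is a formal consequence of $\A(X)$-linearity of the sections $\phi_V$.
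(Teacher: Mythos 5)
Your proposal is correct and follows essentially the same route as the paper: apply the $\A(X)$-linear section $\phi_V$ componentwise to the free modules, use $\A(X)$-linearity to see this commutes with $f$, and conclude that it descends to a section of $K(X)\to K(V)$ on kernels. The paper packages the final step via the universal property of kernels in a commutative diagram rather than your explicit matrix computation, and likewise glosses over the finite-support bookkeeping for $\A^{\oplus J}$ that you flag, but the substance is identical.
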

\begin{proof}
Let $f\colon \A^{\oplus J}_V \rightarrow \A^{\oplus I}_V$ be an $\A_V$-linear map, where $V \subset X$ is open. We have to show that $K=\ker f$ is a flasque sheaf on $V$. For $U \subset V$ open we have a commutative diagram
\[
\xymatrix{
0 \ar[r] & K(V) \ar[r] \ar[d] & \A^{\oplus J}(V) \ar[r] \ar[d] & \A^{\oplus I}(V) \ar[d] \\
0 \ar[r] & K(U) \ar[r] & \A^{\oplus J}(U) \ar[r] & \A^{\oplus I}(U)
}
\]
with exact rows, because taking global sections is a left exact functor. However, $\A(V)$-linearity of the section $r_V \circ \phi_U\colon \A(U) \rightarrow \A(V)$ implies that we have a commutative diagram
\[
\xymatrix{
0 \ar[r] & K(V) \ar[r]  & \A^{\oplus J}(V) \ar[r]  & \A^{\oplus I}(V) ] \\
0 \ar[r] & K(U) \ar[r] \ar@{-->}[u] & \A^{\oplus J}(U) \ar[r] \ar[u] & \A^{\oplus I}(U) \ar[u] 
}
\]
where the dashed arrow is provided by the universal property of kernels. The dashed arrow is therefore right-inverse to the restriction map $K(V) \rightarrow K(U)$, and we conclude that $K = \ker f$ is flasque.
\end{proof}

\begin{corollary}\label{cor:adeles_lache}
For a Noetherian scheme $X$ and a quasi-coherent sheaf $\F$ of algebras, the sheaves of Beilinson-Parshin ad\`eles $\Ab_{X,T}(\F)$ are l\^ache sheaves of algebras.
\end{corollary}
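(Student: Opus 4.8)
The plan is to recognise this statement as a direct consequence of the two preceding results: Lemma~\ref{lemma:adeles_section}, which produces linear sections of the adèlic restriction maps, and Lemma~\ref{lemma:lache}, which asserts that every very flasque sheaf of algebras is l\^ache. Thus the entire task reduces to checking that $\A_{X,T}(\F)$ is a \emph{very flasque} sheaf of algebras, i.e.\ that for every open $U\subseteq X$ the restriction map $\Gamma(X,\A_{X,T}(\F))\to\Gamma(U,\A_{X,T}(\F))$ admits a section linear over the ring $\Gamma(X,\A_{X,T}(\F))$. Before doing anything else I would note that $\A_{X,T}(\F)$ really is a sheaf of algebras: this uses the hypothesis that $\F$ is a quasi-coherent sheaf of $\Oo_X$-algebras together with the multiplicativity of the functor $\A_{X,T}(-)$, which is visible from its componentwise (restricted-product) description via Lemma~\ref{lemma:local_factors}.

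Next I would invoke the locality of Beilinson's construction: for an open immersion $j\colon V\hookrightarrow X$ one has $\A_{X,T}(\F)|_V\simeq\A_{V,\,T\cap|V|_n}(\F|_V)$ (this compatibility is already used implicitly in the proof of Lemma~\ref{lemma:adeles_section}). Consequently $\Gamma(V,\A_{X,T}(\F))=\Ab_{V,\,T\cap|V|_n}(\F|_V)$, and for $U\subseteq V$ the restriction map is precisely the adèlic restriction map studied in Lemma~\ref{lemma:adeles_section}. Applying that lemma to the pair $(X,\F)$ gives, for each open $U$, a section of $\Ab_{X,T}(\F)\to\Ab_{U,\,T\cap|U|_n}(\F|_U)$ that is $\Ab_{X,T}(\Oo_X)$-linear and functorial in $\F$.

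The one point that requires a little care --- and the only real, though mild, obstacle --- is that the notion of very flasque (hence the hypothesis of Lemma~\ref{lemma:lache}) demands linearity over the sheaf of algebras $\A_{X,T}(\F)$ itself, not merely over the base ring $\Ab_{X,T}(\Oo_X)$. Here I would fall back on the explicit shape of the section constructed in the proof of Lemma~\ref{lemma:adeles_section}: on the local factors $\prod_{\sigma}\Ab_{X,\sigma}(\F)$ it is the ``extension by zero'' map --- the identity on components indexed by simplices lying in $U$ and the zero map elsewhere. Since multiplication on the adèles is componentwise with respect to this factorisation, extension by zero commutes with multiplication by an arbitrary element of $\Ab_{X,T}(\F)$, so the section is automatically $\Ab_{X,T}(\F)$-linear. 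This upgrades Lemma~\ref{lemma:adeles_section} to the statement that $\A_{X,T}(\F)$ is very flasque, and Lemma~\ref{lemma:lache} then finishes the proof. As a sanity check, the special case $\F=\Oo_X$ recovers the assertion that Beilinson's sheaf of adèles $\A_{X,T}$ is l\^ache, which is the form in which this corollary is subsequently fed into Theorem~\ref{thm:main3}.
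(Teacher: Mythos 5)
Your proof follows exactly the paper's route: the corollary is obtained by combining Lemma~\ref{lemma:adeles_section} (adelic restriction maps admit linear sections) with Lemma~\ref{lemma:lache} (very flasque implies l\^ache). Your additional observation --- that Lemma~\ref{lemma:adeles_section} only asserts $\Ab_{X,T}(\Oo_X)$-linearity of the section, whereas very flasqueness of the sheaf of algebras $\A_{X,T}(\F)$ requires $\Ab_{X,T}(\F)$-linearity, which nevertheless holds because the section is extension by zero with respect to the componentwise factorisation and multiplication is componentwise --- correctly supplies a detail that the paper's two-line proof passes over silently.
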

\begin{proof}
Lemma \ref{lemma:adeles_section} asserts that $\Ab_{X,T}(\F)$ is very flasque. According to Lemma \ref{lemma:lache} this implies that $\Ab_{X,T}(\F)$ is also l\^ache.
\end{proof}

\subsection{Perfect complexes}
In this subsection we study the $\infty$-category of perfect complexes of $\A$-modules. This is necessary since the classical category of quasi-coherent $\A$-modules is not necessarily abelian (see Remark \ref{rmk:qcoh}).
\begin{definition}\label{defi:Dminus}
Let $\P(\A)$ denote the exact category obtained as the idempotent completion of the exact category of free $\A$-modules. We denote by $\D^{-}(\A)$ the $\infty$-category corresponding to the full subcategory of $\D(\Mod_{\fl}(\A))$ given by complexes of flasque $\A$-modules, which are locally equivalent to objects of $\D^-(\P(\A))$. 
\end{definition}

Recall that every exact functor between exact categories, induces a functor between derived $\infty$-categories.
\begin{lemma}\label{lemma:Gamma_conservative}
Let $X$ be a quasi-compact topological space, and $\A$ a l\^ache sheaf of algebras on $X$. We denote by $R = \Gamma(\A)$ the ring of global sections of $\A$. The global sections functor $\Gamma\colon \D^-(\A) \rightarrow \D^-(R)$, induced by the exact functor $\Gamma\colon \Mod_{\fl}(\A) \rightarrow \Mod(R)$, is well-defined and conservative.
\end{lemma}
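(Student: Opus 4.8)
The plan is to establish the two assertions of Lemma \ref{lemma:Gamma_conservative} separately: first that $\Gamma$ is well-defined on $\D^-(\A)$, and then that it is conservative. For well-definedness, I would argue that the exact functor $\Gamma\colon \Mod_{\fl}(\A) \to \Mod(R)$ sends the subcategory $\D^-(\P(\A))$ into $\D^-(R)$, i.e.\ that it preserves boundedness-above and takes complexes with projective terms to perfect complexes. The key input here is Lemma \ref{lemma:lache_projective}: since $\A$ is l\^ache, every projective quasi-coherent $\A$-module is flasque, and more importantly the kernels, images and cokernels of maps between projective $\A$-modules are flasque. Combined with the fact that flasque sheaves are $\Gamma$-acyclic (and hence that $\Gamma$ takes short exact sequences of flasque $\A$-modules to short exact sequences of $R$-modules), this shows that for a bounded-above complex $V^{\bullet}$ of projective $\A$-modules, $\Gamma(V^{\bullet})$ is a bounded-above complex of $R$-modules with the expected cohomology. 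For the object $\Mm^{\bullet} \in \D^-(\A)$ itself — which is only \emph{locally} equivalent to an object of $\D^-(\P(\A))$, not globally so — I would use Corollary \ref{cor:cohomology_flasque}, which guarantees that all cohomology sheaves $\Hc^i(\Mm^{\bullet})$ are flasque. Then a hypercohomology spectral sequence argument (or an induction on the length of the bounded-above truncations, using Lemma \ref{lemma:resolution_flasque} to propagate flasqueness and acyclicity) shows that $\Gamma$ applied to $\Mm^{\bullet}$ computes the derived global sections $R\Gamma(\Mm^{\bullet})$ and lands in $\D^-(R)$.

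For conservativity, suppose $\Mm^{\bullet} \in \D^-(\A)$ satisfies $\Gamma(\Mm^{\bullet}) \simeq 0$ in $\D^-(R)$; I must show $\Mm^{\bullet} \simeq 0$, equivalently that every cohomology sheaf $\Hc^i(\Mm^{\bullet})$ vanishes. Since vanishing of a sheaf is a local condition and the hypotheses are local, I would reduce to showing $\Gamma(\Mm^{\bullet}|_U) \simeq 0$ for each $U$ in a suitable open cover — but the subtlety is that $\Gamma(\Mm^{\bullet}) \simeq 0$ globally does not immediately imply $\Gamma(\Mm^{\bullet}|_U) \simeq 0$. Instead I would work globally: because $X$ is quasi-compact and each $\Hc^i(\Mm^{\bullet})$ is flasque (Corollary \ref{cor:cohomology_flasque}), the hypercohomology spectral sequence $E_2^{p,q} = H^p(X, \Hc^q(\Mm^{\bullet})) \Rightarrow H^{p+q}\Gamma(\Mm^{\bullet})$ degenerates to $E_2^{0,q} = \Gamma(X,\Hc^q(\Mm^{\bullet}))$, so $H^q\Gamma(\Mm^{\bullet}) \cong \Gamma(X, \Hc^q(\Mm^{\bullet}))$. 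Hence $\Gamma(\Mm^{\bullet}) \simeq 0$ forces $\Gamma(X, \Hc^q(\Mm^{\bullet})) = 0$ for all $q$. It then remains to observe that a flasque sheaf $\Gg$ on $X$ with $\Gamma(X,\Gg) = 0$ need not itself vanish in general — so this is exactly where I expect the main obstacle to lie.

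The resolution of that obstacle should come from the module structure: each $\Hc^q(\Mm^{\bullet})$ is not merely a flasque abelian sheaf but a quasi-coherent $\A$-module (locally a cokernel of a map of free $\A$-modules), and $\A$ is l\^ache. By Lemma \ref{lemma:loc_fg} (or rather its method of proof), a flasque $\A$-module that is locally finitely generated is globally generated by finitely many global sections; if $\Gamma(X,\Hc^q(\Mm^{\bullet})) = 0$ then there are no nonzero global sections, so the zero map $\A^0 \to \Hc^q(\Mm^{\bullet})$ is surjective, forcing $\Hc^q(\Mm^{\bullet}) = 0$. The one gap to fill is the local finite generation of the cohomology sheaves: since $\Mm^{\bullet}$ is locally equivalent to a bounded-above complex of projective $\A$-modules, locally its cohomology in each degree is a subquotient of finitely generated projectives, hence locally finitely generated. (If genuine local finite generation fails in full generality, one falls back on writing $\Hc^q$ as a filtered colimit of locally finitely generated flasque $\A$-submodules, to each of which the preceding argument applies, and concludes that the colimit has no nonzero global sections, hence is zero.) Assembling these pieces gives both well-definedness and conservativity, completing the proof.
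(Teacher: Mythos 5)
Your treatment of well-definedness matches the paper's: the cohomology sheaves $\Hc^i(\Mm^{\bullet})$ are flasque by Corollary \ref{cor:cohomology_flasque}, so $\Gamma$ commutes with taking cohomology and quasi-compactness gives boundedness above. The problem is in your conservativity argument. You correctly reduce to $\Gamma(X,\Hc^q(\Mm^{\bullet})) = 0$ for all $q$, but then you assert that ``a flasque sheaf $\Gg$ on $X$ with $\Gamma(X,\Gg)=0$ need not itself vanish'' and identify this as the main obstacle. That is false: if $\Gg$ is flasque and $\Gg \neq 0$, then $\Gg(U) \neq 0$ for some open $U$, and surjectivity of the restriction $\Gg(X) \twoheadrightarrow \Gg(U)$ forces $\Gg(X) \neq 0$. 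This one-line observation is exactly how the paper concludes, and it requires no module structure, no quasi-coherence, and no finite generation.

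The workaround you build in place of this observation does not go through. In this paper $\P(\A)$ is the idempotent completion of \emph{all} free $\A$-modules $\A^{\oplus I}$, with $I$ possibly infinite (see the proof of Lemma \ref{lemma:c2_projective}, where $I$ is the set of all pairs $(U,s)$), so the cohomology sheaves of an object of $\D^-(\P(\A))$ are not locally finitely generated in general, and Lemma \ref{lemma:loc_fg} does not apply. Your fallback --- writing $\Hc^q$ as a filtered colimit of locally finitely generated flasque $\A$-submodules --- is not justified either: subsheaves of flasque sheaves need not be flasque, and $\QCoh(\A)$ is not even abelian here (Remark \ref{rmk:qcoh}), so there is no reason such an exhaustion by flasque finitely generated submodules exists. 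Deleting this entire detour and replacing it with the surjectivity-of-restriction argument above repairs the proof and recovers the paper's.
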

\begin{proof}
Pick a complex $\Mm^{\bullet} \in \D(\Mod_{\fl}(\A))$ representing an object of $\D^-(\A)$. By definition we have $\Gamma(\Mm^{\bullet}) = (\Gamma(\Mm)^{\bullet})$. Moreover, since the cohomology sheaves $\Hc^i(\Mm^{\bullet})$ are flasque (Corollary \ref{cor:cohomology_flasque}), we have that $\Hc^i(\Gamma(\Mm^{\bullet}))  \simeq \Gamma(\Hc^i(\Mm^{\bullet}))$. In particular we see that $\Gamma(\Mm^{\bullet})$ is an object of $D^-(R)$ (since $X$ is quasi-compact). If $\Mm \neq 0$, there exists $\Hc^i(\Mm^{\bullet}) \neq 0$. Thus, since it is a flasque sheaf, its global sections must be non-zero. We conclude that $\Gamma$ is indeed a conservative functor, as asserted.
\end{proof}

We also have a localisation functor.

\begin{definition}\label{defi:Loc}
For $\A$ a l\^ache sheaf of algebras on $X$, we have an exact functor between exact categories $- \otimes_R \A \colon \P(R) \rightarrow \P(\A) \hookrightarrow \Mod_{\fl}(\A)$. The induced exact functor between derived $\infty$-categories will be denoted by $$\Loc\colon \D^-(R) \rightarrow \D^-(\A).$$
\end{definition}
\begin{proposition}
If $X$ is quasi-compact and $\A$ is l\^ache, then $\Gamma\colon \D^-(\A) \rightarrow \D^-(R)$ is an equivalence of $\infty$-categories, with inverse equivalence $\Loc$.
\end{proposition}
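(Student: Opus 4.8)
The plan is to realise $\Loc$ as a left adjoint of $\Gamma$ and then invoke the formal principle that \emph{a conservative functor admitting a fully faithful left adjoint is an equivalence, with that adjoint as its inverse}. Since $\Gamma\colon \D^-(\A) \to \D^-(R)$ is already known to be conservative (Lemma \ref{lemma:Gamma_conservative}), only two things remain to be checked: (i) the pair $(\Loc,\Gamma)$ underlies an adjunction of $\infty$-categories, and (ii) $\Loc$ is fully faithful, equivalently the unit $\eta\colon \id_{\D^-(R)} \to \Gamma\circ\Loc$ is an equivalence. For (i) I would observe that $\Loc = -\otimes_R\A$ and $\Gamma$ are the pullback and pushforward along the morphism of ringed spaces $\pi\colon (X,\A)\to(\mathrm{pt},R)$; on the exact categories $\P(R)$ and $\Mod_{\fl}(\A)$ both functors are exact — for $\P(R)$ because every short exact sequence there splits, and for $\Gamma$ because flasque sheaves are $\Gamma$-acyclic — so $\pi^{*}$ requires no derivation and $R\pi_{*}$ restricted to flasque complexes is simply $\Gamma$. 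One then checks routinely that the classical adjunction $\pi^{*}\dashv\pi_{*}$ descends to $\Loc\dashv\Gamma$ on $\D^-$; it is the l\^ache hypothesis, via Corollary \ref{cor:cohomology_flasque} and the fact that free $\A$-modules are flasque, that keeps everything inside the stated $\infty$-categories.

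The substance of the argument is (ii). Since $\Loc$ and $\Gamma$ are exact functors of stable $\infty$-categories and $\D^-(R)$ is resolved by $\P(R)$, it suffices to show $\eta_P\colon P \to \Gamma(\Loc(P)) = \Gamma(X,\, P\otimes_R\A)$ is an equivalence for $P\in\P(R)$; as the unit is computed termwise on bounded-above complexes of projectives, this further reduces to a single object $P$ of $\P(R)$. By naturality of $\eta$ and the fact that every $P\in\P(R)$ is a retract of a free module, it is enough to treat $P = R^{\oplus I}$, where the claim is that the canonical map $R^{\oplus I} \to \Gamma(X,\A^{\oplus I})$ is an isomorphism. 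Injectivity is automatic, the coordinate projections being jointly monic since $\bigoplus_i\A$ embeds into $\prod_i\A$. For surjectivity, every local section of the sheaf $\A^{\oplus I}$ is, locally, supported on a finite subset of $I$; covering $X$ by finitely many opens — this is where quasi-compactness of $X$ enters essentially — any global section of $\A^{\oplus I}$ is supported on a finite $F\subseteq I$, hence lies in $\Gamma(X,\A^{\oplus F}) = R^{\oplus F} \subseteq R^{\oplus I}$. I expect this elementary computation, together with making sure the abelian-categorical adjunction and flasque resolutions pass cleanly to the derived $\infty$-categories, to be the only genuinely non-formal part; the main obstacle is really the bookkeeping of (i), since (ii) is short.

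Granting (i) and (ii), the conclusion is purely formal. The unit $\eta$ is an equivalence by (ii), so $\Loc$ is fully faithful. For any $\Mm\in\D^-(\A)$ the triangle identity gives $\Gamma(\epsilon_{\Mm})\circ\eta_{\Gamma(\Mm)} = \id_{\Gamma(\Mm)}$; as $\eta_{\Gamma(\Mm)}$ is an equivalence, so is $\Gamma(\epsilon_{\Mm})$, and since $\Gamma$ is conservative (Lemma \ref{lemma:Gamma_conservative}) the counit $\epsilon_{\Mm}\colon \Loc(\Gamma(\Mm))\to\Mm$ is an equivalence. Hence $\Loc$ is essentially surjective as well, and $\Gamma$ and $\Loc$ are mutually inverse equivalences of $\infty$-categories, with $\eta$ and $\epsilon$ as the witnessing natural equivalences.
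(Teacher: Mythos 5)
Your proof is correct and follows essentially the same route as the paper: both rest on the adjunction $\Loc\dashv\Gamma$, the identification $\Gamma\circ\Loc\simeq\id$ on projective generators (your explicit computation $\Gamma(X,\A^{\oplus I})\simeq R^{\oplus I}$ via quasi-compactness is exactly what the paper's commutative triangle $\P(R)\to\Mod_{\fl}(\A)\to\P(R)$ leaves implicit), and conservativity of $\Gamma$ to upgrade the counit to an equivalence. The only cosmetic difference is that you package essential surjectivity through the triangle identity, whereas the paper exhibits the counit concretely via a projective replacement of $\Gamma(\Mm^{\bullet})$.
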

\begin{proof}
There is a commutative triangle
\[
\xymatrix{
\P(R) \ar[r]^-{-\otimes_R \A} \ar[rd]_{\id} & \Mod_{\fl}(\A) \ar[d]^{\Gamma} \\
& \P(R) 
}
\]
of exact functors, inducing a natural equivalence of functors $\id_{\D^-(\A)} \simeq \Loc \circ \Gamma$. We claim that we also have an equivalence $\Gamma \circ \Loc \simeq \id_{\D^-(R)}$. To see this, consider $\Mm^{\bullet} \in \D^-(\A)$. We will show that $\Mm^{\bullet}$ belongs to the essential image of $\Loc$. Let $g\colon P^{\bullet} \rightarrow \Gamma(\Mm^{\bullet})$ be a projective replacement of $\Gamma(\Mm^{\bullet})$, given by an actual morphism between chain complexes in $\Mod(R)$. By the adjunction between $- \otimes_R \A$ and $\Gamma$, this yields a morphism 
$f\colon \Loc(P^{\bullet}) \simeq \Loc(\Gamma(\Mm^{\bullet})) \rightarrow \Mm^{\bullet}$ in $\D^-(\A)$. Since $\Gamma(f) = g$ is a quasi-isomorphism, and $\Gamma$ is conservative by Lemma \ref{lemma:Gamma_conservative}, we see that $f$ is an equivalence. This implies that every $\Mm^{\bullet} \in \D^-(\A)$ is in fact equivalent to an object of $\D^-(\P(\A))$. Therefore we have a natural equivalence $\Loc \circ \Gamma \simeq \id_{\D^-(\A)}$ as a consequence of the commutative diagram
\[
\xymatrix{
\P(\A) \ar[r]^-{\Gamma} \ar[rd]_{\id} & \P(R) \ar[d]^{-\otimes_R \A} \\
& \P(\A) 
}
\]
of exact functors, and Lemma \ref{lemma:c2_projective}, which asserted $\D^-(\P(\A)) \hookrightarrow \D^-(\Mod_{\fl}(\A))$.
\end{proof}

\begin{corollary}\label{cor:pre_eq_lache}
The functor $\Gamma\colon \D^-(\A) \rightarrow \D^-(R)$ can be promoted to a symmetric monoidal equivalence of symmetric monoidal $\infty$-categories.
\end{corollary}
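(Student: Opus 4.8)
The preceding proposition already exhibits $\Gamma$ and $\Loc$ as mutually inverse equivalences of the underlying $\infty$-categories, so the only remaining task is to equip both sides with symmetric monoidal structures and to upgrade $\Loc$ (hence $\Gamma$) to a symmetric monoidal functor. The plan is to arrange matters so that $\Loc$ becomes \emph{strong} symmetric monoidal and then to invoke the formal principle that a homotopy inverse of a symmetric monoidal functor whose underlying functor is an equivalence of $\infty$-categories carries a canonical symmetric monoidal structure making it a symmetric monoidal equivalence; this is a consequence of the description of symmetric monoidal $\infty$-categories as commutative algebra objects of $\widehat{\Cat}_{\infty}$ and the conservativity of the forgetful functor on equivalences (see \cite{Lurie:ha}).

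First I would fix the monoidal structures. On $\D^-(R)$ one takes the derived tensor product $\otimes_R^{\mathbb{L}}$ with unit $R$. On $\A$-modules one has the ordinary tensor product $\otimes_{\A}$, and here the l\^ache hypothesis is essential: applying Definition \ref{defi:lache} to a zero map out of a free $\A_U$-module shows that every free $\A$-module is flasque, and since direct summands of flasque sheaves are flasque, every object of $\P(\A)$ is flasque; moreover $\P(\A)$ is closed under $\otimes_{\A}$ (a summand of $\A^{\oplus I}\otimes_{\A}\A^{\oplus J}\simeq\A^{\oplus I\times J}$ is again projective), and every projective $\A$-module is flat, so $\otimes_{\A}$ restricts to a bi-exact symmetric monoidal structure on $\P(\A)$ that requires no derived corrections. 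This induces a symmetric monoidal structure on $\D^-(\P(\A))$; since the proof of the preceding proposition shows that every object of $\D^-(\A)$ is equivalent to an object of $\D^-(\P(\A))$, this transports to a symmetric monoidal structure on $\D^-(\A)$ with unit $\A$.

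Next I would check that $\Loc = -\otimes_R\A\colon\P(R)\to\P(\A)$ is strong symmetric monoidal: the projection formula furnishes natural isomorphisms $(R^{\oplus I}\otimes_R\A)\otimes_{\A}(R^{\oplus J}\otimes_R\A)\simeq\A^{\oplus I\times J}\simeq(R^{\oplus I}\otimes_R R^{\oplus J})\otimes_R\A$ together with $\Loc(R)=\A$, and these assemble into a symmetric monoidal refinement of the exact functor $\P(R)\to\P(\A)$, hence of $\Loc\colon\D^-(R)\to\D^-(\A)$. Since the underlying functor of $\Loc$ is an equivalence, the homotopy inverse $\Gamma$ acquires a canonical symmetric monoidal structure and is thereby promoted to a symmetric monoidal equivalence; concretely, the lax structure maps $\Gamma(\Mm)\otimes_R\Gamma(\Nc)\to\Gamma(\Mm\otimes_{\A}\Nc)$ that $\Gamma$ carries as the right adjoint of the strong symmetric monoidal functor $\Loc$ are equivalences, as may be checked after reducing $\Mm$ and $\Nc$ to objects of $\P(\A)$, where they are the isomorphisms above. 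The step requiring the most care is the first one — producing any symmetric monoidal structure on $\D^-(\A)$ at all, given that $\QCoh(\A)$ need not be abelian (Remark \ref{rmk:qcoh}); it is precisely the l\^ache hypothesis, via the flasqueness of free (hence projective) $\A$-modules and the stability of $\P(\A)$ under $\otimes_{\A}$, that lets the tensor product be computed inside the well-behaved category $\P(\A)$ without derived corrections destroying exactness.
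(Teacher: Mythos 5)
Your proposal is correct and follows essentially the same route as the paper, whose entire proof is that $\Loc$ lifts to a symmetric monoidal functor because $-\otimes_R\A\colon \P(R)\to\P(\A)$ is symmetric monoidal, and hence the inverse $\Gamma$ canonically inherits a symmetric monoidal structure. Your additional care in constructing the tensor structure on $\D^-(\A)$ via $\P(\A)$ (using flasqueness and flatness of projective $\A$-modules) fills in details the paper leaves implicit, but does not change the argument.
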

\begin{proof}
The functor $\Loc$ lifts to a symmetric monoidal functor, since $-\otimes_R \A$ is a symmetric monoidal functor $P(R) \to P(\A)$. Therefore also the inverse functor $\Gamma$ can be canonically lifted to a symmetric monoidal functor.
\end{proof}

\begin{corollary}\label{cor:eq_lache}
The functor $\Gamma$ restricts to a symmetric monoidal equivalence of symmetric monoidal stable $\infty$-categories
$$\Perf(\A)_{\otimes} \rightarrow \Perf(R)_{\otimes},$$
with inverse $\Loc$.
\end{corollary}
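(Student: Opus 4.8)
The plan is to bootstrap from Corollary~\ref{cor:pre_eq_lache}, which already furnishes a symmetric monoidal equivalence $\Gamma\colon \D^-(\A)_{\otimes} \simeq \D^-(R)_{\otimes}$ with inverse $\Loc$. A symmetric monoidal equivalence preserves and reflects dualizable objects, and the dualizable objects of $\D^-(R)$ are precisely the perfect complexes $\Perf(R)$ (the standard fact for a commutative ring; perfect complexes are bounded, so nothing changes on passing to $\D^-$). Hence the corollary will follow as soon as $\Gamma$ and $\Loc$ are shown to restrict to functors between $\Perf(\A)$ and $\Perf(R)$, the symmetric monoidal refinement then being inherited directly from Corollary~\ref{cor:pre_eq_lache}. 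If one takes $\Perf(\A)$ to be the subcategory of dualizable objects of $\D^-(\A)$ there is nothing further to check; I will instead argue with the geometric description, in which $\Perf(\A)$ consists of the objects of $\D^-(\A)$ that are, locally on $X$, equivalent to bounded complexes of finitely generated free $\A$-modules.

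First I would settle the easy direction: $\Loc = -\otimes_R \A$ carries a bounded complex of finitely generated free $R$-modules to one of finitely generated free $\A$-modules, and, being an exact functor, preserves idempotent splittings, so $\Loc(\Perf(R)) \subseteq \Perf(\A)$. For the converse, take $\Mm^{\bullet} \in \Perf(\A)$; by Corollary~\ref{cor:pre_eq_lache} we have $\Mm^{\bullet} \simeq \Loc(\Gamma(\Mm^{\bullet}))$, so it suffices to show that $\Gamma(\Mm^{\bullet})$ is perfect over $R$. Since $X$ is quasi-compact, I may cover it by finitely many opens on which $\Mm^{\bullet}$ is a bounded complex of finitely generated free modules; consequently $\Mm^{\bullet}$ has globally bounded Tor-amplitude over $\A$, and its cohomology sheaves are bounded, flasque (Corollary~\ref{cor:cohomology_flasque}) and locally finitely generated, hence globally finitely generated by Lemma~\ref{lemma:loc_fg}. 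Exploiting l\^acheness — which makes every kernel of a map of finitely generated free $\A$-modules flasque, and therefore, again by Lemma~\ref{lemma:loc_fg}, globally finitely generated — I would build, working downward from the top cohomology, a bounded-above complex $P^{\bullet}$ of finitely generated free $\A$-modules quasi-isomorphic to $\Mm^{\bullet}$; applying $\Gamma$ yields a bounded-above complex of finitely generated free $R$-modules, and transporting the Tor-amplitude bound across the symmetric monoidal equivalence $\Gamma$ shows that an appropriate brutal truncation of $\Gamma(P^{\bullet})$ is perfect and still computes $\Gamma(\Mm^{\bullet})$.

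The hard part will be precisely this last construction. Because the rings of ad\`eles (and l\^ache sheaves of algebras in general) are far from Noetherian, boundedness together with finite generation of cohomology does not yield pseudo-coherence for free, so one cannot simply invoke Noetherian descent of perfectness; l\^acheness is genuinely needed. Its role is to upgrade "locally finitely generated" to "globally finitely generated" uniformly, which is what lets the finite free resolution be assembled globally on $X$ rather than only on the members of the cover, and it is this global resolution that descends through $\Gamma$ to exhibit $\Gamma(\Mm^{\bullet})$ as a perfect $R$-complex. Granting this, $\Gamma$ and $\Loc$ restrict to mutually inverse equivalences $\Perf(\A) \simeq \Perf(R)$ (being restrictions of mutually inverse equivalences of $\infty$-categories), and the symmetric monoidal structures restrict along with them, giving $\Perf(\A)_{\otimes} \simeq \Perf(R)_{\otimes}$ as claimed.
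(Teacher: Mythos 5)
You were one step from the paper's actual proof in your opening paragraph and then walked away from it. The paper's argument is precisely the dualisability route you mention and set aside: it observes that a geometrically perfect $\Mm^{\bullet}$ is bounded (by quasi-compactness of $X$), that both $\Mm^{\bullet}$ and $(\Mm^{\bullet})^{\vee}$ therefore lie in $\D^-(\A)$, so that $\Mm^{\bullet}$ is dualisable in $\D^-(\A)_{\otimes}$; since the symmetric monoidal equivalence of Corollary \ref{cor:pre_eq_lache} preserves dualisable objects and the dualisable objects of $\D^-(R)$ are the perfect complexes, $\Gamma(\Mm^{\bullet}) \in \Perf(R)$. The only content beyond your "nothing further to check" remark is the containment of the geometric $\Perf(\A)$ in the dualisable objects, and that is a two-line verification. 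Your treatment of $\Loc$ agrees with the paper's.

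The alternative you actually pursue --- building a global bounded-above resolution by finitely generated free $\A$-modules and then truncating --- is a legitimate strategy but, as sketched, has two genuine soft spots, both located exactly where you flag "the hard part". First, l\^acheness gives you that the syzygy kernels in your resolution are flasque, but Lemma \ref{lemma:loc_fg} needs \emph{local finite generation} as an input before it can upgrade this to global finite generation; that local finite generation does not come from l\^acheness at all, but must be extracted from the local perfectness of $\Mm^{\bullet}$ by a comparison-of-resolutions argument (the global syzygy differs locally from the local one by a projective summand), and this step is not in your sketch. Second, "transporting the Tor-amplitude bound across $\Gamma$" is not immediate: Tor-amplitude over $R$ is tested against discrete $R$-modules $M$, and $\Loc(M) = M \otimes_R \A$ is a derived tensor product that need not be discrete, so the monoidal equivalence does not hand you the bound over $R$ from the local bound over $\A$ without a further argument. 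Both gaps are fillable, but each costs more than the entire dualisability proof; the lesson is that for this statement the abstract characterisation of perfection is strictly cheaper than the resolution-theoretic one.
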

\begin{proof}
This follows from the equivalence $\D^-(\A)_{\otimes}  \simeq \D^-(R)_{\otimes}$ once we have shown that the mutually inverse functors $\Gamma$ and $\Loc$ induce functors $\Gamma \colon \Perf(\A) \rightarrow \Perf(R)$, and $\Loc\colon \Perf(R) \rightarrow \Perf(A)$. Since $\Loc$ is induced by $- \otimes_R A$ it is clear that $\Loc$ sends perfect complexes to perfect complexes. 

In order to show that $\Gamma$ preserves perfect complexes, one uses that they are dualisible in $\D^-(\A)_{\otimes}$. The details are as follows. We have shown in Lemma \ref{lemma:c2} that the functor $\D^+(\Mod_{\fl}(\A)) \hookrightarrow \DMod^+(\A)$ is fully faithful. In particular, let $\Mm^{\bullet} \in \Perf(\A) \subset \DMod(\A)$ be a chain complex, which is locally equivalent to bounded complexes of projective $\A$-modules. Since $X$ is quasi-compact, every perfect complex is automatically bounded, and therefore admits a bounded below resolution by flasque $\A$-modules. Therefore, $\Mm^{\bullet} \in \D^-(\A)$. Moreover, $(\Mm^{\bullet})^{\vee}$ also belongs to $\D^-(\A)$, and hence $\Mm^{\bullet}$ is dualisable in $\D^-(\A)$. We conclude that $\Gamma(\Mm^{\bullet})$ is a dualisable object in $\D^-(R)$, that is, $\Gamma(\Mm^{\bullet}) \in \Perf(R)$. Since $\Gamma$ and $\Loc$ are symmetric monoidal functors, they preserve perfect complexes. 
\end{proof}

\section{Adelic descent for perfect complexes and maps to stacks}

This section is concerned with proving the Adelic Descent Theorem, and exploring its consequences. We refer the reader to Definition \ref{defi:adeles} for Beilinson's sheaf of ad\`eles.

\begin{theorem}[Adelic Descent]\label{thm:adelic_descent}
For every Noetherian scheme $X$ we have an equivalence of symmetric monoidal $\infty$-categories
$$\Perf(X)_{\otimes}  \simeq |\Perf(\Ab_X^{\bullet})_{\otimes}|,$$
where $\Ab_X^{\bullet}$ denotes the co-simplicial ring of Beilinson-Parshin ad\`eles, and $|\Perf(\Ab_X^{\bullet})_{\otimes}|$ denotes the totalisation of the co-simplicial object $\Perf(\Ab_X^{\bullet})_{\otimes}$ in the $\infty$-category of small monoidal stable $\infty$-categories.
\end{theorem}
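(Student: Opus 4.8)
The plan is to reduce the statement to the two ingredients highlighted in the introduction, namely Theorem~\ref{thm:beilinson} (Beilinson's flasque resolution, giving the cohomological/fully-faithful half) and Corollary~\ref{cor:eq_lache} (the l\^ache equivalence $\Perf(\A)_\otimes \simeq \Perf(\Gamma(\A))_\otimes$). Concretely, I would factor the desired equivalence as
\[
\Perf(X)_{\otimes} \;\xrightarrow{\;\sim\;}\; |\Perf(\A_X^{\bullet})_{\otimes}| \;\xrightarrow{\;\sim\;}\; |\Perf(\Ab_X^{\bullet})_{\otimes}|,
\]
where $\A_X^{\bullet}$ is Beilinson's co-simplicial \emph{sheaf} of algebras on $|X|$. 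The second equivalence is immediate: by Corollary~\ref{cor:adeles_lache} each $\A_X^k$ is a l\^ache sheaf of algebras with $\Gamma(\A_X^k) = \Ab_X^k$, so Corollary~\ref{cor:eq_lache} gives a symmetric monoidal equivalence $\Perf(\A_X^k)_{\otimes} \simeq \Perf(\Ab_X^k)_{\otimes}$ degreewise; one checks these are compatible with the co-simplicial structure maps (which are induced by the ring maps $\Ab_X^k \to \Ab_X^{k+1}$, equivalently by the sheaf maps $\A_X^k \to \A_X^{k+1}$, via base change, and $\Loc$ commutes with base change), hence they assemble to an equivalence of co-simplicial $\infty$-categories and thus of totalisations.

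The substance is therefore the first equivalence $\Perf(X)_{\otimes} \simeq |\Perf(\A_X^{\bullet})_{\otimes}|$. I would first construct the functor: the augmentation $\Oo_X \to \A_X^{\bullet}$ of Theorem~\ref{thm:beilinson} makes each $\A_X^k$ an $\Oo_X$-algebra, so $- \otimes_{\Oo_X} \A_X^k$ sends $\Perf(X)$ to $\Perf(\A_X^k)$ compatibly in $k$, yielding $\Perf(X)_{\otimes} \to |\Perf(\A_X^{\bullet})_{\otimes}|$, which is manifestly symmetric monoidal. \textbf{Full faithfulness} is the cohomological-descent half: for $M, N \in \Perf(X)$, mapping spaces in the totalisation are computed by the Bousfield--Kan/cosimplicial limit $\Map_{|\Perf(\A_X^\bullet)|}(M\otimes\A_X^\bullet, N\otimes\A_X^\bullet) \simeq \lim_{[k]\in\Delta} \Map_{\Perf(\A_X^k)}(M\otimes\A_X^k, N\otimes\A_X^k)$, and since $M$ is dualisable this reduces to $\lim_{[k]} R\Gamma(X, \A_X^k \otimes_{\Oo_X} \mathcal{H}om(M,N))$; by Theorem~\ref{thm:beilinson} (applied to the perfect, hence quasi-coherent up to shift, complex $\mathcal{H}om(M,N)$, or rather to its cohomology sheaves and a spectral-sequence / $t$-structure argument) this totalisation recovers $R\Gamma(X, \mathcal{H}om(M,N)) \simeq \Map_{\Perf(X)}(M,N)$. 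One must be slightly careful that Beilinson's theorem is stated for a single quasi-coherent sheaf; I would extend it to bounded complexes of quasi-coherent sheaves by dévissage along the $t$-structure, using that $\A_X^k(-)$ is exact and commutes with the relevant limits.

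The \textbf{main obstacle}, and the part deferred to the bulk of the paper, is \textbf{essential surjectivity} of $\Perf(X)_{\otimes} \to |\Perf(\A_X^{\bullet})_{\otimes}|$: given a cartesian family $M^\bullet$ of perfect $\A_X^{\bullet}$-modules, one must produce a perfect complex on $X$ glueing to it — an \emph{effectivity of descent data} statement. The strategy I would pursue is induction on (Krull) dimension of $X$ combined with the recursive structure of Beilinson's ad\`eles: a cartesian $\A_X^{\bullet}$-module restricts, on the one hand, to data over the generic points (captured by the ``horizontal'' face), and on the other to data supported on lower-dimensional closed subsets, and the formal-glueing philosophy (in the spirit of Beauville--Laszlo and Ben-Bassat--Temkin, here used simultaneously at all closed points) should let one reconstruct the sheaf by glueing the localisation with the formal completions. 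Here the key technical inputs are the l\^ache property (so that descent data over $\A_X^k$ can be rigidified via $\Gamma$ to module-level data, using Corollary~\ref{cor:eq_lache}) and a careful local analysis of $\A_{X,\sigma}$ along simplices $\sigma$, reducing to a Beauville--Laszlo-type square in the one-simplex case. Once effectivity is established, the functor is an equivalence of $\infty$-categories, and being symmetric monoidal it is automatically an equivalence of symmetric monoidal $\infty$-categories, completing the proof together with the l\^ache step above.
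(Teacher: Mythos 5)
Your overall architecture coincides with the paper's: the proof is factored as $\Perf(X)_{\otimes} \simeq |\Perf(\A_X^{\bullet})_{\otimes}| \simeq |\Perf(\Ab_X^{\bullet})_{\otimes}|$, with the second equivalence coming degreewise from the l\^ache property (Corollaries \ref{cor:adeles_lache} and \ref{cor:eq_lache}), and the first being the substantive sheaf-level statement (Proposition \ref{prop:adelic_descent}). Your full-faithfulness argument via dualisability and Theorem \ref{thm:beilinson} is also essentially what the paper does, though it packages it differently: the paper constructs a right adjoint $\int_X$ to $-\otimes_{\Oo_X}\A_X^{\bullet}$ by the Adjoint Functor Theorem, proves a projection formula, and reduces the unit being an equivalence to the single computation $\int_X \A_X^{\bullet} \simeq |\A_X^{\bullet}| \simeq \Oo_X$, which is exactly Beilinson's resolution theorem. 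Either route works.

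The genuine gap is essential surjectivity, which you correctly identify as the main obstacle but then only gesture at. Your proposed strategy (induction on Krull dimension, reduction to a Beauville--Laszlo-type square in the one-simplex case) is not what the paper does and, as stated, is not a workable plan: the adelic cosimplicial ring involves all chains of specialisations simultaneously and does not decompose into two-term formal-glueing squares, and an induction on dimension gives you no handle on a cartesian module $M^{\bullet}$ whose level-zero piece lives over the infinite product $\prod_{x}\widehat{\Oo}_x$. The paper instead runs a Noetherian induction on the closed support where $\int_X M^{\bullet}$ fails to be perfect, and the steps it needs are precisely the ones absent from your sketch: (i) conservativity of $\int_X$ (Proposition \ref{prop:conservative}), which requires Bhatt's theorem on perfect complexes over infinite product rings (global boundedness) together with Thomason--Trobaugh to pass between $\Perf_{\{x\}}(X)$ and $\Perf_{\{x\}}(\widehat{\Oo}_x)$; (ii) generic perfectness of $\int_X M^{\bullet}$ via an explicit simplicial homotopy contracting $|Z|_{\bullet}$ onto a generic point (Lemma \ref{lemma:homotopy_equivalence}); (iii) a spreading-out lemma (Lemma \ref{lemma:generic_zero}) showing that vanishing at a generic point propagates to an open neighbourhood, which uses the filtered-colimit description of perfect complexes; and (iv) the $K$-theoretic obstruction to extending a perfect complex from an open subset, circumvented by replacing $M^{\bullet}$ with $M^{\bullet}\oplus\Sigma M^{\bullet}$. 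Without some substitute for these ingredients your proof of essential surjectivity does not go through.
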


Let $S^{\bullet}$ be a co-simplicial ring. Applying the functor $\Perf\colon \Rings \rightarrow \inftyCat$, assigning to a ring $R$ the stable $\infty$-category of perfect $R$-complexes, we obtain a co-simplicial object in stable $\infty$-categories, $\Perf(S^{\bullet})$. The objects of the limit $\infty$-category $|\Perf(S^{\bullet})|$ will also be referred to as \emph{cartesian} co-simplicial perfect $S^{\bullet}$-complexes. This formulation is inspired by a co-simplicial formulation of Grothendieck's faithfully flat descent theory. We will briefly review this viewpoint, in order to motivate the following considerations. More details can be found in \cite[Tag 039Y]{stacks-project}. For $R \rightarrow S$ a ring homomorphism, we may define a co-simplicial ring $S^{\bullet}$, where $S^k$ can be identified with tensoring $S$ with itself $(k+1)$-times; $S^k = S \otimes_R \cdots \otimes_R S$. A co-simplicial $S^{\bullet}$-module consists of a co-simplicial $R$-module $M^{\bullet}$, such that $M^k$ is an $S^k$-module, compatibly with the co-simplicial structure on $M^{\bullet}$ and $S^{\bullet}$. We say that $M^{\bullet}$ is a cartesian $S^{\bullet}$-module, if for every $[k] \rightarrow [n]$ the induced map $M^k \otimes_{R^k} R^n \rightarrow M^n$ is an isomorphism of $R^n$-modules. Classical descent theory asserts that for $R \rightarrow S$ faithfully flat, we have an equivalence of categories between $R$-modules and cartesian $S^{\bullet}$-modules.

\subsection{Proof of the Adelic Descent Theorem}

The Adelic Descent Theorem \ref{thm:adelic_descent} follows from an analogous assertion for sheaves of symmetric monoidal $\infty$-categories.

\begin{proposition}\label{prop:adelic_descent}
For every Noetherian scheme $X$ we have an equivalence of sheaves of symmetric monoidal $\infty$-categories
$$- \otimes \A_X^{\bullet} \colon \Perf(\Oo_X)_{\otimes}  \simeq |\Perf(\A_X^{\bullet})_{\otimes}|,$$
where $\A_X^{\bullet}$ denotes the co-simplicial sheaf of algebras of Beilinson-Parshin ad\`eles.
\end{proposition}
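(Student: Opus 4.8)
The plan is to prove the statement locally on $X$ and to reduce everything to Beilinson's resolution together with the \emph{l\^ache} machinery of Section 2. Since both sides are sheaves of symmetric monoidal $\infty$-categories on $|X|$, it suffices to produce the equivalence over a basis of open affines $U = \Spec R \subset X$, compatibly with restriction. Fix such a $U$ and write $A^{\bullet} = \A_U^{\bullet}(\Oo_U)$ for the co-simplicial sheaf of ad\`elic algebras on $|U|$, and $\Ab_U^{\bullet} = \Gamma(U, A^{\bullet})$ for the associated co-simplicial ring. First I would invoke Corollary \ref{cor:eq_lache}: in each co-simplicial degree $k$, the sheaf of algebras $\A_U^k = \A_{U,|U|_k}(\Oo_U)$ is l\^ache (Corollary \ref{cor:adeles_lache}), so $\Gamma$ induces a symmetric monoidal equivalence $\Perf(\A_U^k)_{\otimes} \simeq \Perf(\Ab_U^k)_{\otimes}$ with inverse $\Loc$. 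These equivalences are natural in the co-simplicial structure maps (they are intertwined by the base-change functors $- \otimes_{\Ab_U^k}\Ab_U^{k'}$, which on the sheaf side correspond to $-\otimes_{\A_U^k}\A_U^{k'}$, because $\Loc$ is defined by tensoring). Passing to totalisations gives $|\Perf(\A_U^{\bullet})_{\otimes}| \simeq |\Perf(\Ab_U^{\bullet})_{\otimes}|$, which is exactly the second equivalence advertised in the introduction; this reduces the Proposition to constructing the first equivalence $\Perf(\Oo_U)_{\otimes} \simeq |\Perf(\A_U^{\bullet})_{\otimes}|$.

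To build that equivalence I would use the augmentation $\Oo_U \to \A_U^{\bullet}(\Oo_U)$ of Theorem \ref{thm:beilinson}. Base change along the augmentation gives a symmetric monoidal functor $-\otimes_{\Oo_U}\A_U^{\bullet}\colon \Perf(\Oo_U)_{\otimes} \to |\Perf(\A_U^{\bullet})_{\otimes}|$, well defined because for a perfect complex $M$ on the affine $U$ the complexes $M\otimes_{\Oo_U}\A_U^k$ are perfect over $\A_U^k$ and the cartesian (Beck--Chevalley) condition holds by flatness of the transition maps (Lemma \ref{lemma:adeles_flat} and Lemma \ref{lemma:local_factors}). Full faithfulness of this functor is the "cohomological descent" half: for perfect complexes $M, N$ on $U$ one has $\Map_{\Perf(\Oo_U)}(M,N) = R\Gamma(U, M^{\vee}\otimes N)$, and by Beilinson's Theorem \ref{thm:beilinson} applied to the perfect (hence quasi-coherent, bounded) complex $M^{\vee}\otimes N$ this is computed by the totalisation $|\Ab_U^{\bullet}(M^{\vee}\otimes N)|$, which one identifies with the mapping space in the totalised category $|\Perf(\A_U^{\bullet})_{\otimes}|$. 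Here I would need the compatibility of $\A_U^{\bullet}(-)$ with tensor products and duals of perfect complexes, which follows from flatness together with the fact that $\A_U^{\bullet}(-)$ on the affine $U$ is base change $-\otimes_{\Oo_U}\A_U^{\bullet}$.

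The main obstacle — and the technical heart of the paper — is \textbf{essential surjectivity}: showing that every cartesian object $\Mm^{\bullet} \in |\Perf(\A_U^{\bullet})_{\otimes}|$ descends to a perfect complex on $U$. The strategy I would pursue is an induction on the dimension of $U$ (legitimate since $U$ is Noetherian), peeling off one stratum at a time via the recursive structure of ad\`eles: the degree-zero part $\A_U^0$ splits, by construction, into a "generic" factor supported on the generic points and an "infinitesimal" factor over the lower-dimensional locus, and Remark \ref{rmk:zero-dimensional} shows the zero-dimensional contributions are already determined. Concretely I would use the l\^ache formal-gluing available in each step: a cartesian descent datum at level $k$ amounts to gluing a perfect complex along the square relating $U$, its formal completion along a closed subscheme, and the open complement, and I would invoke the flatness (Lemma \ref{lemma:adeles_flat}) and the section property (Lemma \ref{lemma:adeles_section}) to run a Beauville--Laszlo-type argument, producing an $\Oo_U$-perfect complex whose ad\`elisation recovers $\Mm^{\bullet}$. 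Once essential surjectivity is established over each affine $U$, Lemma \ref{lemma:loc_flasque}-style locality glues the equivalences across a cover of $X$, and since $-\otimes\A_X^{\bullet}$ is symmetric monoidal by construction the resulting equivalence is one of sheaves of symmetric monoidal $\infty$-categories, as claimed. Finally, taking global sections of this sheaf-level statement (again using that $\Gamma$ computes mapping spaces via flasqueness) yields Theorem \ref{thm:adelic_descent}.
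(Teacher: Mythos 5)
Your overall architecture is right in outline: full faithfulness does come from Beilinson's resolution (Theorem \ref{thm:beilinson}), the reduction to affines and the l\^ache/global-sections equivalence (Corollary \ref{cor:eq_lache}) are used exactly as you say (though in the paper that step carries the Proposition to Theorem \ref{thm:adelic_descent} rather than living inside the Proposition's proof), and essential surjectivity is indeed the heart of the matter. But your essential surjectivity argument is a gap, not a proof, and the specific plan you sketch would not run as stated. The phrase ``a Beauville--Laszlo-type argument, producing an $\Oo_U$-perfect complex whose ad\`elisation recovers $\Mm^{\bullet}$'' is precisely the content that needs to be supplied. The difficulty is that $\A_U^0$ is an infinite product $\prod_{x\in|U|}\widehat{\Oo}_x$ over \emph{all} points at once, so a perfect $\A_U^0$-complex is merely a globally bounded family of perfect complexes over the $\widehat{\Oo}_x$ (Theorem \ref{thm:bhatt_products}), with no a priori coherence between the factors; the paper's example of $\prod_p\mathbb{F}_p$ over $\Spec\mathbb{Z}$ shows that such an object can fail to underlie any descent datum, so the cartesian condition at higher co-simplicial levels is doing real work that a stratum-by-stratum formal gluing does not capture. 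An induction on dimension also does not organize the argument correctly, because the ``non-generic'' factor of $\A_U^0$ is not the ad\`eles of a single closed subscheme to which one could apply an inductive hypothesis.

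The paper instead proceeds by constructing a right adjoint $\int_X$ to $-\otimes_{\Oo_X}\A_X^{\bullet}$ on the Ind-completions (via the Adjoint Functor Theorem), proving a projection formula, and then reducing everything to two statements your proposal does not address: (i) conservativity of $\int$ on $\Perf(\A_X^{\bullet})$ for affine $X$ (Proposition \ref{prop:conservative}), which requires Bhatt's theorem on perfect complexes over product rings together with Thomason--Trobaugh's invariance of $\Perf_I(R)$ under completion; and (ii) the fact that $\int M^{\bullet}$ is perfect, proved by showing it is perfect near each generic point (via a homotopy contraction of the chain complex $|Z|_{\bullet}$ onto its generic point, Lemma \ref{lemma:homotopy_equivalence}), spreading this out to an open set (Lemma \ref{lemma:generic_zero}, using Thomason--Trobaugh's filtered-colimit statement), and then running a Noetherian induction on the support of the cofibre --- where one must also circumvent the $K$-theoretic obstruction to extending perfect complexes from opens by the $M\oplus\Sigma M$ trick. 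None of these ingredients (conservativity, spreading out, the $K$-theory obstruction) appear in your plan, and without them the claim that the descent datum ``descends to a perfect complex on $U$'' is unsupported.
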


In order to deduce the aforementioned theorem from this result, one takes global section: we have seen in Corollary \ref{cor:eq_lache} that taking global sections is a symmetric monoidal equivalence. More details can be found in Paragraph \ref{sub:conclusion}.

\subsubsection{The adelic realisation functor}

Our proof requires an $\infty$-category ambient to $|\Perf(\A_X^{\bullet})|$, closed under small colimits. This is necessary to apply Lurie's Adjoint Functor Theorem \ref{thm:adjointfunctor}.

\begin{definition}\label{adelic_realisation}
For a sheaf of algebras $\A$ on a topological space $X$, we define $\QC(\A) = \Ind \Perf(\A)$.
Let $X$ be a Noetherian scheme, we define the category $|\QC(\A_X^{\bullet})|$ to be the geometric realisation of the co-simplicial object $\QC(\A_X^{\bullet})$ in symmetric monoidal $\infty$-categories.
\end{definition}

Below we define the adelic realisation functor.

\begin{definition}\label{adelic_realisation}
The \emph{adelic realisation functor} $\Perf(\Oo_X)_{\otimes} \rightarrow \Perf(\A_X^{\bullet})_{\otimes}$ is given by tensoring a perfect complex of $\Oo_X$-modules with the co-simplicial sheaf of rings $\A_X^{\bullet}$. Since $\QC(\Oo_X)_{\otimes} \rightarrow \Ind \Perf(\Oo_X)_{\otimes}$, and analogously $\QC(\A_X^{k})  \simeq \Ind \Perf(\A_X^{k})$ for every $[k] \in \Delta$ we obtain an extension of this functor
$$- \otimes \A_X^{\bullet} \colon \QC(\Oo_X)_{\otimes} \rightarrow \QC(\A_X^{\bullet})_{\otimes}.$$
\end{definition}

In light of Proposition \ref{prop:adelic_descent} it is tempting to believe that the adelic descent functor $\QC(X) \rightarrow |\QC(\A_X^{\bullet})|$ is an equivalence. However, due to pathological behaviour of complexes of modules over infinite products of rings, we do not expect this to be true.
The $\infty$-category $|\QC(\A_X^{\bullet})|$ is stable, and admits small colimits for formal reasons.

\begin{lemma}
The $\infty$-category $|\QC(\A_X^{\bullet})|$ is stable, possesses a canonical symmetric monoidal structure, and is closed under small colimits.
\end{lemma}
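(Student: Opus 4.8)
The plan is to deduce everything from general facts about limits (totalisations) of co-simplicial diagrams in the $\infty$-category of presentable symmetric monoidal $\infty$-categories, applied to the diagram $[k] \mapsto \QC(\A_X^k) = \Ind \Perf(\A_X^k)$. The key point is that each $\QC(\A_X^k)$ is itself stable, presentable, and symmetric monoidal: stability because $\Perf(\A_X^k)$ is a stable $\infty$-category (being the perfect complexes over a l\^ache sheaf of algebras, which by Corollary~\ref{cor:eq_lache} is equivalent to $\Perf$ of the ordinary ring $\Ab_X^k$, hence stable) and $\Ind$ of a small stable $\infty$-category is stable; presentability because $\Ind$ of a small idempotent-complete stable $\infty$-category is presentable (indeed compactly generated); and the symmetric monoidal structure is the $\Ind$-extension of the tensor product on $\Perf(\A_X^k)$, which is compatible with filtered colimits in each variable, so it lands in $\mathrm{Pr}^{\mathrm{L}}_{\mathrm{st}}$ with respect to the appropriate monoidal structure.

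First I would recall that all the transition functors in the co-simplicial diagram $\QC(\A_X^{\bullet})$ are the base-change functors $- \otimes_{\A_X^k} \A_X^n$ along the structure maps, which are left adjoints preserving colimits and are symmetric monoidal; hence $\QC(\A_X^{\bullet})$ defines a co-simplicial object in the $\infty$-category $\mathrm{CAlg}(\mathrm{Pr}^{\mathrm{L}}_{\mathrm{st}})$ of stable presentable symmetric monoidal $\infty$-categories. Then I would invoke the fact that $\mathrm{Pr}^{\mathrm{L}}$ admits all small limits and that the forgetful functor $\mathrm{Pr}^{\mathrm{L}} \to \widehat{\Cat}_\infty$ preserves them (Lurie, \emph{Higher Topos Theory}~5.5.3.13, 5.5.3.18); moreover the full subcategory of stable presentable $\infty$-categories is closed under limits in $\mathrm{Pr}^{\mathrm{L}}$, and $\mathrm{CAlg}$ of a symmetric monoidal $\infty$-category with limits again has limits computed underneath. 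Consequently the totalisation $|\QC(\A_X^{\bullet})|$, computed as the limit of the co-simplicial diagram, inherits: stability (limit of stable presentables is stable presentable), a canonical symmetric monoidal structure (as a limit in $\mathrm{CAlg}(\mathrm{Pr}^{\mathrm{L}}_{\mathrm{st}})$), and closure under small colimits (being presentable, hence cocomplete).

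The main obstacle — and the only place any care is needed — is making sure one is taking the limit in the right ambient $\infty$-category so that colimits exist. A priori the statement as phrased computes a limit of a co-simplicial diagram, and limits of stable $\infty$-categories along \emph{left} adjoints need not stay presentable unless one passes to $\mathrm{Pr}^{\mathrm{L}}$ and uses that the transition maps preserve colimits. The resolution is exactly the observation above: since every structure map in $\QC(\A_X^{\bullet})$ is a base-change functor, it is a colimit-preserving symmetric monoidal functor, so the co-simplicial diagram lives in $\mathrm{CAlg}(\mathrm{Pr}^{\mathrm{L}}_{\mathrm{st}})$, where the limit exists and is again a stable presentable symmetric monoidal $\infty$-category. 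I would close by remarking that, by the standard equivalence $\mathrm{Pr}^{\mathrm{L}} \simeq (\mathrm{Pr}^{\mathrm{R}})^{\op}$, the same limit can be computed along the right adjoints $\prod$-type functors, which is occasionally the more convenient description, but for the present lemma the $\mathrm{Pr}^{\mathrm{L}}$-picture already gives all three asserted properties at once.
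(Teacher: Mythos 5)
Your proof is correct, and for two of the three assertions (stability and the symmetric monoidal structure) it coincides with the paper's: both deduce them from the fact that a limit of a co-simplicial diagram of stable symmetric monoidal $\infty$-categories is again stable and symmetric monoidal. For closure under small colimits, however, you take a genuinely different route. The paper argues by hand: it invokes Proposition \ref{prop:colimits} to reduce to the existence of small coproducts, and then constructs the coproduct of a family $(M_i^{\bullet})_{i \in I}$ levelwise, using that the transition functors $-\otimes_{\A_X^k}\A_X^n$ commute factorwise with coproducts to check that the levelwise direct sum is again cartesian and (via the description of mapping spaces in limits, Lemma \ref{lemma:limits}) satisfies the universal property. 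You instead observe that the whole co-simplicial diagram lives in $\mathrm{CAlg}(\mathrm{Pr}^{\mathrm{L}}_{\mathrm{st}})$ --- each level $\QC(\A_X^k) = \Ind\Perf(\A_X^k)$ is compactly generated and each transition functor is a colimit-preserving symmetric monoidal left adjoint --- and then invoke the fact that $\mathrm{Pr}^{\mathrm{L}}$ has small limits computed on underlying $\infty$-categories, with stable presentables closed under these limits. This buys you strictly more than the lemma asks for: presentability of $|\QC(\A_X^{\bullet})|$, which in particular subsumes the local smallness that the paper establishes separately in Remark \ref{rmk:loc_small}. The cost is reliance on heavier general machinery where the paper's argument is elementary and self-contained; both are valid, and your identification of the one delicate point --- that the limit must be recognised as a limit in $\mathrm{Pr}^{\mathrm{L}}$, which is legitimate precisely because the transition maps are base-change functors --- is exactly right.
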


\begin{proof}
By definition, $|\QC(\A_X^{\bullet})|$ is the limit of a co-simplicial diagram of symmetric monoidal stable $\infty$-categories. Therefore it is stable (see \cite[Proposition 1.1.4.6]{Lurie:ha}) and symmetric monoidal itself.

By Proposition \ref{prop:colimits}, it suffices to verify that $|\QC(\A_X^{\bullet})|$ is closed under small coproducts. Let $I$ be a set, and $(M_i^{\bullet})_{i \in I} \in |\QC(\A_X^{\bullet})|$ a family of objects in $|\QC(\A_X^{\bullet})|$. Since the functor $\otimes$ commutes factorwise with small coproducts, we see that $\bigoplus_{i \in I} M_i^{\bullet} \in |\QC(\A_X^{\bullet})|$, and satisfies the universal property of a coproduct.
\end{proof}

We also know that this $\infty$-category is locally small by Remark \ref{rmk:loc_small}.

\subsubsection{The right adjoint to adelic realisation}

The adelic realisation functor preserves small coproducts. This follows from its definition using tensor products, and the description of mapping spaces in the $\infty$-category $|\QC(\A_X^{\bullet})|$ given in Lemma \ref{lemma:limits}. Since it is exact, it commutes with small colimits by virtue of Proposition \ref{prop:colimits}. Moreover, we know that $\QC(X) \cong \Ind \Perf(X)$ is presentable. By Lurie's Adjoint Functor Theorem \ref{thm:adjointfunctor} we see that there exists a right adjoint $\int_X \colon |\QC(\A_X^{\bullet})| \rightarrow \QC(X)$. 
\begin{definition}
The right adjoint to the functor $- \otimes_{\Oo_X} \A_X^{\bullet}\colon \QC(X) \rightarrow |\QC(\A_X^{\bullet})|$ will be denoted by $\int_X\colon |\QC(\A_X^{\bullet})| \rightarrow \QC(X)$.
\end{definition}

It is possible to give a more concrete definition of the functor $\int_X$. For a discussion in terms of model categories we refer the reader to To\"en--Vezzosi's treatment \cite[1.2.12]{MR2394633}, which also inspired the notation $\int$. We will content ourselves with the following observation: by virtue of the equivalence $\QC(X)  \simeq \Ind \Perf(X)$, we may view objects in $\QC(X)$ as functors $\Perf(X)^{\op} \rightarrow \Grpd$, sending finite colimits in $\Perf(X)$ (that is, finite limits in $\Perf(X)^{\op}$) to finite limits in $\Grpd$. The adjunction between $-\otimes_{\Oo_X} \A_X^{\bullet}$ and $\int_X$ implies that for $N^{\bullet} \in \QC(\A_X^{\bullet})$, $\int_X N^{\bullet}$ is the functor $\Perf(X)^{\op} \rightarrow \Grpd$, which informally is given by
$$M \mapsto \Map_{\A_X^{\bullet}}(M \otimes_{\Oo_X}\A_X^{\bullet},N^{\bullet}).$$
It follows from the proof of the Adjoint Functor Theorem that $\int_X$ is equivalent to the functor 
$$\Map_{\A_X^{\bullet}}(-\otimes_{\Oo_X} \A_X^{\bullet},-)\colon \QC(\A_X) \times \Perf(X)^{\op} \rightarrow \Grpd.$$
We record this for later use.

\begin{lemma}\label{lemma:intHom}
With respect to the equivalence $\QC(X)  \simeq \Ind \Perf(X) \subset \Fun(\Perf(X)^{\op},\Grpd)$, we have $\int_X \simeq \Map_{\A_X^{\bullet}}(-\otimes_{\Oo_X}\A_X^{\bullet},-)\colon \QC(\A_X^{\bullet}) \times \Perf(X)^{\op} \rightarrow \Grpd$. 
\end{lemma}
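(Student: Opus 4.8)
The plan is to unwind both sides against the Yoneda embedding and reduce the statement to the description of mapping spaces in the totalisation $|\QC(\A_X^{\bullet})|$, i.e. to Lemma \ref{lemma:limits}. Recall that under the equivalence $\QC(X) \simeq \Ind\Perf(X) \subset \Fun(\Perf(X)^{\op},\Grpd)$, every object $\Gg \in \QC(X)$ is identified with the presheaf $M \mapsto \Map_{\QC(X)}(M,\Gg)$, and every $M \in \Perf(X)$ is compact, hence corepresents this evaluation. Applying this with $\Gg = \int_X N^{\bullet}$ and invoking the defining adjunction between $-\otimes_{\Oo_X}\A_X^{\bullet}$ and $\int_X$, I obtain for each $M \in \Perf(X)$ and each $N^{\bullet} \in \QC(\A_X^{\bullet})$ a chain of equivalences
\[
\left(\int_X N^{\bullet}\right)(M) \;\simeq\; \Map_{\QC(X)}\!\left(M,\int_X N^{\bullet}\right) \;\simeq\; \Map_{|\QC(\A_X^{\bullet})|}\!\left(M\otimes_{\Oo_X}\A_X^{\bullet},\,N^{\bullet}\right).
\]

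Next I would invoke Lemma \ref{lemma:limits}, which computes mapping spaces in the limit $\infty$-category $|\QC(\A_X^{\bullet})|$ as the totalisation over $\Delta$ of the co-simplicial space $[k] \mapsto \Map_{\A_X^{k}}\!\left(M\otimes_{\Oo_X}\A_X^{k},\,N^{k}\right)$. By definition this totalisation is precisely $\Map_{\A_X^{\bullet}}(M\otimes_{\Oo_X}\A_X^{\bullet},N^{\bullet})$. Composing with the displayed identifications yields the asserted pointwise equivalence $\left(\int_X N^{\bullet}\right)(M) \simeq \Map_{\A_X^{\bullet}}(M\otimes_{\Oo_X}\A_X^{\bullet},N^{\bullet})$.

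It then remains to promote this family of equivalences, indexed by objects $M$ and $N^{\bullet}$, to an equivalence of functors $\QC(\A_X^{\bullet}) \times \Perf(X)^{\op} \to \Grpd$. Here I would observe that each step is manifestly functorial: the Yoneda identification is natural in $\Gg$ and in $M$; the adjunction counit/unit supply naturality in both $M$ and $N^{\bullet}$; and the identification of Lemma \ref{lemma:limits} is natural in both variables. Assembling these produces a natural transformation between the two functors which is a pointwise equivalence, hence an equivalence, since equivalences in a functor $\infty$-category are detected objectwise.

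The only genuine subtlety — and the step I expect to be the main obstacle — is the bookkeeping required to see that the right adjoint $\int_X$ produced abstractly by the Adjoint Functor Theorem \ref{thm:adjointfunctor} coincides, as a functor of \emph{two} variables, with the explicit mapping-space functor, rather than merely agreeing with it on each object. This is a matter of tracing through the construction in the proof of the Adjoint Functor Theorem and matching the resulting description of $\int_X$ with the limit formula of Lemma \ref{lemma:limits}; no new geometric or homological input is needed beyond the results already at hand.
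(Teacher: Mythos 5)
Your proposal is correct and follows essentially the same route as the paper: identify $(\int_X N^{\bullet})(M)$ with $\Map_{\QC(X)}(M,\int_X N^{\bullet})$ via the Ind-presentation, apply the adjunction to get $\Map_{|\QC(\A_X^{\bullet})|}(M\otimes_{\Oo_X}\A_X^{\bullet},N^{\bullet})$, unwind this via Lemma \ref{lemma:limits}, and defer the two-variable functoriality to an inspection of the construction in the Adjoint Functor Theorem. The paper's own justification is if anything terser than yours on the functoriality point, which you correctly single out as the only genuine subtlety.
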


Another formal property of $\int_X$ of importance to us is its behaviour with respect to small colimits. We will give a heuristic justification that $\int_X$ commutes with small colimits, before engaging with the formal argument. Since $|X|$ is a Noetherian topological space (that is every open subset is quasi-compact), we have $\Perf(\A) \subset \DMod(\A)^c$, that is, every perfect complex of $\A$-modules is compact. Thinking informally of $\int_X$ as a functor $\QC(\A_X^{\bullet}) \times \Perf(X)^{\op} \rightarrow \Grpd$, we see that 
$$\int_X \bigoplus_{i \in I} N_i^{\bullet} \simeq \bigoplus_{i \in I}\int_X N_i^{\bullet}\colon M \mapsto \Hom_{\A_X^{\bullet}}(M \otimes_{\Oo_X} \A_X^{\bullet},\bigoplus_{i \in I} N_i^{\bullet})  \simeq \bigoplus_{i \in I}\Hom_{\A_X^{\bullet}}(M \otimes_{\Oo_X} \A_X^{\bullet},N_i^{\bullet}).$$
The key observation of the heuristic reasoning above is that $-\otimes_{\Oo_X} \A_X^{\bullet}$ preserves compact objects. Therefore we need a sufficient criterion for objects in $\QC(\A_X^{\bullet})$ to be compact.

\begin{lemma}\label{lemma:perf_compact}
Let $M^{\bullet} \in \QC(\A_X^{\bullet})$ be a cartesian co-simplicial $\A_X^{\bullet}$-module. If $M^0 \in \Perf(\A_X^0) \subset \QC(\A_X^0)$, then $M^{\bullet}$ is compact in $\QC(\A_X^{\bullet})$.
\end{lemma}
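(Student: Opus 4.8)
The goal is to show that a cartesian co-simplicial module $M^{\bullet}$ whose zero-th term $M^0$ is perfect is a compact object of $|\QC(\A_X^{\bullet})|$. The strategy is to reduce the compactness of $M^{\bullet}$ to the compactness of $M^0$ in $\QC(\A_X^0)$ (which holds by hypothesis, since $|X|$ is Noetherian and hence $\Perf(\A_X^0) \subset \QC(\A_X^0)^c$), by exhibiting a convenient description of the mapping space out of $M^{\bullet}$. Concretely, I would first recall the general formula for mapping spaces in a limit $\infty$-category: for $N^{\bullet} \in |\QC(\A_X^{\bullet})|$ one has $\Map_{|\QC(\A_X^{\bullet})|}(M^{\bullet},N^{\bullet}) = \lim_{[n] \in \Delta} \Map_{\QC(\A_X^n)}(M^n,N^n)$, as recorded in Lemma~\ref{lemma:limits}. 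The point of the cartesian hypothesis is that each $M^n$ is the base change $M^0 \otimes_{\A_X^0} \A_X^n$ along the (iterated) coface map $\A_X^0 \to \A_X^n$ corresponding to $[0] \hookrightarrow [n]$; the adjunction (extension of scalars $\dashv$ restriction of scalars) then identifies $\Map_{\QC(\A_X^n)}(M^n,N^n)$ with $\Map_{\QC(\A_X^0)}(M^0, N^n|_{\A_X^0})$.

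\textbf{Key steps.} (1) Using the cartesian property and the scalar-extension adjunction, rewrite the co-simplicial mapping-space diagram $[n] \mapsto \Map_{\QC(\A_X^n)}(M^n,N^n)$ as $[n] \mapsto \Map_{\QC(\A_X^0)}(M^0, \res_n N^n)$, where $\res_n$ denotes restriction along $\A_X^0 \to \A_X^n$; one must check this identification is natural in $[n]$, so that taking the limit over $\Delta$ is legitimate. (2) Pull the (fixed) object $M^0$ out of the limit: since mapping into a fixed object commutes with limits in the target variable, $\lim_{[n]} \Map_{\QC(\A_X^0)}(M^0,\res_n N^n) \simeq \Map_{\QC(\A_X^0)}\!\bigl(M^0, \lim_{[n]} \res_n N^n\bigr)$. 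Call the target $T(N^{\bullet}) := \lim_{[n]\in\Delta} \res_n N^n \in \QC(\A_X^0)$; this is a functor $|\QC(\A_X^{\bullet})| \to \QC(\A_X^0)$. (3) Show $T$ preserves filtered colimits. The restriction functors $\res_n\colon \QC(\A_X^n) \to \QC(\A_X^0)$ are right adjoints to base change, but more importantly here they preserve filtered colimits (they are exact and colimit-preserving on Ind-categories of small stable categories), and filtered colimits in the limit $\infty$-category $|\QC(\A_X^{\bullet})|$ are computed level-wise; the remaining issue is that $\lim_{\Delta}$ need not commute with filtered colimits in general. (4) Resolve this by observing that the cosimplicial diagram $[n] \mapsto \res_n N^n$ is, for $N^{\bullet}$ cartesian, equivalent to the \v{C}ech-type diagram associated with the (finite, since $X$ is Noetherian of finite Krull dimension locally... ) — or, more robustly, by invoking that $M^0$ is compact and factoring the argument through $\Map(M^0,-)$ directly on the level-wise diagram before totalising, using that $\Map(M^0,-)$ commutes with the relevant totalisation because $M^0 \otimes_{\A_X^0}\A_X^{\bullet}$ is a \emph{compact} object of $|\QC(\A_X^{\bullet})|$ by the free case. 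In other words, I would first prove the statement for $M^{\bullet} = P \otimes_{\Oo_X} \A_X^{\bullet}$ with $P \in \Perf(X)$ (equivalently $M^0$ free-ish over $\A_X^0$) using the adelic realisation functor's left-adjointness and the fact that it preserves compact objects, and then pass to a general cartesian $M^{\bullet}$ with $M^0$ perfect by writing $M^0$ as a retract of such, using idempotent-completeness and stability of the subcategory of compact objects under retracts.

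\textbf{Main obstacle.} The crux is Step (3)/(4): controlling the interaction of the cosimplicial totalisation $\lim_{\Delta}$ with filtered colimits. In general $\lim_{\Delta}$ does not commute with filtered colimits, so one cannot naively conclude compactness of $M^{\bullet}$ from compactness of $M^0$. The clean way around this is to not take the limit of mapping spaces at all, but instead to exploit the \emph{adjunction} between the adelic realisation functor $-\otimes_{\Oo_X}\A_X^{\bullet}$ and its right adjoint $\int_X$ (Lemma~\ref{lemma:intHom}): for $M^{\bullet}$ cartesian with $M^0$ perfect, one shows $M^{\bullet}$ lies in the essential image of $-\otimes_{\Oo_X}\A_X^{\bullet}$ restricted to perfect complexes — indeed, by adelic descent (Proposition~\ref{prop:adelic_descent}, applied after checking the hypotheses are met level-zero-wise) a cartesian module is of the form $P \otimes_{\Oo_X}\A_X^{\bullet}$, and $M^0$ perfect forces $P \in \Perf(X)$ — and then $\Map_{|\QC(\A_X^{\bullet})|}(P\otimes\A_X^{\bullet}, -) \simeq \Map_{\QC(X)}(P, \int_X -)$. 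Since $P$ is compact in $\QC(X)$ and $\int_X$ commutes with filtered colimits (this is precisely the content of the preceding discussion, to be established in the next lemma), the composite preserves filtered colimits, i.e.\ $M^{\bullet}$ is compact. One must be slightly careful about circularity: if the compactness of $\int_X$-images is what drives this, one should make sure the present lemma is used only in the \emph{free}/perfect-pullback case to prove ``$\int_X$ commutes with colimits'', and then bootstrap; alternatively, prove this lemma first in the level-wise/elementary manner sketched in Steps (1)--(3) for the subtlety-free case $\dim X = 0$ and induct on dimension using the recursive structure of the ad\`eles. I expect the bookkeeping around which form of the argument avoids circular dependence on the Adjoint Functor Theorem setup to be the genuinely delicate point.
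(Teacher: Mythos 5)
Your steps (1)--(2) run parallel to the paper's starting point (the mapping-space formula of Lemma~\ref{lemma:limits} plus the cartesian condition $M^k \simeq M^0 \otimes_{\A_X^0} \A_X^k$), and you correctly isolate the real difficulty: the totalisation over $\Delta$ does not commute with filtered colimits or infinite coproducts, so compactness of $M^0$ does not naively propagate to the limit category. But your proposed resolution is circular in exactly the way you worry it might be, and the escape routes you sketch do not close the loop. Invoking Proposition~\ref{prop:adelic_descent} to write $M^{\bullet} \simeq P \otimes_{\Oo_X}\A_X^{\bullet}$ with $P$ perfect is invoking the main theorem of the paper, whose proof sits strictly downstream of this lemma; and ``$\int_X$ commutes with filtered colimits'' is Lemma~\ref{lemma:int_coproducts}, whose proof cites the present lemma. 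The bootstrap you suggest (prove the free/pullback case first, then realise a general cartesian $M^{\bullet}$ as a retract) does not work either: exhibiting an arbitrary cartesian $M^{\bullet}$ with $M^0$ perfect as a retract of some $P\otimes_{\Oo_X}\A_X^{\bullet}$ is essentially the essential-surjectivity statement of adelic descent, i.e.\ the hard part of the whole argument. The dimension-induction idea is not developed and is not needed.

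The idea you are missing is to abandon the attempt to commute $\Map(M^0,-)$ past $\lim_{\Delta}$ altogether and instead use the other characterisation of compactness: a map $f\colon M^{\bullet} \to \bigoplus_{i\in I} N_i^{\bullet}$ must factor through $\bigoplus_{i\in J} N_i^{\bullet}$ for some finite $J$. Compactness of $M^0$ produces such a $J$ and a factorisation in co-simplicial degree $0$; the cartesian condition then forces a factorisation in every degree $k$, through the \emph{same} $J$. The point is that ``factoring through the summand $\bigoplus_{J}$'' is an essentially propositional condition --- the space of such factorisations is empty or contractible in each degree --- so the co-simplicial diagram of factorisation spaces is a levelwise trivial fibration over the point, and its limit over $\Delta$ is again contractible and in particular nonempty. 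This yields the factorisation in $|\QC(\A_X^{\bullet})|$ without ever exchanging $\lim_{\Delta}$ with a colimit, and without appealing to $\int_X$ or to adelic descent.
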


\begin{proof}
Let $(N_i^{\bullet})_{i \in I}$ be an arbitrary small family of objects in $\QC(\A_X^{\bullet})$, and $f\colon M^{\bullet} \rightarrow \bigoplus_{i \in I} N_i^{\bullet}$ an arbitrary morphism. It suffices to show that there exists a finite subset $J \subset I$, such that we have a factorisation
\begin{equation}\label{eqn:factorisation}
\xymatrix{
M^{\bullet} \ar[r] \ar[rd] & \bigoplus_{i \in J} N_i^{\bullet} \ar[d] \\
& \bigoplus_{i \in I} N_i^{\bullet}.
}
\end{equation}
In order to produce this factorisation, we will show that the space of commutative diagrams as drawn above is either empty or contractible. Since $\Perf(\A_X^0) \subset \QC(\A_X^0)^c$, by virtue of the definition $\QC(\A_X^0)  \simeq \Ind \Perf(\A_X^0)$, we see that we have a finite subset $J \subset I$ and a factorisation $M^0 \rightarrow \bigoplus_{i \in J} N_i^{0} \rightarrow \bigoplus_{i \in I} N_i^{0}$. Since the modules $M^{\bullet}$ and $(N_i^{\bullet})_{i \in I}$ are cartesian, that is, $M^k  \simeq M^0 \otimes_{\A_X^0} \A_X^k$, we see that we have a factorisation  $M^k \rightarrow \bigoplus_{i \in J} N_i^{k} \rightarrow \bigoplus_{i \in I} N_i^{k}$ for all co-simplicial levels $k \geq 0$. 

Consider the cartesian diagram of co-simplicial spaces
\[
\xymatrix{
X^{\bullet} \ar[r] \ar[d] & \mathsf{pt}^{\bullet} \ar[d]^f\\
\Map_{\A_X^{\bullet}}(M,\bigoplus_{i \in J} N_i^{\bullet}) \ar[r] & \Map_{\A_X^{\bullet}}(M,\bigoplus_{i \in I} N_i^{\bullet}),
}
\]
where $\mathsf{pt}^{\bullet}$ is the constant diagram, consisting levelwise of a single point, and the map $\mathsf{pt}^{\bullet} \rightarrow \Map(M,\bigoplus_{i \in I} N_i^{\bullet})$ corresponds to the element $f \in |\Map(M,\bigoplus_{i \in I} N_i^{\bullet})|$ (where we used Lemma \ref{lemma:limits}). We know that the fibre $X^{\bullet} \rightarrow \mathsf{pt}^{\bullet}$ is contractible: as we have remarked above, the space of factorisations through $\bigoplus_{i \in J} N_i^{\bullet} \rightarrow \bigoplus_{i \in I} N_i^{\bullet}$ is either empty or contractible. We have shown that in the case of $f$, a factorisation exists levelwise. Hence we have a fibrewise equivalence $X^{\bullet} \rightarrow \mathsf{pt}^{\bullet}$.

Taking the limit of the co-simplicial diagram above, we obtain a cartesian diagram of space
\[
\xymatrix{
\mathsf{pt} \ar[r] \ar[d] & \mathsf{pt} \ar[d]^f\\
\Map_{|\QC(\A_X^{\bullet})|}(M,\bigoplus_{i \in J} N_i^{\bullet}) \ar[r] & \Map_{|\QC(\A_X^{\bullet})|}(M,\bigoplus_{i \in I} N_i^{\bullet}),
}
\]
which implies that there exists a unique factorisation as in \eqref{eqn:factorisation}.
\end{proof}

\begin{lemma}\label{lemma:int_coproducts}
The functor $\int_X$ commutes with small colimits.
\end{lemma}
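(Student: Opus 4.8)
The plan is to reduce the statement that $\int_X$ commutes with small colimits to the single statement that $\int_X$ commutes with small coproducts, using Proposition \ref{prop:colimits} exactly as was done for $|\QC(\A_X^{\bullet})|$ and for the adelic realisation functor: an exact functor between stable $\infty$-categories automatically commutes with all finite colimits, so once coproducts are handled, arbitrary colimits follow. Since $\int_X$ is by definition a right adjoint (to $-\otimes_{\Oo_X}\A_X^{\bullet}$), the content of the lemma is the non-formal fact that this right adjoint happens to preserve coproducts as well; this is where the compactness results of the preceding lemmas enter.

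The key steps I would carry out are as follows. First I would recall, via Lemma \ref{lemma:intHom}, the concrete description of $\int_X$ as $\Map_{\A_X^{\bullet}}(-\otimes_{\Oo_X}\A_X^{\bullet},-)$ under the identification $\QC(X)\simeq \Ind\Perf(X)\subset \Fun(\Perf(X)^{\op},\Grpd)$. Thus, to compare $\int_X\big(\bigoplus_{i\in I} N_i^{\bullet}\big)$ with $\bigoplus_{i\in I}\int_X N_i^{\bullet}$ as objects of $\Ind\Perf(X)$, it suffices to evaluate both on an arbitrary test object $M\in \Perf(X)$ and produce a natural equivalence
$$\Map_{\A_X^{\bullet}}\Big(M\otimes_{\Oo_X}\A_X^{\bullet},\bigoplus_{i\in I}N_i^{\bullet}\Big)\ \simeq\ \bigoplus_{i\in I}\Map_{\A_X^{\bullet}}\big(M\otimes_{\Oo_X}\A_X^{\bullet},N_i^{\bullet}\big),$$
where the coproduct on the right is computed in $\Ind\Perf(X)$ and then evaluated at $M$. (One has to be careful here: the right-hand coproduct of ind-objects, evaluated at $M$, is not literally the coproduct of mapping spaces; this is precisely the subtlety that makes the lemma non-formal.) The second step is to observe that for $M\in\Perf(X)$ the object $M\otimes_{\Oo_X}\A_X^{\bullet}$ is a cartesian $\A_X^{\bullet}$-module whose degree-zero part $M\otimes_{\Oo_X}\A_X^0$ lies in $\Perf(\A_X^0)$; hence by Lemma \ref{lemma:perf_compact} it is a compact object of $\QC(\A_X^{\bullet}) = |\QC(\A_X^{\bullet})|$. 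Compactness of $M\otimes_{\Oo_X}\A_X^{\bullet}$ means exactly that $\Map_{\A_X^{\bullet}}(M\otimes_{\Oo_X}\A_X^{\bullet},-)$ sends the coproduct $\bigoplus_{i\in I}N_i^{\bullet}$ to the coproduct of the individual mapping spaces. The third step is bookkeeping on the $\QC(X)$ side: since every $M'\in\Perf(X)$ gives a compact object of $\QC(X)$, the coproduct $\bigoplus_{i\in I}\int_X N_i^{\bullet}$ in $\QC(X)$, when probed by $M$, also reduces to the coproduct of the $\Map(M,\int_X N_i^{\bullet})$, and the adjunction identifies each of these with $\Map_{\A_X^{\bullet}}(M\otimes_{\Oo_X}\A_X^{\bullet},N_i^{\bullet})$. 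Matching the two descriptions naturally in $M$ gives the desired equivalence, hence $\int_X$ preserves coproducts; combined with exactness and Proposition \ref{prop:colimits}, it preserves all small colimits.

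The main obstacle is the second step: establishing that $M\otimes_{\Oo_X}\A_X^{\bullet}$ is genuinely compact in $\QC(\A_X^{\bullet})$ and correctly interpreting what the right-hand side $\bigoplus_{i\in I}\int_X N_i^{\bullet}$ means as an ind-object before evaluating it on test objects. The first half is handled by Lemma \ref{lemma:perf_compact} (which is why that lemma was proved with the hypothesis phrased in terms of $M^0\in\Perf(\A_X^0)$), and one must check its hypotheses genuinely apply to $M\otimes_{\Oo_X}\A_X^{\bullet}$ for $M$ a perfect complex on $X$ — this uses that $-\otimes_{\Oo_X}\A_X^k$ sends perfect complexes to perfect complexes. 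The second half is a standard compactness argument in $\Ind\Perf(X)$, relying on $|X|$ being a Noetherian topological space so that perfect complexes are compact; the heuristic computation displayed just before the statement already contains the essential idea, and the proof simply makes that heuristic precise using Lemmas \ref{lemma:intHom} and \ref{lemma:perf_compact}.
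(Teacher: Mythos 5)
Your proposal is correct and follows essentially the same route as the paper: the paper's one-line proof cites Lemma \ref{lemma:generators_coproducts} together with the fact that $-\otimes_{\Oo_X}\A_X^{\bullet}$ preserves compact objects (Lemma \ref{lemma:perf_compact}), and your argument simply inlines the proof of Lemma \ref{lemma:generators_coproducts} --- reducing to coproducts via Proposition \ref{prop:colimits}, testing against the compact generators $M\in\Perf(X)$ of $\QC(X)$, and using the adjunction plus compactness of $M\otimes_{\Oo_X}\A_X^{\bullet}$. No gap; the bookkeeping you flag as the ``main obstacle'' is exactly what that appendix lemma handles.
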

\begin{proof}
This follows from Lemma \ref{lemma:generators_coproducts}, and the fact that $-\otimes_{\Oo_X}\A_X^{\bullet}$ preserves perfect complexes, that is, compact objects (by Lemma \ref{lemma:perf_compact}).
\end{proof}

The fact that $\int$ commutes with small colimits will be used in the proof of the following lemma.

\begin{lemma}[Projection Formula]\label{lemma:projection_formula}
For every $M \in \QC(X)$, and $N^{\bullet} \in \QC(\A_X^{\bullet})$ we have an equivalence in $\QC(X)$
$$\int_X M \otimes_{\Oo_X} N^{\bullet}  \simeq M \otimes_{\Oo_X} \int_X N^{\bullet}.$$
\end{lemma}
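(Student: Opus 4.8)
The plan is to reduce the statement to the case where $M$ is a perfect complex, where it follows formally from dualisability together with the adjunction between adelic realisation and $\int_X$. Throughout, write $L = - \otimes_{\Oo_X} \A_X^{\bullet}\colon \QC(X) \to |\QC(\A_X^{\bullet})|$ for the adelic realisation functor, and note that $M \otimes_{\Oo_X} N^{\bullet}$ denotes the $\QC(X)$-module action on $|\QC(\A_X^{\bullet})|$, i.e.\ $L(M) \otimes N^{\bullet}$ with the monoidal product of $|\QC(\A_X^{\bullet})|$.

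First I would fix $N^{\bullet}$ and observe that both $M \mapsto \int_X\big(M \otimes_{\Oo_X} N^{\bullet}\big)$ and $M \mapsto M \otimes_{\Oo_X} \int_X N^{\bullet}$ are endofunctors of $\QC(X)$ that preserve small colimits. For the second this is immediate, since $- \otimes_{\Oo_X} \int_X N^{\bullet}$ is colimit-preserving. For the first, $L$ is exact and preserves small coproducts, hence preserves all small colimits by Proposition \ref{prop:colimits}; the monoidal product of $|\QC(\A_X^{\bullet})|$ preserves colimits separately in each variable; and $\int_X$ commutes with small colimits by Lemma \ref{lemma:int_coproducts}. Since $\QC(X) \simeq \Ind \Perf(X)$ is generated under small colimits by $\Perf(X)$, it suffices to produce the equivalence, naturally in $M \in \Perf(X)$ and in $N^{\bullet}$, for $M$ perfect.

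For $M \in \Perf(X)$ the object $M$ is dualisable in $\QC(X)_{\otimes}$, with dual $M^{\vee}$; since $L$ is symmetric monoidal, $L(M)$ is dualisable in $|\QC(\A_X^{\bullet})|$ with dual $L(M^{\vee})$. Then for arbitrary $P \in \QC(X)$, using Lemma \ref{lemma:intHom} for the adjunction and the two dualisability statements, I would compute
\begin{align*}
\Map_{\QC(X)}\big(P, \textstyle\int_X (M \otimes_{\Oo_X} N^{\bullet})\big)
&\simeq \Map_{|\QC(\A_X^{\bullet})|}\big(L(P),\, L(M) \otimes N^{\bullet}\big) \\
&\simeq \Map_{|\QC(\A_X^{\bullet})|}\big(L(P) \otimes L(M)^{\vee},\, N^{\bullet}\big) \\
&\simeq \Map_{|\QC(\A_X^{\bullet})|}\big(L(P \otimes_{\Oo_X} M^{\vee}),\, N^{\bullet}\big) \\
&\simeq \Map_{\QC(X)}\big(P \otimes_{\Oo_X} M^{\vee},\, \textstyle\int_X N^{\bullet}\big) \\
&\simeq \Map_{\QC(X)}\big(P,\, M \otimes_{\Oo_X} \textstyle\int_X N^{\bullet}\big),
\end{align*}
where the third line uses that $L$ is symmetric monoidal (so $L(M)^{\vee} \simeq L(M^{\vee})$ and $L$ carries tensor products to tensor products) and the last line uses dualisability of $M$ in $\QC(X)$. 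By the Yoneda lemma this gives the desired equivalence $\int_X(M \otimes_{\Oo_X} N^{\bullet}) \simeq M \otimes_{\Oo_X} \int_X N^{\bullet}$ for $M$ perfect, and the chain of equivalences is visibly natural in $N^{\bullet}$ and in $M$ among perfect complexes.

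The only genuine subtlety is the bookkeeping needed to run the reduction in the first step: one must know that the equivalence just constructed is a natural transformation of functors $\Perf(X) \to \QC(X)$ in $M$, so that, since both endofunctors preserve small colimits, left Kan extension along $\Perf(X) \hookrightarrow \QC(X)$ transports it to an equivalence on all of $\QC(X)$. Because the equivalence above was assembled entirely from the adjunction counit/unit and the duality data of $M$, all of which are natural, this is automatic, and I do not anticipate any serious obstacle beyond writing down this naturality carefully.
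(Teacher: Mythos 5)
Your proposal is correct and follows essentially the same route as the paper's proof: reduce to $M \in \Perf(X)$ using that $\int_X$ commutes with small colimits (Lemma \ref{lemma:int_coproducts}), then exploit dualisability of $M$ together with the description of $\int_X$ via mapping spaces out of $-\otimes_{\Oo_X}\A_X^{\bullet}$ (Lemma \ref{lemma:intHom}). Your version merely spells out the duality manipulation as a chain of mapping-space equivalences against a test object $P$ and invokes Yoneda, which is a slightly more careful writeup of the same argument.
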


\begin{proof}
Since $\int_X$ commutes with small colimits by Lemma \ref{lemma:int_coproducts}, we may assume without loss of generality that $M \in \Perf(X)$. In Lemma \ref{lemma:intHom} we have seen that under the equivalence $\QC(X)  \simeq \Ind \Perf(X)$, the functor $\int_X$ equivalent to $\Hhom_{\A_X^{\bullet}}(-\otimes_{\Oo_X}\A_X^{\bullet},-)$, that is, 
$\int M \otimes N^{\bullet}$ is equivalent to the functor $(-\otimes_{\Oo_X}\A_X^{\bullet})^{\vee} \otimes_{\A_X^{\bullet}} M \otimes_{\Oo_X} N^{\bullet}  \simeq M \otimes_{\Oo_X} (-\otimes_{\Oo_X}\A_X^{\bullet})^{\vee} \otimes_{\A_X^{\bullet}}N^{\bullet}$. And the right hand side is equivalent to $M \otimes_{\Oo_X} \int N^{\bullet}$.
\end{proof}

\begin{lemma}\label{lemma:unit}
The unit $\id_{\Perf(\Oo_X)} \rightarrow \int (- \otimes \A_X^{\bullet})$ is an equivalence, that is, for every $M \in \QC(\Oo_X)$ we have $M \xrightarrow{\simeq} \int(M \otimes \A_X^{\bullet})$.
\end{lemma}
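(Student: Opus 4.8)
The plan is to reduce this statement to Beilinson's Theorem \ref{thm:beilinson} via the projection formula. First I would recall that $\int_X$ commutes with small colimits (Lemma \ref{lemma:int_coproducts}), so since $\QC(X) \simeq \Ind\Perf(X)$ is generated under colimits by $\Perf(X)$, it suffices to check the claim for $M = \Oo_X$; indeed, by Lemma \ref{lemma:projection_formula} (Projection Formula) we have $\int_X(M \otimes_{\Oo_X}\A_X^{\bullet}) \simeq M \otimes_{\Oo_X}\int_X(\A_X^{\bullet})$, so the general case follows once $\int_X(\A_X^{\bullet}) \simeq \Oo_X$ is established and once one checks that under this identification the natural transformation $M \to \int_X(M \otimes \A_X^{\bullet})$ agrees with $M \otimes (\text{unit for }\Oo_X)$.

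Next I would identify $\int_X(\A_X^{\bullet})$ explicitly. By Lemma \ref{lemma:intHom}, viewing objects of $\QC(X)$ as functors $\Perf(X)^{\op} \to \Grpd$, the object $\int_X(\A_X^{\bullet})$ sends a perfect complex $P$ to $\Map_{\A_X^{\bullet}}(P \otimes_{\Oo_X}\A_X^{\bullet}, \A_X^{\bullet})$, which by adjunction between extension of scalars along $\Oo_X \to \A_X^k$ levelwise and the totalisation description of mapping spaces (Lemma \ref{lemma:limits}) is the totalisation of the co-simplicial space $[k] \mapsto \Map_{\A_X^k}(P \otimes_{\Oo_X}\A_X^k, \A_X^k) \simeq \Map_{\Oo_X}(P, \A_X^k)$, i.e. $\Map_{\Oo_X}\bigl(P, |\A_X^{\bullet}|\bigr)$. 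Here $|\A_X^{\bullet}|$ denotes the totalisation computed in $\D(\Oo_X\text{-mod})$, or more precisely in $\QC(X)$; since $P$ is perfect, mapping out of it commutes with this totalisation.

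Now Beilinson's Theorem \ref{thm:beilinson} asserts exactly that the augmentation $\Oo_X \to \A_X^{\bullet}$ is a flasque resolution, hence exhibits $\Oo_X \xrightarrow{\simeq} |\A_X^{\bullet}|$ as an equivalence in $\QC(X)$ (the cited theorem is stated for $R\Gamma$, but its underlying content is the sheaf-level statement that $\Oo_X \to \A_X^{\bullet}$ is a resolution; since the terms are flasque the totalisation in $\QC(X)$ is computed by the associated cochain complex). Plugging this in gives $\int_X(\A_X^{\bullet})(P) \simeq \Map_{\Oo_X}(P,\Oo_X)$, which is precisely the functor corepresented by $\Oo_X \in \QC(X)$; and tracing through the adjunction shows the unit map is the identity under this identification. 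The main obstacle I anticipate is the careful bookkeeping of the two totalisations — one in spaces (coming from the limit defining $|\QC(\A_X^{\bullet})|$ and the formula for its mapping spaces) and one in $\QC(X)$ (the flasque resolution) — and verifying that they are compatibly interchanged with the Hom out of a perfect (hence compact, and dualisable) complex $P$; the compactness statement $\Perf(\A_X^k) \subset \QC(\A_X^k)^c$ on a Noetherian topological space, already used in Lemma \ref{lemma:perf_compact}, is what makes this interchange legitimate.
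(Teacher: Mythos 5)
Your proposal is correct and follows essentially the same route as the paper: reduce via the projection formula to showing $\int_X \A_X^{\bullet} \simeq \Oo_X$, identify $\int_X \A_X^{\bullet}$ with the totalisation $|\A_X^{\bullet}|$ using the description of mapping spaces in the limit $\infty$-category, and conclude by Beilinson's Theorem \ref{thm:beilinson} that the augmentation $\Oo_X \rightarrow \A_X^{\bullet}$ exhibits $\Oo_X \simeq |\A_X^{\bullet}|$. The extra bookkeeping you supply (testing against a perfect $P$, interchanging the two totalisations) is a more explicit unwinding of the step the paper records as $\Hhom_{\A_X^{\bullet}}(\A_X^{\bullet},\A_X^{\bullet}) \simeq |\A_X^{\bullet}|$.
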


\begin{proof}
This is a special case of Lemma \ref{lemma:projection_formula}, once we will have shown that $\int \A_X^{\bullet}  \simeq \Oo_X$. By definition, we have $\int_X \A_X^{\bullet}  \simeq \Hhom_{\A_X^{\bullet}}(\A_X^{\bullet},\A_X^{\bullet})  \simeq |\A_X^{\bullet}|$, where the last equivalence uses the description of Hom-spaces in limits of $\infty$-categories of Lemma \ref{lemma:limits}. By Theorem \ref{thm:beilinson} the sheaf $|\A_X^{\bullet}|$ is canonically equivalent to $\Oo_X$, by means of the augmentation $\Oo_X \rightarrow \A_X^{\bullet}$.
\end{proof}

Consequently we see that $-\otimes_{\Oo_X} \A_X^{\bullet}$ is fully faithful.

\begin{corollary}
The functor $-\otimes_{\Oo_X} \A_X^{\bullet}\colon \QC(X)  \simeq \Ind \Perf(X) \hookrightarrow \QC(\A_X^{\bullet})$ is fully faithful.
\end{corollary}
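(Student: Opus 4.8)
The plan is to deduce full faithfulness from the unit equivalence of Lemma \ref{lemma:unit} together with the adjunction between $-\otimes_{\Oo_X}\A_X^{\bullet}$ and $\int_X$. Recall the general principle: if $L \dashv R$ is an adjunction of $\infty$-categories with unit $\eta\colon \id \Rightarrow RL$, then $L$ is fully faithful if and only if $\eta$ is an equivalence. This is a standard fact about adjunctions (it follows from the triangle identities, or see \cite[Proposition 5.2.7.4]{Lurie:htt}), and it holds at the level of $\infty$-categories, not merely homotopy categories.

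Concretely, I would argue as follows. For objects $M, M' \in \QC(X) \simeq \Ind\Perf(X)$, the counit–unit formalism gives a commuting triangle in which the map
$$\Map_{\QC(X)}(M,M') \longrightarrow \Map_{\QC(\A_X^{\bullet})}(M \otimes_{\Oo_X}\A_X^{\bullet}, M' \otimes_{\Oo_X}\A_X^{\bullet})$$
induced by $-\otimes_{\Oo_X}\A_X^{\bullet}$ is identified, via the adjunction isomorphism $\Map_{\QC(\A_X^{\bullet})}(M\otimes_{\Oo_X}\A_X^{\bullet}, M'\otimes_{\Oo_X}\A_X^{\bullet}) \simeq \Map_{\QC(X)}(M, \int_X(M'\otimes_{\Oo_X}\A_X^{\bullet}))$, with the map
$$\Map_{\QC(X)}(M, M') \longrightarrow \Map_{\QC(X)}(M, \int_X(M'\otimes_{\Oo_X}\A_X^{\bullet}))$$
given by postcomposition with the unit $M' \to \int_X(M'\otimes_{\Oo_X}\A_X^{\bullet})$. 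By Lemma \ref{lemma:unit} this unit is an equivalence for every $M' \in \QC(\Oo_X)$, hence postcomposition with it is an equivalence of mapping spaces. Therefore $-\otimes_{\Oo_X}\A_X^{\bullet}$ induces an equivalence on all mapping spaces, i.e. it is fully faithful.

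There is really no substantial obstacle here: the statement is a formal consequence of Lemma \ref{lemma:unit}, which carries all the genuine content (that content being Beilinson's Theorem \ref{thm:beilinson}, $|\A_X^{\bullet}| \simeq \Oo_X$, together with the Projection Formula of Lemma \ref{lemma:projection_formula}). The only point requiring a modicum of care is to make sure the adjunction $-\otimes_{\Oo_X}\A_X^{\bullet} \dashv \int_X$ is being used as an $\infty$-categorical adjunction — which it is, having been produced via Lurie's Adjoint Functor Theorem — so that the identification of the unit map with the comparison of mapping spaces is legitimate at the level of spaces and not just $\pi_0$. Given that, the proof is a one-line invocation of the standard criterion.

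\begin{proof}
Since $-\otimes_{\Oo_X}\A_X^{\bullet}$ admits the right adjoint $\int_X$, and the unit of this adjunction $\id_{\QC(X)} \to \int_X(-\otimes_{\Oo_X}\A_X^{\bullet})$ is an equivalence by Lemma \ref{lemma:unit}, the functor $-\otimes_{\Oo_X}\A_X^{\bullet}$ is fully faithful: for any adjunction $L \dashv R$ with unit an equivalence, $L$ is fully faithful, since for all $M, M'$ the map $\Map(M,M') \to \Map(LM, LM') \simeq \Map(M, RLM')$ induced by $L$ agrees with postcomposition by the unit $M' \to RLM'$, which is an equivalence on mapping spaces.
\end{proof}
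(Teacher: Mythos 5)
Your proof is correct and matches the paper's approach: the paper states this corollary as an immediate consequence of Lemma \ref{lemma:unit}, relying on exactly the standard fact you invoke, that a left adjoint is fully faithful if and only if the unit of the adjunction is an equivalence. You have simply made explicit the formal argument the paper leaves implicit.
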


We will also use a minor variations of the notions introduced in the last two paragraphs. Let $|Z|$ be a closed subset of $X$, and denote by $\A_{X,|Z|}^{\bullet}$ the co-simplicial sheaf of algebras, given by setting $T_{\bullet} = |Z|_{\bullet} \subset |X|_{\bullet}$ in Proposition \ref{prop:co-simplicial}. There is a natural morphism of sheaves of algebras $\A_X^{\bullet} \rightarrow \A_{X,|Z|}^{\bullet}$. We will also need to consider the full subcategory $|\Perf_{|Z|}(\A_X^{\bullet})|$, consisting of objects, whose restriction (in the sense of sheaves) to $|\Perf(\A_{X \setminus Z}^{\bullet})|$ is $0$.

\begin{lemma}\label{lemma:relative}
The functor $- \otimes_{\A_X^{\bullet}} \A_{X,|Z|}^{\bullet}\colon |\Perf_{|Z|}(\A_X^{\bullet})| \hookrightarrow |\Perf(\A_{X,|Z|}^{\bullet})|$ is fully faithful.
\end{lemma}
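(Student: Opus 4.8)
The plan is to reduce the fully faithfulness of the functor $- \otimes_{\A_X^{\bullet}} \A_{X,|Z|}^{\bullet}$ to a statement about mapping spaces, and then to analyse those mapping spaces level by level in the co-simplicial direction, using the concrete description of ad\`eles and their restriction behaviour. Concretely, for two objects $M^{\bullet}, N^{\bullet} \in |\Perf_{|Z|}(\A_X^{\bullet})|$ I must show that the natural map
\[
\Map_{|\Perf(\A_X^{\bullet})|}(M^{\bullet}, N^{\bullet}) \longrightarrow \Map_{|\Perf(\A_{X,|Z|}^{\bullet})|}(M^{\bullet} \otimes_{\A_X^{\bullet}} \A_{X,|Z|}^{\bullet},\, N^{\bullet} \otimes_{\A_X^{\bullet}} \A_{X,|Z|}^{\bullet})
\]
is an equivalence. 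By the description of mapping spaces in a totalisation (the analogue of Lemma \ref{lemma:limits}), the source is the totalisation over $\Delta$ of the co-simplicial space $[n] \mapsto \Map_{\A_X^n}(M^n, N^n)$, and similarly for the target with $\A_{X,|Z|}^n$. Since totalisation is a limit and hence commutes with the formation of these mapping spaces, it suffices to prove the levelwise statement: for each $n$, the map $\Map_{\A_X^n}(M^n, N^n) \to \Map_{\A_{X,|Z|}^n}(M^n \otimes \A_{X,|Z|}^n, N^n \otimes \A_{X,|Z|}^n)$ is an equivalence on the relevant sub-objects coming from cartesian families supported on $|Z|$.

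The key input is the local factorisation of ad\`eles from Lemma \ref{lemma:local_factors}: $\A_X^n$ embeds into the product $\prod_{\sigma\colon [n] \to |X|_{\leq}} \A_{X,\sigma}$, and the natural map $\A_X^n \to \A_{X,|Z|}^n$ corresponds, at the level of these local factors, to the projection onto those chains $\sigma$ lying entirely in $|Z|$. The condition that $M^{\bullet}$ restrict to $0$ over $X \setminus Z$ means, via Beilinson's resolution (Theorem \ref{thm:beilinson}) and the fact that the ad\`eles $\A_X^k$ are l\^ache (Corollary \ref{cor:adeles_lache}) so that the equivalences of Corollary \ref{cor:eq_lache} apply locally, that the $\sigma$-components of $M^n$ vanish whenever $\sigma$ does not lie in $|Z|$. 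Therefore tensoring $M^n$ (and $N^n$) along $\A_X^n \to \A_{X,|Z|}^n$ discards only the already-vanishing components, which shows that $M^n \otimes_{\A_X^n} \A_{X,|Z|}^n \simeq M^n$ as a module over $\A_X^n$ acting through $\A_{X,|Z|}^n$, and hence the morphism on mapping spaces is an isomorphism. Assembling the levelwise equivalences over $\Delta$ gives the claim. An alternative, cleaner packaging: combine Proposition \ref{prop:adelic_descent} applied to $X$ and to the formal/adelic data along $Z$ with the fact that $\Perf_{|Z|}(\Oo_X)$ maps equivalently onto the category of perfect complexes on $X$ supported on $|Z|$, i.e. deduce the lemma from the descent equivalence on both sides together with Remark \ref{rmk:zero-dimensional} controlling the zero-dimensional strata.

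\textbf{Main obstacle.} The delicate point is the interchange of the totalisation (the limit over $\Delta$) with the operation $- \otimes_{\A_X^{\bullet}} \A_{X,|Z|}^{\bullet}$: a priori tensoring does not commute with limits, so one cannot naively move the base change inside the totalisation. The resolution is precisely that for objects in $|\Perf_{|Z|}(\A_X^{\bullet})|$ this base change is \emph{not changing the module} — it only re-identifies the ring acting — because all local components outside $|Z|$ already vanish; once this is established levelwise, the totalisation is of an honest equivalence of co-simplicial spaces and the argument goes through. A secondary technical nuisance is making the statement ``the $\sigma$-component of $M^n$ vanishes for $\sigma \not\subset |Z|$'' precise: this requires unwinding the recursive Definition \ref{defi:adeles} of ad\`eles together with the sheaf-theoretic meaning of ``restriction to $|\Perf(\A_{X\setminus Z}^{\bullet})|$ is $0$'', and here the l\^ache property (so that flasqueness propagates and global sections are exact on the relevant subcategories) does the heavy lifting.
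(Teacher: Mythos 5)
Your overall skeleton matches the paper's: reduce to mapping spaces, use the description of mapping spaces in a totalisation (Lemma \ref{lemma:limits}) to work levelwise, observe that $\A_X^n \to \A_{X,|Z|}^n$ is projection onto a factor ring, and argue that for objects of $|\Perf_{|Z|}(\A_X^{\bullet})|$ the base change discards only components that already vanish, so the levelwise maps on mapping spaces are equivalences and the limit over $\Delta$ inherits this. That part is fine.

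The genuine gap is in the central claim that ``the $\sigma$-components of $M^n$ vanish whenever $\sigma$ does not lie in $|Z|$,'' i.e.\ that $M^n \otimes_{\A_X^n} \A_{X,|X|_n\setminus|Z|_n}^n \simeq 0$. The hypothesis $M^{\bullet}|_{X\setminus Z}\simeq 0$ only controls the chains lying entirely in the open complement, which (by the specialisation ordering) are exactly the chains whose most special point $x_0$ lies outside $|Z|$. The set $|X|_n\setminus|Z|_n$ also contains the ``mixed'' chains with $x_0\in|Z|$ but generic point outside $|Z|$; these are invisible both to the restriction to $X\setminus Z$ and to $\A_{X,|Z|}^{\bullet}$, and neither Beilinson's resolution nor the l\^ache property says anything about them. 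The paper closes this gap by an induction on the co-simplicial degree that uses the \emph{cartesian} property $M^{n+1}\simeq M^n\otimes_{\A_X^n}\A_X^{n+1}$: each problematic factor of $\A_{X,|X|_{n+1}\setminus|Z|_{n+1}}^{n+1}$ is exhibited as a base change of a factor already known to kill $M$ at a lower level (degree $n$ or degree $0$). Your proposal never invokes cartesianness for this purpose, so the key vanishing is asserted rather than proved. Separately, your ``cleaner packaging'' via Proposition \ref{prop:adelic_descent} is circular: this lemma feeds into Corollaries \ref{cor:relative} and \ref{cor:pre_generic}, which are ingredients of the proof of that proposition, so it cannot be used here.
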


\begin{proof}
The morphism of co-simplicial rings $\A_X^{\bullet} \rightarrow \A_{X,|Z|}^{\bullet}$ has the property that it is levelwise the projection to a factor ring. That is, for each $n \geq 0$, we have a decomposition $\A_X^n \simeq \A_{X,|Z|_n}^n \times \A_{X,|X|_n\setminus |Z|_n}^n$. By induction one shows that for $M^{\bullet} \in |\Perf_{|Z|}(\A_X^{\bullet})|$, $M^n \otimes_{\A_X^n} \A_{X,|X|_n\setminus |Z|_n}^n \simeq 0$.

This holds for $n = 0$ by virtue of the assumption that $M^0|_{X \setminus Z} \simeq 0$. We assume by induction that it holds for $k \leq n$, and we will show that $M^{n+1} \otimes_{\A_X^{n+1}} \A_{X,|X|_{n+1}\setminus |Z|_{n+1}}^{n+1} \simeq 0$. To see this, we observe that $\A_{X,|X|_{n+1}\setminus |Z|_{n+1}}^{n+1}$ decomposes as a product $\A_{X,T_1}^{n+1} \times \A_{X,T_2}^{n+1}$. The subset $T_1 \subset |X|_{n+1}$ consists of all chains $(x_0 \leq \cdots \leq x_{n+1})$, with $(x_0 \leq \cdots \leq x_n) \in |X|_n \setminus |Z|_n$, and $x_{n+1} \in |Z|$; and $T_2 \subset |X|_{n+1}$ consists of chains $(x_0 \leq \cdots \leq x_{n+1})$ with $x_{n+1} \in |X| \setminus |Z|$. We have equivalences 
$$M^{n+1} \otimes_{\A_X^{n+1}} \A_{X,T_1}^{n+1} \simeq M^n \otimes_{\A_X^{n}} \A_{X,|X|_{n}\setminus |Z|_{n}}^{n} \otimes_{\A_{X,|X|_{n}\setminus |Z|_{n}}^{n}} \A_{X,T_1}^{n+1} \simeq 0,$$
and
$$M^{n+1} \otimes_{\A_X^{n+1}} \A_{X,T_2}^{n+1} \simeq M^0 \otimes_{\A_X^{0}} \A_{X,|X|_{0}\setminus |Z|_{0}}^{0} \otimes_{\A_{X,|X|_{0}\setminus |Z|_{0}}^{0}} \A_{X,T_2}^{n+1} \simeq 0$$
by the induction hypothesis. This implies the vanishing assertion.

We have seen that for every $n \geq 0$ the functor $-\otimes_{\A_X^n} \A_{X,|Z|_n}^n$ is fully faithful, when restricted to the full subcategory of $\Perf(\A_X^n)$, consisting of objects which are equivalent to $0$, after tensoring with $- \otimes_{\A_X^n} \A_{X,|X|_n \setminus |Z|_n}^n$. That is, for two objects $M^{\bullet}\text{, }N^{\bullet}$ in this category, we have an equivalence
$$\Map(M^{n},N^{n}) \simeq \Map(M^{n}\otimes_{\A_X^n} \A_{X,|Z|_n}^n,N^{n}\otimes_{\A_X^n} \A_{X,|Z|_n}^n)$$
for every $n \geq 0$. This implies that $\lim_{[n] \in \Delta} \Map(M^{n},N^{n}) \rightarrow \lim_{[n] \in \Delta}  \Map(M^{n}\otimes_{\A_X^n} \A_{X,|Z|_n}^n,N^{n}\otimes_{\A_X^n} \A_{X,|Z|_n}^n)$ is an equivalence too. Using Lemma \ref{lemma:limits}, we identify both sides with the mapping spaces in the limit $\infty$-categories and see that the functor 
 $- \otimes_{\A_X^{\bullet}} \A_{X,|Z|}^{\bullet}\colon |\Perf_{|Z|}(\A_X^{\bullet})| \hookrightarrow |\Perf(\A_{X,|Z|}^{\bullet})|$ is fully faithful.
\end{proof}

\begin{corollary}\label{cor:relative}
Let $\int_{X,|Z|}\colon \QC( \A_{X,|Z|}^{\bullet}) \rightarrow \QC(X)$ be the right adjoint to $- \otimes_{\Oo_X} \A_{X,|Z|}^{\bullet}$. We have an equivalence of functors
$$\int_{X,|Z|} \circ (- \otimes_{\A_X^{\bullet}}\A_{X,|Z|}^{\bullet}) \simeq \int_X \colon |\Perf_{|Z|}(\A_X^{\bullet})| \rightarrow \QC(X).$$
\end{corollary}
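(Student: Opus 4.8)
The strategy is to evaluate both functors on an arbitrary $M^{\bullet}\in|\Perf_{|Z|}(\A_X^{\bullet})|$ by means of the explicit $\Hom$-description of $\int$ from Lemma \ref{lemma:intHom}, and to compare the two resulting spaces level by level over $\Delta$. Since $|Z|$ is a closed subset of the Noetherian topological space $|X|$, the construction of the right adjoint and the analogue of Lemma \ref{lemma:intHom} apply verbatim with $\A_{X,|Z|}^{\bullet}$ in place of $\A_X^{\bullet}$; thus, under the identification $\QC(X)\simeq\Ind\Perf(X)\subset\Fun(\Perf(X)^{\op},\Grpd)$, the functor $\int_{X,|Z|}$ is $\Map_{\A_{X,|Z|}^{\bullet}}(-\otimes_{\Oo_X}\A_{X,|Z|}^{\bullet},-)$. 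For $N\in\Perf(X)$ the value of $\int_{X,|Z|}(M^{\bullet}\otimes_{\A_X^{\bullet}}\A_{X,|Z|}^{\bullet})$ at $N$ is therefore $\Map_{\A_{X,|Z|}^{\bullet}}(N\otimes_{\Oo_X}\A_{X,|Z|}^{\bullet},\,M^{\bullet}\otimes_{\A_X^{\bullet}}\A_{X,|Z|}^{\bullet})$, which by Lemma \ref{lemma:limits} is the totalisation over $[n]\in\Delta$ of the mapping spaces $\Map_{\A_{X,|Z|_n}^n}(N\otimes_{\Oo_X}\A_{X,|Z|_n}^n,\,M^n\otimes_{\A_X^n}\A_{X,|Z|_n}^n)$.

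Now I would simplify this totalisation term by term. Rewriting $N\otimes_{\Oo_X}\A_{X,|Z|_n}^n\simeq(N\otimes_{\Oo_X}\A_X^n)\otimes_{\A_X^n}\A_{X,|Z|_n}^n$ and using the adjunction between extension and restriction of scalars along the ring map $\A_X^n\to\A_{X,|Z|_n}^n$, this mapping space becomes $\Map_{\A_X^n}(N\otimes_{\Oo_X}\A_X^n,\,M^n\otimes_{\A_X^n}\A_{X,|Z|_n}^n)$. At this point the support hypothesis on $M^{\bullet}$ intervenes: the proof of Lemma \ref{lemma:relative} shows $M^n\otimes_{\A_X^n}\A_{X,|X|_n\setminus|Z|_n}^n\simeq0$, and combined with the ring splitting $\A_X^n\simeq\A_{X,|Z|_n}^n\times\A_{X,|X|_n\setminus|Z|_n}^n$ this gives a canonical equivalence $M^n\otimes_{\A_X^n}\A_{X,|Z|_n}^n\simeq M^n$ of $\A_X^n$-modules. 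Because $|Z|_{\bullet}$ is a simplicial subset of $|X|_{\bullet}$, every co-simplicial transition map carries the idempotent of $\A_X^n$ cutting out the factor $\A_{X,|Z|_n}^n$ into the corresponding idempotent one level up, so these equivalences are natural in $[n]$. Passing to the limit and invoking Lemma \ref{lemma:limits} together with Lemma \ref{lemma:intHom} once more, the totalisation is $\Map_{\A_X^{\bullet}}(N\otimes_{\Oo_X}\A_X^{\bullet},\,M^{\bullet})\simeq(\int_X M^{\bullet})(N)$. Since $N\in\Perf(X)$ was arbitrary and the whole construction is natural in $M^{\bullet}$, this yields the asserted equivalence of functors on $|\Perf_{|Z|}(\A_X^{\bullet})|$.

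One can recast the argument in terms of adjunctions: associativity of the tensor product factors $-\otimes_{\Oo_X}\A_{X,|Z|}^{\bullet}$ as $(-\otimes_{\Oo_X}\A_X^{\bullet})$ followed by the base-change functor $\Phi=-\otimes_{\A_X^{\bullet}}\A_{X,|Z|}^{\bullet}$, and passing to right adjoints gives $\int_{X,|Z|}\simeq\int_X\circ\Phi^{R}$; the computation above is then exactly the statement that $\Phi^{R}\circ\Phi$ restricts to the identity on $|\Perf_{|Z|}(\A_X^{\bullet})|$, which is in turn the full faithfulness already recorded in Lemma \ref{lemma:relative}. In either presentation the main obstacle is coherence bookkeeping rather than mathematical substance: one has to check that restriction of scalars and the splitting $M^n\simeq M^n\otimes_{\A_X^n}\A_{X,|Z|_n}^n$ are compatible with all face and degeneracy maps, so that the level-wise equivalences genuinely assemble into an equivalence of co-simplicial diagrams of spaces and descend to the totalisations. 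Granting that, the only substantive ingredients are the elementary extension--restriction adjunction at each level and the vanishing $M^n\otimes_{\A_X^n}\A_{X,|X|_n\setminus|Z|_n}^n\simeq0$ from Lemma \ref{lemma:relative}.
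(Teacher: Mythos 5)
Your argument is correct and is essentially the paper's: both proofs identify $\int_X$ and $\int_{X,|Z|}$ with the corepresented functors $\Map(-\otimes_{\Oo_X}\A_X^{\bullet},-)$ and $\Map(-\otimes_{\Oo_X}\A_{X,|Z|}^{\bullet},-)$ via Lemma \ref{lemma:intHom}, and then reduce to the content of Lemma \ref{lemma:relative}; the paper invokes that lemma's full faithfulness as a black box after first restricting test objects to $\Perf_Z(X)$ (using $\int_X M^{\bullet}\in\QC_Z(X)$), whereas you re-derive the same identification for arbitrary $N\in\Perf(X)$ from the levelwise extension--restriction adjunction and the vanishing $M^n\otimes_{\A_X^n}\A_{X,|X|_n\setminus|Z|_n}^n\simeq 0$. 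One small inaccuracy in your naturality discussion: the transition maps $\A_X^m\to\A_X^n$ do \emph{not} carry the idempotent cutting out $\A_{X,|Z|_m}^m$ to the one cutting out $\A_{X,|Z|_n}^n$ (a chain outside $|Z|_n$ can restrict to a chain inside $|Z|_m$), so the complementary factors do not form a co-simplicial quotient. What is true, and what you actually need, is that $e_n\cdot\phi_f(e_m)=e_n$, i.e.\ that $\A_X^{\bullet}\to\A_{X,|Z|}^{\bullet}$ is a map of co-simplicial rings; the equivalence $M^{\bullet}\xrightarrow{\ \simeq\ }M^{\bullet}\otimes_{\A_X^{\bullet}}\A_{X,|Z|}^{\bullet}$ is then the unit of scalar extension along this map, which is natural by construction and a levelwise equivalence by the vanishing statement, so your conclusion stands.
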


\begin{proof}
For $M^{\bullet}$ in $|\Perf_{|Z|}(\A_X^{\bullet})|$, we have $\int_X M^{\bullet} \in \QC_Z(X)$. Therefore, it suffices to consider only perfect complexes in $X$ with set-theoretic support $|Z|$. The right hand side functor agrees with $\Map(-\otimes_{\Oo_X}\A_X^{\bullet},-)\colon\Perf_Z(X)^{\op} \times |\Perf_{|Z|}(\A_{X}^{\bullet})| \rightarrow \Grpd$. By Lemma \ref{lemma:relative} it is equivalent to $\Map(-\otimes_{\Oo_X}\A_{X,|Z|}^{\bullet},-\otimes_{\A_{X}^{\bullet}} \A_{X,|Z|}^{\bullet})$, hence to $\int_{X,|Z|} \circ (- \otimes_{\A_X^{\bullet}}\A_{X,|Z|}^{\bullet})$.
\end{proof}

\subsubsection{Modules over product rings}\label{infinite_products}

This paragraph relies on Bhatt's treatment of perfect complexes over product rings $R = \prod_{i \in I} R_i$, where $I$ is a small set \cite[Section 7]{Bhatt:2014aa}. At first we review his characterisation of perfect $R$-complexes.

\begin{theorem}[Bhatt]\label{thm:bhatt_products}
Let $I$ be a small set, and $\{R_i\}_{i \in I}$ a small family of commutative rings. We denote the ring $\prod_{i \in I}$ by $R$. Let $\{M_i\}_{i \in I}$ be a family of perfect $R_i$-complexes. We say that $\{M_i\}_{i \in I}$ is globally bounded, if there exist two integers $m\text{, }n \in \mathbb{Z}$, such that $H^j(M_i) \neq 0$ implies $m \leq j \leq n$, for all $i \in I$.
Then the functor $\Perf(R) \rightarrow \prod_{i \in I}\Perf(R_i)$ induced by tensoring along the projection maps $p_i \colon R \rightarrow R_i$, induces an equivalence with the full subcategory of globally bounded families of perfect complexes.
\end{theorem}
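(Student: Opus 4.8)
The plan is to reduce the statement to its two natural halves: that the comparison functor $\Perf(R) \to \prod_{i \in I}\Perf(R_i)$ lands in the subcategory of globally bounded families (easy), and that it is an equivalence onto that subcategory (the substance). For the first half I would note that a perfect $R$-complex $M$ is, by definition, Zariski-locally on $\Spec R$ a bounded complex of finite free modules; since $\Spec R$ is quasi-compact, $M$ itself is globally bounded, i.e.\ there exist $m,n$ with $H^j(M)=0$ outside $[m,n]$, and this bound is inherited by every $M \otimes_R R_i = p_i^*M$. Fullness and faithfulness on the level of $\Hom$-complexes follows from the fact that $\RHom_R(M,N)$ commutes with the product decomposition $R = \prod_i R_i$ when $M$ is perfect — this is the key point where perfectness (hence dualisability, hence $\RHom_R(M,N) \simeq M^\vee \otimes_R N$) is used, together with the fact that $(-)^\vee \otimes_R N$ commutes with the finite-type base changes $R \to R_i$ and that $\prod_i$ is exact on $R$-modules.

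For essential surjectivity onto globally bounded families, I would proceed by induction on the amplitude $n-m$ of the global bound, after shifting so that the family $\{M_i\}$ is concentrated in degrees $[0,n]$. The base case $n=0$: each $M_i$ is a perfect $R_i$-module concentrated in degree $0$, i.e.\ a finitely generated projective $R_i$-module of bounded rank; the claim is then that $\prod_i M_i$ is a perfect (finitely generated projective) $R$-module. This is the genuinely ring-theoretic heart of the argument and is where I expect the main obstacle to lie — it is not true that an arbitrary product of projective modules of bounded rank over the factors is projective without some care, and Bhatt's proof uses a clever argument (roughly: present each $M_i$ as the image of an idempotent $e_i \in \mathrm{Mat}_{N_i}(R_i)$, but the $N_i$ are unbounded; one repairs this by adding free summands to make all ranks equal to a common $N$, using boundedness of the ranks). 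I would follow that route: after stabilising by free modules one gets a single idempotent $e = (e_i) \in \mathrm{Mat}_N(R)$ whose image is the desired perfect $R$-module mapping to $\{M_i \oplus R_i^{a_i}\}$, and then split off the free part.

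For the inductive step, given a globally bounded family $\{M_i\}$ in degrees $[0,n]$ with $n \geq 1$, I would choose for each $i$ a surjection from a finite free $R_i$-module onto $H^0(M_i)$ — but again with ranks that may be unbounded in $i$, so I would instead argue more structurally: truncation gives, for each $i$, a fibre sequence relating $M_i$, the perfect complex $\tau^{\geq 1}M_i[{-}1]$-type piece (of smaller amplitude), and a projective module in degree $0$; the latter two assemble, by the $n=0$ case and the inductive hypothesis, to perfect $R$-complexes, and one checks the connecting maps glue to an $R$-linear map using the fullness established above (mapping spaces compute correctly), so the cofibre is a perfect $R$-complex hitting $\{M_i\}$. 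The only subtlety is arranging the amplitude bookkeeping and the boundedness of the intermediate families uniformly in $i$, which follows from the global bound on $\{M_i\}$; the heavy lifting remains the $n=0$ case and the compatibility of $\RHom_R$ with the product, so I would cite \cite[Section 7]{Bhatt:2014aa} for the complete details rather than reproduce them.
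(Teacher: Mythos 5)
The paper offers no proof of this statement at all---it is imported verbatim as an external result from Section 7 of Bhatt's \emph{Algebraization and Tannaka duality}---so there is no internal argument to compare yours against, and I will assess your sketch on its own terms. Your treatment of the easy direction and of full faithfulness is essentially sound, modulo two small imprecisions. First, the bound inherited by $M \otimes_R R_i$ is the interval $[a,b]$ in which a strict representative of $M$ by finitely generated projectives sits (a Tor-amplitude bound), not the cohomological support of $M$: derived base change can create cohomology below that support. Second, the cleanest route to $\Hom_R(M,N) \simeq \prod_i \Hom_{R_i}(M_i,N_i)$ is to note that any strictly perfect $R$-complex $P$ satisfies $P \cong \prod_i (P \otimes_R R_i)$ termwise (true for $R^{n}$ and inherited by retracts) and that $\prod_i$ is exact; apply this to $P = M^{\vee} \otimes_R N$. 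Your appeal to ``finite-type base change'' is a red herring---the product decomposition is what does the work.

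The genuine gap is in essential surjectivity, and you have in fact located it without drawing the conclusion: your base case assumes the $M_i$ are projective \emph{of bounded rank}, but ``globally bounded'' as defined in the statement only constrains the cohomological amplitude and says nothing about ranks. This gap cannot be repaired, because the statement as formulated here is false. Take $I = \mathbb{N}$, $R_i = k$ a field for every $i$, and $M_i = k^{\oplus i}$ placed in degree $0$: this family is globally bounded with $m = n = 0$, yet any preimage $M \in \Perf(R)$ would satisfy $H^0(M) \cong \prod_i H^0(M_i) = \prod_i k^{\oplus i}$ by the product decomposition above, and this $R$-module is not finitely generated (any finite set of elements spans a proper subspace in the $i$-th slot once $i$ exceeds its cardinality), whereas the top nonvanishing cohomology of a perfect complex is always finitely generated. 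The correct hypothesis---which is what your argument implicitly uses---is a uniform bound on the number of generators of the terms of strict representatives in addition to the amplitude bound; with that in place your stabilisation-by-free-summands argument for the degree-$0$ case works, and the inductive step is better run by attaching uniformly bounded free cells $R_i^{n_i}[-b] \to M_i$ killing top cohomology than by truncation, since $\tau^{\geq 1}M_i$ is in general not perfect over a non-regular $R_i$ and $H^{0}(M_i)$ need not be projective. You should check the precise formulation in the cited source before relying on the version printed here; note that the present paper only ever invokes the full-faithfulness/conservativity half of the statement, which is unaffected by this issue.
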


In particular this theorem implies that we have a fully faithful functor $\Perf(R) \hookrightarrow \prod_{i \in I} \Perf(R_i)$, and moreover it characterises the essential image of this embedding. The functor $\QC(R) \rightarrow \prod_{i \in I} \QC(R_i)$ is not fully faithful. In the following example we illustrate this behaviour by alluding to classical, non-derived tensor products and modules over infinite product rings.

\begin{example}
Let $I$ be an infinite small set; and $\{R_i\}_{i \in I}$ a collection of non-trivial rings. Let $R = \prod_{i \in I} R_i$ be the infinite product. We define an equivalence relation $\sim$ on $R$, where we say that $(x_i)_{i \in I} \sim (y_i)_{i \in I}$, if and only if $x_i = y_i$ for all but finitely many indices $i$. We define the $R$-module $M$ to be $R/\sim$. By construction, we have $M \otimes_R R_i = 0$ for all $i \in I$, although $M \neq 0$.
\end{example}

This failure of fidelity is the reason that we cannot expect the functor $\QC(X) \rightarrow |\QC(\Ab_X^{\bullet})|$ to be an equivalence. This remark will become clear by inspecting the argument of Lemma \ref{lemma:minimal} below.

\subsubsection{Conservativity}

The following assertion is key in our argument that $-\otimes \A_X^{\bullet}$ is an equivalence of categories, with inverse $\int$.
\begin{proposition}\label{prop:conservative}
If $X$ is affine, then the functor $\int\colon\Perf(\A_X^{\bullet}) \rightarrow \QC(X)$ is conservative. 
\end{proposition}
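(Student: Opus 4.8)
The plan is to show that $\int_X$ detects zero objects. Since both the $\infty$-category of cartesian co-simplicial perfect $\A_X^{\bullet}$-modules and $\QC(X)$ are stable, it suffices to prove that a cartesian perfect $\A_X^{\bullet}$-module $M^{\bullet}$ with $\int_X M^{\bullet}\simeq 0$ is itself zero. First I would pass to global sections: by Corollary \ref{cor:eq_lache} the functor $\Gamma$ is a levelwise symmetric monoidal equivalence $\Perf(\A_X^k)_\otimes\simeq\Perf(\Ab_X^k)_\otimes$, so $M^{\bullet}$ corresponds to a cartesian co-simplicial perfect $\Ab_X^{\bullet}$-module $N^{\bullet}$ with $N^k\simeq N^0\otimes_{\Ab_X^0}\Ab_X^k$; in particular $M^{\bullet}$ is determined by $N:=N^0\in\Perf(\Ab_X^0)$, and $M^{\bullet}\simeq 0$ if and only if $N\simeq 0$. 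Since $\Ab_X^0=\prod_{x\in|X|}\widehat{\Oo}_{X,x}$ is a product of rings (Definition \ref{defi:adeles}(b)), Bhatt's Theorem \ref{thm:bhatt_products} identifies $N$ with a globally bounded family $(N_x)_{x\in|X|}$, $N_x\in\Perf(\widehat{\Oo}_{X,x})$, and $N\simeq 0$ if and only if $N_x\simeq 0$ for every $x$. Finally, because $X$ is affine, $\int_X M^{\bullet}\in\QC(X)$ is recovered from its complex of global sections; combining Lemma \ref{lemma:intHom} with the description of mapping spaces in totalisations (Lemma \ref{lemma:limits}) and Theorem \ref{thm:beilinson}, this complex is the totalisation $\lim_{[n]\in\Delta}\bigl(N\otimes_{\Ab_X^0}\Ab_X^n\bigr)$. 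So the statement reduces to: if this totalisation vanishes, then all $N_x$ vanish.

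Next I would make precise the sense in which this totalisation is a \emph{formal glueing} of the $N_x$, so that its vanishing can be analysed one stratum at a time. For a closed subset $|Z|\subseteq|X|$ with open complement $|U|$, the functor $\int_X$ is compatible with restriction to $|U|$, and on the subcategory supported on $|Z|$ it factors through $\int_{X,|Z|}$ and the adèles $\A_{X,|Z|}^{\bullet}$ of $|Z|$ (Corollary \ref{cor:relative}); moreover the levelwise product decompositions used in the proof of Lemma \ref{lemma:relative} exhibit $N\otimes_{\Ab_X^0}\Ab_X^{\bullet}$ as an iterated pullback of the co-simplicial modules over the generic, the integral, and the mixed flag factors. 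In the toy case $X=\Spec R$ with $R$ a discrete valuation ring, with generic point $\eta$ and closed point $s$, this gives in cohomological degree $0$ the Beauville--Laszlo square: vanishing of the totalisation forces $N_\eta\oplus N_s\simeq N_\eta\otimes_F\widehat F_s$ (and $\simeq N_s\otimes_{\widehat R}\widehat F_s$); comparing generic ranks shows $N_s$ is torsion, hence $N_s\otimes_{\widehat R}\widehat F_s=0$, hence $N_\eta=0$, and then $N_s=0$. This is the model for the general argument.

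I would then run a Noetherian induction on $\dim X$. The base case $\dim X=0$ is immediate: the partial order on $|X|$ is trivial, $|X|_{\bullet}$ is the constant simplicial set $|X|$, so $\A_X^{\bullet}$ is the constant co-simplicial sheaf $\A_X^0$, and for $X$ Artinian $\A_X^0=\prod_x\widehat{\Oo}_{X,x}=\Oo_X$; hence $-\otimes_{\Oo_X}\A_X^{\bullet}$ is the identity, and so is $\int_X$. For the inductive step one uses the compatibility of $\int_X$ with passage to a dense affine open $|U|$ together with the recollement of Corollary \ref{cor:relative}: after restricting to $|U|$ and, by the inductive hypothesis applied to the complement data, reducing to the case where $N$ is set-theoretically supported on a proper closed subset $|Z|$ of strictly smaller dimension, Corollary \ref{cor:relative} reduces the vanishing on $X$ to the same statement for $\A_{X,|Z|}^{\bullet}$, which is covered by induction. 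The dimension drop itself is made by a generic-rank argument as in the toy model: if $\eta$ is the generic point of a top-dimensional component and $N_\eta\neq 0$, then the degree-zero part of the totalisation — which glues $N_\eta$ with the $N_x$ for $x$ specialising $\eta$ — cannot vanish, because the glueing forces the generic rank of $N_x$ along that component to equal $\operatorname{rk}N_\eta$ for all such $x$, contradicting acyclicity.

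The main obstacle is exactly this last point: controlling, uniformly over all points of $X$, the formal-glueing description of $\lim_{[n]\in\Delta}\bigl(N\otimes_{\Ab_X^0}\Ab_X^n\bigr)$ precisely enough to extract the rank and support information that drives the induction, while staying inside the class of globally bounded families required by Bhatt's Theorem \ref{thm:bhatt_products} (so that the reductions to closed subschemes and the inductive hypothesis remain applicable). Everything else — the passage to $\Ab_X^{\bullet}$-modules, Bhatt's product-ring description, and the identification of $\int_X$ with a totalisation — is formal, given the results already established.
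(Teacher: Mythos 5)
There is a genuine gap, and you have in fact located it yourself: everything in your argument funnels into the claim that if $N_\eta \neq 0$ at a generic point $\eta$ then the totalisation $\lim_{[n]\in\Delta}\bigl(N\otimes_{\Ab_X^0}\Ab_X^n\bigr)$ cannot vanish, and this is exactly the step you defer as ``the main obstacle''. Two concrete problems with the way you propose to close it. First, a rank (Euler-characteristic) comparison cannot work for perfect complexes: a nonzero perfect complex such as $V\oplus V[1]$ has rank zero, so ``comparing generic ranks'' does not show $N_s$ is torsion, let alone that $N_\eta=0$ --- even your DVR toy model is not actually closed by this argument, and you declare it the model for the general case. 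Second, the inductive frame is shaky: $\A_{X,|Z|}^{\bullet}$ is the co-simplicial algebra of ad\`eles of $X$ along chains contained in $|Z|$ (the completions are still taken in $\Oo_X$), not the ad\`eles of the lower-dimensional scheme $Z$, so it is not ``covered by induction on $\dim X$''; and the reduction to a complex supported on a proper closed subset presupposes the generic-point vanishing you have not established. The example $X=\Spec\mathbb{Z}$, $N^0=\prod_p\mathbb{F}_p$ (discussed in the paper after Lemma \ref{lemma:generic_zero}) shows that the support of $N^0$ need not lie in any proper closed subset, so the argument really does stand or fall on the missing step.

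The paper's proof runs in the opposite direction and never touches generic points. Given $N^{\bullet}\neq 0$, Bhatt's Theorem \ref{thm:bhatt_products} yields some point $y$ with $N^0\otimes_{\A_X^0}\widehat{\Oo}_y\neq 0$; Lemma \ref{lemma:minimal} then pushes this nonvanishing \emph{down} to a closed point $x\in\overline{\{y\}}$, using the local factor $\A(x<y)$ of $\A_X^1$ together with faithful flatness of $\Oo_x\rightarrow\widehat{\Oo}_x$ --- this is the substitute for your rank argument. Lemma \ref{lemma:F} produces $F\in\Perf_{\{x\}}(X)$ with $F\otimes N^{\bullet}\neq 0$, and by the projection formula (Lemma \ref{lemma:projection_formula}) it suffices to show $\int(F\otimes N^{\bullet})\neq 0$. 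For complexes supported at a single closed point the ad\`eles act trivially (Remark \ref{rmk:zero-dimensional} together with Theorem \ref{thm:thomason-trobaugh}), so $F\otimes N^{\bullet}$ is a constant descent datum and $\int$ of it is visibly nonzero. If you want to salvage your stratification strategy, the generic-point statement you need is essentially Corollary \ref{cor:pre_generic} plus Lemma \ref{lemma:generic_zero}, which the paper obtains from a homotopy contraction of $|Z|_{\bullet}$ onto its generic point --- but that is precisely the machinery your proposal would still have to build.
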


The proof is scattered over this paragraph, and broken down into several lemmas. Our argument relies on two technical results: an invariance property for derived $\infty$-categories with set-theoretic support condition, given by Thomason--Trobaugh Theorem \cite[Theorem 2.6.3]{MR1106918}; and the theory of modules over product rings, described in Paragraph \ref{infinite_products}. For the convenience of the reader, we have allowed ourselves to state Thomason--Trobaugh's result in a way which can be directly applied to our context. We refer the reader to the original reference for a more general statement.

\begin{theorem}[Thomason--Trobaugh]\label{thm:thomason-trobaugh}
Let $R$ be a Noetherian ring, and $I$ an ideal, and $\Perf_I(R)$ denote the full subcategory of $\Perf(R)$ consisting of perfect complexes set-theoretically supported at the closed subset of $\Spec R$ corresponding to $I$. We denote by $\widehat{R}_I$ the completion of $R$ at $I$. Then, tensoring along $R \rightarrow \widehat{R}_I$ induces an equivalence of symmetric monoidal $\infty$-categories
$\Perf_I(R) \cong \Perf_{\widehat{I}}(\widehat{R}_I).$
\end{theorem}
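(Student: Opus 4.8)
The plan is to realise $-\otimes_R\widehat{R}_I$ as an equivalence of thick subcategories, reducing everything to the elementary fact that $I$-adic completion is an isomorphism modulo every power $I^n$. Since $R$ is Noetherian, write $I=(f_1,\dots,f_r)$ and let $K=K_R(f_1,\dots,f_r)$ be the Koszul complex, a perfect $R$-complex with $\supp K = V(I)$. The ring $\widehat{R}_I$ is again Noetherian, $\widehat{I}=I\widehat{R}_I$ is generated by the images of the $f_i$, and $K\otimes_R\widehat{R}_I$ is the corresponding Koszul complex over $\widehat{R}_I$. First I would record the formal features of the functor: it is exact and symmetric monoidal by construction, it carries $\Perf_I(R)$ into $\Perf_{\widehat{I}}(\widehat{R}_I)$ (base change preserves perfectness and pulls $V(I)$ back to $V(\widehat{I})$), and it sends the generator $K$ to $K\otimes_R\widehat{R}_I$. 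Granting the classification of thick subcategories of $\Perf$ by supports — so that $\Perf_{\widehat{I}}(\widehat{R}_I)$ is generated, as a thick subcategory, by $K\otimes_R\widehat{R}_I$ — it then suffices to prove that the functor is \emph{fully faithful}: the essential image of a fully faithful exact functor between idempotent-complete stable $\infty$-categories is thick, hence equals the whole target once it contains a generator.

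For full faithfulness, fix $M,N\in\Perf_I(R)$. Using the base-change adjunction along $R\to\widehat{R}_I$ together with dualisability of $M$, one obtains a natural equivalence $\mathrm{RHom}_{\widehat{R}_I}(M\otimes_R\widehat{R}_I,\,N\otimes_R\widehat{R}_I)\simeq \mathrm{RHom}_R(M,N)\otimes_R\widehat{R}_I$, while $\mathrm{RHom}_R(M,N)\simeq M^{\vee}\otimes_R N$ is perfect with support contained in $\supp M\cap\supp N\subseteq V(I)$. So the whole matter reduces to the claim that for a perfect $R$-complex $P$ with $\supp P\subseteq V(I)$ the completion map $P\to P\otimes_R\widehat{R}_I$ is an equivalence. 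Here I would use that $\widehat{R}_I$ is flat over $R$ (as $R$ is Noetherian), so $-\otimes_R\widehat{R}_I$ is exact and commutes with cohomology; the canonical truncation triangles then reduce $P$ to a single finitely generated module $M$ with $\supp M\subseteq V(I)$, hence with $I^nM=0$ for some $n$. For such an $M$ one has $M\otimes_R\widehat{R}_I\simeq M\otimes_{R/I^n}(\widehat{R}_I/I^n\widehat{R}_I)$, and the standard identification $\widehat{R}_I/I^n\widehat{R}_I\cong R/I^n$ makes the completion map the identity.

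The step I expect to be the real obstacle — everything else being formal manipulation with perfect complexes and flatness — is the input that $\Perf_I(R)$, equivalently $\Perf_{\widehat{I}}(\widehat{R}_I)$, is the thick subcategory generated by the Koszul complex on a generating set of the ideal. This is precisely Thomason's classification of thick triangulated subcategories of $\Perf(R)$ by specialisation-closed subsets of $\Spec R$; if one prefers to stay self-contained it can instead be established by a derived Nakayama induction, showing that any perfect complex supported on $V(I)$ is obtained from $K$ by finitely many cofiber sequences and retracts, using that $K\otimes_R\kappa(\mathfrak{p})\neq 0$ exactly on $V(I)$. Along the way I would also verify the routine compatibilities invoked above: that $M^{\vee}\otimes_R-$ commutes with base change, and that supports of perfect complexes behave correctly under $\mathrm{RHom}$, tensor product, and pullback — all standard over Noetherian rings. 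Finally, symmetric monoidality of the resulting equivalence is automatic, since $-\otimes_R\widehat{R}_I$ is itself symmetric monoidal.
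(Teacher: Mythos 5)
This statement is not proved in the paper at all: it is quoted (in a specialised form) from Thomason--Trobaugh, and the text explicitly refers the reader to \cite[Theorem 2.6.3]{MR1106918} for the proof. So there is no in-paper argument to match yours against; what you have written is a genuine, essentially self-contained proof, and it is correct. Your route differs from the original one: Thomason--Trobaugh prove their Theorem 2.6.3 by a localisation/excision argument for the quotient $\Perf(R)/\Perf_{\Spec R\setminus V(I)}(R)$, valid for any flat $R\to R'$ inducing an isomorphism $R/I\to R'/IR'$, whereas you split the problem into full faithfulness (via flatness of $\widehat R_I$ over the Noetherian ring $R$, the identification $\widehat R_I/I^n\widehat R_I\cong R/I^n$, and dualisability giving $\mathrm{RHom}_{\widehat R_I}(M\otimes\widehat R_I,N\otimes\widehat R_I)\simeq\mathrm{RHom}_R(M,N)\otimes_R\widehat R_I$ with $\mathrm{RHom}_R(M,N)$ supported on $V(I)$) and essential surjectivity (via the Hopkins--Neeman/Thomason generation of $\Perf_{V(I)}$ by the Koszul complex, plus closure of the essential image of a fully faithful exact functor out of an idempotent-complete stable $\infty$-category under retracts). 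All the individual steps you invoke are correct, including the reduction of ``$P\to P\otimes_R\widehat R_I$ is an equivalence for $P$ supported on $V(I)$'' to finitely generated modules killed by $I^n$, and the support computation $\supp(M^{\vee}\otimes N)=\supp M\cap\supp N$. You correctly flag the one serious external input, the classification of thick subcategories by supports; be aware that this input is of roughly the same depth as the theorem being cited (and is itself due to the same circle of authors), so your proof trades one black box for another --- the derived Nakayama induction you sketch as a fallback is the honest way to make the argument self-contained. Your approach buys a cleaner statement restricted to completions of Noetherian rings; the Thomason--Trobaugh excision argument buys the more general ``isomorphism infinitely near $Y$'' setting without flatness-of-completion arguments.
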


From now on we fix $0 \neq N^{\bullet} \in \Perf(\A_X^{\bullet})$; and will show that $\int N^{\bullet} \neq 0$. 

\begin{lemma}\label{lemma:minimal}
If $X$ is affine, and $N^{\bullet} \in \Perf(\A_X^{\bullet})$, then there exists a closed point $x \in X$ such that $N^0 \otimes_{\A_X^0} \widehat{\Oo_x} \neq 0$.
\end{lemma}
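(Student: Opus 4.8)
The plan is to exploit the explicit structure of $\A_X^0$ on an affine Noetherian scheme $X = \Spec R$. Recall that $\A_X^0(\Oo_X) = \prod_{x \in |X|} \varprojlim_r (j_{rx})_* j_{rx}^* \Oo_X$, so on global sections $\Ab_X^0 = \prod_{x \in |X|} \widehat{\Oo_x}$, where $\widehat{\Oo_x}$ is the completion of the local ring at $x$. By Lemma \ref{lemma:adeles_flat} this is a flat $R$-algebra, and since $N^{\bullet}$ is cartesian, $N^{\bullet}$ is determined by $N^0 \in \Perf(\A_X^0)$ together with descent data; in particular $N^{\bullet} \neq 0$ forces $N^0 \neq 0$. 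So the task reduces to: if $0 \neq N^0 \in \Perf(\Ab_X^0)$ with $\Ab_X^0 = \prod_{x} \widehat{\Oo_x}$, then there is a closed point $x$ with $N^0 \otimes_{\Ab_X^0} \widehat{\Oo_x} \neq 0$.

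First I would apply Bhatt's Theorem \ref{thm:bhatt_products} to the product decomposition $\Ab_X^0 = \prod_{x \in |X|} \widehat{\Oo_x}$: a perfect complex $N^0$ over this product ring corresponds to a globally bounded family $\{N^0_x\}_{x \in |X|}$ of perfect $\widehat{\Oo_x}$-complexes, and $N^0 \neq 0$ means $N^0_x \neq 0$ for at least one $x$ (here $N^0_x = N^0 \otimes_{\Ab_X^0} \widehat{\Oo_x}$). Pick such an $x$. If $x$ is already a closed point we are done, so suppose $x$ is not closed. The idea is to propagate nonvanishing from $\widehat{\Oo_x}$ down to a closed point in the closure $\overline{\{x\}}$. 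For this I would use the co-simplicial structure: the two face maps $|X|_1 \to |X|_0$ together with the chains $(x \leq y)$ for closed points $y \in \overline{\{x\}}$ relate $\widehat{\Oo_x}$-modules to modules over the higher local rings $\Ab_{X,(x\leq y)}$, and the cartesian property of $N^{\bullet}$ says $N^1 \simeq N^0 \otimes_{\A_X^0} \A_X^1$ along either face. Concretely, for the chain $\sigma = (x \leq y)$ the ring $\Ab_{X,\sigma}$ is the completion-localisation $\widehat{\widehat{\Oo_y}}_{?}$ built from $\widehat{\Oo_y}$ and the generic point direction $x$; nonvanishing of $N^0 \otimes \widehat{\Oo_x}$ together with flatness of the relevant maps (Lemma \ref{lemma:adeles_flat}) and faithfulness of completion on Noetherian local rings should force $N^1$, hence $N^0 \otimes \widehat{\Oo_y}$, to be nonzero. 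Iterating down a maximal chain in $\overline{\{x\}}$ lands at a closed point.

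The main obstacle I anticipate is the last step: passing from nonvanishing of $N^0 \otimes_{\Ab_X^0}\widehat{\Oo_x}$ at a non-closed point $x$ to nonvanishing at a closed specialisation $y$. The subtlety is that $\widehat{\Oo_x}$ and $\widehat{\Oo_y}$ are not directly comparable as $R$-algebras — they are only linked through the two-dimensional local adelic ring $\Ab_{X,(x\leq y)}$ and the two face maps into it, which involve both completion at $\mathfrak{m}_y$ and localisation away from it. One must check that this composite does not kill a nonzero perfect complex; the cleanest route is probably to argue at the level of homology in top nonvanishing degree and use that $\widehat{\Oo_y} \to \Ab_{X,(x \leq y)}$ is faithfully flat after suitable localisation, combined with the fact (from the proof of Beilinson's theorem, or directly) that $\Ab_{X,(x \leq y)} \otimes_{\widehat{\Oo_y}} \kappa(\mathfrak{m}_y) \neq 0$. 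Alternatively, and perhaps more robustly, I would prove the reduction differently: use Beilinson's resolution (Theorem \ref{thm:beilinson}) to note that the unit $\Oo_X \to \A_X^0$ is a monomorphism onto the "$x$-components", invoke the $0$-dimensional remark \ref{rmk:zero-dimensional} to control the skyscraper pieces, and apply Thomason--Trobaugh \ref{thm:thomason-trobaugh} to reduce statements about supports at a closed point to the completed local ring — thereby sidestepping the explicit higher-local-ring computation. Either way, once a closed point $x$ with $N^0 \otimes_{\A_X^0} \widehat{\Oo_x} \neq 0$ is found, the lemma follows, and this nonvanishing at a closed point will be the input to the subsequent computation of $\int N^{\bullet}$ via Thomason--Trobaugh.
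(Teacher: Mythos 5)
Your outline coincides with the paper's up to the point you yourself flag as the main obstacle: both apply Bhatt's Theorem \ref{thm:bhatt_products} to the product ring $\Ab_X^0=\prod_{x}\widehat{\Oo}_x$ to detect a (possibly non-closed) point with nonzero fibre, and both then try to transport nonvanishing to a closed specialisation through a level-one local factor of the cosimplicial ring. The gap is that neither of your proposed mechanisms for this transport works as stated. Write $y$ for the detected point and $x\in\overline{\{y\}}$ for a closed point (the paper's convention; you swap the letters). By cartesianness the two base changes of $N^0$ along $\A^0\rightrightarrows\A(x<y)$ agree, so, arguing by contradiction from $N^0\otimes_{\A^0}\widehat{\Oo}_x=0$, what must be shown is that base change along the \emph{generic-point} leg $\widehat{\Oo}_y\to\A(x<y)$ reflects vanishing on the relevant complexes; faithfulness on the closed-point leg is irrelevant, since that leg is only applied to an object already assumed to vanish. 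Your concrete claims concern the closed-point leg and are moreover false already for curves: there $\A(x<\eta)=\widehat{F}_x$, the map $\widehat{\Oo}_x\to\widehat{F}_x$ is flat but not faithfully flat, and $\widehat{F}_x\otimes_{\widehat{\Oo}_x}\kappa(\mathfrak{m}_x)=0$.

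The missing idea is the paper's support reduction. One first replaces $N^{\bullet}$ by $N^{\bullet}\otimes_{\Oo_X}F$ for a suitable $F\in\Perf_{\{y\}}(\widehat{\Oo}_y)$ with $N^0\otimes_{\A_X^0}\widehat{\Oo}_y\otimes F\neq 0$, so that $N^0\otimes_{\A_X^0}\widehat{\Oo}_y$ lies in $\Perf_{\{y\}}(\widehat{\Oo}_y)$; Thomason--Trobaugh (Theorem \ref{thm:thomason-trobaugh}) then identifies this category with $\Perf_{\{y\}}(\Oo_y)$, so the complex descends to the \emph{uncompleted} local ring $\Oo_y$. The reflection of vanishing is then obtained not from any property of $\widehat{\Oo}_y\to\A(x<y)$, but from the faithful flatness of $\Oo_y\to\widehat{\Oo}_x\otimes_{\Oo_X}\Oo_y$, the base change along $\Oo_x\to\Oo_y$ of the faithfully flat completion $\Oo_x\to\widehat{\Oo}_x$ at the \emph{closed} point --- standard commutative algebra rather than a computation inside the higher local ring. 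Your alternative route does gesture at Thomason--Trobaugh and Remark \ref{rmk:zero-dimensional}, but without the support reduction and this particular faithfully flat map the argument does not close. (Two smaller points: no iteration along a maximal chain is needed, since $(x<y)$ is a legitimate chain in $|X|_1$ for any closed $x\in\overline{\{y\}}$; and the lemma as used presupposes $N^{\bullet}\neq 0$, which is fixed just before its statement.)
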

\begin{proof}
It follows from Bhatt's theorem \ref{thm:bhatt_products} that there is a (not necessarily closed) point $y \in X$ with $N^0 \otimes_{\A_X^0} \widehat{\Oo_y} \neq 0$.  Without loss of generality we may assume that $N^0 \otimes_{\A_X^0} \widehat{\Oo_y} \in \Perf_{\{y\}}(\widehat{\Oo}_y)$, otherwise choose $F \in \Perf_{\{y\}}(\widehat{\Oo}_y)$, such that $N^0 \otimes_{\A_X^0} \widehat{\Oo_y} \otimes_{\widehat{\Oo}_y} F \neq 0$ (existence is guaranteed by Lemma \ref{lemma:F}), and consider $N^{\bullet} \otimes_{\Oo_X}F$ instead. The latter is fine, because $N^0 \otimes_{\A_X^0} \widehat{\Oo_x} \otimes_{\Oo_x} F \neq 0$ implies $N^0 \otimes_{\A_X^0} \widehat{\Oo_x} \neq 0$. Note that $N^{\bullet} \otimes_{\Oo_X}F \in \QC(\A_X^{\bullet})$, because $F$ might not be perfect as a complex of $\Oo_X$-module. 

Let $x \in \overline{\{y\}}$ be a closed point. 
Observe that such an $x$ exists, because $X$ is affine (every ideal is contained in a maximal ideal). We claim that $N^0 \otimes_{\A_X^0} \widehat{\Oo_x} \neq 0$ as well. Indeed, we have a map co-simplicial sheaves of algebras $f\colon \A_X^{\bullet} \rightarrow \A_{X,T_{\bullet}}^{\bullet}$, where $T_{\bullet}\subset |X|_{\bullet}$ is the minimal simplicial subset containing the chains $(x)$, $(y)$, and $(x < y)$. 

By \cite[Proposition 3.3.3]{MR1138291}, the ring $\A^{1}_{X,T^1}$ is isomorphic to a product $\A^0 \times \A(x < z)$, where the first factor corresponds to the degenerate part of the co-simplicial ring.

The maps $\A_X^{0} \rightarrow \widehat{\Oo}_z$ for $z = x,y$ factor through $\A^0_{X,T^0}(\Oo_X)$.
We have the commutative diagram (see \cite[Proposition 3.2.1]{MR1138291} for how $\A(x < y)$ fits in there)
\[
\xymatrix{
\Oo_y \ar[r] \ar[d] & \widehat{\Oo}_x \otimes_{\Oo_X} \Oo_y \ar[d] & \widehat{\Oo}_x \ar@{=}[d] \ar[l] \\
\widehat{\Oo}_y \ar[r] & \A(x < y) & \widehat{\Oo}_x. \ar[l]
}
\]
We assume by contradiction that $N^{\bullet} \in \QC_{\{y\}}(\Oo_y) \cong \QC_{\{y\}}(\widehat{\Oo}_y)$ (where the equivalence uses Thomason--Trobaugh's Theorem \ref{thm:thomason-trobaugh}), and $N^{0} \otimes_{\A^0} \widehat{\Oo}_x = 0$. The commutative diagram above implies
$$(N^{0} \otimes_{\A^0} \widehat{\Oo}_y) \otimes_{\Oo_y} (\widehat{\Oo}_x \otimes_{\Oo_X} \Oo_y) = 0.$$ The ring homormophism $\Oo_y \rightarrow \widehat{\Oo}_x \otimes_{\Oo_X} \Oo_y$ is the co-base change of the fully faithful ring homomorphism $\Oo_x \rightarrow \widehat{\Oo}_x$. Therefore, we have $N^0 \otimes_{\A^0} \widehat{\Oo}_y = 0$, which is a contradiction.
\end{proof}

The following lemma could also be deduced from a more elementary argument. We give a more abstract proof, which uses a categorical viewpoint on ring completions.

\begin{lemma}\label{lemma:F}
Let $X$ be an affine Noetherian scheme. For every $0 \neq N^{\bullet} \in \Perf(\A_X^{\bullet})$ there exists a closed point $x \in X$, and a perfect complex $F \in \Perf_{\{x\}}(X)$, such that $F \otimes_{\Oo_X} N^{\bullet} \neq 0$.
\end{lemma}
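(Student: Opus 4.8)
The goal is, given $0 \neq N^{\bullet} \in \Perf(\A_X^{\bullet})$ over an affine Noetherian $X$, to produce a closed point $x$ and a perfect complex $F \in \Perf_{\{x\}}(X)$ with $F \otimes_{\Oo_X} N^{\bullet} \neq 0$. By Lemma \ref{lemma:minimal} we already know there is a closed point $x \in X$ with $N^0 \otimes_{\A_X^0} \widehat{\Oo}_x \neq 0$, so the task is really to convert this nonvanishing after completion at $x$ into nonvanishing after tensoring with a \emph{perfect} complex on $X$ supported at $x$. The first step is therefore to recall that tensoring with $\A_X^0$ factors the completion: the structure map $\Oo_X \to \A_X^0$ followed by projection lands in $\prod_{z}\widehat{\Oo}_z$, and in particular the composite $\Oo_X \to \A_X^0 \to \widehat{\Oo}_x$ is the usual completion map $R \to \widehat{R}_{\mathfrak m_x}$. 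So it suffices to find $F \in \Perf_{\{x\}}(X)$ with $F \otimes_{\Oo_X}\widehat{\Oo}_x \otimes_{\widehat{\Oo}_x}\bigl(N^0\otimes_{\A_X^0}\widehat{\Oo}_x\bigr) \neq 0$, equivalently $\bigl(F \otimes_R \widehat{R}_{\mathfrak m_x}\bigr) \otimes_{\widehat{R}_{\mathfrak m_x}} M \neq 0$ where $M := N^0 \otimes_{\A_X^0}\widehat{\Oo}_x \neq 0$.

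\textbf{Key steps.} Second, I would invoke Thomason--Trobaugh (Theorem \ref{thm:thomason-trobaugh}): the functor $\Perf_{\{x\}}(R) \to \Perf_{\{x\}}(\widehat R_{\mathfrak m_x})$, $F \mapsto F \otimes_R \widehat R_{\mathfrak m_x}$, is an equivalence of symmetric monoidal $\infty$-categories. Hence it is enough to exhibit a perfect complex $G$ on $\widehat R_{\mathfrak m_x}$, set-theoretically supported at the closed point, with $G \otimes_{\widehat R_{\mathfrak m_x}} M \neq 0$; then $F$ can be taken to be the preimage of $G$ under the equivalence. Third, the natural candidate for $G$ is the Koszul complex $K(\widehat R; \underline f)$ on a system of parameters $\underline f = (f_1,\dots,f_d)$ generating (up to radical) the maximal ideal $\widehat{\mathfrak m}$; this is perfect and supported precisely at the closed point. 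The remaining point is conservativity: one must check that $M \neq 0$ forces $K(\widehat R;\underline f) \otimes_{\widehat R} M \neq 0$. This is where the argument really happens. Since $M$ is a nonzero perfect complex over the complete local ring $\widehat R$, it is bounded with finitely generated cohomology, and Nakayama gives $H^i(M) \otimes_{\widehat R} \kappa(x) \neq 0$ for some $i$; more directly, $M \otimes_{\widehat R}^{\mathbb L} \kappa(x)$ is nonzero because the residue field $\kappa(x)$ detects nonvanishing of nonzero perfect complexes over a local ring (a minimal free resolution argument). As $\kappa(x) \simeq K(\widehat R;\underline f) \otimes_{\widehat R}^{\mathbb L} \kappa(x)$ and, more to the point, $K(\widehat R;\underline f)$ itself has $\kappa(x)$ as its top cohomology and is built by iterated cofiber sequences from $\widehat R$ and its shifts, the nonvanishing of $M \otimes^{\mathbb L}_{\widehat R}\kappa(x)$ propagates to $M \otimes^{\mathbb L}_{\widehat R} K(\widehat R;\underline f)$. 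Concretely: if $K(\widehat R;\underline f)\otimes^{\mathbb L}_{\widehat R} M = 0$ then smashing the distinguished triangles defining the Koszul complex one gets, by descending induction on the number of parameters, that each partial Koszul smash vanishes, and in the end $M/(f_1,\dots,f_d) \simeq M \otimes_{\widehat R}^{\mathbb L}\kappa(x) \otimes \text{(shift)}$ — a contradiction. Pull $G = K(\widehat R;\underline f)$ back along Thomason--Trobaugh to get the desired $F$, and unwind the identification of $N^{\bullet}\otimes_{\Oo_X} F$ at co-simplicial level $0$ with the above.

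\textbf{Main obstacle.} The conceptual heart, and the step I expect to require the most care, is the conservativity claim that a nonzero perfect complex over the complete local ring $\widehat{\Oo}_x$ stays nonzero after deriving-tensoring with the Koszul complex on a system of parameters (equivalently, with the residue field). This is standard commutative algebra — it is the statement that $-\otimes^{\mathbb L}\kappa(x)$ is conservative on $\Perf(\widehat{\Oo}_x)$, provable via minimal free resolutions or via the support/base-change theory of perfect complexes — but it must be handled cleanly, in particular making sure the complex $F$ produced is genuinely in $\Perf_{\{x\}}(X)$ and not merely in $\Perf_{\{x\}}(\widehat{\Oo}_x)$, which is exactly what Thomason--Trobaugh buys us. A secondary bookkeeping obstacle is matching the tensor product $F \otimes_{\Oo_X} N^{\bullet}$ taken in $\QC(\A_X^{\bullet})$ with the level-zero computation $F \otimes_R (N^0 \otimes_{\A_X^0}\widehat{\Oo}_x)$, which follows from flatness of $\A_X^0$ over $R$ (Lemma \ref{lemma:adeles_flat}) and the factorisation of the completion map through $\A_X^0$, but deserves an explicit line.
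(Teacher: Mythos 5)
Your proposal is correct, but it takes a genuinely different route from the paper's. The paper itself flags this just before the lemma: ``The following lemma could also be deduced from a more elementary argument. We give a more abstract proof, which uses a categorical viewpoint on ring completions.'' Your argument is essentially that elementary one. The paper's proof starts from Lemma \ref{lemma:minimal} exactly as you do, but then cites the fully faithful embedding $\Perf(\widehat{\Oo}_x) \hookrightarrow \Ind \Perf_{\{x\}}(X)$ (Efimov, as reformulated by Br\"aunling--Groechenig--Wolfson): the nonzero object $N^0 \otimes_{\A_X^0}\widehat{\Oo}_x$, viewed as an Ind-object of $\Perf_{\{x\}}(X)$, i.e.\ a functor on $\Perf_{\{x\}}(X)^{\op}$, must be nonzero on some $F^{\vee}$, which yields $F \otimes_{\Oo_X} N^{0} \otimes_{\A_X^0} \widehat{\Oo}_x \simeq \Hhom(F^{\vee},N^{0} \otimes_{\A_X^0} \widehat{\Oo}_x)\neq 0$ without ever exhibiting $F$. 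You instead construct $F$ explicitly as a Koszul complex and verify nonvanishing by hand, via conservativity of $-\otimes^{\mathbb{L}}\kappa(x)$ on perfect complexes over a local ring (minimal free resolutions) plus K\"unneth over $\kappa(x)$, which gives $\supp(P\otimes^{\mathbb{L}}Q)=\supp P\cap\supp Q$ for perfect complexes; your ``descending induction over Koszul cones'' is a slightly roundabout but valid substitute for that last step. Two small simplifications are available to you: since $R$ is Noetherian you can take the Koszul complex on a finite generating set of $\mathfrak{m}_x\subset R$ itself, which lies in $\Perf_{\{x\}}(X)$ on the nose and makes the detour through Thomason--Trobaugh unnecessary; and if your parameters generate $\mathfrak{m}_x$ only up to radical, the top cohomology of the Koszul complex is an Artinian quotient rather than $\kappa(x)$, though the support argument is unaffected. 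What your approach buys is independence from the cited categorical input on completions and an explicit witness $F$; what the paper's buys is brevity. Your closing bookkeeping points (the composite $\Oo_X\rightarrow\A_X^0\rightarrow\widehat{\Oo}_x$ being the completion map, and nonvanishing at co-simplicial level $0$ forcing $F\otimes_{\Oo_X}N^{\bullet}\neq 0$ since the objects are cartesian) are both correct and match how the paper implicitly uses them.
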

\begin{proof}
We have seen in Lemma \ref{lemma:minimal} that there exists a closed point $x \in X$, such that the perfect complex of $\widehat{\Oo_x}$-modules $N^{0} \otimes_{\A_X^0} \widehat{\Oo}_x$ is non-zero. The Yoneda embedding restricts to a fully faithful functor $\Perf(\widehat{\Oo}_x) \hookrightarrow \Ind \Perf_{\{x\}}(X)$ (see the reformuation of \cite{Efimov:2010fk} in \cite[Proposition 5.2 \& 5.4]{Braunling:2014aa}). Therefore, there exists an object $F \in \Perf_{\{x\}}(X)$, such that $\Hhom(F^{\vee},N^{\bullet} \otimes_{\A_X^0} \widehat{\Oo}_x)  = F \otimes_{\Oo_X} N^{0} \otimes_{\A_X^0} \widehat{\Oo}_x\neq 0$. This implies $F \otimes_{\Oo_X} N^{\bullet} \neq 0$.
\end{proof}

In the following we denote by $\Hc^i(-)$ the degree $i$ cohomology sheaf of a complex of sheaves. Recall that we have functors $\A_{X,T}(-)\colon \QCoh(X) \rightarrow \Mod(\Oo_X)$, as defined in Definition \ref{defi:adeles}. As before, we denote by $\A_X^0(-)$ the functor for $T = |X|$.
 
\begin{lemma}\label{lemma:cohomology_sheaves}
Let $M \in \QC(X)  \simeq \Ind \Perf(X)$, then $\Hc^i(M \otimes_{\Oo_X} \A_{X,T})  \simeq \A_{X,T}(\Hc^i(M))$.
\end{lemma}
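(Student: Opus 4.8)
The statement to prove is Lemma~\ref{lemma:cohomology_sheaves}: for $M \in \QC(X) \simeq \Ind\Perf(X)$ one has $\Hc^i(M \otimes_{\Oo_X} \A_{X,T}) \simeq \A_{X,T}(\Hc^i(M))$.

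\medskip

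The plan is to reduce the statement to the defining properties of the ad\`ele functor $\A_{X,T}$ recorded in Definition~\ref{defi:adeles}, namely that it is exact and commutes with directed colimits. First I would treat the affine case, since the assertion is local on $X$ and both sides are computed sheaf-theoretically; in fact taking cohomology sheaves, tensoring with the flat sheaf of algebras $\A_{X,T}$ (flatness over $\Oo_X$ in the affine case by Lemma~\ref{lemma:adeles_flat}, and in general levelwise), and the functor $\A_{X,T}(-)$ all commute with restriction to opens, so it suffices to produce a natural equivalence after restricting to an affine open $\Spec R \subset X$. Over such an affine, $\A_{X,T}(-)$ is identified with $-\otimes_R \A_{X,T}$ (the remark preceding Lemma~\ref{lemma:adeles_flat}), so the claim becomes: for a complex $M$ of $R$-modules that is a filtered colimit of perfect complexes, $H^i(M \otimes_R \A_{X,T}) \simeq H^i(M) \otimes_R \A_{X,T}$, which is exactly the statement that the flat $R$-algebra $\A_{X,T}$ satisfies the usual flat base change for cohomology of complexes.

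\medskip

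In more detail, the key steps, in order, are: (1) reduce to $X = \Spec R$ affine by the locality just discussed; (2) observe that $M = \colim_\alpha M_\alpha$ is a filtered colimit of perfect complexes by $\QC(X)\simeq \Ind\Perf(X)$; (3) since $\A_{X,T}(-)$ commutes with directed colimits (Definition~\ref{defi:adeles}(a)) and cohomology sheaves commute with filtered colimits, reduce to the case $M$ perfect; (4) for $M$ perfect, write it (locally, hence harmlessly) as a bounded complex of finite free $\Oo_X$-modules, so that $M\otimes_{\Oo_X}\A_{X,T}$ is a bounded complex of finite sums of copies of $\A_{X,T}$; (5) compute $\Hc^i$ of this complex using that $\A_{X,T}(-)$ is exact — more precisely, apply $\A_{X,T}(-)$ to the short exact sequences $0\to B^i\to Z^i\to \Hc^i(M)\to 0$ and $0\to Z^i\to M^i\to B^{i+1}\to 0$ of coherent $\Oo_X$-modules defining the cohomology of the complex $M$, using exactness of $\A_{X,T}(-)$ to conclude that $\A_{X,T}$ applied to the complex has cohomology $\A_{X,T}(\Hc^i(M))$; and (6) check all identifications are natural in $M$, so that they glue over an affine cover back to the global statement.

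\medskip

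The main subtlety — rather than a genuine obstacle — is bookkeeping the reduction to the perfect case compatibly with sheafification: one must be a little careful that the isomorphism $\A_{X,T}(\Hc^i(M)) \simeq \Hc^i(\A_{X,T}^{\bullet}\text{-complex})$ produced affine-locally is canonical enough to be independent of the chosen local presentation of $M$ as a complex of free modules, so that it descends to a global natural equivalence; this follows because the comparison map is the obvious one induced by the lax monoidal (tensor) structure, and exactness of $\A_{X,T}(-)$ forces it to be an isomorphism. A second minor point is that $M$ need not be bounded, only its local representatives as filtered colimits of perfect complexes are bounded in each stage; but fixing $i$, only finitely many terms of any such complex contribute to $\Hc^i$, so this causes no trouble after the colimit reduction in step (3). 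No new input beyond Definition~\ref{defi:adeles}, Lemma~\ref{lemma:adeles_flat}, and the equivalence $\QC(X)\simeq\Ind\Perf(X)$ is required.
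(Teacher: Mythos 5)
Your proposal is correct and follows essentially the same route as the paper's proof: reduce to perfect $M$ via commutation with filtered colimits, represent $M$ locally by a bounded complex of finite free sheaves, use exactness of $\A_{X,T}(-)$ to commute it past cohomology, and identify $\A_{X,T}$ applied to the free complex with $M \otimes_{\Oo_X} \A_{X,T}$. The extra framing via Lemma~\ref{lemma:adeles_flat} and affine flat base change is a harmless elaboration of the same argument.
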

\begin{proof}
By definition, the functor $\A_{X,T}(-) \simeq \QCoh(X) \rightarrow \Mod(\Oo_X)$ commutes with filtered colimits. Since the same is true for $\Hc^i(-)$, and $- \otimes_{\Oo_X}\A_{X,T}(\Oo_X)$, we see that we may assume that $M$ is a perfect complex. Since the statement is local, we may assume that $M$ can be represented by a bounded complex $(V^{\bullet})$ of free sheaves of finite generation $V^i = \Oo_X^{m_i}$. In particular, $\Hc^i$ is represented as the cohomology sheaf in degree $i$ of the chain complex
$$\cdots \rightarrow \Oo_X^{m_{i-1}} \rightarrow \Oo_X^{m_{i}} \rightarrow \Oo_X^{m_{i+1}} \rightarrow \cdots.$$
Since $\A_{X,T}(-)$ is exact, we obtain a chain complex 
$$\cdots \rightarrow \A_{X,T}(V\Oo_X^{m_{i-1}}) \rightarrow \A_{X,T}(\Oo_X^{m_{i}}) \rightarrow \A_{X,T}(\Oo_X^{m_{i-1}}) \rightarrow \cdots,$$
and an equivalence of middle cohomology sheaves. The second chain complex can be identified with $M \otimes_{\Oo_X} \A_{X,T}$. This implies the assertion.
\end{proof}

\begin{proof}[Proof of Proposition \ref{prop:conservative}]
Let $0 \neq N^{\bullet} \in \Perf(\A_X^{\bullet})$, and $F \in \Perf(X)$ as in Lemma \ref{lemma:F}. In particular, there exists a closed point $x$, such that $F \in \Perf_{\{x\}}(X)$. By the projection formula (Lemma \ref{lemma:projection_formula}) we have 
$$\int(N^{\bullet} \otimes_{\Oo_X} F) \cong F \otimes_{\Oo_X} \int N^{\bullet}.$$
Therefore, it suffices to show that this expression is $\neq 0$, to deduce that $\int N^{\bullet} \neq 0$. Henceforth, we replace $N^{\bullet}$ by $N^{\bullet} \otimes F$, that is, we have $N^{\bullet}|_{X \setminus \{x\}} = 0$. Since pullback along $\Spec \widehat{\Oo}_x \rightarrow X$ induces an equivalence of $\infty$-categories (using Thomason--Trobaugh's Theorem \ref{thm:thomason-trobaugh})
$$\Perf_{\{x\}}(X) \rightarrow \Perf_{\{x\}}(\widehat{\Oo}_x),$$
one obtains $N^0 \cong M \otimes_{\Oo_X} \A_X^0$, for some $M \in \Perf_{\{x\}}(X)$. Let $d_i\colon \A_X^0 \rightarrow \A_X^n$ be an arbitrary boundary map, and $\phi_i \colon \Oo_X \rightarrow \A_X^n$ the map given by composition with the augmentation. We claim that $M \rightarrow M \otimes \A_X^n$ is a quasi-isomorphism of complexes of $\Oo_X$-modules. Indeed, passing to the cohomology sheaves, we obtain using Lemma \ref{lemma:cohomology_sheaves}
$$\Hc^i(M) \rightarrow \A_X^{n}(\Hc^i(M)).$$
This is an isomorphism, since $\Hc^i(M)$ is set-theoretically supported at the closed point $x$ (Remark \ref{rmk:zero-dimensional}). 

Hence we see that $N^{\bullet} \otimes_{\Oo_X} F$ is equivalent to the constant object $M \otimes_{\Oo_X}\A_X^{\bullet}$ in $\Perf(\A_X^{\bullet})$, which implies $\int N^{\bullet} \otimes_{\Oo_X} F \cong N^0 \otimes_{\Oo_X} F \neq 0$.
\end{proof}

\subsubsection{Conclusion of the argument}\label{sub:conclusion}

Henceforth we assume that $X$ is an affine Noetherian scheme. In this paragraph we will prove that for $M^{\bullet} \in |\Perf(\A_X^{\bullet})|$ the object $\int M^{\bullet}$ is also perfect. We will first show (Corollary \ref{cor:generic}) that this statement holds generically, that is, there exists an open subset $U \subset X$, such that $\int M^{\bullet}|_{U} \in \Perf(U)$. We use this result as a stepping stone, together with the Noetherian property of $X$, to show that $\int M^{\bullet}$ is in fact a perfect complex. The key observation to deduce this, is Lemma \ref{lemma:generic_zero}, which asserts that for a generic point $\eta$ of $X$, $M^0 \otimes_{\A_X^0} \Oo_{\eta} \simeq 0$ implies the existence of an open subset $V \subset X$, such that $M^{\bullet}|_V \simeq 0$.

\begin{lemma}\label{lemma:homotopy_equivalence}
Let $X$ be a Noetherian scheme, and $|Z| \subset X$ a closed subset, with $\eta$ a generic point of $|Z|$, and $U$ an affine open neighbourhood of $\eta$, such that $U \cap Z$ is irreducible. We have that the canonical augmentation $\Oo_{\eta}|_U \rightarrow \A_{X,|Z|_{\bullet}}^{\bullet}(\Oo_{\eta})|_U$ is a homotopy equivalence of co-simplicial sheaves of algebras.
\end{lemma}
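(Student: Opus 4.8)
The plan is to reduce to the case where $|Z|$ is itself irreducible with generic point $\eta$, and then to exploit the simplicial subset $|Z|_{\bullet}$ in the same spirit as the proof of Proposition~\ref{prop:conservative}. Concretely, I would first record that $\Oo_{\eta}$ is a quasi-coherent sheaf on $X$ which, as a sheaf, is supported (set-theoretically) on $\overline{\{\eta\}} = |Z|$ (after restricting to $U$, it is supported on the irreducible closed set $U\cap Z$ with unique generic point $\eta$). The augmentation $\Oo_{\eta}|_U \to \A_{X,|Z|_{\bullet}}^{\bullet}(\Oo_{\eta})|_U$ is a morphism of co-simplicial sheaves; to show it is a homotopy equivalence it suffices, by Lemma~\ref{lemma:loc_flasque} and the fact that homotopy equivalences of co-simplicial objects can be checked on a cover, to work locally and in fact to produce an explicit co-simplicial contraction.

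The key structural input is the description of $\A_{X,|Z|_k}^k(\Oo_{\eta})$ in low degrees. Since $\eta$ is the unique generic point of the irreducible set $U\cap Z$, every chain $(x_0 \leq \cdots \leq x_k)$ in $|Z|_k$ (intersected with $|U|$) satisfies $x_0 = \eta$ precisely when the chain is "non-degenerate at the bottom"; more importantly, the recursive formula in Definition~\ref{defi:adeles}(b),(c) applied to the \emph{skyscraper-type} sheaf $\Oo_{\eta}$ collapses. Indeed $\Oo_{\eta}$ is the generic stalk viewed as a constant sheaf on the irreducible $U$, so $(j_{rx})_*(j_{rx})^*\Oo_{\eta}$ is either $\Oo_{\eta}$ itself (when $x \leq \eta$, which for points of $|Z|\cap|U|$ forces $x = \eta$) or zero otherwise — one has to be slightly careful here, but the upshot is that $\A_{X,|Z|_k}^k(\Oo_{\eta})|_U$ is a product of copies of $\Oo_{\eta}|_U$ indexed by chains in $(U\cap Z)_k$, and these products assemble into the co-simplicial abelian group $\Oo_{\eta}|_U \otimes C^{\bullet}((U\cap Z)_{\bullet};\,\mathrm{const})$, i.e. the (ordered) simplicial cochain complex of the simplicial set $(U\cap Z)_{\bullet}$ with constant coefficients in $\Oo_{\eta}|_U$. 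The first step is thus to identify $\A_{X,|Z|_{\bullet}}^{\bullet}(\Oo_{\eta})|_U$ with this combinatorial object together with its augmentation by $\Oo_{\eta}|_U$ in degree $-1$.

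Given that identification, the claim becomes purely combinatorial: the simplicial set $(U\cap Z)_{\bullet}$ has $\eta$ as an initial object in its poset of points (every point of the irreducible $U\cap Z$ is a specialisation of $\eta$), so $(U\cap Z)_{\bullet}$ is the nerve of a poset with initial element, hence contractible; therefore its augmented cochain complex with constant coefficients in any abelian group is homotopy equivalent to that group concentrated in degree $0$, via an explicit cone-on-$\eta$ contracting homotopy $h_k(f)(x_0 \leq \cdots \leq x_{k-1}) = f(\eta \leq x_0 \leq \cdots \leq x_{k-1})$. This homotopy is $\Oo_{\eta}|_U$-linear and visibly functorial, so it descends to the sheaf level and produces the desired homotopy equivalence of co-simplicial sheaves of algebras.

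I expect the main obstacle to be the first step: pinning down precisely that $\A_{X,|Z|_{\bullet}}^{\bullet}(\Oo_{\eta})|_U$ is the constant-coefficient cochain complex of $(U\cap Z)_{\bullet}$, which requires unwinding the recursive Definition~\ref{defi:adeles} for the specific sheaf $\Oo_{\eta}$ and checking that the completions $\varprojlim_r (j_{rx})_*(j_{rx})^*\Oo_{\eta}$ contribute nothing new (the $\mathfrak{m}_x$-adic completion of the residue field at $\eta$ pulled back to $x$ is $\Oo_{\eta}$ when $x=\eta$ and $0$ when $x\neq\eta$ lies in $|Z|\cap|U|$) and that the products over $|X|$ in clause (c) reduce to products over the chains in $(U\cap Z)_{\bullet}$. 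The reference to \cite{MR1138291}, in particular the analysis of $\A(x<y)$ and the product decompositions of the adelic rings already used in the proof of Lemma~\ref{lemma:minimal}, should supply the needed local computations. Once the combinatorial reformulation is in hand, the contracting homotopy and hence the homotopy equivalence is immediate.
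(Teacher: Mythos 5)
Your combinatorial idea is the right one --- the paper's proof is also built on the cone homotopy $h_{n,\alpha}$ that contracts the simplicial set $|U\cap Z|_{\bullet}$ onto its maximal element $\eta$ --- but the reduction you propose in your ``first step'' is false, and it is exactly where the real content of the lemma lives. The local factors $\A_{X,\sigma}(\Oo_{\eta})$ for a chain $\sigma = (x_0 \leq \cdots \leq x_n)$ in $|Z|_n$ are \emph{not} copies of $\Oo_{\eta}$: by definition $\A_{X,\sigma}(\Oo_{\eta}) = \A_{X,\sigma_+}(\Oo_X)$, where $\sigma_+$ is $\sigma$ with $\eta$ appended, and the completions do contribute something new. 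Since $\A_{X,T}(-)$ commutes with directed colimits, one computes $\A_{X,\{x\}}(\Oo_{\eta})$ as the localisation $\widehat{\Oo}_x \otimes_{\Oo_{X,x}} \Oo_{\eta}$; already for a curve with $x$ a closed point this is the completed local field $\Frac \widehat{\Oo}_x$, which is neither $\Oo_{\eta}$ nor $0$. (Your heuristic that $\varprojlim_r (j_{rx})_*(j_{rx})^*\Oo_{\eta}$ vanishes for $x \neq \eta$ applies the coherent-sheaf formula directly to the non-coherent sheaf $\Oo_{\eta}$, in the wrong order relative to the colimit; if it were correct the whole adelic complex would collapse to its $\eta$-component and the theory would be vacuous.) Consequently $\A^{\bullet}_{X,|Z|}(\Oo_{\eta})|_U$ is not the constant-coefficient cochain complex of $(U\cap Z)_{\bullet}$, and the contractibility of the nerve of a poset with a greatest element does not by itself finish the proof. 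A smaller point: with the paper's convention $x \leq y \Leftrightarrow x \in \overline{\{y\}}$, the generic point is the \emph{maximum} of $|Z|_{\leq}$, so your explicit homotopy, which prepends $\eta$ at the bottom of a chain, does not even produce valid chains.

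What the paper does instead is implement the cone homotopy directly on the adelic rings: for each $\sigma$ and $\alpha\colon[n]\to[1]$ the chain $h_{n,\alpha}(\sigma)_+$ is a subchain of $\sigma_+$, giving a canonical inclusion $i_{\sigma,\alpha}\colon \A_{X,h_{n,\alpha}(\sigma)}(\Oo_{\eta}) \hookrightarrow \A_{X,\sigma}(\Oo_{\eta})$ between genuinely different rings; the homotopy $h^{n,\alpha}$ is assembled from these inside the product $\prod_{\sigma} \A_{X,\sigma}(\Oo_{\eta})$ of Lemma \ref{lemma:local_factors}. There is then a further nontrivial verification, absent from your proposal, that $h^{n,\alpha}$ preserves the subobject $\A^{\bullet}_{X,|Z|}(\Oo_{\eta})$ of that product; the paper does this by exhibiting $h^{n,\alpha}$ as a co-simplicial structure map of an auxiliary adelic object. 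To repair your argument you would need to replace the constant-coefficient identification by these inclusions and supply that last verification.
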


We refer the reader to \cite[Tag 019U]{stacks-project} for a detailed review of the definition of a homotopy between maps of co-simplicial objects in a category $\C$. In brief, we say that $f\text{, }g\colon U^{\bullet} \rightarrow V^{\bullet}$ are homotopic, if for every $n \geq 0$, and every $\alpha\colon [n] \rightarrow [1]$ we have maps $h^{n,\alpha}\colon U^n \rightarrow V^n$, such that for $f\colon [k] \rightarrow [n]$ the diagram
\[
\xymatrix{
U^k \ar[r]^f \ar[d]_{h^{k,\alpha \circ f}} & U^n \ar[d]^{h^{n,\alpha}} \\
V^k \ar[r]^f & V^n
}
\]
commutes. We also require that the original maps $f^n$ and $g^n$ are obtained as $h^{n,0}$ and $h^{n, 1}$, where $i\colon [n] \rightarrow [1]$ is the constant map with value $i$, for $i \in [1]$. 

There is an equivalent definition, which makes sense in any $\infty$-category, possessing finite products, so in particular in the $\infty$-category of small $\infty$-categories. We refer the reader to Definition \ref{defi:infty_homotopy}. In Lemma \ref{lemma:cat_homotopy} we show that two homotopy equivalent co-simplicial objects in $\infty$-categories, have canonically equivalent limit $\infty$-categories. 

There is also a notion of homotopy for maps between simplicial sets. Let $S_{\leq}$ be a partially ordered set, with a maximal element $\eta$, and we will mainly care about $S_{\leq} = |U\cap Z|_{\leq}$ with the generic point $\eta$. We have the simplicial set of ordered chains $S_{\bullet}$, where $S_n = \{x_0\leq \cdots \leq x_n\}$. We have a homotopy
$$h_{n,\alpha}(x_0 \leq \cdots \leq x_n) = x_0\leq \cdots \leq x_k \leq \eta \cdots \leq \eta,$$
where $k$ is the maximal element of $[n]$, such that $\alpha(k) = 0$. This defines a homotopy between the identiy of the simplicial set $S_{\bullet}$ of ordered chains, and the constant map $S_{\bullet} \rightarrow \{\eta\} \subset S_{\bullet}$. In other words, the constant simplicial set $\{\eta\}_{\bullet}$ is a deformation retract of $S_{\bullet}$. This is a minor variation of \cite[Tag 08Q3]{stacks-project}. The proof of the Lemma above will use this homotopy $h_{n,\alpha}$, which contracts $|U\cap Z|_{\bullet}$ onto the generic point.

\begin{proof}[Proof of Lemma \ref{lemma:homotopy_equivalence}]
From now on we replace $X$ by $U$ without loss of generality. Recall that the co-simplicial algebra $\A_{X,|Z|}^{\bullet}(\Oo_{\eta})$ is a subobject of the co-simplicial algebra $[n] \mapsto \prod_{[n] \rightarrow |Z|_{\leq}} \A_{X,\sigma}(\Oo_{\eta})$ (Lemma \ref{lemma:local_factors}). For $\sigma\colon [n] \rightarrow |Z|_{\leq}$ we denote by $\sigma_+$ the map $[n+1] \rightarrow |Z|_{\leq}$, which restricts to $\sigma$ along the map $[n] \rightarrow [n+1]$, and sends $n+1$ to $\eta$. By definition, we have $\A_{X,\sigma}(\Oo_{\eta}) = \A_{X,\sigma_+}(\Oo_X)$. Therefore, we have for $\alpha\colon [n] \rightarrow [1]$ that the chain of points corresponding to $h_{n,\alpha}(\sigma)_+$ can be extended to the chain $\sigma_+$, that is, there is a canonical inclusion
$$i_{\sigma,\alpha}\colon \A_{X,h_{n,\alpha}(\sigma)}(\Oo_{\eta}) \rightarrow \A_{X,\sigma}(\Oo_{\eta}).$$
We define the homotopy as a map
$$h^{n,\alpha}\colon \prod_{\sigma\colon [n] \rightarrow |Z|_{\leq}}\A_{X,\sigma}(\Oo_{\eta}) \rightarrow \prod_{\sigma\colon [n] \rightarrow |Z|_{\leq}}\A_{X,\sigma}(\Oo_{\eta}),$$
sending $(x_{\sigma})_{\sigma\colon [n] \rightarrow |Z|_{\leq}}$ to $(i_{\sigma,\alpha}(x_{h_{n,\alpha}(\sigma)}))_{\sigma\colon [n] \rightarrow |Z|_{\leq}}$. It remains to verify that the homotopy $h^{n,\alpha}$ respects the subobject $\A_{X,|Z|}^{\bullet}(\Oo_{\eta})$.

To see this for a given $\alpha\colon [n] \rightarrow [1]$, we observe that the ring homomorphism $h^{n,\alpha}\colon \A_{X}^n(\Oo_{\eta}) \rightarrow \A_{X}^n(\Oo_{\eta})$ can be obtained using the co-simplicial structure $\A_X^{\bullet}(\Oo_{\eta})$. For $\alpha\colon [n] \rightarrow [1]$, we denote by $[k] = \alpha^{-1}(0)_+$ the preimage of $0$, with an additional maximal element $\eta$ adjoined (otherwise $k$ could be empty). We let $s\colon [k] \rightarrow [n]_+$ be the inclusion, which identifies $[k] \setminus\{\eta\}$ with $\alpha^{-1}(0)$, and preserves maxima. We define $r\colon [n]_+ \rightarrow [k]$ to be the unique map in $\Delta$, preserving maxima, and satisfying $r\circ s = \id_{[k]}$. We let $p= s \circ r\colon [n]_+ \rightarrow [n]_+$, and define $T = \{(x_0\leq \cdots \leq x_{n+1} \in |X|_{n+1}| x_{n+1} = \eta)\}$. By virtue of \cite[Section 2.2 \& 2.3]{MR1138291} we have a morphism $\A_X^n(\Oo_{\eta}) = \A_{X,T}^{n+1}(\Oo_X) \rightarrow \A_{X,T}^{n+1}(\Oo_X) = \A_X^{n}(\Oo_{\eta})$, which agrees with $h^{n,\alpha}$ on the local factors. This shows that the homotopies $h^{n,\alpha}$ are well-defined.

Hence, we have constructed a homotopy between $\id_{\A_{X,|Z|}^{\bullet}(\Oo_{\eta})}$ and the composition $\A_{X,|Z|}^{\bullet}(\Oo_{\eta}) \rightarrow (\Oo_{\eta})^{\bullet} \rightarrow \A_{X,|Z|}^{\bullet}(\Oo_{\eta})$.
\end{proof}

Since homotopy equivalent co-simplicial rings have equivalent categories of cartesian modules by Lemma \ref{lemma:cat_homotopy}, we obtain the following consequence.

\begin{corollary}\label{cor:pre_generic}
For $X$ an affine Noetherian scheme, with a closed subset $|Z| \subset X$, and $M^{\bullet} \in |\Perf_{|Z|}(\A_{X}^{\bullet})|$ (that is, $M^{\bullet}|_{X\setminus |Z|} \simeq 0$, see also Lemma \ref{lemma:relative}), for every generic point $\eta$ of $|Z|$, the stalk $(\int M^{\bullet})_{\eta}$ is a perfect $\Oo_{\eta}$-complex.
\end{corollary}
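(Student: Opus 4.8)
The plan is to reduce the assertion to Lemma \ref{lemma:homotopy_equivalence} by localising at a generic point $\eta$ of $|Z|$ and exploiting the compatibility of $\int$ with restriction to open subschemes. First I would choose an affine open neighbourhood $U$ of $\eta$ in $X$ such that $U \cap Z$ is irreducible; this is possible since $\eta$ is a generic point of $|Z|$, so it has an open neighbourhood meeting only the component of $|Z|$ through $\eta$. The formation of $\int$ is local on $X$ in the sense that $(\int_X M^{\bullet})|_U \simeq \int_U (M^{\bullet}|_U)$ — this follows from the adjunction defining $\int$ together with the fact that $\A_X^{\bullet}$ restricts to $\A_U^{\bullet}$; so it suffices to compute $(\int_U M^{\bullet}|_U)_\eta$. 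Replacing $X$ by $U$, we may thus assume $X$ is affine and $Z$ is irreducible with generic point $\eta$.

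Next I would use that $M^{\bullet} \in |\Perf_{|Z|}(\A_X^{\bullet})|$, i.e. $M^{\bullet}|_{X \setminus |Z|} \simeq 0$, to pass through Lemma \ref{lemma:relative} and Corollary \ref{cor:relative}: we have $\int_X M^{\bullet} \simeq \int_{X,|Z|} (M^{\bullet} \otimes_{\A_X^{\bullet}} \A_{X,|Z|}^{\bullet})$, so without loss of generality we work with the co-simplicial ring $\A_{X,|Z|}^{\bullet}$ in place of $\A_X^{\bullet}$. Now I invoke Lemma \ref{lemma:homotopy_equivalence}: after possibly further shrinking to an affine open around $\eta$ on which $Z$ stays irreducible (already arranged), the augmentation $\Oo_\eta \to \A_{X,|Z|}^{\bullet}(\Oo_\eta)$ is a homotopy equivalence of co-simplicial sheaves of algebras, and by Lemma \ref{lemma:cat_homotopy} homotopy equivalent co-simplicial rings have equivalent $\infty$-categories of cartesian modules. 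The upshot is that, after tensoring everything with $\Oo_\eta$ (equivalently, taking stalks at $\eta$), the co-simplicial sheaf of algebras becomes the \emph{constant} co-simplicial ring $\Oo_\eta$. Under this equivalence, a cartesian module $M^{\bullet}$ corresponds to a single perfect complex $M^0 \otimes_{\A_X^0} \Oo_\eta$ over $\Oo_\eta$, placed in constant co-simplicial degree.

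Finally, I would identify the stalk $(\int_X M^{\bullet})_\eta$ with this constant perfect complex. For the constant co-simplicial ring $\Oo_\eta$, the functor $- \otimes \Oo_\eta^{\bullet}$ is (up to the homotopy equivalence) the identity on $\Perf(\Oo_\eta)$, its right adjoint $\int$ is the identity too, and the totalisation of the constant diagram $\Perf(\Oo_\eta)$ recovers $\Perf(\Oo_\eta)$; combining with the unit equivalence of Lemma \ref{lemma:unit} applied over $\Spec \Oo_\eta$, one gets $(\int_X M^{\bullet})_\eta \simeq M^0 \otimes_{\A_X^0} \Oo_\eta$, which is a perfect $\Oo_\eta$-complex since $M^0$ is a perfect $\A_X^0$-module and tensoring preserves perfectness. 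The main obstacle I anticipate is the bookkeeping needed to justify that $\int$ commutes with restriction to $U$ and with passage to the stalk at $\eta$ — i.e. that the various base-change squares for the adjoint pair $(- \otimes \A_X^{\bullet}, \int_X)$ commute — since $\int_X$ is only characterised abstractly via the Adjoint Functor Theorem; here Lemma \ref{lemma:intHom} (the description of $\int_X$ as $\Map_{\A_X^{\bullet}}(- \otimes \A_X^{\bullet}, -)$) together with the explicit description of mapping spaces in limit $\infty$-categories (Lemma \ref{lemma:limits}) should do the job, reducing everything to a levelwise computation with ordinary Hom-complexes.
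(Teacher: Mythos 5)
Your proposal is correct and follows essentially the same route as the paper: pass to $\A_{X,|Z|}^{\bullet}(\Oo_{\eta})$-modules via Lemma \ref{lemma:relative}, then use the homotopy equivalence of Lemma \ref{lemma:homotopy_equivalence} together with Lemma \ref{lemma:cat_homotopy} to identify the relevant limit category with $\Perf(\Oo_{\eta})$, under which $\int$ is the inverse of the realisation functor. The one place you diverge --- and precisely the ``main obstacle'' you flag --- is the localisation at $\eta$: rather than verifying that $\int$ commutes with restriction to opens and with passage to stalks, the paper applies the already-established Projection Formula (Lemma \ref{lemma:projection_formula}) to obtain $(\int M^{\bullet})_{\eta} \simeq \int(M^{\bullet}\otimes_{\Oo_X}\Oo_{\eta})$ in a single step, which disposes of all the base-change bookkeeping you anticipate.
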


\begin{proof}
By the Projection Formula \ref{lemma:projection_formula} we have $(\int M^{\bullet})_{\eta} \simeq \int (M^{\bullet} \otimes_{\Oo_X} \Oo_{\eta})$. The co-simplicial object $M^{\bullet} \otimes_{\Oo_X} \Oo_{\eta}$ can be viewed as an $\A_{X,|Z|_{\bullet}}^{\bullet}(\Oo_{\eta})$-module by Lemma \ref{lemma:relative}. Since the co-simplicial sheaf of algebras $\A_{X,|Z|_{\bullet}}^{\bullet}(\Oo_{\eta})$ is homotopy equivalent to the constant co-simplicial sheaf of algebras $\Oo_{\eta}$, with homotopy inverse given by the projection $\A_{X,|Z|_{\bullet}}^{\bullet}(\Oo_{\eta}) \rightarrow \Oo_{\eta}$, we have an equivalence
$$|\Perf(\A_{X,|Z|_{\bullet}}^{\bullet}(\Oo_{\eta}))| \rightarrow |\Perf(\Oo_{\eta})|,$$
inverse to the the realisation functor given by tensoring along the augmentation $\Oo_{\eta}\rightarrow \A_{X,|Z|_{\bullet}}^{\bullet}(\Oo_{\eta})$. By the adjunction of $\int$ and $- \otimes_{\Oo_X} \A_X^{\bullet}$, this inverse agrees with $\int$ (see also Corollary \ref{cor:relative}). This implies that $(\int M^{\bullet})_{\eta}$ is a perfect $\Oo_{\eta}$-complex.
\end{proof}

\begin{lemma}\label{lemma:generic_zero}
Let $X$ be an affine Noetherian scheme, $|Z| \subset X$ a closed subset, and $M^{\bullet} \in |\Perf(\A_{X,|Z|_{\bullet}}^{\bullet})|$, such that for $\eta$ a generic point of $|Z|$, we have $M^0 \otimes_{\A_{X,|Z|}^0} \Oo_{\eta} \simeq 0$. Then, there exists an open subset $V \subset X$, such that $M^{\bullet}|_V \simeq 0$.
\end{lemma}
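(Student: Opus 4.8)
The plan is to reduce the vanishing of $M^{\bullet}$ to that of its degree‑zero part, then to analyse the latter point by point via Bhatt's description of perfect complexes over a product ring, using the higher simplicial levels to propagate vanishing out of the generic point $\eta$.

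First I would record that, since $M^{\bullet}$ is cartesian, for the face $[0]\to[n]$ hitting the smallest vertex we have $M^{n}\simeq M^{0}\otimes_{\A_{X,|Z|}^{0}}\A_{X,|Z|}^{n}$, so $M^{\bullet}|_{V}\simeq 0$ for an open $V\subseteq X$ as soon as $M^{0}|_{V}\simeq 0$; since the statement only asks for \emph{some} non‑empty open $V$, I may also shrink $X$ to an affine open neighbourhood of $\eta$ disjoint from the other components of $|Z|$, so that $|Z|=\overline{\{\eta\}}$ is irreducible with generic point $\eta$. By Definition~\ref{defi:adeles}(b) and Corollary~\ref{cor:adeles_lache} the ring of global sections of $\A_{X,|Z|}^{0}$ is $\prod_{x\in |Z|}\widehat{\Oo_{x}}$, so Bhatt's Theorem~\ref{thm:bhatt_products} presents $M^{0}$ as a globally bounded family $(M^{0}_{x})_{x\in|Z|}$ with $M^{0}_{x}\in\Perf(\widehat{\Oo_{x}})$, and the hypothesis reads $M^{0}_{\eta}\simeq 0$. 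It therefore suffices to prove that $M^{0}_{x}\simeq 0$ for all $x$ in a non‑empty open subset of $|Z|$.

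For the base of an induction I would feed the hypothesis through Lemma~\ref{lemma:homotopy_equivalence}: after the above shrinking the augmentation $\Oo_{\eta}\to\A_{X,|Z|}^{\bullet}(\Oo_{\eta})$ is a homotopy equivalence of co‑simplicial sheaves of algebras, so, exactly as in the proof of Corollary~\ref{cor:pre_generic}, the resulting equivalence $|\Perf(\A_{X,|Z|}^{\bullet}(\Oo_{\eta}))|\simeq\Perf(\Oo_{\eta})$ identifies $M^{\bullet}\otimes_{\Oo_{X}}\Oo_{\eta}$ with $M^{0}\otimes_{\A_{X,|Z|}^{0}}\Oo_{\eta}\simeq 0$; hence $M^{\bullet}\otimes_{\Oo_{X}}\Oo_{\eta}\simeq 0$. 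Next I would propagate this inside the chain poset of $|Z|$: for a chain $\sigma\colon x=x_{0}<x_{1}<\dots<x_{k}=\eta$ in $|Z|$, cartesianness of $M^{\bullet}$ yields, as in the proof of Lemma~\ref{lemma:minimal}, an equivalence $M^{0}_{x}\otimes_{\widehat{\Oo_{x}}}\A_{X,\sigma}\simeq M^{0}_{\eta}\otimes_{\widehat{\Oo_{\eta}}}\A_{X,\sigma}\simeq 0$. Using Huber's description \cite[\S{}2.2--3.3 and Prop.~3.2.1, 3.3.3]{MR1138291} of the local rings $\A_{X,\sigma}$ as iterated localisations and completions of $\widehat{\Oo_{x}}$, one checks that as $\sigma$ ranges over all chains of $|Z|$ from $x$ to $\eta$ the spectra $\Spec\A_{X,\sigma}$ exhaust the primes of $\widehat{\Oo_{x}}$ lying over the part of $\Spec\Oo_{x}$ contained in $|Z|$; a descending induction on $\operatorname{length}$ of a maximal chain from $x$ to $\eta$ (equivalently, on the codimension of $\supp M^{0}_{x}$ traced out through these rings), combined with Thomason--Trobaugh's invariance Theorem~\ref{thm:thomason-trobaugh} along $\widehat{\Oo_{x}}$, then forces $\supp M^{0}_{x}$ to shrink until it is empty, so $M^{0}_{x}\simeq 0$. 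Since $(M^{0}_{x})_{x\in|Z|}$ is globally bounded and $X$ is Noetherian, the "bad locus'' $\{x\in|Z|:M^{0}_{x}\not\simeq 0\}$ is contained in a proper closed subset of $|Z|$ not containing $\eta$; taking $V\subseteq X$ open with $|Z|\cap V$ its complement finishes the argument, since then $M^{0}|_{V}\simeq 0$ and hence $M^{\bullet}|_{V}\simeq 0$.

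The main obstacle is precisely this last inductive step. The positive‑degree cartesian conditions only "see'' $\Spec\widehat{\Oo_{x}}$ along $|Z|$‑directions and never reach the closed point $\mathfrak m_{x}$ directly, so one must carefully match the spectra of the rings $\A_{X,\sigma}$ against the $|Z|$‑part of $\Spec\widehat{\Oo_{x}}$ (this is where Huber's structure results are essential) and then argue, \emph{uniformly} in $x$, that the support of the perfect complex $M^{0}_{x}$ collapses — the subtle point being that a perfect complex over $\widehat{\Oo_{x}}$ supported on a small closed set need not vanish, so the collapse must genuinely exploit the cartesian compatibility across all $x$ together with Bhatt's global boundedness, rather than a fibrewise estimate. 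Establishing that the resulting bad locus is an honest proper closed subset of $|Z|$ is the delicate part of the bookkeeping.
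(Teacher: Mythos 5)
Your reduction to proving $M^0|_V \simeq 0$ is fine, and your opening moves (Bhatt's Theorem \ref{thm:bhatt_products}, reading the hypothesis as the vanishing of the $\eta$-component of $M^0$) are consistent with the paper. But the core of your argument --- deducing $M^0_x \simeq 0$ for each $x$ from the vanishing of $M^0_x \otimes_{\widehat{\Oo}_x} \A_{X,\sigma}$ over the chains $\sigma$ from $x$ to $\eta$ --- does not work, and you essentially concede this yourself without supplying a repair. The rings $\A_{X,\sigma}$ attached to nondegenerate chains ending at $\eta$ are built from localisations of $\widehat{\Oo}_x$ that never see the closed point of $\Spec\widehat{\Oo}_x$, so no amount of such vanishing can force $M^0_x \simeq 0$. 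The paper's own example directly after the lemma defeats your strategy: over $X = \Spec\mathbb{Z}$ the globally bounded family $(\Fb_p)_p$ satisfies $\Fb_p \otimes_{\mathbb{Z}_p}\mathbb{Q}_p \simeq 0$ for every $p$, i.e.\ every fibrewise consequence of cartesianness that you invoke, yet it is not zero on any open subset. What rules it out as a descent datum is not a fibrewise collapse of supports but the failure of the degree-one cartesian condition over the single factor ring $\bigl(\prod_p \mathbb{Z}_p\bigr)\otimes_{\mathbb{Z}}\mathbb{Q}$ of $\Ab_X^1$ --- a genuinely global constraint across all $p$ at once. Your closing assertion that the ``bad locus'' is contained in a proper closed subset of $|Z|$ is likewise unsupported; nothing in a point-by-point analysis makes that locus closed.

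The paper's proof avoids the fibrewise route entirely. It isolates the factor $\A^0_{X,|Z|,<\eta} = \prod_{x<\eta}\widehat{\Oo}_x$ of $\A^0_{X,|Z|}$ and the factor $\A^0_{X,|Z|,<\eta}\otimes_{\Oo_X}\Oo_\eta$ of $\A^1_{X,|Z|}$; the two coface maps into the latter convert the hypothesis $M^0\otimes_{\A^0_{X,|Z|}}\Oo_\eta\simeq 0$ into $(M^0\otimes_{\A^0_{X,|Z|}}\A^0_{X,|Z|,<\eta})\otimes_{\Oo_X}\Oo_\eta\simeq 0$, a statement about one perfect complex over one product ring. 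Writing $\Oo_\eta = \colim_{f}\Oo_{X,f}$ and applying Thomason--Trobaugh's filtered-colimit continuity of $\Perf$ (\cite[Proposition 3.20]{MR1106918}, made applicable via Corollary \ref{cor:eq_lache}) then produces a single $f$ with $(M^0\otimes_{\A^0_{X,|Z|}}\A^0_{X,|Z|,<\eta})\otimes_{\Oo_X}\Oo_{X,f}\simeq 0$, which is exactly the uniformity over all $x<\eta$ that your approach cannot reach. To salvage your write-up, replace the chain-by-chain induction with this single application of the filtered-colimit argument to the product ring.
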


\begin{proof}
Let $\A_{X,|Z|,< \eta}^0 = \prod_{x < \eta} \widehat{\Oo}_{x}$, which is a factor of $\A_{X,|Z|}^0 = \prod_{x \in |Z|} \widehat{\Oo}_x$. By assumption, $\A_{X,|Z|,< \eta}^0 \otimes_{\Oo_X} \Oo_{\eta}$ is a factor of $\A_{X,|Z|}^1$. Via the co-simplicial structure, the second ring receives a map from $\A_{X,|Z|,< \eta}^0$, and also from $\Oo_{\eta}$ (another factor of $\A_{X,|Z|}^0$):
\[
\xymatrix{
\A_{X,|Z|,< \eta}^0 \ar[rd] & &\\
& \A_{X,|Z|,< \eta}^0 \otimes_{\Oo_X} \Oo_{\eta} \ar@{^(->}[r] & \A^1_{X,|Z|}. \\
\Oo_{\eta} \ar[ru] & &
}
\]
The assumption $M^0 \otimes_{\A_{X,|Z|}^0} \Oo_{\eta} \simeq 0$ implies therefore also $M^1 \otimes_{\A_{X,|Z|}^1} (\A_{X,|Z|,< \eta}^0 \otimes_{\Oo_X} \Oo_{\eta}) \simeq 0$. This yields $(M^0 \otimes_{\A_{X,|Z|}^0} \A_{X,|Z|,< \eta}^0) \otimes_{\Oo_X} \Oo_{\eta} \simeq 0$. 
We can write $\Oo_{\eta}$ as a filtered colimit of rings $\colim_{f \in \Oo_X} \Oo_{X,f}$, and hence see that $\A_{X,|Z|,< \eta}^0 \otimes_{\Oo_X} \Oo_{\eta} \simeq \colim_{f \in \Oo_X} \A_{X,|Z|,< \eta}^0 \otimes_{\Oo_X} \Oo_{X,f}$ is a filtered colimit of rings. 

By \cite[Proposition 3.20]{MR1106918}, if $R = \colim_{i \in I} R_i$ is a filtered colimit of rings, and for $j \in I$, $M_j \in \Perf(R_j)$, such that $M_j \otimes_{R_j} R \simeq 0$, then there exists $i \geq j$, such that $M_j \otimes_{R_j} R_i \simeq 0$ (in \emph{loc. cit.} it is shown that the homotopy category of perfect complexes of $R$-modules is equivalent to the filtered colimit of the homotopy category of perfect complexes of $R_i$-modules).

Since our sheaves of algebras are l\^ache, we may apply this result, since by virtue of Corollary \ref{cor:eq_lache} the categories over perfect complexes over these sheaves of algebras are equivalent to perfect complexes over the rings of global sections. Hence, we see that there exists $f \in \Oo_{X}$, such that $(M^0 \otimes_{\A_{X,|Z|}^0} \A_{X,|Z|,< \eta}^0) \otimes \Oo_f \simeq 0$. Restricting both sides to the open subset $V \subset X$ defined by localisation at $f$, we obtain $M^0|_V \simeq 0$, because $/(\Oo_f)|_V \cong \Oo$.
\end{proof}

The following non-example demonstrates how the property of being $0$ at the generic point \emph{spreads out} to an open neighbourhood.

\begin{example}
Consider the example $X=\Spec \mathbb{Z}$, we then have a perfect $\Ab_X^0$-complex given by $\prod_{p \text{ prime}} \mathbb{F}_p$ by Bhatt's Theorem \ref{thm:bhatt_products} for perfect complexes over product rings. We claim that this perfect complex cannot be $M^0$ of an adelic descent datum $M^{\bullet} \in |\Perf(\A_X^\bullet)|$. Indeed, if it was, then we would have $M^0 \otimes_{\Oo_X} \mathbb{Q} \simeq 0$. But the element $(1,\cdots)_{p\text{ prime}} \in \Hc^0(M)$, is not annihilated by any $f \in \Oo_X(X) = \mathbb{Z}$.
\end{example}

We will now put this spreading-out property of adelic descent data to use, to show that for $M \in |\Perf(\A_X^{\bullet})|$ the complex $\int M$ is generically perfect.

\begin{corollary}\label{cor:generic}
Let $X$ be an affine Noetherian scheme, $|Z| \subset X$ a closed subset, and $M^{\bullet} \in |\Perf(\A_{X,|Z|}^{\bullet})|$. For every generic point $\eta$ of $|Z|$ exists an open neighbourhood $U \subset X$, such that $(\int M^{\bullet})|_U$ is a perfect complex on $U$.
\end{corollary}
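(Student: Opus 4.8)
The plan is to reduce the statement to the two preceding results, Corollary~\ref{cor:pre_generic} and Lemma~\ref{lemma:generic_zero}, using a dichotomy according to whether the complex vanishes at the generic point $\eta$ or not. First I would observe that, by Lemma~\ref{lemma:relative}, an object $M^{\bullet}\in|\Perf(\A_{X,|Z|}^{\bullet})|$ may freely be regarded as an object of $|\Perf_{|Z|}(\A_X^{\bullet})|$, and by Corollary~\ref{cor:relative} the two possible meanings of $\int M^{\bullet}$ — namely $\int_{X,|Z|}M^{\bullet}$ and $\int_X(M^{\bullet}\otimes_{\A_X^{\bullet}}\A_{X,|Z|}^{\bullet})$ — agree. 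So it suffices to produce an open $U\ni\eta$ on which this object is perfect.

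Now I would split into two cases. \emph{Case 1: $M^0\otimes_{\A_{X,|Z|}^0}\Oo_{\eta}\simeq 0$.} Then Lemma~\ref{lemma:generic_zero} gives an open $V\subset X$ with $M^{\bullet}|_V\simeq 0$; shrinking to an affine open $U\subset V$ still containing $\eta$ (note $\eta\in V$ since the lemma's $V$ is a localisation at $f\in\Oo_X$ with $M^0\otimes\Oo_f$ vanishing, and $\eta$ lies over the generic point of $|Z|$, hence in $V$ — one checks $f\notin$ the prime of $\eta$, since otherwise $M^0\otimes\Oo_\eta$ would already have been nonzero, contradiction, or rather: the statement of Lemma~\ref{lemma:generic_zero} is precisely that such a $V$ exists and by its proof contains $\eta$), we get $(\int M^{\bullet})|_U\simeq 0$, which is certainly a perfect complex on $U$. \emph{Case 2: $M^0\otimes_{\A_{X,|Z|}^0}\Oo_{\eta}\not\simeq 0$.} Here I would invoke Corollary~\ref{cor:pre_generic}, which tells us that the stalk $(\int M^{\bullet})_{\eta}$ is a perfect $\Oo_{\eta}$-complex. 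The remaining task is to \emph{spread out} perfectness of a stalk to an open neighbourhood: since $X$ is Noetherian, $\Oo_{\eta}=\colim_{f\notin\pf_\eta}\Oo_{X,f}$ is a filtered colimit of rings, and $(\int M^{\bullet})_\eta$ is the base change to $\Oo_{\eta}$ of $(\int M^{\bullet})|_{U'}$ for any affine open $U'\ni\eta$ (this uses that $\int_X$ commutes with the relevant localisations, e.g. via the Projection Formula Lemma~\ref{lemma:projection_formula} applied to $\Oo_{U'}\to\Oo_{\eta}$, together with $\int M^{\bullet}\in\QC(X)$). By the standard finite-presentation argument for perfect complexes over a filtered colimit of rings — \cite[Proposition 3.20]{MR1106918}, already cited in the proof of Lemma~\ref{lemma:generic_zero} — a complex whose base change to $\Oo_{\eta}$ is perfect is already perfect after base change to some $\Oo_{X,f}$, and shrinking $U'$ to $U=\Spec\Oo_{X,f}$ yields $(\int M^{\bullet})|_U\in\Perf(U)$.

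The main obstacle I expect is the bookkeeping around which object $\int M^{\bullet}$ actually refers to and checking that it really is an honest object of $\QC(X)$ whose formation is compatible with restriction to open subsets (so that ``stalk at $\eta$ is perfect'' can be promoted via a finitely-presented open); this is where Corollaries~\ref{cor:relative}, the Projection Formula, and Lemma~\ref{lemma:int_coproducts} (commuting with colimits, hence with filtered colimits of the localisations) all get used in concert. The case division itself is clean, and in Case~1 one must only make sure the open subset $V$ produced by Lemma~\ref{lemma:generic_zero} genuinely contains $\eta$ — but that is immediate from its construction as a localisation $\Spec\Oo_{X,f}$ together with the fact that $M^0\otimes\Oo_f\simeq 0$ forces $f$ to lie outside the prime ideal $\pf_\eta$ corresponding to $\eta$ (else the nonvanishing hypothesis of Case~2 would hold). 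Once these compatibilities are in place, the proof is a short two-line case analysis.
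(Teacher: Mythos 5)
Your Case 1 is fine (and the open $V$ produced by Lemma \ref{lemma:generic_zero} does contain $\eta$, being a basic open $D(f)$ with $f$ drawn from the filtered system presenting $\Oo_{\eta}$). The gap is in Case 2, at the step where you ``spread out'' perfectness of the stalk. Thomason--Trobaugh's \cite[Proposition 3.20]{MR1106918} asserts that the homotopy category of perfect complexes over $\colim_i R_i$ is the filtered colimit of the homotopy categories $\Ho \Perf(R_i)$; it lets you descend a perfect complex over $\Oo_{\eta}$ to a perfect complex over some $\Oo_{X,f}$, and descend maps and equivalences between objects that are \emph{already perfect at a finite stage}. It does not say that an arbitrary object $N$ of $\QC(\Spec \Oo_{X,f})$ with $N \otimes \Oo_{\eta}$ perfect becomes perfect over a smaller localisation: $\mathbb{Q}/\mathbb{Z}$ over $\mathbb{Z}$, whose base change to $\mathbb{Q}$ vanishes but which is nonzero and non-perfect over every $\mathbb{Z}[1/n]$, is a counterexample, and the paper's own example of $\prod_p \mathbb{F}_p$ over $\Spec \mathbb{Z}$ makes the same point. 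Since $\int M^{\bullet}$ is a priori only an object of $\QC(X) = \Ind \Perf(X)$, neither perfectness nor vanishing of its stalk at $\eta$ can be propagated to an open neighbourhood by a finite-presentation argument carried out in $\QC(X)$ alone.

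The paper's proof supplies exactly the missing mechanism. It writes $\int M^{\bullet} \simeq \colim_{i} N_i$ as a filtered colimit of perfect complexes; since $(\int M^{\bullet})_{\eta}$ is perfect, hence compact, it is a retract of some $(N_j)_{\eta}$, and the corresponding idempotent extends over an affine open $U \ni \eta$, producing a perfect complex $N'_j$ on $U$ together with a map $N'_j \rightarrow (\int M^{\bullet})|_U$ that is an equivalence at $\eta$. The decisive step is then to pass to the adjoint map $N'_j \otimes_{\Oo_X} \A_{X,|Z|}^{\bullet} \rightarrow M^{\bullet}$, whose cofibre is a \emph{perfect adelic descent datum} vanishing at $\eta$; Lemma \ref{lemma:generic_zero} --- which genuinely uses the cartesian structure over the co-simplicial sheaf of algebras --- shows that this cofibre vanishes on an open $V$, whence $(\int M^{\bullet})|_V \simeq N'_j|_V$ is perfect. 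Your two-case division is harmless but unnecessary (the retract argument subsumes the vanishing case); what is indispensable, and absent from your proposal, is the return to $|\Perf(\A_{X,|Z|}^{\bullet})|$ so that the spreading-out lemma can be applied to the comparison map rather than to an arbitrary ind-perfect complex.
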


\begin{proof}
Choose an isomorphism of $(\int M^{\bullet}) \rightarrow \colim_{i \in I} N_i$ with a filtered colimit of perfect complexes on $X$. By Corollary \ref{cor:pre_generic}, we know that $(\int M)_{\eta}$ is a perfect $\Oo_{\eta}$-complex. Therefore we see that there exists an index $j \in I$, such that we have a factorisation 
\[
\xymatrix{
(\int M^{\bullet})_{\eta} \ar[r]^s \ar[rd] & (N_j)_{\eta} \ar[d]^r, \\
& (\int M^{\bullet}) 
}
\]
and we see that $p = s \circ r$ defines an idempotent on $(N_j)_{\eta}$, such that the corresponding direct summand is equivalent to $(\int M^{\bullet})_{\eta}$. Since $N_j$ is perfect, there exists an affine open subset $U \subset X$, and an idempotent $p$ on $N_j|_U$, which extends the one over $\Oo_{\eta}$. Passing to the corresponding direct summand $N'_j$ of $N_j|_U$, we see that we have constructed a direct complex $N'_j$, with a map to $(\int M^{\bullet})|_U$, which induces an equivalence at $\eta$. By adjunction, we have a morphism $N'_j \otimes_{\Oo_X} \A_{X,|Z|}^{\bullet} \rightarrow M^{\bullet}$, which is an equivalence after tensoring with $- \otimes_{\A_{X,|Z|}^0} \Oo_{\eta}$. By Lemma \ref{lemma:generic_zero} there exists an open subset $V \subset U$, such that $N'_j \otimes_{\Oo_X} \A_{V,|Z| \cap V}^{\bullet} \rightarrow M^{\bullet}|_V$ is an equivalence. This implies that $(\int M^{\bullet})|_V$ is equivalent to $N'_j|_V$, and thus is a perfect complex.
\end{proof}

\begin{proof}[Proof of Proposition \ref{prop:adelic_descent}]
Let $M^{\bullet} \in |\Perf(\A_X^{\bullet})|$, we will show that $\int M^{\bullet}$ is perfect. Thus, by conservativity of $\int$, and the fact that $- \otimes_{\Oo_X} \A_X^{\bullet}$ is fully faithful, we can apply Lemma \ref{lemma:weakBB} to conclude the proof.

Let $\eta$ be a generic point of $X$. By virtue of Corollary \ref{cor:generic} there exists an open neighbourhood $U$, and a perfect complex $N \in \Perf(U)$, together with an isomorphism $N \otimes_{\Oo_U} \A_X^{\bullet} \simeq M|_U$. We would like to extend this map to $X$, and then take the cofibre thereof. But in general it is not possible to extend a perfect complex from an open subset, unless its $K$-theory class extends \cite[Proposition 5.2.2]{MR1106918}. Hence, we consider instead $M_0^{\bullet} = M^{\bullet} \oplus \Sigma M$, and $N'_0 \oplus \Sigma N$, whose $K$-theory class is $0$, but contain the perfect complexes we care about as a direct summand. We may assume that $N'_0 = N_0|_U$, where $N''_0 \in \Perf(X)$. Let $j\colon U \hookrightarrow X$ be the corresponding open immersion, we have a map
$$N_0'' \rightarrow j_*j^*N_0'' \rightarrow j_*j^*\int M_0^{\bullet},$$
and we see that there exists a line bundle $L$, such that we have a map $N_0 = N_0'' \otimes L \rightarrow \int M_0^{\bullet}$, which induces an equivalence, when restricted to $U$. We consider the cofibre $C$ of the adjoint $N_0 \otimes_{\Oo_X} \A_X^{\bullet} \rightarrow M_0^{\bullet}$, and define $M_1^{\bullet} = C \oplus \Sigma C$.

Since $M_1^0 \otimes_{\A_X^0} \Oo_{\eta} \simeq 0$, we see that $M_1 \in \Perf(\A_{X,|Z|}^{\bullet})$ for a closed subset $|Z| \subset X$ by Lemma \ref{lemma:generic_zero}. Iterating this process, we obtain a sequence $(M_n^{\bullet})_{n\in \mathbb{N}}$ of objects in $\Perf(\A_X^{\bullet})$, together with an increasing sequence of open subsets $U_n \subset |X|$, such that $\int M_n^{\bullet}|_{U_n}$ is perfect. Since $X$ is Noetherian, this sequence stabilises, that is $U_n = U$ for $n >> 0$. We must have $U = |X|$, because otherwise we could choose a generic point of the complement, and continue the iterative process described above. Since $M_n^{\bullet}$ contains $M^{\bullet}$ as a direct summand, we conclude that $\int M^{\bullet}$ is a perfect complex.
\end{proof}

\subsection{Adelic reconstruction}

In this subsection we collect various corollaries of our adelic descent result \ref{thm:adelic_descent}. Applying the Tannakian formalism for symmetric monoidal $\infty$-categories, we can show that a Noetherian scheme $X$ can be reconstructed from the co-simplicial ring $\Ab_X^{\bullet}$. We will also give an adelic description of $G$-torsors on $X$, for any Noetherian affine group scheme, analogously to Weil's Theorem \ref{thm:weil} for algebraic curves, which inspired the present work.

Our results rely heavily on the Tannakian formalism for symmetric monoidal stable $\infty$-categories, as developed by Lurie in \cite{lurie2011quasi}, and further refined by Bhatt \cite{Bhatt:2014aa} and Bhatt--Halpern-Leistner \cite{bhatt2015tannaka}.

\begin{theorem}[Bhatt, Bhatt--Halpern-Leistner]\label{thm:bhattdhl}
\begin{itemize}
\item[(a)] The category $\mathsf{AlgSp}^{\qcqs}$ of quasi-compact and quasi-separated algebraic spaces embeds fully faithfully into $(\St_{\otimes})^{\op}$, the dual $\infty$-category of small symmetric monoidal stable $\infty$-categories, by means of the functor $\Perf(-)_{\otimes}\colon (\mathsf{AlgSp}^{\qcqs})^{\op} \hookrightarrow \St_{\otimes}$.

\item[(b)] The functor $\APerf_{\cn}(-)_{\otimes}$ from the $\infty$-category of spectral Noetherian stacks $(\mathsf{DStck}^{\N})^{\op}$ to the $\infty$-category of symmetric monoidal $\infty$-categories $\inftyCat_{\otimes}$ is fully faithful.
\end{itemize}
\end{theorem}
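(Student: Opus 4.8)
Since this statement collects results of Lurie, Bhatt, and Bhatt--Halpern-Leistner, the ``proof'' amounts to assembling the relevant Tannakian reconstruction theorems, and the plan is to describe the shape of that argument. The heart of part (b) is Lurie's Tannaka duality for geometric stacks, which identifies, for a connective $E_\infty$-ring $A$ and a suitably geometric spectral stack $Y$, the space $\Map(\Spec A, Y)$ with the space of colimit-preserving symmetric monoidal functors $\QCoh_{\cn}(Y) \rightarrow \Mod_A^{\cn}$ preserving flat objects. To pass from $\QCoh_{\cn}$ to $\APerf_{\cn}$ one uses the Noetherian hypothesis: over a Noetherian base every connective quasi-coherent complex is a filtered colimit of connective almost perfect complexes, so $\QCoh_{\cn}(Y) \simeq \Ind(\APerf_{\cn}(Y))$, and a symmetric monoidal functor out of $\APerf_{\cn}(Y)$ extends uniquely by left Kan extension to a colimit-preserving symmetric monoidal functor out of $\QCoh_{\cn}(Y)$, the flatness constraint then being automatic; this yields the fully faithful embedding $\APerf_{\cn}(-)_{\otimes}$ of (b).

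First I would settle the affine case: a colimit-preserving symmetric monoidal functor $\Mod_B \rightarrow \Mod_A$ is the same datum as an $E_\infty$-ring map $B \rightarrow A$, by the full faithfulness of $B \mapsto \Mod_B$ as a functor from $E_\infty$-rings to presentable symmetric monoidal $\infty$-categories. Second, I would globalize by descent: a general source $X$ is a colimit of affines, and both $\Map(-, Y)$ and $\Fun^{\otimes}(\QCoh(Y), \QCoh(-))$ turn that colimit into a limit by descent for quasi-coherent sheaves, reducing to $X$ affine; a general target $Y$ is then handled by descending a morphism $\Spec A \rightarrow Y$ along an affine atlas of $Y$ together with its descent data, which is effective because of the geometric hypotheses imposed on $Y$. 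Carrying out this descent for \'etale (or Nisnevich) hypercovers by affine schemes, rather than only for the ``geometric stacks'' of Lurie's original formulation, is precisely the additional input of Bhatt--Halpern-Leistner that widens the admissible targets to all Noetherian stacks in (b), and to all quasi-compact quasi-separated algebraic spaces in (a).

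Part (a) is the non-connective, dualizable counterpart of (b): one uses that $\Perf(X)$ is the subcategory of dualizable objects of $\QCoh(X) \simeq \Ind \Perf(X)$ for $X$ quasi-compact and quasi-separated, Ind-extends a symmetric monoidal exact functor $\Perf(Y) \rightarrow \Perf(X)$ to a colimit-preserving symmetric monoidal functor $\QCoh(Y) \rightarrow \QCoh(X)$ preserving dualizable objects, and then invokes Tannaka duality to produce the morphism $X \rightarrow Y$. The step I expect to be the main obstacle is the recovery of the $t$-structure from the tensor-triangulated data: a symmetric monoidal exact functor on $\Perf$ a priori carries no connectivity information, and verifying that the induced functor on $\QCoh$ respects connectivity and flatness --- so that Lurie's theorem applies --- is where the geometric input (the affine atlas, Noetherian approximation, boundedness of perfect complexes on quasi-compact quasi-separated spaces) is genuinely needed. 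A secondary, largely bookkeeping difficulty is keeping track of the symmetric monoidal structures through the various limit and colimit manipulations, which one handles by working throughout with $\Cat$-valued sheaves and the Lurie tensor product on presentable $\infty$-categories.
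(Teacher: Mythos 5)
The paper does not prove this statement: it is quoted as a black box from the literature, part (a) being Bhatt's Tannaka duality for quasi-compact quasi-separated algebraic spaces \cite{Bhatt:2014aa} and part (b) the main theorem of Bhatt--Halpern-Leistner \cite{bhatt2015tannaka}. There is therefore no internal argument to measure your proposal against. As an outline of those external proofs your sketch is broadly accurate: the affine case via the full faithfulness of $B \mapsto \Mod_B$, globalisation in the source by descent, globalisation in the target along an atlas, and the identification of connective quasi-coherent complexes with $\Ind(\APerf_{\cn})$ over a Noetherian base are all genuine ingredients, and you correctly single out the recovery of connectivity data from the tensor structure on $\Perf$ as the delicate point in (a).

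Two corrections of emphasis. First, your claim that after Kan-extending from $\APerf_{\cn}$ ``the flatness constraint is then automatic'' inverts the logic of \cite{bhatt2015tannaka}: removing the flatness/tameness hypothesis present in Lurie's original Tannaka duality is precisely the content of that paper, and it is achieved not by a formal left Kan extension but by a substantial d\'evissage (Noetherian approximation, reduction to the affine and complete local cases, and a careful analysis of the counit), so presenting it as a formality is a real gap in the sketch rather than a bookkeeping omission. Second, the affine input should be stated for connective module categories, or for functors constrained to preserve connective objects; for the unbounded stable categories the identification of symmetric monoidal colimit-preserving functors $\Mod_B \rightarrow \Mod_A$ with $E_\infty$-ring maps is itself a theorem about units and endomorphism rings, and is exactly where the $t$-structure issue you flag for part (a) already enters. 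Neither point affects the present paper, which only uses the theorem as stated and cited.
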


Recall that $\APerf$ denotes the $\infty$-category of almost perfect, that is, pseudo-coherent complexes. That is, a complex which can locally be presented as a bounded above complex of finitely generated locally projective sheaves of modules. The subscript $\cn$ refers to the full subcategory of \emph{connective} objects, that is complexes, which are concentrated in non-positive degrees.

\subsubsection{For schemes}

By a well-known theorem of Gelfand--Naimark \cite{gelfand}, every locally compact space $X$ can be reconstructed from the ring of continuous real-valued functions. It is equally well-known that a single ring is not sufficient to capture the delicate geometry of an arbitrary non-affine scheme. The result below shows however that for a Noetherian scheme $X$, the co-simplicial ring of ad\`eles $\Ab_X^{\bullet}$ allows one to reconstruct $X$. It is intriguing to observe that local compactness for topological spaces is not dissimilar from the Noetherian hypothesis for schemes. To iterate further on this philosophical point, we remind ourselves that a Hausdorff topological vector space is finite dimensional if and only if it is locally compact. The Noetherian condition enforces finite-dimensionality of the local rings, and guarantees that ideals are finitely generated.

\begin{theorem}\label{thm:schematic_reconstruction}
For any Noetherian scheme $X$ we have a canonical equivalence $|\Spec \Ab_X^{\bullet}|  \simeq X$, where the colimit $|\cdot|$ is taken in the category of quasi-compact and quasi-separated schemes.
\end{theorem}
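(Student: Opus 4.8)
The plan is to deduce the theorem formally from the Adelic Descent Theorem~\ref{thm:adelic_descent} and the Tannakian reconstruction result~\ref{thm:bhattdhl}(a): all of the geometric content already sits in adelic descent, so what remains is to identify the correct universal property. Concretely, I would prove that for every quasi-compact and quasi-separated scheme $Y$ there is an equivalence, natural in $Y$,
\[
\Map_{\Sch^{\qcqs}}(X,Y)\;\simeq\;\lim_{[n]\in\Delta}\Map_{\Sch^{\qcqs}}(\Spec\Ab_X^{n},Y),
\]
that is, $X$ corepresents the functor computing the geometric realisation of the simplicial scheme $\Spec\Ab_X^{\bullet}$. Since $|\Spec\Ab_X^{\bullet}|$ is by definition an object corepresenting this functor, exhibiting such an equivalence compatibly with the tautological cocone proves the theorem; in particular the colimit exists and is a scheme.

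First I would build the canonical cocone $\Spec\Ab_X^{\bullet}\to X$. The levelwise functors $\Perf(X)_\otimes\to\Perf(\Ab_X^{n})_\otimes$ underlying the adelic realisation of Theorem~\ref{thm:adelic_descent} assemble, as $[n]$ varies over $\Delta$, into a co-simplicial object of $\St_\otimes$ equipped with a map from the constant object $\Perf(X)_\otimes$. Each $\Spec\Ab_X^{n}$ is affine, hence qcqs, and $X$ is a Noetherian and therefore qcqs scheme, so the fully faithful embedding $\Perf(-)_\otimes\colon(\mathsf{AlgSp}^{\qcqs})^{\op}\hookrightarrow\St_\otimes$ of Theorem~\ref{thm:bhattdhl}(a) translates this data back into a compatible family of morphisms $\Spec\Ab_X^{n}\to X$, i.e.\ an augmented simplicial scheme $\Spec\Ab_X^{\bullet}\to X$.

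Next I would verify the universal property. Mapping out of a fixed object commutes with limits, and $|\Perf(\Ab_X^{\bullet})_\otimes|$ is by definition the totalisation (the limit over $\Delta$) of the co-simplicial object $\Perf(\Ab_X^{\bullet})_\otimes$ in $\St_\otimes$. Hence, for a qcqs scheme $Y$, Theorem~\ref{thm:bhattdhl}(a) gives
\[
\lim_{[n]\in\Delta}\Map_{\Sch^{\qcqs}}(\Spec\Ab_X^{n},Y)\;\simeq\;\lim_{[n]\in\Delta}\Map_{\St_\otimes}\!\big(\Perf(Y)_\otimes,\Perf(\Ab_X^{n})_\otimes\big)\;\simeq\;\Map_{\St_\otimes}\!\big(\Perf(Y)_\otimes,\,|\Perf(\Ab_X^{\bullet})_\otimes|\big).
\]
By the Adelic Descent Theorem the last mapping space equals $\Map_{\St_\otimes}(\Perf(Y)_\otimes,\Perf(X)_\otimes)$, which by Theorem~\ref{thm:bhattdhl}(a) once more is $\Map_{\Sch^{\qcqs}}(X,Y)$; unwinding the identifications shows this chain is induced by the cocone of the previous paragraph. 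Finally, since qcqs schemes form a full subcategory of $\mathsf{AlgSp}^{\qcqs}$ and the corepresenting object $X$ is already a scheme, the very same computation identifies $X$ with $|\Spec\Ab_X^{\bullet}|$ whether the colimit is formed in qcqs schemes or in qcqs algebraic spaces.

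I expect the main obstacle to be organisational rather than conceptual: one must check that the Tannaka-dual of the levelwise adelic realisation really is the geometrically canonical cocone (so that the reconstruction recovers $X$ on the nose, not merely an abstractly isomorphic scheme), that the co-simplicial object of $\St_\otimes$ at play is exactly the one whose totalisation is computed in Theorem~\ref{thm:adelic_descent}, and that all mapping spaces and limits are taken in the same $\infty$-category of symmetric monoidal stable $\infty$-categories, so that size issues and the formation of the totalisation are consistent. Once these compatibilities are pinned down, the statement is a direct consequence of adelic descent and Tannakian reconstruction, requiring no further input from commutative algebra or the geometry of ad\`eles.
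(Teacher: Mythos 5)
Your proposal is correct and takes essentially the same route as the paper: the paper's proof simply combines the adelic descent equivalence $\Perf(X)_{\otimes}\simeq|\Perf(\Ab_X^{\bullet})_{\otimes}|$ with Bhatt's Tannakian reconstruction (citing Corollary~1.6 of \cite{Bhatt:2014aa}) to obtain the augmentation $\Spec\Ab_X^{\bullet}\to X$ and the colimit identification in $\Sch^{\qcqs}$. Your explicit corepresentability computation is just the content of that cited corollary unwound by hand.
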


\begin{proof}
We use Bhatt's Tannakian reconstruction result, as presented in \cite{Bhatt:2014aa}. Theorem \ref{thm:adelic_descent} gives a symmetric monoidal equivalence $\Perf(X)_{\otimes}  \simeq |(\Perf(\Ab_X^{\bullet}))_{\otimes}|$. By \cite[Corollary 1.6]{Bhatt:2014aa} we may deduce the existence of an augmentation $\Spec \Ab_X^{\bullet} \rightarrow X$, yielding an equivalence $|\Spec \Ab_X^{\bullet}| \rightarrow X$ in the category of quasi-compact and quasi-separated schemes.
\end{proof}

Bhatt's \cite{Bhatt:2014aa} allows one to reconstruct a quasi-compact and quasi-separated algebraic space $X$ from the symmetric monoidal $\infty$-category $\Perf(X)_{\otimes}$. This result extends work by Lurie \cite{lurie2011quasi}, and the classical Tannakian philosophy in general. 

\begin{corollary}\label{cor:main4}
The functor $\Ab^{\bullet}\colon (\Sch^{\N})^{\op} \rightarrow \Rings^{\Delta}$, sending a Noetherian scheme $X$ to $\Ab_X^{\bullet}$, is faithful. 
\end{corollary}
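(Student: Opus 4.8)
The plan is to deduce faithfulness formally from the reconstruction Theorem \ref{thm:schematic_reconstruction}, by exhibiting $\Ab^{\bullet}$ as the first factor of a composite functor which is manifestly faithful. Viewing $\Ab^{\bullet}$ as a covariant functor $\Sch^{\N} \to (\Rings^{\Delta})^{\op}$, I first observe that on its essential image the geometric realisation $|\Spec R^{\bullet}|$ of the simplicial affine scheme $\Spec R^{\bullet}$ exists in the category $\Sch^{\qcqs}$ of quasi-compact and quasi-separated schemes; a morphism $S^{\bullet} \to R^{\bullet}$ of co-simplicial rings induces a morphism $\Spec R^{\bullet} \to \Spec S^{\bullet}$ of simplicial affine schemes, hence a morphism $|\Spec R^{\bullet}| \to |\Spec S^{\bullet}|$ on realisations. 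Thus $\Psi\colon R^{\bullet} \mapsto |\Spec R^{\bullet}|$ is a functor $(\Rings^{\Delta})^{\op} \to \Sch^{\qcqs}$, defined at least on the essential image of $\Ab^{\bullet}$, and the composite $\Psi \circ \Ab^{\bullet}$ sends a Noetherian scheme $X$ to $|\Spec \Ab_X^{\bullet}|$.

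Next I would verify that $\Psi \circ \Ab^{\bullet}$ is naturally equivalent to the inclusion $\iota\colon \Sch^{\N} \hookrightarrow \Sch^{\qcqs}$. For each fixed $X$, Theorem \ref{thm:schematic_reconstruction} supplies an equivalence $|\Spec \Ab_X^{\bullet}| \xrightarrow{\ \sim\ } X$, induced by the augmentation $\epsilon_X\colon \Spec \Ab_X^{\bullet} \to X$, which itself arises from the co-augmentation $\Oo_X \to \A_X^{\bullet}(\Oo_X)$ of co-simplicial sheaves of algebras. The corollary following Lemma \ref{lemma:functoriality} shows precisely that these co-augmented co-simplicial sheaves of algebras are functorial in $X$; applying $\Spec$ and global sections makes $\epsilon_X$, and hence $|\epsilon_X|$, natural in $X$. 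Since each $|\epsilon_X|$ is an equivalence, the family $\{|\epsilon_X|\}_{X}$ assembles into a natural equivalence $\Psi \circ \Ab^{\bullet} \simeq \iota$.

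Finally, $\iota$ is the inclusion of a full subcategory, hence faithful; a functor naturally equivalent to a faithful functor is faithful, so $\Psi \circ \Ab^{\bullet}$ is faithful; and whenever a composite $G \circ F$ is faithful, the functor $F$ is faithful. Therefore $\Ab^{\bullet}$ is faithful.

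Since the reduction is entirely formal, the only genuine input is the naturality of the reconstruction equivalence in $X$, which I expect to be the crux of the argument; it is provided by the functoriality of Beilinson's adèles recorded in Lemma \ref{lemma:functoriality}. One may alternatively avoid $\Spec$ altogether: the Adelic Descent equivalence $\Perf(X)_{\otimes} \simeq |\Perf(\Ab_X^{\bullet})_{\otimes}|$ of Theorem \ref{thm:adelic_descent} is natural in $X$ (by Proposition \ref{prop:adelic_descent} together with the functoriality of adèles), its right-hand side depends only on the co-simplicial ring $\Ab_X^{\bullet}$, and composing with the fully faithful Tannakian embedding $\Perf(-)_{\otimes}\colon (\Sch^{\N})^{\op} \hookrightarrow \St_{\otimes}$ of Theorem \ref{thm:bhattdhl}(a) again presents $\Ab^{\bullet}$ as the first factor of a faithful composite.
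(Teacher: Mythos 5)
Your proposal is correct and is essentially the paper's argument: the paper likewise deduces faithfulness by exhibiting $\Ab^{\bullet}$ as the first factor of a faithful composite, using precisely the factorisation $(\Sch^{\N})^{\op} \rightarrow \Rings^{\Delta} \rightarrow \St_{\otimes}$ through the fully faithful Tannakian embedding $\Perf(-)_{\otimes}$ that you sketch as your ``alternative'' at the end. Your primary route through $\Psi = |\Spec(-)|$ valued in $\Sch^{\qcqs}$ is the same trick with a different second factor, and in fact mirrors the second half of the paper's own proof (where $|\Spec f^{\sharp}|$ is used to recover $f$); you also correctly isolate the naturality of the reconstruction equivalence in $X$, via Lemma \ref{lemma:functoriality}, as the only non-formal input.
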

\begin{proof}
We denote by $\St_{\otimes}$ the $\infty$-category of small symmetric monoidal $\infty$-categories. It is shown by Bhatt's Theorem \ref{thm:bhattdhl}(a) that there is a fully faithful functor $(\Sch^{\qcqs})^{\op} \rightarrow \St_{\otimes}$. As our construction has shown, the restriction of this functor to Noetherian schemes, factors through $(\Sch^{\mathsf{N}})^{\op} \rightarrow \Rings^{\Delta} \rightarrow \St_{\otimes}$ the category of co-simplicial rings. This shows that the functor $(\Sch^{\mathsf{N}})^{\op} \rightarrow \Rings^{\Delta}$ is faithful.

Let $X$ and $Y$ be Noetherian schemes, and $f^{\sharp}\colon \Ab_Y^{\bullet} \rightarrow \Ab_X^{\bullet}$ a morphism of co-simplicial rings. Since $X  \simeq |\Spec \Ab_X^{\bullet}|$, and similarly for $Y$, we obtain an induced map $\Spec f^{\sharp}\colon \Spec \Ab_X^{\bullet} \rightarrow \Spec \Ab_Y^{\bullet}$. Taking geometric realisations in the category $\Sch^{\qcqs}$, we obtain a morphism $f\colon X \rightarrow Y$ by virtue of Theorem \ref{thm:schematic_reconstruction}. 
\end{proof}

Since the category of quasi-compact and quasi-separated schemes is classical, that is a $1$-category, the colimit $|\Spec \Ab_X^{\bullet}|$ can actually be identified with a co-equaliser.

\begin{corollary}\label{cor:schematic_reconstruction}
For any Noetherian scheme $X$ we have a canonical equivalence $$\colim [\Spec \Ab_X^1 \rightrightarrows \Spec \Ab_X^0]  \simeq X,$$ where the co-equaliser is taken in the category of quasi-compact and quasi-separated schemes.
\end{corollary}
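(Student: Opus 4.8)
Both sides are identified with the object produced in Theorem~\ref{thm:schematic_reconstruction}, so the plan is to combine that theorem with the elementary fact that, in any ordinary ($1$-)category, a colimit over $\Delta^{\op}$ is computed as the coequaliser of the two face maps in degree $1$. Concretely, I would argue as follows.

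\textit{Step 1: a categorical lemma.} Let $\Cc$ be a $1$-category, $Y_{\bullet}\colon \Delta^{\op} \to \Cc$ a simplicial object, and suppose the colimit $L := \colim_{\Delta^{\op}} Y_{\bullet}$ exists, with colimiting cocone $(c_n\colon Y_n \to L)_{n\geq 0}$. I claim that $c_0\colon Y_0 \to L$ exhibits $L$ as the coequaliser of the two coface maps $Y(\delta^0), Y(\delta^1)\colon Y_1 \rightrightarrows Y_0$. That $c_0$ coequalises these is just the cocone condition along the two arrows $[0]\rightrightarrows[1]$ of $\Delta$. Conversely, given $h\colon Y_0 \to T$ with $h\circ Y(\delta^0) = h\circ Y(\delta^1)$, one defines maps $g_n := h\circ Y(v_n)\colon Y_n \to T$, where $v_n\colon [0]\to[n]$ is the terminal vertex; the point is that for \emph{every} vertex $v\colon [0]\to[n]$ the composite $h\circ Y(v)$ is the same, which one sees by connecting any two vertices of $[n]$ through a chain of edges $[1]\to[n]$ and using $h\circ Y(\delta^0)=h\circ Y(\delta^1)$ repeatedly (no degeneracies are needed here, as the two coface maps $[0]\rightrightarrows[1]$ have a common section). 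This makes $(g_n)$ a cocone on $Y_{\bullet}$, hence it factors uniquely through $L$; restricting to degree $0$ gives the required factorisation of $h$ through $c_0$, and uniqueness follows from that of the induced map out of $L$.

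\textit{Step 2: conclusion.} The category $\Sch^{\qcqs}$ is classical, i.e.\ a $1$-category, and by construction $|\Spec\Ab_X^{\bullet}|$ is the colimit over $\Delta^{\op}$ of the simplicial affine scheme $\Spec\Ab_X^{\bullet}$, which exists and is canonically $X$ by Theorem~\ref{thm:schematic_reconstruction}. Applying Step~1 with $\Cc = \Sch^{\qcqs}$ and $Y_{\bullet} = \Spec\Ab_X^{\bullet}$ shows that the coequaliser $\colim[\Spec\Ab_X^1 \rightrightarrows \Spec\Ab_X^0]$ exists in $\Sch^{\qcqs}$ and is canonically equivalent to $X$, the two face maps being $\Spec$ of the two cofaces $\Ab_X^0 \rightrightarrows \Ab_X^1$.

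There is essentially no obstacle here; the only care needed is bookkeeping — matching the two cofaces $\Ab_X^0\rightrightarrows\Ab_X^1$ with $d_0,d_1$, and observing that the mere existence of the $\Delta^{\op}$-colimit (guaranteed by Theorem~\ref{thm:schematic_reconstruction}) is what licenses passing to the coequaliser without separately proving that $\Sch^{\qcqs}$ has coequalisers.
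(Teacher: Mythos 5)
Your proposal is correct and follows the same route as the paper: the paper derives the corollary from Theorem~\ref{thm:schematic_reconstruction} by simply remarking that $\Sch^{\qcqs}$ is a $1$-category, so the $\Delta^{\op}$-colimit reduces to the coequaliser of the two face maps. Your Step~1 just spells out that standard reduction (finality of $\Delta^{\op}_{\leq 1} \hookrightarrow \Delta^{\op}$ for $1$-categorical colimits) in full, which the paper leaves implicit.
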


The recent paper \cite{bhatt2015tannaka} by Bhatt and Halpern-Leistner extends the aforementioned Tannakian reconstruction result to (spectral) stacks, for which the derived $\infty$-category is compactly generated. 

\begin{corollary}\label{perfect_reconstruction}
If $\Y$ is a spectral stack with quasi-affine diagonal, such that the derived category $\QC(\Y)$ is compactly generated, and $X$ is a Noetherian scheme, then we have
$\Y(X)  \simeq |\Y(\Ab_X^{\bullet})|.$
\end{corollary}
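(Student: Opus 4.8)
The plan is to deduce the statement from the Adelic Descent Theorem~\ref{thm:adelic_descent} by feeding it into the Tannakian reconstruction theorem of Bhatt and Halpern-Leistner~\cite{bhatt2015tannaka}. The first move is to record the consequence of that theorem in a convenient form. For a spectral stack $\Y$ with quasi-affine diagonal such that $\QC(\Y)$ is compactly generated, \cite{bhatt2015tannaka} identifies, functorially in a spectral scheme $T$, the space $\Y(T)=\Map(T,\Y)$ with the space $\Fun^{\dagger}(\QC(\Y),\QC(T))$ of colimit-preserving symmetric monoidal functors satisfying the relevant connectivity (and flatness) condition. Under our hypotheses the compact objects of $\QC(\Y)$ are perfect, hence dualisable, and $\QC(T)\simeq\Ind\Perf(T)$; therefore such a functor is the Ind-extension of its restriction to $\Perf(\Y)$, which is an exact symmetric monoidal functor necessarily valued in the dualisable, i.e.\ perfect, objects of $\QC(T)$. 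This yields a natural equivalence $\Y(T)\simeq\Fun^{\dagger}(\Perf(\Y)_{\otimes},\Perf(T)_{\otimes})$, the defining conditions being transcribed to the level of perfect complexes.

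With this reformulation at hand the argument is a short computation. The functor $\Fun^{\dagger}(\Perf(\Y)_{\otimes},-)$ commutes with limits of small symmetric monoidal stable $\infty$-categories, since mapping spaces in $\St_{\otimes}$ do and the auxiliary conditions are detected objectwise. Applying Theorem~\ref{thm:adelic_descent} to rewrite $\Perf(X)_{\otimes}$ as the totalisation $|\Perf(\Ab_X^{\bullet})_{\otimes}|$, and then invoking the Tannakian description levelwise over the affine schemes $\Spec\Ab_X^{n}$, we get
\begin{align*}
\Y(X) &\simeq \Fun^{\dagger}(\Perf(\Y)_{\otimes},\Perf(X)_{\otimes})
\simeq \Fun^{\dagger}\bigl(\Perf(\Y)_{\otimes},\,|\Perf(\Ab_X^{\bullet})_{\otimes}|\bigr)\\
&\simeq \bigl|\Fun^{\dagger}(\Perf(\Y)_{\otimes},\Perf(\Ab_X^{\bullet})_{\otimes})\bigr|
\simeq |\Y(\Ab_X^{\bullet})|,
\end{align*}
which is the asserted equivalence.

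The step I expect to be the main obstacle is the passage, carried out informally in the first paragraph, from the $\QC$-formulation of Tannaka duality actually proved in \cite{bhatt2015tannaka} to the $\Perf$-formulation used above: one must check, under the hypotheses on $\Y$, that the compact objects of $\QC(\Y)$ are perfect, and that the auxiliary conditions (preservation of connective objects, of flat objects, and of pushforwards along affine morphisms) transfer losslessly between a colimit-preserving functor $\QC(\Y)\to\QC(T)$ and its restriction $\Perf(\Y)\to\Perf(T)$. An alternative that avoids the $\Perf$-reduction is to argue directly with the $\QC$-version: every $\dagger$-functor $F\colon\QC(\Y)\to|\QC(\Ab_X^{\bullet})|$ factors through the fully faithful adelic realisation $\QC(X)\hookrightarrow|\QC(\Ab_X^{\bullet})|$, whose essential image is closed under colimits (being that of a left adjoint), because $F$ is determined by its values on the compact --- hence dualisable --- objects of $\QC(\Y)$, and the dualisable objects of the totalisation $|\QC(\Ab_X^{\bullet})|$ are exactly the cartesian perfect complexes $|\Perf(\Ab_X^{\bullet})|\simeq\Perf(X)$, which lie in $\QC(X)$. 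Once this factorisation is secured, the chain of equivalences of the second paragraph applies verbatim to the $\QC$-version, and the remaining verifications are routine bookkeeping.
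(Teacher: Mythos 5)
Your argument is correct and is precisely the route the paper intends: the corollary is stated without an explicit proof, as an immediate consequence of combining Theorem~\ref{thm:adelic_descent} with the Tannaka duality of \cite{bhatt2015tannaka} for stacks with compactly generated $\QC$, which is exactly the chain of equivalences you write down. The only point deserving a word of care --- that the connectivity/flatness side conditions on a symmetric monoidal functor into the totalisation are detected levelwise --- is supplied by Lemma~\ref{lemma:cohomology_sheaves} and Theorem~\ref{thm:beilinson}, as in the proof of Corollary~\ref{cor:APerf}, so your proposal is complete.
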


In positive characteristic, stacks of interest rarely satisfy the condition of $\QC(\Y)$ being compactly generated. For instance, it may fail for $BG$, where $G$ is a reductive algebraic group. Nonetheless, as we recalled in Theorem \ref{thm:bhattdhl}, the article\cite{bhatt2015tannaka} also treats the more general case of Noetherian spectral stacks, without the restrictive assumption of compact generation.
However it is no longer sufficient to consider only perfect complexes, additionally the notions of connnectivity and pseudo-coherence become relevant. We will investigate the consequences of their result and adelic descent theory in Paragraph \ref{Noetherian_stacks}.

\subsubsection{Pseudo-coherence}

In this paragraph we study the behaviour of \emph{pseudo-coherence}, also known as \emph{almost perfect complexes}, with respect to the equivalence $\QC(X)  \simeq |\QC(\Ab_X^{\bullet})|$. Recall that an element of $\QC(X)$ is called \emph{pseudo-coherent}, if it can locally be represented by a bounded above complex of finitely generated locally projective $\Oo_X$-modules.

\begin{lemma}\label{lemma:A_conservative}
The functor $\A_X^0(-)\colon \QCoh(X) \rightarrow \Mod(\Oo_X)$ is conservative.
\end{lemma}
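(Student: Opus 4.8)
The plan is to show that a nonzero quasi-coherent sheaf $\F$ on $X$ has $\A_X^0(\F) \neq 0$. Since $\A_X^0(-)$ is exact and commutes with directed colimits, and since every quasi-coherent sheaf is a directed colimit of its coherent subsheaves, it suffices to treat the case where $\F$ is coherent and nonzero. For a coherent sheaf we have the explicit description $\A_X^0(\F) = \prod_{x \in |X|} \varprojlim_{r \geq 0}(j_{rx})_*(j_{rx})^*\F$, so it is enough to exhibit a single point $x \in |X|$ at which the completed stalk $\varprojlim_r (\F_x/\mathfrak{m}_x^r\F_x)$ is nonzero.

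First I would choose $x$ to be a point in the support of $\F$; since $\F \neq 0$ is coherent, its support is a nonempty closed subset, so such a point exists, and then $\F_x \neq 0$ is a nonzero finitely generated module over the Noetherian local ring $\Oo_{X,x}$. Next I would invoke the Krull intersection theorem: for a finitely generated module $M \neq 0$ over a Noetherian local ring $(\Oo,\mathfrak{m})$, one has $\bigcap_{r \geq 0}\mathfrak{m}^r M = 0$, hence the completion $\widehat{M} = \varprojlim_r M/\mathfrak{m}^r M$ is nonzero (indeed $M$ injects into $\widehat{M}$ since the kernel of $M \to \widehat{M}$ is exactly $\bigcap_r \mathfrak{m}^r M$). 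Therefore the $x$-component of $\A_X^0(\F)$ is nonzero, so $\A_X^0(\F) \neq 0$. This proves that $\A_X^0(-)$ reflects the property of being zero; since $\A_X^0(-)$ is exact, it then reflects injections, surjections and isomorphisms, i.e.\ it is conservative: if $\phi\colon \F \to \Gg$ has $\A_X^0(\phi)$ an isomorphism, then $\A_X^0(\ker\phi) = 0$ and $\A_X^0(\coker\phi) = 0$ by exactness, forcing $\ker\phi = \coker\phi = 0$.

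I do not anticipate a serious obstacle here; the only point requiring slight care is the reduction to coherent sheaves (using that $\A_X^0$ commutes with filtered colimits and that filtered colimits of monomorphisms are monomorphisms in $\Mod(\Oo_X)$, so a nonzero coherent subsheaf of $\F$ maps to a nonzero subobject of $\A_X^0(\F)$), and making sure the Krull intersection theorem is applied in the Noetherian setting where $X$ is Noetherian and $\F$ coherent so that all local rings and stalks in sight are Noetherian and finitely generated respectively. Alternatively, one can avoid the colimit reduction entirely by noting directly that for arbitrary quasi-coherent $\F \neq 0$ there is a point $x$ with $\F_x \neq 0$, and that $\varprojlim_r \F_x/\mathfrak{m}_x^r\F_x$ surjects onto $\F_x/\mathfrak{m}_x\F_x \neq 0$ (by Nakayama, as $\F_x$ is a nonzero module, though for non-finitely-generated modules Nakayama needs $\F_x$ to be, say, finitely generated — so the coherent reduction is the cleanest route).
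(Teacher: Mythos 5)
Your proof is correct, but it takes a genuinely different route from the paper's. The paper's argument is global and leans on Beilinson's resolution theorem (Theorem \ref{thm:beilinson}): since $\F \rightarrow \A_X^{\bullet}(\F)$ is a resolution, $\Gamma(U,\F)$ is recovered as the equaliser of $\Gamma(U,\A_X^0(\F)) \rightrightarrows \Gamma(U,\A_X^1(\F))$, so $\A_X^0(\F)=0$ forces $\Gamma(U,\F)=0$ for all $U$ and hence $\F=0$. You instead argue locally: reduce to a nonzero coherent subsheaf via exactness and commutation with directed colimits, pick a point $x$ in the support, and use Krull's intersection theorem to see that $\F_x$ injects into $\varprojlim_r \F_x/\mathfrak{m}_x^r\F_x$, so the $x$-component of $\A_X^0(\F)$ is nonzero. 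Your version is more elementary and self-contained --- it uses only Definition \ref{defi:adeles} and standard commutative algebra, whereas the paper's proof is shorter only because the (substantially harder) resolution theorem is already available; your care about where finite generation enters (Krull/Nakayama need $\F_x$ finitely generated over a Noetherian local ring, hence the coherent reduction) is exactly the right point to flag. The final step, passing from ``reflects zero objects'' to ``conservative'' via exactness applied to $\ker\phi$ and $\coker\phi$, is also correct and is implicit in both arguments.
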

\begin{proof}
Assume that $M \in \QCoh(X)$ is a quasi-coherent sheaf on $X$, such that $\A_X^0(M) = 0$. We have $\Gamma(U,M) = \lim_{\Delta} \Gamma(U,\A_X^k(M))$, where the limit is taken in the category of abelian groups. This limit, agrees with the equaliser of $[\Gamma(U,\A_X^0(M)) \rightrightarrows \Gamma(U,\A_X^1(M))]$, and therefore we see that $\Gamma(U,M) = 0$ for all open subsets $U$. This implies $M = 0$.
\end{proof}

\begin{corollary}\label{cor:A_conservative}
If $M \in \QCoh(X)$ is a quasi-coherent sheaf, such that $\A_X^0(M)$ is locally finitely generated, then $M$ is locally finitely generated.
\end{corollary}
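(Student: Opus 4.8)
The plan is to reduce to the affine case, use flasqueness of the adelic sheaves to replace the local finite generation hypothesis by a finite set of \emph{global} sections, and then descend finite generation along the flat ring map $R \to \Ab_X^0$ by invoking the conservativity recorded in Lemma \ref{lemma:A_conservative}.

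First I would note that both the hypothesis and the conclusion are local on $X$, and that $\A_X^0(-)$ is compatible with restriction to open subschemes (immediate from Definition \ref{defi:adeles}, since the skyscraper-type sheaves $(j_{rx})_*(j_{rx})^*\F$ restrict either to $0$ or to the analogous sheaf on the open subset, according to whether $x$ lies in it). Hence we may assume $X = \Spec R$ is affine Noetherian; writing $M$ also for the corresponding $R$-module, we then have $\Gamma(X,\A_X^0(M)) = \Ab_X^0(M) = M \otimes_R \Ab_X^0$, where $\Ab_X^0$ is flat over $R$ by Lemma \ref{lemma:adeles_flat}. Since $\A_X^0(M)$ is flasque by Corollary \ref{cor:adeles_flasque} and locally finitely generated by hypothesis, Lemma \ref{lemma:loc_fg} furnishes a surjection $(\A_X^0)^{\oplus n} \twoheadrightarrow \A_X^0(M)$, witnessed by finitely many global sections $t_1,\dots,t_n \in M \otimes_R \Ab_X^0$ which generate $\A_X^0(M)$ as an $\A_X^0$-module.

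Next I would expand each $t_i$ as a finite sum $\sum_j m_{ij}\otimes a_{ij}$ with $m_{ij}\in M$ and $a_{ij}\in\Ab_X^0$, and set $M' := \sum_{i,j} R\,m_{ij} \subseteq M$, a finitely generated $R$-submodule. Flatness of $\Ab_X^0$ over $R$ yields an inclusion $M'\otimes_R\Ab_X^0 \hookrightarrow M\otimes_R\Ab_X^0$ through which every $t_i$ factors, and applying the exact functor $\A_X^0(-)$ to $0 \to M' \to M \to M/M' \to 0$ exhibits $\A_X^0(M')$ as a subsheaf of $\A_X^0(M)$ containing the generating sections $t_1,\dots,t_n$; hence $\A_X^0(M') = \A_X^0(M)$, so $\A_X^0(M/M') = 0$. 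Conservativity of $\A_X^0(-)$ (Lemma \ref{lemma:A_conservative}) then forces $M/M' = 0$, so that $M = M'$ is finitely generated over $R$, and therefore the original quasi-coherent sheaf is locally finitely generated.

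I do not anticipate a serious obstacle: the argument is essentially formal once the cited results are assembled. The single point deserving care is the passage from the mere local finite generation of $\A_X^0(M)$ to a genuine finite generating family by \emph{global} sections of $\A_X^0(M)$; this uses the flasqueness of $\A_X^0(M)$ precisely as in the proof of Lemma \ref{lemma:loc_fg} (local generators are extended to global sections), and it is exactly what licenses manipulating those generators inside $M\otimes_R\Ab_X^0$ in the descent step.
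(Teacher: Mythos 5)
Your proof is correct and follows essentially the same route as the paper's: both arguments combine Lemma \ref{lemma:loc_fg} (flasqueness upgrades local to global finite generation of $\A_X^0(M)$), exactness of $\A_X^0(-)$, and the conservativity of Lemma \ref{lemma:A_conservative} to descend finite generation. The only cosmetic difference is that you argue directly via the finitely generated submodule $M'$ spanned by the tensor components of the global generators, whereas the paper argues by contradiction with an infinite presentation $\Oo_U^{\oplus I}\twoheadrightarrow M$ and extracts a finite subfamily $J\subset I$.
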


\begin{proof}
Assume that $X$ is not locally finitely generated. Then, after replacing $X$ by a suitable affine open subset $U$, we may construct a surjection $\Oo_U^{\oplus I} \twoheadrightarrow M$, with $I$ an infinite set, such that for every finite subset $J \subset I$, the restriction $\Oo_U^{\oplus J} \rightarrow M$ is not surjective. We denote the cokernel of this map by $C_J$.

However, we know that $\A_U^0(M)$ is finitely generated (by Lemma \ref{lemma:loc_fg} a locally finitely generated $\A_U$-module is globally finitely generated). Hence, there must exist a finite subset $J \subset I$, such that $\A_U^0(\Oo_U)^{\oplus J} \twoheadrightarrow \A_U^0(M)$ is a surjection. Therefore, by exactness of $\A_U^0$, we obtain $\A_U^0(C_J) = 0$. However, by Lemma \ref{lemma:A_conservative}, we see that $C_J = 0$, and therefore the map $\Oo_X^{\oplus J} \twoheadrightarrow M$ is a surjection as well. This contradicts our assumption.
\end{proof}

\begin{corollary}\label{cor:pseudo}
Let $M \in \QC(X)$ be such that $M \otimes_{\Oo_X} \A_X^0$ is pseudo-coherent, then $M$ is pseudo-coherent.
\end{corollary}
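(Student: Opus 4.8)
The plan is to reduce the statement to the standard characterisation of pseudo-coherence over a Noetherian scheme: an object $N\in\QC(X)$ is pseudo-coherent precisely when it is cohomologically bounded above and every cohomology sheaf $\Hc^i(N)$ is coherent, i.e. locally finitely generated. It therefore suffices to verify these two properties for $M$, and the relevant tools are already available: Lemma~\ref{lemma:cohomology_sheaves} identifies $\A_X^0(\Hc^i(M))$ with $\Hc^i(M\otimes_{\Oo_X}\A_X^0)$, while Lemma~\ref{lemma:A_conservative} and Corollary~\ref{cor:A_conservative} transport vanishing, respectively local finite generation, of $\A_X^0(\Hc^i(M))$ back to $\Hc^i(M)$.

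First I would settle cohomological boundedness. As $M\otimes_{\Oo_X}\A_X^0$ is pseudo-coherent it is cohomologically bounded above (and $|X|$ is quasi-compact), so $\Hc^i(M\otimes_{\Oo_X}\A_X^0)=0$ for $i\gg 0$; by Lemma~\ref{lemma:cohomology_sheaves} this means $\A_X^0(\Hc^i(M))=0$, and Lemma~\ref{lemma:A_conservative} then forces $\Hc^i(M)=0$ for $i\gg 0$. Write $n$ for the top nonvanishing cohomological degree of $M$.

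Next I would prove coherence of all $\Hc^i(M)$ by descending induction on $i\leq n$. The elementary input each time is that the top nonvanishing cohomology sheaf of a pseudo-coherent $\A_X^0$-module complex is locally finitely generated: locally such a complex is a bounded above complex of finitely generated free $\A_X^0$-modules, and by repeatedly splitting off a finitely generated free summand at the top (a surjection onto a free, hence projective, module splits) one may arrange the top term to sit in the degree of interest with all terms finitely generated projective, so that the top cohomology appears as a cokernel of a map out of a finitely generated module. For $i=n$: $\Hc^n(M\otimes_{\Oo_X}\A_X^0)$ is the top cohomology of a pseudo-coherent complex, hence locally finitely generated; by Lemma~\ref{lemma:cohomology_sheaves} it equals $\A_X^0(\Hc^n(M))$, so Corollary~\ref{cor:A_conservative} gives that $\Hc^n(M)$ is coherent. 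For the inductive step, assume $\Hc^j(M)$ coherent for all $j>i$. Then $\tau_{\geq i+1}(M)$ is cohomologically bounded with coherent cohomology sheaves, hence pseudo-coherent over $\Oo_X$; since $-\otimes_{\Oo_X}\A_X^0$ is exact (Definition~\ref{defi:adeles}, see also Lemma~\ref{lemma:adeles_flat}), $\tau_{\geq i+1}(M)\otimes_{\Oo_X}\A_X^0$ is a pseudo-coherent $\A_X^0$-module complex. Applying $-\otimes_{\Oo_X}\A_X^0$ to the fibre sequence $\tau_{\leq i}(M)\to M\to\tau_{\geq i+1}(M)$ and using exactness to commute it past the truncations, we obtain a fibre sequence $\tau_{\leq i}(M\otimes_{\Oo_X}\A_X^0)\to M\otimes_{\Oo_X}\A_X^0\to\tau_{\geq i+1}(M)\otimes_{\Oo_X}\A_X^0$ whose outer terms are pseudo-coherent; hence so is $\tau_{\leq i}(M\otimes_{\Oo_X}\A_X^0)$, and its top cohomology $\Hc^i(M\otimes_{\Oo_X}\A_X^0)=\A_X^0(\Hc^i(M))$ is locally finitely generated. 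Corollary~\ref{cor:A_conservative} then yields coherence of $\Hc^i(M)$, completing the induction. Together with the boundedness above, this shows $M$ is pseudo-coherent.

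The main obstacle is that $\A_X^0$ is very far from Noetherian, so one cannot directly conclude that a pseudo-coherent $\A_X^0$-module complex has finitely generated cohomology in every degree; only the top nonvanishing degree is immediate. This is precisely why the argument proceeds by descending induction, peeling off one coherent cohomology sheaf of $M$ at a time so that, at each stage, the degree one cares about has been made the top degree of a pseudo-coherent complex; the inductive hypothesis re-enters through the fact that a cohomologically bounded complex with coherent cohomology sheaves is pseudo-coherent, which in turn uses that $X$ is Noetherian.
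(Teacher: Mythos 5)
Your proposal is correct and follows essentially the same route as the paper: reduce to cohomological boundedness plus local finite generation of the $\Hc^i(M)$ via the Noetherian hypothesis, transport both properties through Lemma~\ref{lemma:cohomology_sheaves}, Lemma~\ref{lemma:A_conservative} and Corollary~\ref{cor:A_conservative}, and then run a descending induction using a truncation triangle and the two-out-of-three property of pseudo-coherence. The only (immaterial) difference is that the paper peels off a single cohomology sheaf $\Hc^i(M)[-i]$ at each step, whereas you split off the whole upper truncation $\tau_{\geq i+1}(M)$ at once.
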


\begin{proof}
Since $X$ is Noetherian, we only have to show that $\Hc^i(M) = 0$ for $i >> 0$, and that every $\Hc^i(M)$ is locally finitely generated (see \cite[Tag 066E]{stacks-project}). The first assertion follows directly from the Lemmas \ref{lemma:cohomology_sheaves} and \ref{lemma:A_conservative}. Indeed, we know that $\Hc^i(M \otimes_{\Oo_X}\A_X^0)  \simeq \A_X^0(\Hc^i(M))$, and that the functor $\A^0_X(-)$ is conservative. Therefore we see that vanishing of $\Hc^i(M \otimes_{\Oo_X}\A_X^0)$ in high degrees, must imply the same statement for $M$.

To show that all cohomology sheaves $\Hc^i(M)$ are locally finite generated, we may restrict $X$ to an affine open subscheme $U \subset X$. We may assume that $M|_U \otimes_{\Oo_U} \A_U^0$ is a bounded above complex of finitely generated free $\A_U^0$-modules. Therefore there exists a degree $i$, such that $\Hc^j(M \otimes_{\Oo_X} \A_X^0) = 0$ for all $j \geq i + 1$. In particular, we see that $\Hc^i(M \otimes_{\Oo_X} \A_U^i)$ is finitely generated. Since we have that $\Hc^i(M \otimes_{\Oo_X}\A_U^0)  \simeq \A_U^0(\Hc^i(M))$, we see from Corollary \ref{cor:A_conservative} that $\Hc^i(M)$ is finitely generated.

Consider the distinguished triangle $\tau_{\leq i-1}M|_U \rightarrow M|_U \rightarrow \Hc^i(M)[-i]|_U$. Since $U$ is Noetherian, the finitely generated module $\Hc^i$ is pseudo-coherent, and by applying the exact functor $- \otimes_{\Oo_X} \A_X^0$ we obtain that $\tau_{\leq i-1}M \otimes_{\Oo_X} \A_X^0$ is also pseudo-coherent. Hence, we conclude that also $\Hc^{i-1}(\tau_{\leq i-1}M)|_U = \Hc^{i-1}(M)|_U$ is finitely generated. Iterating this argument, we see that all cohomology sheaves are locally finitely generated.
\end{proof}

\begin{corollary}\label{cor:APerf}
We have an equivalence $\APerf(X)  \simeq |\APerf(\Ab_X^{\bullet})|$ for pseudo-coherent complexes.
Similarly, we have an equivalence of symmetric monoidal $\infty$-categories $\APerf_{\cn}(X)  \simeq |\APerf_{\cn}(\Ab_X^{\bullet})_{\otimes}|$, of almost perfect (that is, pseudo-coherent) and connective complexes.
\end{corollary}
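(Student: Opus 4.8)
The plan is to prove, as for Theorem \ref{thm:adelic_descent} and Proposition \ref{prop:adelic_descent}, the sheaf-level statement that the adelic realisation functor $-\otimes_{\Oo_X}\A_X^{\bullet}$ restricts to equivalences $\APerf(\Oo_X)\simeq|\APerf(\A_X^{\bullet})|$ and $\APerf_{\cn}(\Oo_X)\simeq|\APerf_{\cn}(\A_X^{\bullet})|$; the stated forms then follow by taking global sections via Corollary \ref{cor:eq_lache}. Full faithfulness is immediate: by Lemma \ref{lemma:unit} the unit $\id\to\int_X(-\otimes_{\Oo_X}\A_X^{\bullet})$ is an equivalence on all of $\QC(X)$, so $-\otimes_{\Oo_X}\A_X^{\bullet}\colon\QC(X)\to|\QC(\A_X^{\bullet})|$ is fully faithful; since the ad\`eles are flat over $\Oo_X$ (Lemma \ref{lemma:adeles_flat}), flat base change preserves pseudo-coherence and connectivity levelwise, so this functor carries $\APerf(\Oo_X)$ into $|\APerf(\A_X^{\bullet})|$ and $\APerf_{\cn}(\Oo_X)$ into $|\APerf_{\cn}(\A_X^{\bullet})|$, and stays fully faithful there. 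Conversely, if $M\in\QC(X)$ has $M\otimes_{\Oo_X}\A_X^{\bullet}$ levelwise almost perfect (resp. connective), then $M\otimes_{\Oo_X}\A_X^{0}$ is pseudo-coherent (resp. connective, since $\A_X^0(-)$ is exact and conservative, Lemma \ref{lemma:A_conservative}), hence $M\in\APerf(\Oo_X)$ (resp. $\APerf_{\cn}(\Oo_X)$) by Corollary \ref{cor:pseudo}. So everything reduces to essential surjectivity onto $|\APerf(\A_X^{\bullet})|$.

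Let $\Cc\subseteq|\QC(\A_X^{\bullet})|$ be the essential image of $-\otimes_{\Oo_X}\A_X^{\bullet}$. This functor is exact and preserves small colimits (Lemma \ref{lemma:int_coproducts} and the adjunction with $\int_X$), so $\Cc$ is a stable subcategory closed under colimits; being exact, $-\otimes_{\Oo_X}\A_X^{\bullet}$ also commutes with the truncations $\tau^{\geq -n}$ and with convergent Postnikov limits — here one uses that the cosimplicial structure maps $\A_X^0\to\A_X^k$ are flat, so $\tau^{\geq -n}$ preserves cartesianness — hence $\Cc$ is closed under those as well. Given $M^{\bullet}\in|\APerf(\A_X^{\bullet})|$, it is levelwise bounded above, so $M^{\bullet}\simeq\mathsf{lim}_n\,\tau^{\geq -n}M^{\bullet}$, and each $\tau^{\geq -n}M^{\bullet}$ is a cartesian cosimplicial object with finitely many nonzero cohomology sheaves, all coherent, hence a finite iterated extension of shifts of the $\Hc^i(M^{\bullet})$. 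As $\Cc$ is closed under shifts, extensions, and this limit, it suffices to prove that each $\Hc^i(M^{\bullet})$ — an arbitrary cartesian cosimplicial quasi-coherent sheaf $\Gg^{\bullet}$ over $\A_X^{\bullet}$ with $\Gg^0$ coherent — lies in $\Cc$. Thus the whole corollary reduces to an \emph{abelian} adelic descent statement: every such $\Gg^{\bullet}$ has the form $\Gg\otimes_{\Oo_X}\A_X^{\bullet}$ for a quasi-coherent sheaf $\Gg$ on $X$, which is then coherent by Corollary \ref{cor:A_conservative}.

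To prove this one works locally, so $X=\Spec R$ and $\A_X^0$ has ring of sections $\Ab_X^0$, faithfully flat over $R$ (Lemma \ref{lemma:adeles_flat}; $\Spec\Ab_X^0$ meets every point of $\Spec R$). Set $\Gg:=\int_X\Gg^{\bullet}$; one must show that $\Gg$ is a coherent sheaf concentrated in degree $0$ and that the counit $\Gg\otimes_{\Oo_X}\A_X^{\bullet}\to\Gg^{\bullet}$ is an equivalence. Both amount to showing that the cosimplicial coherent datum $\Gg^{\bullet}$ is a resolution of its own zeroth cohomology — exactly as Beilinson's complex $\A_X^{\bullet}(\F)$ resolves $\F$ (Theorem \ref{thm:beilinson}) — using the exactness and colimit-compatibility of the functors $\A_{X,T}(-)$ (Definition \ref{defi:adeles}), the conservativity and finite-generation results Lemma \ref{lemma:A_conservative} and Corollary \ref{cor:A_conservative}, and the d\'evissage of Paragraph \ref{sub:conclusion}. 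Once this is established, $\Gg$ is coherent, the counit is an equivalence, and the canonically defined local sheaves glue to a coherent sheaf on $X$; the symmetric monoidal and connective variants follow because $-\otimes_{\Oo_X}\A_X^{\bullet}$ is symmetric monoidal, exact, and compatible with the connective truncation.

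I expect this last step — the vanishing of the higher cosimplicial cohomology of a cartesian coherent datum — to be the main obstacle. It is the abelian shadow of the difficulty already resolved for perfect complexes in Paragraph \ref{sub:conclusion}: one cannot simply invert $-\otimes_{\Oo_X}\A_X^{\bullet}$ on all of $\QC(X)$, since that functor is not essentially surjective, as the object $\prod_{p}\mathbb{F}_p\in\Perf(\Ab_{\Spec\mathbb{Z}}^0)$ witnesses. So the argument should re-run the d\'evissage of Paragraph \ref{sub:conclusion} at the level of quasi-coherent sheaves: restrict $\Gg^{\bullet}$ to the generic points of its support, contract the adelic complex there using the homotopy of Lemma \ref{lemma:homotopy_equivalence}, spread the resulting vanishing out to an open neighbourhood as in Lemmas \ref{lemma:generic_zero} and \ref{cor:generic}, and conclude by Noetherian induction on $X$. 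The remaining points — flatness of the cosimplicial structure maps so that $\tau^{\geq -n}$ preserves cartesianness, and convergence of the Postnikov limit inside $|\QC(\A_X^{\bullet})|$ — are routine.
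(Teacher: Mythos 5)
Your reduction of the corollary to essential surjectivity is sound, and your handling of full faithfulness and of the ``only if/if'' directions for pseudo-coherence and connectivity (via Lemma \ref{lemma:unit}, Lemma \ref{lemma:adeles_flat} and Corollary \ref{cor:pseudo}) is fine. But the heart of the matter --- that every cartesian levelwise almost perfect module descends --- is not actually proved in your proposal. You reduce it, via a Postnikov tower, to an abelian descent claim for cartesian cosimplicial coherent modules, and then say this claim ``should'' follow by re-running the d\'evissage of Paragraph \ref{sub:conclusion} at the level of quasi-coherent sheaves. That re-run is not a routine adaptation: the d\'evissage there leans on compactness and dualisability of perfect complexes (Lemma \ref{lemma:perf_compact}, the idempotent-extension step in Corollary \ref{cor:generic}, the $K$-theoretic extension trick), none of which is available for a bare coherent sheaf, so what you have is a plan for a second descent theorem of comparable difficulty rather than a proof. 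Your auxiliary claims --- flatness of the cosimplicial structure maps $\A_X^k \rightarrow \A_X^l$ so that truncation preserves cartesianness, convergence of the Postnikov limit inside $|\QC(\A_X^{\bullet})|$, and the transfer of the $\APerf$ statement through the l\^ache equivalence (Corollary \ref{cor:eq_lache} is stated only for $\Perf$) --- are likewise asserted rather than established.

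The paper takes a much shorter route that requires no new descent argument. It starts from the already-established equivalence $\Ind\Perf(X) \simeq \Ind|\Perf(\Ab_X^{\bullet})|$ and merely checks that both directions preserve the subcategories of almost perfect (resp.\ connective) objects: $-\otimes_{\Oo_X}\A_X^{\bullet}$ does so because it is a tensor product, and conversely an $M \in \QC(X)$ whose adelic realisation is pseudo-coherent (resp.\ connective) is itself pseudo-coherent by Corollary \ref{cor:pseudo} (resp.\ connective by Lemma \ref{lemma:cohomology_sheaves} together with Theorem \ref{thm:beilinson}). In other words, Corollary \ref{cor:pseudo} --- which you invoke only as a consistency check on the image of the realisation functor --- is exactly the tool that replaces your proposed abelian d\'evissage: it recognises the almost perfect objects of $\QC(X)$ from their adelic realisations, so the $\APerf$ equivalence is obtained by restricting an equivalence one already has rather than by proving effectivity of descent afresh. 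To repair your argument, either carry out the abelian descent statement in full, or observe that it becomes unnecessary once Corollary \ref{cor:pseudo} is in hand.
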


\begin{proof}
We have seen that $|\Spec \Ab_X^{\bullet}| \rightarrow X$ is an equivalence in the category $\Sch^{\qcqs}$ of quasi-compact and quasi-separated schemes. In particular we have morphisms of schemes $\Spec \Ab_X^{\bullet} \rightarrow X$, which induce a functor $\APerf(X) \rightarrow \APerf(\Ab_X^{\bullet})$, preserving connective objects, and compatible with the equivalence $\Perf(X)  \simeq |\Perf(\Ab_X^{\bullet})|$.

The adelic realisation functor $-\otimes_{\Oo_X} \A_X^{\bullet}\colon \QC(X) \rightarrow |\QC(\A_X^{\bullet})|$ preserves almost perfect objects; and so does the localisation functor $\Loc\colon \APerf(\Ab_X^{\bullet}) \rightarrow \APerf(\A_X^{\bullet})$, since it is defined as the tensor product $- \otimes_{\Ab_X^{\bullet}} \A_X^{\bullet}$.

We have seen in Lemma \ref{lemma:cohomology_sheaves} that for every complex $M \in \QC(X)$ we have $\A_X^{\bullet}(\Hc^i(M)) = \Hc^i(\A^{\bullet}_X\otimes_{\Oo_X} M)$. Together with Theorem \ref{thm:beilinson} this implies $\Hc^i(M) = |\Hc^i(M \otimes_{\Oo_X} \A_X^{\bullet})|$. And we see that $M$ is connective if and only if $\A_X^{\bullet} \otimes_{\Oo_X} M$ is. 
We have seen in Corollary \ref{cor:pseudo} that $M \in \QC(X)$ is pseudo-coherent, if and only if $\A_X^{\bullet} \otimes_{\Oo_X} M$ is pseudo-coherent. This shows that the image of a pseudo-coherent, and connective $M \in \APerf_{\cn}(\Ab_X^{\bullet})$ lies inside $\APerf_{cn}(X)$. 

We conclude that both directions of the equivalence $\Ind \Perf(X)  \simeq \Ind|\Perf(\Ab_X^{\bullet})|$ respect almost perfect and connective objects. Hence, we obtain the equivalences $\APerf(X)  \simeq |\APerf(\Ab_X^{\bullet})|$ and $\APerf_{\cn}(X)  \simeq |\APerf_{\cn}(\Ab_X^{\bullet})_{\otimes}|$.
\end{proof}

\subsubsection{Ad\`eles and maps to Noetherian stacks}\label{Noetherian_stacks}

We will see that $X$ remains the colimit of the diagram $\Spec \Ab_X^{\bullet}$ in the $\infty$-category of Noetherian stacks.

\begin{theorem}\label{thm:stacky_reconstruction}
Let $X$ be a Noetherian scheme, and $\Y$ a spectral Noetherian stack with quasi-affine diagonal. We then have a canonical equivalence $\Y(X)  \simeq \Y(\Spec \Ab_X^{\bullet})$, that is, $X$ the equivalent to the colimit $|\Spec \Ab_X^{\bullet}|$ in the $\infty$-category of spectral Noetherian stacks with quasi-affine diagonal.
\end{theorem}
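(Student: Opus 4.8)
The plan is to deduce the theorem from the Tannakian reconstruction result for Noetherian spectral stacks, Theorem~\ref{thm:bhattdhl}(b), together with the adelic descent equivalence for almost perfect connective complexes from Corollary~\ref{cor:APerf}; the argument runs parallel to the proof of Theorem~\ref{thm:schematic_reconstruction} with $\Perf$ replaced throughout by $\APerf_{\cn}$, and parallel to Corollary~\ref{perfect_reconstruction}, where compact generation of $\QC(\Y)$ played the role now played by the Noetherian hypothesis. First I would unwind the statement: since $\Y$ has quasi-affine diagonal, $\Y(\Spec\Ab_X^{\bullet})$ denotes the totalisation $\lim_{[n]\in\Delta}\Y(\Spec\Ab_X^n)$, so the asserted equivalence $\Y(X)\simeq\Y(\Spec\Ab_X^{\bullet})$ says exactly that $X$ corepresents the functor $\Y\mapsto\lim_{[n]}\Y(\Spec\Ab_X^n)$ on the $\infty$-category of spectral Noetherian stacks with quasi-affine diagonal, i.e. that $X$ is the colimit of $\Spec\Ab_X^{\bullet}$ computed in that category (note $X$, being a Noetherian scheme, has quasi-affine diagonal and so lies in this category).

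Next I would assemble the chain of equivalences. By Theorem~\ref{thm:bhattdhl}(b), applied with each affine scheme $\Spec\Ab_X^n$ and with $X$ as test object — the Noetherian and quasi-affine-diagonal hypotheses there constrain only the stack $\Y$, not the test scheme — the space $\Y(T)$ is identified with the space $\Fun^{\otimes}_{\mathrm{adm}}\bigl(\APerf_{\cn}(\Y)_{\otimes},\APerf_{\cn}(T)_{\otimes}\bigr)$ of symmetric monoidal functors satisfying the connectivity and flatness conditions of \cite{bhatt2015tannaka}. Since the functor $\Fun^{\otimes}(\APerf_{\cn}(\Y)_{\otimes},-)$ takes limits of symmetric monoidal $\infty$-categories to limits of spaces, and since Corollary~\ref{cor:APerf} gives $\APerf_{\cn}(X)_{\otimes}\simeq\lim_{[n]}\APerf_{\cn}(\Ab_X^n)_{\otimes}$, it remains only to check that admissibility is compatible with this totalisation; granting that, one obtains
\begin{align*}
\Y(\Spec\Ab_X^{\bullet}) &\simeq \lim_{[n]}\Fun^{\otimes}_{\mathrm{adm}}\bigl(\APerf_{\cn}(\Y)_{\otimes},\APerf_{\cn}(\Ab_X^n)_{\otimes}\bigr) \\
&\simeq \Fun^{\otimes}_{\mathrm{adm}}\bigl(\APerf_{\cn}(\Y)_{\otimes},\APerf_{\cn}(X)_{\otimes}\bigr) \simeq \Y(X),
\end{align*}
which is the assertion.

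The hard part is precisely this compatibility: one must show that a symmetric monoidal functor $F\colon\APerf_{\cn}(\Y)_{\otimes}\to\APerf_{\cn}(X)_{\otimes}=\lim_{[n]}\APerf_{\cn}(\Ab_X^n)_{\otimes}$ is admissible if and only if each component $F_n=\mathrm{pr}_n\circ F$ is. The ``only if'' direction follows once one knows that the projections $\mathrm{pr}_n$, which are the adelic realisation $-\otimes_{\Oo_X}\A_X^n$ composed with the global sections equivalence of Corollary~\ref{cor:eq_lache}, preserve connective and flat objects; this holds because $\A_X^n$ is locally a flat $\Oo_X$-algebra by Lemma~\ref{lemma:adeles_flat}. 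For the ``if'' direction one shows that connectivity and flatness of an object $M\in\QC(X)$ are detected levelwise: for connectivity this is the argument in the proof of Corollary~\ref{cor:APerf} — Lemma~\ref{lemma:cohomology_sheaves} gives $\Hc^i(M\otimes_{\Oo_X}\A_X^0)\simeq\A_X^0(\Hc^i(M))$ and $\A_X^0(-)$ is conservative by Lemma~\ref{lemma:A_conservative}, so $M$ is connective iff $M\otimes_{\Oo_X}\A_X^0$ is; for flatness one uses that $M$ is flat iff $M\otimes^{\mathbb{L}}_{\Oo_X}N$ is connective for all connective $N\in\QC(X)$, and reduces this to the connectivity criterion by flat base change along $\Oo_X\to\A_X^n$ (again Lemma~\ref{lemma:adeles_flat}). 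Combining these criteria with the description of mapping spaces in limits of $\infty$-categories yields the required compatibility, and with it the theorem.
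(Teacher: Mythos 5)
Your proposal is correct and follows essentially the same route as the paper: combine the equivalence $\APerf_{\cn}(X)_{\otimes}\simeq|\APerf_{\cn}(\Ab_X^{\bullet})_{\otimes}|$ of Corollary \ref{cor:APerf} with the Tannakian duality of Theorem \ref{thm:bhattdhl}(b) and the fact that mapping into $\Y$ converts this limit of symmetric monoidal $\infty$-categories into a limit of spaces. The only difference is one of detail: the compatibility of admissibility with totalisations, which you sketch by hand via levelwise detection of connectivity and flatness, is exactly what the paper outsources to \cite[Lemma 3.12]{bhatt2015tannaka}.
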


\begin{proof}
We have shown in Corollary \ref{cor:APerf} that we have an equivalence $\APerf(X)_{\otimes}  \simeq |\APerf(\Ab_X^{\bullet})_{\otimes}|$. By means of \cite[Lemma 3.12 \& Theorem 1.4]{bhatt2015tannaka} we see that this equivalence realises $X$ as a colimit of $\Spec \Ab_X^{\bullet}$ in the category of spectral Noetherian stacks.
\end{proof}

If $\Y$ is a classical Noetherian stack, that is a groupoid-valued functor $\Aff^{\op} \rightarrow \Gpd$, we may take advantage of the fact that $\Gpd$ form a $2$-category to simplify the limit $|\Y(\Spec \Ab_X^{\bullet})|$.

\begin{corollary}\label{cor:Y}
Let $\Y$ be a Noetherian stack with quasi-affine diagonal, taking values in the $2$-category of groupoids. Then, we have an equivalence of groupoids 
$$\Y(X)  \simeq \lim [\Y(\Ab_X^{0}) \rightrightarrows \Y(\Ab_X^1) \triplerightarrow \Y(\Ab_X^2)],$$
where the limit is taken in the $2$-category of groupoids.
\end{corollary}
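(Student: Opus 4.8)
This corollary is the groupoid‑theoretic shadow of Corollary \ref{cor:schematic_reconstruction}, and the argument runs in parallel, with the role of $\Delta_{\leq 1}$ — which suffices to exhibit a coequaliser because schemes form a $1$-category — now played by $\Delta_{\leq 2}$, reflecting that groupoids form a $2$-category. I would start from Theorem \ref{thm:stacky_reconstruction}, which already gives $\Y(X) \simeq |\Y(\Spec \Ab_X^{\bullet})| = \lim_{[n] \in \Delta} \Y(\Ab_X^n)$, the totalisation of the cosimplicial space $[n] \mapsto \Y(\Ab_X^n)$. By hypothesis $\Y$ takes values in ordinary groupoids, so each space $\Y(\Ab_X^n)$ is $1$-truncated, i.e. has vanishing homotopy groups in degrees $\geq 2$.

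The next step is to cut down the indexing category. For a cosimplicial space which is levelwise $k$-truncated, the canonical map from the totalisation over $\Delta$ to the partial totalisation over the full subcategory $\Delta_{\leq k+1} \subset \Delta$ spanned by $[0], \dots, [k+1]$ is an equivalence; this is a standard property of the $\mathrm{Tot}$-tower of a truncated cosimplicial space (the case $k = 0$ recovers the fact that a homotopy limit of a cosimplicial set is the equaliser $Y^0 \rightrightarrows Y^1$, which is exactly what turns Corollary \ref{cor:schematic_reconstruction} into a plain coequaliser). Applying this with $k = 1$, I obtain $\Y(X) \simeq \lim_{[n] \in \Delta_{\leq 2}} \Y(\Ab_X^n)$, a finite homotopy limit over a diagram assembled from $\Y(\Ab_X^0)$, $\Y(\Ab_X^1)$ and $\Y(\Ab_X^2)$ together with the coface and codegeneracy maps between them.

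Finally I would unwind this finite limit. Since the diagram takes values in (nerves of) $1$-groupoids, the $\infty$-categorical limit over $\Delta_{\leq 2}$ coincides with the $2$-categorical pseudolimit of the same diagram, which admits the familiar explicit description: an object consists of an object $E \in \Y(\Ab_X^0)$ together with an isomorphism in $\Y(\Ab_X^1)$ between the two pullbacks of $E$ along the cofaces $[0] \to [1]$, subject to the cocycle condition in $\Y(\Ab_X^2)$ — the compatibilities with the codegeneracies impose no additional data, being automatically forced by the cocycle condition evaluated on degenerate $2$-simplices. This is precisely the $2$-limit $\lim[\Y(\Ab_X^{0}) \rightrightarrows \Y(\Ab_X^1) \triplerightarrow \Y(\Ab_X^2)]$ appearing in the statement. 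The only genuinely non-formal ingredient is the truncation step passing from $\Delta$ to $\Delta_{\leq 2}$; the identification of the $\infty$-categorical limit with the $2$-categorical one, and the elimination of the codegeneracy conditions, are bookkeeping entirely parallel to the scheme case treated in Corollary \ref{cor:schematic_reconstruction}.
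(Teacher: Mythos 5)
Your argument is correct and is exactly the route the paper intends: Corollary \ref{cor:Y} is stated without a detailed proof, the paper merely remarking beforehand that one simplifies the totalisation $|\Y(\Spec \Ab_X^{\bullet})|$ supplied by Theorem \ref{thm:stacky_reconstruction} using that groupoids form a $2$-category. Your truncation step (a levelwise $1$-truncated cosimplicial space has totalisation computed over $\Delta_{\leq 2}$) and the subsequent identification of that finite limit with the $2$-categorical limit, with the codegeneracy conditions absorbed into the cocycle identity, supply precisely the details the paper leaves implicit.
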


\subsubsection{The special case of $G$-bundles}\label{specialcase}

A special case of Corollary \ref{cor:Y} of particular interest to us is the following generalisation of Weil's theorem to arbitrary Noetherian schemes.

\begin{corollary}\label{cor:BG}
Let $G$ be an affine Noetherian group scheme, and $X$ a Noetherian scheme. Then we have a canonical equivalence
$$BG(X)  \simeq \lim [BG(\Ab_X^{0}) \rightrightarrows BG(\Ab_X^1) \triplerightarrow BG(\Ab_X^2)].$$
\end{corollary}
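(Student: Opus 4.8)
The plan is to deduce Corollary \ref{cor:BG} as the special case of Corollary \ref{cor:Y} obtained by taking $\Y = BG$. First I would check that $BG$ satisfies the hypotheses of Corollary \ref{cor:Y}: for $G$ an affine Noetherian group scheme, $BG$ is a Noetherian algebraic stack, and its diagonal $BG \to BG \times BG$ is $G$, which is affine, hence in particular quasi-affine. Moreover $BG$ is a classical (rather than genuinely derived) stack taking values in the $2$-category of groupoids. Thus Corollary \ref{cor:Y} applies verbatim and gives
\[
BG(X) \simeq \lim\,[BG(\Ab_X^{0}) \rightrightarrows BG(\Ab_X^1) \triplerightarrow BG(\Ab_X^2)],
\]
which is exactly the assertion.

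The only point requiring a word of justification is that, although Corollary \ref{cor:Y} is phrased for a truncated cosimplicial diagram ending in degree $2$, the limit over $\Delta$ of a cosimplicial diagram of groupoids is computed by its $2$-coskeleton, i.e.\ by the three terms in degrees $0,1,2$ together with the (co)face and (co)degeneracy maps among them; this is the statement already invoked in the proof of Corollary \ref{cor:Y} when passing from $\Y(\Spec \Ab_X^\bullet)$ to the truncated limit, using that $\Gpd$ is a $2$-category. So no new argument is needed here beyond observing that $BG$ is an instance of the stacks $\Y$ covered by Corollary \ref{cor:Y}.

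I do not anticipate a genuine obstacle; the content is entirely in the earlier results. If one wanted to be careful, the single verification worth spelling out is that the diagonal of $BG$ is quasi-affine: this holds because the diagonal of $BG$ is the classifying map of the $G$-torsor $G \to \Spec k$, so it is an affine morphism whenever $G$ is affine, and affine morphisms are quasi-affine. With that in hand the corollary is immediate from Corollary \ref{cor:Y} applied to $\Y = BG$.
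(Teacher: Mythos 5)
Your proposal is correct and follows exactly the paper's route: the paper introduces Corollary \ref{cor:BG} precisely as ``a special case of Corollary \ref{cor:Y}'' applied to $\Y = BG$, with no further argument. Your added verifications --- that the diagonal of $BG$ is affine (hence quasi-affine) when $G$ is affine, and that $BG$ is a classical Noetherian groupoid-valued stack --- are exactly the hypotheses of Corollary \ref{cor:Y} that the paper leaves implicit.
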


For $G = \GL_n$, and a scheme $U$, $B\mathrm{GL}_n(U)$ is equivalent to the groupoid $\Vect_n(U)$ of rank $n$ vector bundles on $U$. We write $\Vect_n^f(U)$ for full subgroupoid, consisting only of trivial rank $n$ bundles. 

Let $E\in \Vect_n(X)$, since finitely generated projective modules over local rings are free, we may choose a trivialisation $E_x  \simeq \Oo_x^{\oplus n}$, for every $x \in |X|$. This implies that $\Ab_X^0(E)$ is a trivial rank $n$ module over $\Ab_X^0$. Hence, we obtain an equivalence
$$\Vect_n(X)  \simeq \lim [\Vect_n^f(\Ab_X^0) \rightrightarrows \Vect_n^f(\Ab_X^1) \triplerightarrow \Vect_n^f(\Ab_X^2)].$$
For $E \in \Vect_n(X)$ we denote the corresponding objects in $\Vect_n^f(\Ab_X^i)$ by $E^i$.
The choice of a trivialisation $E^0  \simeq (\Ab_X^0)^{\oplus n}$ induces two trivialisations $\psi_i\colon E^1  \simeq (\Ab_X^1)^{\oplus n}$ for $i=0\text{ ,}1$. The trivialisations are obtained by means of the defining property of a cartesian module. Let $\partial_i\colon \{i\} \hookrightarrow \{0,1\} = [1]$ in $\Delta$. Since $E^{\bullet}$ is a cartesian $\Ab_X^{\bullet}$-module, we have $E^1  \simeq E^0 \otimes_{\Ab_X^0,\partial_i} \Ab_X^1$. We let $\phi\in \GL_n(\Ab_X^1)$ be the linear map $\phi = \psi_0 \circ \psi_1^{-1}\colon (\Ab_X^1)^{\oplus n} \rightarrow (\Ab_X^1)^{\oplus n}$.

For $0 \leq i < j\leq 2$, we denote by $\partial_{ij}\colon \{i,j\} \hookrightarrow \{0, 1, ,2\}= [2]$ in $\Delta$. We denote by $\phi_{ij} = \phi \otimes_{\Ab_X^1,\partial_{ij}} \Ab_X^2 \in \GL_n(\Ab_X^2)$. By construction of $\phi$ the cocycle identity $\phi_{02} = \phi_{01} \circ \phi_{12}$ is satisfied.

Vice versa, every $\phi \in \GL_n(\Ab_X^1)$, satisfying the cocycle identity $\phi_{02} = \phi_{01} \circ \phi_{12}$, corresponds to an object $E^{\bullet} \in \lim [\Vect_n^f(\Ab_X^0) \rightrightarrows \Vect_n^f(\Ab_X^1) \triplerightarrow \Vect_n^f(\Ab_X^2)]$, plus the choice of a trivialisation $E^0  \simeq (\Ab_X^0)^{\oplus n}$. Summarising we obtain interpretation of Corollary \ref{cor:BG}.

\begin{corollary}\label{cor:BG2}
Let $X$ be a Noetherian scheme and $G$ a special group scheme (that is, every $G$-bundle on a Noetherian scheme is Zariski-locally trivial). We denote by $G(\Ab_X^1)^{\text{cocycle}}$ the subset consisting of $\phi \in \G(\Ab_X^1)$ satisfying the cocycle condition $\phi_{02} = \phi_{01}\circ \phi_{12}$ in $G(\Ab_X^2)$. Then, we have an equivalence of groupoids
$BG(X)  \simeq [G(\Ab_X^1)^{\text{cocycle}}/G(\Ab_X^0)].$
\end{corollary}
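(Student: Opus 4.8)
The plan is to deduce the statement from Corollary~\ref{cor:BG} by making the limit occurring there explicit, exactly as was done above for $G=\GL_n$. Corollary~\ref{cor:BG} supplies an equivalence of groupoids
$$BG(X)\ \simeq\ \mathcal{L}\ :=\ \lim\bigl[\,BG(\Ab_X^{0})\ \rightrightarrows\ BG(\Ab_X^{1})\ \triplerightarrow\ BG(\Ab_X^{2})\,\bigr],$$
the limit being taken in the $2$-category of groupoids. Unwinding the definition of this $2$-limit, an object of $\mathcal{L}$ is a $G$-torsor $P^{0}$ over $\Spec\Ab_X^{0}$ together with an isomorphism of torsors $\alpha\colon d_0^{*}P^{0}\xrightarrow{\ \sim\ }d_1^{*}P^{0}$ over $\Spec\Ab_X^{1}$ whose three pullbacks to $\Spec\Ab_X^{2}$ satisfy the cocycle identity, and a morphism is an isomorphism of the underlying torsors over $\Spec\Ab_X^{0}$ compatible with the chosen $\alpha$'s. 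It therefore suffices to identify $\mathcal{L}$ with the action groupoid $[\,G(\Ab_X^{1})^{\mathrm{cocycle}}/G(\Ab_X^{0})\,]$.

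The crucial point will be that the underlying $G$-torsor $P^{0}$ of any object of $\mathcal{L}$ is trivial. Using the equivalence $BG(X)\simeq\mathcal{L}$ one may assume that $P^{0}$ is the pullback of a $G$-torsor $P$ on $X$ along the canonical morphism $\Spec\Ab_X^{0}\to X$. Since $G$ is special, $P$ is Zariski-locally trivial on $X$; picking for each $x\in|X|$ an open neighbourhood on which $P$ trivialises produces a section of $P$ over $\Spec\Oo_{X,x}$, hence, after completion, a section over $\Spec\widehat{\Oo}_{X,x}$. Now $\Ab_X^{0}=\Gamma\bigl(X,\A_X^{0}(\Oo_X)\bigr)=\prod_{x\in|X|}\widehat{\Oo}_{X,x}$, and for each $x$ the composite $\Spec\widehat{\Oo}_{X,x}\to\Spec\Ab_X^{0}\to X$ is the completion map followed by $\Spec\Oo_{X,x}\to X$. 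Because $P$ is affine and of finite presentation over $X$, a morphism $\Spec\Ab_X^{0}\to P$ over $X$ is the same datum as a family, indexed by $x\in|X|$, of morphisms $\Spec\widehat{\Oo}_{X,x}\to P$ over $X$; hence the sections constructed above assemble into a section of $P^{0}$ over $\Spec\Ab_X^{0}$, so $P^{0}$ is trivial. This is the precise analogue, for a general special $G$, of the computation $\Ab_X^{0}(E)\simeq(\Ab_X^{0})^{\oplus n}$ used above when $G=\GL_n$.

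Granting the triviality of $P^{0}$, the remaining identification is a formal unwinding of descent data. Fix $(P^{0},\alpha)\in\mathcal{L}$ and a trivialisation $t\colon P^{0}\xrightarrow{\ \sim\ }\mathbf{1}$; pulling $t$ back along the two coface maps and transporting $\alpha$ yields an automorphism $\phi:=d_1^{*}t\circ\alpha\circ(d_0^{*}t)^{-1}$ of the trivial $G$-torsor over $\Spec\Ab_X^{1}$, that is, an element $\phi\in\Aut_{\Ab_X^{1}}(\mathbf{1})=G(\Ab_X^{1})$. Since the three coface maps act on automorphisms of trivial torsors through the structure maps $G(\Ab_X^{1})\to G(\Ab_X^{2})$, the cocycle identity for $\alpha$ over $\Spec\Ab_X^{2}$ becomes $\phi_{02}=\phi_{01}\circ\phi_{12}$, i.e.\ $\phi\in G(\Ab_X^{1})^{\mathrm{cocycle}}$, and replacing $t$ by $g\cdot t$ for $g\in G(\Ab_X^{0})$ replaces $\phi$ by $d_0(g)\,\phi\,d_1(g)^{-1}$ (with the conventions of the discussion preceding the statement). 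This defines a functor $[\,G(\Ab_X^{1})^{\mathrm{cocycle}}/G(\Ab_X^{0})\,]\to\mathcal{L}$, $\phi\mapsto(\mathbf{1},\phi)$. It is essentially surjective by the previous paragraph, and fully faithful because a morphism $(\mathbf{1},\phi)\to(\mathbf{1},\phi')$ in $\mathcal{L}$ is exactly an element $g\in\Aut_{\Ab_X^{0}}(\mathbf{1})=G(\Ab_X^{0})$ with $d_0(g)\,\phi\,d_1(g)^{-1}=\phi'$, which is precisely a morphism $\phi\to\phi'$ in the action groupoid. Composing with the equivalence of Corollary~\ref{cor:BG} yields the claim.

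The step I expect to require the most care is the triviality statement of the second paragraph: it rests on the identification $\Ab_X^{0}=\prod_{x\in|X|}\widehat{\Oo}_{X,x}$ together with the elementary but essential fact that a morphism out of the spectrum of an infinite product of rings into a fixed affine scheme of finite presentation is the same as a family of morphisms out of the factor spectra. This is the only place where one genuinely uses that $G$ is special --- so that $P$ is Zariski-locally trivial on $X$ --- and that $G$, and hence every $G$-torsor, is affine and of finite presentation. Everything else is bookkeeping with descent data, entirely parallel to the $\GL_n$ case treated just before the statement.
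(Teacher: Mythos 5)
Your proof is correct and follows essentially the same route as the paper: the paper's own argument for this corollary is the $\GL_n$ discussion immediately preceding the statement (trivialise the degree-zero part of the descent datum using the product decomposition $\Ab_X^0 = \prod_{x \in |X|}\widehat{\Oo}_{X,x}$, then unwind the isomorphism over $\Ab_X^1$ into a cocycle and quotient by changes of trivialisation), and you carry out exactly this reduction from Corollary \ref{cor:BG}. The one place you go beyond the paper is welcome rather than a deviation: you actually justify the triviality of $P^0$ for a general special $G$ --- local sections of the affine torsor over each $\Spec\widehat{\Oo}_{X,x}$ assembling into a section over the product ring --- whereas the paper only spells this out for $\GL_n$ via freeness of projective modules over local rings and leaves the general case implicit.
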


When $X$ is a curve, Weil's formulation $BG(X)  \simeq [G(F_X)\setminus G(\Ab_X)\;/\;G(\mathbb{O}_X)]$ can be directly deduced from the aforementioned one. Recall that $\Ab_X^1 = F_X \times \mathbb{O}_X \times \mathbb{A}_X$, and that $\Ab_X^2 = \Ab_X^1$ for dimension reasons. Therefore, $G(\Ab_X^1)^{\text{cocycle}} = G(\Ab_X^1)$, and we obtain the quotient
$$[G(F_X) \times G(\mathbb{O}_X) \times G(\mathbb{A}_X) \;/\; G(F_X) \times G(\mathbb{O}_X)]  \simeq [G(F_X)\setminus G(\Ab_X)\;/\;G(\mathbb{O}_X)].$$ 

\begin{rmk}
In \cite[Section 5.2]{MR1138291} Huber defines \emph{rational ad\`eles} $a_X^{\bullet}$. The arguments of this paper also apply to $a_X^{\bullet}$, in fact the proofs can be slightly simplified in this case.
\end{rmk}

\appendix
\section{Results from $\infty$-category theory}

An $\infty$-category $\C$ is called \emph{small}, if it is equivalent to an $\infty$-category modelled by a quasi-category whose underlying simplicial set is small. If $\C$ is a classical category, this is tantamount to $\C$ being equivalent to a category, for which the class of objects, and morphism sets for every pair of objects, form small sets.

We say that $\C$ is \emph{locally small}, if for every pair of objects $X$, $Y$ the mapping space $\Map(X,Y)$ can be modelled by a small Kan complex (see \cite[Definition 5.4.1.7]{Lurie:bh}). For a classical category $\C$ this amounts to the condition that $\Map(X,Y)$ is a small set.

We will only consider locally small $\infty$-categories. Because we are sometimes taking limits of $\infty$-categories (over small diagrams), it is important to observe that this operation will preserve local smallness.

\begin{rmk}\label{rmk:loc_small}
Let $\C_-\colon K \rightarrow \inftyCat$ be a diagram of $\infty$-categories, such that $K$ is a small simplicial set, and for each $k \in K$ we have that $\C_k$ is a locally small $\infty$-category. Then, the limit $\lim_{k \in K} \C_k$ is also a locally small $\infty$-category.
\end{rmk}

The reader may easily verify this assertion with the help of \cite[Corollary 3.3.3.2]{Lurie:bh}, which identifies the limit $\C = \lim_{k \in K} \C_k$ with the $\infty$-category of cartesian sections, of a cartesian fibration $\widetilde{\C} \rightarrow K$, corresponding to the functor $\C_-$. This implies that the mapping spaces in $\C$ are equivalent to the mapping spaces in the $\infty$-category of cartesian sections of $\widetilde{\C} \rightarrow K$. If $X, Y\colon K \rightarrow  \widetilde{\C}$ are two such sections, then we obtain $$\Map(X,Y) \simeq \lim_{k \in K} \Map(X_k,Y_k).$$ Since small limits of small spaces remain small, we conclude that $\C$ is locally small. The description of mapping spaces in limits of $\infty$-categories will be recorded for future reference.

\begin{lemma}\label{lemma:limits}
Let $\C_-\colon K \rightarrow \C$ be a diagram of $\infty$-categories, parametrised by a small simplicial set $K$. We denote by $\C = \lim_{k \in K} \C_k$ the limit of this diagram. Given objects $X\text{, }Y \in \C$, we have a canonical equivalence of spaces $\Map(X,Y)  \simeq \lim_{k \in K}(X_k,Y_k).$
\end{lemma}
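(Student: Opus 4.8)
The statement is a standard fact about limits of $\infty$-categories, and the plan is to reduce it to the computation of mapping spaces in an $\infty$-category of cartesian sections. Concretely, I would invoke \cite[Corollary 3.3.3.2]{Lurie:bh}, which produces a cartesian fibration $\widetilde{\C} \to K$ classifying the diagram $\C_{-}$, and identifies the limit $\C = \lim_{k\in K}\C_k$ with the full subcategory $\Fun^{\mathrm{cart}}_K(K,\widetilde{\C}) \subset \Fun_K(K,\widetilde{\C})$ of cartesian sections. Given this, objects $X,Y\in\C$ are cartesian sections $X,Y\colon K\to\widetilde{\C}$, with components $X_k = X(k)$, $Y_k = Y(k)$.

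**Key steps.** First I would recall that mapping spaces in a functor $\infty$-category are computed pointwise: for the $\infty$-category $\Fun_K(K,\widetilde{\C})$ of sections, one has $\Map(X,Y) \simeq \lim_{k\in K}\Map_{\widetilde{\C}_k}(X_k,Y_k)$, where $\widetilde{\C}_k$ denotes the fibre over $k$, which is equivalent to $\C_k$ (this is \cite[Section 3.3.3]{Lurie:bh}, using that sections are objects of a fibre-wise mapping-space description; alternatively one uses the cotensor/weighted-limit formula for $\Fun_K$). Second, I would note that the full subcategory inclusion $\C = \Fun^{\mathrm{cart}}_K(K,\widetilde{\C}) \hookrightarrow \Fun_K(K,\widetilde{\C})$ is fully faithful by definition, so the mapping space between $X$ and $Y$ computed in $\C$ agrees with the one computed in $\Fun_K(K,\widetilde{\C})$. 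Combining these two facts gives the desired equivalence
$$\Map_{\C}(X,Y) \simeq \lim_{k\in K}\Map_{\C_k}(X_k,Y_k),$$
which is precisely the assertion (the excerpt's notation $\lim_{k\in K}(X_k,Y_k)$ abbreviates $\lim_{k\in K}\Map(X_k,Y_k)$). Finally, one checks naturality/canonicity: the equivalence is induced by the evaluation functors $\C\to\C_k$, which by construction send $X\mapsto X_k$, so the comparison map $\Map_{\C}(X,Y)\to\lim_k\Map_{\C_k}(X_k,Y_k)$ is the canonical one, and the above shows it is an equivalence.

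**Main obstacle.** There is no serious obstacle; the only delicate point is bookkeeping — making sure the pointwise mapping-space formula for sections of a cartesian fibration is invoked in exactly the form needed, and that the passage to the full subcategory of cartesian sections does not change mapping spaces (it does not, being fully faithful). One could alternatively give a model-independent argument: $\lim_{k\in K}\C_k$ represents the functor $\Cat_\infty \ni \mathcal{D}\mapsto \lim_{k\in K}\Map_{\Cat_\infty}(\mathcal{D},\C_k)$, and applying this with $\mathcal{D}$ ranging over $\Delta^0$ and $\Delta^1$ (or more precisely using that $\Map_{\C}(X,Y)$ is the fibre of $\Fun(\Delta^1,\C)\to\Fun(\partial\Delta^1,\C)$ over $(X,Y)$, and that $\Fun(\Delta^1,-)$ and $\Fun(\partial\Delta^1,-)$ commute with limits of $\infty$-categories) yields the formula directly. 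Either route is routine, and I would present the first one since the cartesian-fibration model has already been set up in Remark \ref{rmk:loc_small}.
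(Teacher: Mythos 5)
Your proposal is correct and follows essentially the same route as the paper, which likewise invokes \cite[Corollary 3.3.3.2]{Lurie:bh} to identify the limit with the $\infty$-category of cartesian sections of the associated cartesian fibration $\widetilde{\C}\rightarrow K$ and then reads off the mapping spaces as the limit $\lim_{k\in K}\Map(X_k,Y_k)$ (this is exactly the discussion surrounding Remark \ref{rmk:loc_small}, of which the lemma is a recorded consequence). Your additional care about the fully faithful inclusion of cartesian sections and the alternative cotensor argument are fine but not needed beyond what the paper does.
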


The following result is \cite[Corollary 5.5.2.9 \& Remark 5.5.2.10]{Lurie:bh}.

\begin{theorem}[Lurie's Adjoint Functor Theorem]\label{thm:adjointfunctor}
Let $F\colon \C \rightarrow \D$ be a functor between stable $\infty$-categories. If $\C$ is presentable, and $\D$ locally small, then $F$ admits a right adjoint $G\colon \D \rightarrow \C$, if and only if $F$ preserves small colimits.
\end{theorem}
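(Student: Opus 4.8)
The statement is quoted verbatim from \cite[Corollary 5.5.2.9 \& Remark 5.5.2.10]{Lurie:bh}, so we only indicate the shape of the argument. One implication is purely formal: if $F$ admits a right adjoint $G$, then for every small diagram $p\colon K \to \C$ and every object $d \in \D$ the adjunction supplies natural equivalences
\[
\begin{aligned}
\Map_{\D}(F(\colim p),d) &\simeq \Map_{\C}(\colim p, G(d)) \\
&\simeq \lim_{k \in K}\Map_{\C}(p(k),G(d)) \simeq \lim_{k \in K}\Map_{\D}(F(p(k)),d),
\end{aligned}
\]
and the Yoneda lemma identifies this chain with the assertion that the canonical comparison map $\colim(F\circ p) \to F(\colim p)$ is an equivalence. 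Hence $F$ preserves small colimits.

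For the converse one must produce the right adjoint. The plan is to show that for each object $d \in \D$ the functor $h_d\colon \C^{\op} \to \Grpd$, $c \mapsto \Map_{\D}(F(c),d)$, is representable; the pointwise representing objects $G(d)$ then assemble into a functor $G\colon \D \to \C$ right adjoint to $F$ by standard bookkeeping with units and the Yoneda lemma. Because $F$ preserves small colimits, $h_d$ carries small colimits in $\C$ to small limits in $\Grpd$; and because $\D$ is locally small, each $\Map_{\D}(F(c),d)$ is essentially small, so that $h_d$ genuinely lands in $\Grpd$. Since $\C$ is presentable, the representability criterion for presentable $\infty$-categories --- the key engine in the proof of \cite[Corollary 5.5.2.9]{Lurie:bh} --- then applies to $h_d$ and furnishes the representing object.

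The one delicate point, and the reason the hypothesis on $\D$ can be weakened from presentability all the way down to local smallness (this is \cite[Remark 5.5.2.10]{Lurie:bh}), is precisely the essential smallness of the mapping spaces $\Map_{\D}(F(c),d)$ required before one may even speak of representability: local smallness of $\D$ provides exactly this, whereas the cocompleteness and accessibility inputs needed for the representability criterion all flow from presentability of $\C$ alone. The remaining verifications are routine; we refer the reader to \emph{loc. cit.} for the details.
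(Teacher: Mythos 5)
Your proposal is correct: the paper itself offers no proof and simply cites \cite[Corollary 5.5.2.9 \& Remark 5.5.2.10]{Lurie:bh}, and your sketch faithfully outlines that cited argument, including the correct identification of where local smallness of $\D$ (as opposed to presentability) enters, namely in guaranteeing that the functors $c \mapsto \Map_{\D}(F(c),d)$ take essentially small values so that the representability criterion for presentable $\infty$-categories applies. Nothing further is needed.
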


We refer the reader to \cite[Proposition 1.4.4.1.]{Lurie:ha} for a proof of the following result.

\begin{proposition}\label{prop:colimits}
If $\C$ is a stable $\infty$-category, admitting arbitrary coproducts, then $\C$ possesses all small colimits. Moreover, let $F\colon \C \rightarrow \D$ be an exact functor between cocomplete stable $\infty$-categories, commuting with small coproducts. Then, $F$ commutes with small colimits.
\end{proposition}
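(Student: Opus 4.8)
The plan is to reduce an arbitrary small colimit to the two ingredients we are given — finite colimits and small coproducts — and then to note that an exact functor which preserves coproducts automatically respects this reduction. Both assertions are instances of Lurie's general cocompleteness machinery (see \cite[Section 4.4.2]{Lurie:bh}); indeed the statement is precisely \cite[Proposition 1.4.4.1]{Lurie:ha}, and I sketch the argument. First I would recall that a stable $\infty$-category admits \emph{all} finite colimits: it has a zero object, which serves as an initial object, and it has pushouts, since every morphism $f\colon X\to Y$ sits in a bicartesian square exhibiting $\cofib(f)$, and an $\infty$-category with an initial object and pushouts has all finite colimits (see \cite[Section 1.1]{Lurie:ha}). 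So in the first assertion $\C$ has finite colimits and arbitrary small coproducts.

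For the reduction, let $p\colon I\to\C$ be a diagram indexed by a small simplicial set $I$. The colimit of $p$, when it exists, is computed by the bar construction: it is the geometric realization $|B_\bullet(p)|$ of the simplicial object $[n]\mapsto \coprod_{i_0\to\cdots\to i_n} p(i_0)$, the coproduct running over the $n$-simplices of $I$. Each term $B_n(p)$ is a small coproduct of objects of $\C$, hence exists. So it suffices to know that $\C$ admits geometric realizations, i.e.\ colimits over $\Delta^{\op}$, and these are built from finite colimits together with \emph{sequential} colimits: a simplicial object is the sequential colimit of its skeleta, and the passage $\mathrm{sk}_{n-1}\to\mathrm{sk}_{n}$ sits in a pushout square built from latching and matching data, which is a finite colimit. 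Finally, a sequential colimit $\colim(X_0\to X_1\to\cdots)$ is the coequalizer — a pushout, hence a finite colimit — of the two maps $\coprod_n X_n\rightrightarrows\coprod_n X_n$ given by the identity and the shift. Assembling these three steps, $\C$ admits all small colimits.

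For the second assertion, being exact means $F$ preserves finite colimits (in particular pushouts, the zero object, and coequalizers), and by hypothesis $F$ preserves small coproducts. Since every step of the reduction above — forming the bar object $B_\bullet(p)$, the skeletal filtration of a simplicial object, and the coequalizer presentation of a sequential colimit — is a diagram built functorially out of coproducts and finite colimits, $F$ carries the colimit cone of $p$ in $\C$ to a colimit cone of $F\circ p$ in $\D$; hence $F$ commutes with all small colimits.

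The main obstacle is the first sentence of the second paragraph: unlike in ordinary category theory, one cannot present a general colimit directly as a coequalizer of a pair of maps between coproducts, and the correct $\infty$-categorical replacement is the bar construction, which lands in $\Delta^{\op}$-diagrams rather than $(\bullet\rightrightarrows\bullet)$-diagrams. One must then carry out two further auxiliary reductions — that geometric realizations, and separately that sequential colimits, are themselves expressible through finite colimits and coproducts. Organising these reductions, rather than the bookkeeping of the bar construction itself, is where the real content sits, and it is exactly this that is done in \cite[Section 4.4.2]{Lurie:bh}; I would cite it and record the consequence.
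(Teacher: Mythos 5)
Your proposal is correct and takes essentially the same route as the paper, which offers no argument of its own and simply points to \cite[Proposition 1.4.4.1]{Lurie:ha} --- the same reference you identify. Your additional sketch (Bousfield--Kan bar construction, skeletal filtration of geometric realisations, telescope presentation of sequential colimits) is the standard reduction from \cite[Section 4.4.2]{Lurie:bh} and is sound, up to the minor imprecision that cofibre squares only provide pushouts along maps to the zero object, so general pushouts require the finite-biproduct construction of \cite[Section 1.1]{Lurie:ha}.
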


The notion of compact objects in stable $\infty$-categories will be essential to us. There are two equivalent characterisations of compactness. We refer the reader to \cite[Proposition 1.4.4.1.]{Lurie:ha} for a proof of equivalence.

\begin{definition}
Let $\C$ be a stable $\infty$-category. An object $X \in \C$ is called \emph{compact}, if $\Hom(X,-)$ commutes with small coproducts. Equivalently, for $(Y_i)_{i \in I} \in \C$, a small family of objects in $\C$, for every morphism $X \rightarrow \bigoplus_{i \in I} Y_i$, there exists a finite subset $J \subset I$, such that the map factors through $\bigoplus_{i \in J} Y_i \rightarrow \bigoplus_{i \in I} Y_i$.
\end{definition}

The full subcategory of compact objects of $\C$ will be denoted by $\C^c$. We say that $\C$ is \emph{compactly generated}, if $\C  \simeq \Ind \C^c$.

\begin{lemma}\label{lemma:generators_coproducts}
Let $F\colon \C \rightarrow \D$ be an exact functor between cocomplete stable $\infty$-categories, with a right adjoint $G$. If $\C$ is compactly generated, and $F$ preserves compact objects, then $G$ commutes with small colimits.
\end{lemma}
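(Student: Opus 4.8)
The argument is formal, and the plan is to reduce to preservation of small coproducts and then test against compact objects.

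First I would observe that $G$, being a right adjoint, preserves all limits, in particular finite limits; since $\C$ and $\D$ are stable, finite limits coincide with finite colimits, so $G$ is exact. By Proposition \ref{prop:colimits} it then suffices to show that $G$ commutes with small coproducts, for exactness together with preservation of coproducts forces $G$ to commute with all small colimits.

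Let $(Y_i)_{i\in I}$ be a small family in $\D$. I want to show that the canonical comparison map
\[
\bigoplus_{i\in I} G(Y_i) \longrightarrow G\Bigl(\bigoplus_{i\in I} Y_i\Bigr)
\]
is an equivalence in $\C$. Since $\C$ is compactly generated, the spectral mapping functors $\mathrm{map}_{\C}(X,-)\colon \C\to\Spectra$ with $X\in\C^c$ are jointly conservative, so it is enough to check the comparison after applying each of them. As $X$ is compact, $\mathrm{map}_{\C}(X,-)$ commutes with small coproducts, so the adjunction $F\dashv G$ identifies the source with $\bigoplus_{i} \mathrm{map}_{\D}(F(X),Y_i)$; and since $F$ preserves compact objects, $F(X)\in\D^c$, whence $\mathrm{map}_{\D}(F(X),-)$ also commutes with small coproducts and the target is identified with $\bigoplus_{i}\mathrm{map}_{\D}(F(X),Y_i)$ as well. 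Tracking the coproduct inclusions shows that under these identifications the comparison map becomes the identity, hence an equivalence. Therefore $G$ preserves small coproducts, and by the previous step $G$ commutes with all small colimits.

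I do not expect a genuine obstacle: the one point requiring care is to work with the spectrum-valued functor $\mathrm{map}_{\C}(X,-)$ rather than the space-valued $\Map_{\C}(X,-)$, so that exactness is available and Proposition \ref{prop:colimits} applies both to it and to $G$, and to note that compact generation of $\C$ is precisely what makes these functors jointly conservative.
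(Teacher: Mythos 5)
Your proposal is correct and follows essentially the same route as the paper's proof: reduce to small coproducts via Proposition \ref{prop:colimits}, then test the comparison map against compact generators using the adjunction and the fact that $F$ preserves compactness. The only (welcome) additions are your explicit remarks that $G$ is exact, so that Proposition \ref{prop:colimits} indeed applies to it, and that one should use spectrum-valued mapping functors; the paper leaves the former implicit and handles the latter through its $\Hom$ notation.
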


\begin{proof}
By Proposition \ref{prop:colimits} it suffices to show that $G$ commutes with small coproducts. That is, for $\{Y_i\}_{i \in I}$, where $I$ is a small set indexing objects in $\D$, we have to show that the natural map $\bigoplus_{i \in I} G(Y_i) \rightarrow G(\bigoplus_{i \in I} Y_i)$ is an equivalence. Since $\C$ is compactly generated, this is equivalent to
$$\Hom(X,\bigoplus_{i \in I} G(Y_i)) \rightarrow \Hom(X,G(\bigoplus_{i \in I} Y_i))$$
being an equivalence for every compact object $X \in \C$. Compactness of $X$, and the adjunction between $F$ and $G$ allows us to show instead that the map
$$\bigoplus_{i \in I}\Hom(F(X),Y_i) \rightarrow \bigoplus_{i \in I} \Hom(F(X),Y_i)$$
is an equivalence. But this is a morphism equivalent to the identity map.
\end{proof}

The following is a well-known criterion for an adjunction to be an equivalence.

\begin{lemma}\label{lemma:weakBB}
Let $F\colon \C \rightarrow \D$ be a functor between $\infty$-categories with right adjoint $G$. If $F$ is fully faithful and $G$ is conservative, then $F$ and $G$ are mutually inverse equivalences.
\end{lemma}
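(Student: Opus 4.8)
The plan is to exploit the triangle identities for the adjunction $F \dashv G$, with unit $\eta\colon \id_{\C} \to GF$ and counit $\varepsilon\colon FG \to \id_{\D}$. First I would recall the standard fact that $F$ is fully faithful precisely when the unit $\eta$ is a natural equivalence. This is \cite[Proposition 5.2.7.4]{Lurie:bh}, but it is also immediate from the adjunction: for objects $c, c' \in \C$ the map $\Map_{\C}(c,c') \to \Map_{\D}(Fc,Fc') \simeq \Map_{\C}(c,GFc')$ induced by $F$ is, up to the adjunction equivalence, postcomposition with $\eta_{c'}$, so full faithfulness of $F$ amounts to $\eta_{c'}$ being an equivalence for every $c'$.

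Next, fix an arbitrary object $d \in \D$ and consider the counit component $\varepsilon_d\colon FGd \to d$. Applying $G$ produces $G(\varepsilon_d)\colon GFGd \to Gd$, and the triangle identity for the adjunction says that the composite $Gd \xrightarrow{\eta_{Gd}} GFGd \xrightarrow{G(\varepsilon_d)} Gd$ is homotopic to $\id_{Gd}$. By the previous paragraph $\eta_{Gd}$ is an equivalence, so $G(\varepsilon_d)$ exhibits $Gd$ as a retract of $GFGd$ via an equivalence; hence $G(\varepsilon_d)$ is itself an equivalence (a retraction of an equivalence is invertible already in the homotopy category).

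Now I would invoke conservativity of $G$: since $G(\varepsilon_d)$ is an equivalence, so is $\varepsilon_d\colon FGd \to d$. As $d$ was arbitrary, the counit $\varepsilon$ is a natural equivalence, and together with the fact that $\eta$ is a natural equivalence this shows that $F$ and $G$ are mutually inverse equivalences of $\infty$-categories. There is no genuine obstacle here; the only points needing a moment of care are the identification of full faithfulness of $F$ with the unit being an equivalence, and the observation that the triangle identity exhibits $G(\varepsilon_d)$ as a split epimorphism with invertible section, hence an equivalence — both of which are purely formal.
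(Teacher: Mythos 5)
Your argument is correct and is essentially the same as the paper's: both prove that full faithfulness of $F$ makes the unit an equivalence, then use the triangle identity to deduce that $G(\varepsilon_d)$ is an equivalence, and finally invoke conservativity of $G$ to conclude that the counit is an equivalence. The only cosmetic difference is that you spell out the retract observation pointwise, whereas the paper phrases the same triangle identity at the level of functors.
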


\begin{proof}
Since $F$ is fully faithful, the unit $\id_{\C} \rightarrow GF$ is an equivalence of functors. It remains to check that the co-unit $F G \rightarrow \id_{\D}$ is an equivalence. By assumption, $G$ is conservative, therefore it suffices to prove that $G F G \rightarrow G$ is an equivalence of functors. By the standard properties of adjunctions, the composition
$$G \simeq \id_{\C} G  \simeq G  F  G \rightarrow G  \id_{\D} \simeq G$$
is equivalent to the identity map. Thus the co-unit $FG \rightarrow \id_{\D}$ is an equivalence, and we conclude the proof.
\end{proof}

Let $\D$ be a small stable $\infty$-category, admitting finite limits. We will define define a functor 
$$\Map(-,-)\colon \Delta^{\op} \times \D^{\Delta} \rightarrow \D^{\Delta},$$
sending a co-simplicial object $U^{\bullet}$ in $\D$, and $[n] \in \Delta$, to a co-simplicial object $\Map([n],U^{\bullet})$. Recall that we have a category $\Delta/[n]$, whose objects are morphisms $[k] \rightarrow [n]$. It is endowed with a forgetful functor to $\Delta$, and this construction is contravariantly functorial in $[n]$. The co-simplicial object $\Map([n],U^{\bullet})$ is given by considering the functor $\Delta/[n] \rightarrow \Delta \rightarrow \D$, and taking the fibrewise limit along $\Delta/[n] \rightarrow \Delta$ to obtain a functor $\Map([n],U^{\bullet})\colon\Delta \rightarrow \D$.

\begin{definition}\label{defi:infty_homotopy}
Let $U^{\bullet}\text{, }V^{\bullet}\colon \Delta \rightarrow \D$ be two co-simplicial objects in a small $\infty$-category $\D$, admitting finite limits. A homotopy between $f\text{, g}\colon U^{\bullet} \rightarrow V^{\bullet}$ is a morphism $h\colon U^{\bullet} \rightarrow \Map([1],V^{\bullet})$, such that $\mathrm{ev}_0 \circ h \simeq f$, and $\mathrm{ev}_1 \circ h \simeq g$. Here $\mathrm{ev}_i\colon \Map([1],V^{\bullet}) \rightarrow V^{\bullet}$ denotes the evaluation map, induced by the inclusion $[0] \simeq \{i\} \hookrightarrow [1]$.
\end{definition}

We refer the reader to \cite[Tag 019U]{stacks-project} for a detailed explanation for how this recovers the explicit definition given earlier, in the case that $\D$ is a classical category (with small colimits). The reason for considering the notion of homotopy between morphisms of simplicial objects, is the following lemma.

\begin{lemma}\label{lemma:cat_homotopy}
Let $\C^{\bullet}$ be a co-simplicial small $\infty$-category. Then, the map $c\colon \C^{\bullet} \rightarrow \Map([1],\C^{\bullet})$, corresponding to the map $s\colon [1] \rightarrow [0]$ of simplicial sets, induces an equivalence of the $\infty$-categories obtained by totalisation $|\C^{\bullet}| \rightarrow |\Map([1],\C^{\bullet})|$. In particular we see that two homotopy equivalent co-simplicial small $\infty$-categories have equivalent totalisations.
\end{lemma}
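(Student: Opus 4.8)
The plan is to realise $c$ as half of a homotopy equivalence of co-simplicial small $\infty$-categories, and to use that the totalisation functor $|\cdot|=\lim_{\Delta}(-)$ carries co-simplicial homotopy equivalences to equivalences.

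First I would set up the elementary maps. In $\Delta$ the degeneracy $s\colon[1]\to[0]$ has the two vertex inclusions $d^{0},d^{1}\colon[0]\to[1]$ as sections, $s\circ d^{0}=s\circ d^{1}=\mathrm{id}_{[0]}$. Since $\Map(-,\C^{\bullet})$ is contravariant in its first argument, $s$ induces $c\colon\C^{\bullet}=\Map([0],\C^{\bullet})\to\Map([1],\C^{\bullet})$ and the $d^{i}$ induce evaluations $\mathrm{ev}_{0},\mathrm{ev}_{1}\colon\Map([1],\C^{\bullet})\to\C^{\bullet}$ with $\mathrm{ev}_{i}\circ c\simeq\mathrm{id}_{\C^{\bullet}}$; applying $|\cdot|$ gives $|\mathrm{ev}_{i}|\circ|c|\simeq\mathrm{id}_{|\C^{\bullet}|}$. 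Hence it suffices to show that $c$ admits a homotopy inverse, for then one of the $|\mathrm{ev}_{i}|$ is a two-sided inverse of $|c|$, and the final assertion of the lemma follows formally: if $\phi\colon\C^{\bullet}\to\D^{\bullet}$ has homotopy inverse $\psi$, then $|\phi|\circ|\psi|\simeq|\phi\circ\psi|\simeq\mathrm{id}$ and symmetrically.

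The heart of the argument is to exhibit a co-simplicial homotopy, in the sense of Definition \ref{defi:infty_homotopy}, from $\mathrm{id}_{\Map([1],\C^{\bullet})}$ to $c\circ\mathrm{ev}_{1}$. Its combinatorial input is the deformation retraction of the simplicial set $\Delta^{1}$ onto its terminal vertex, a minor variant of \cite[Tag 08Q3]{stacks-project}: for each $n$ and each $\alpha\colon[n]\to[1]$ there is the operator on $\mathrm{Hom}_{\Delta}([n],[1])$ sending $(x_{0}\leq\dots\leq x_{n})$ to $(x_{0}\leq\dots\leq x_{k}\leq 1\leq\dots\leq 1)$, where $k$ is the largest index with $\alpha(k)=0$. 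Unwinding the defining fibrewise limit, $\Map([1],\C^{\bullet})^{n}$ is an iterated product of copies of $\C^{n}$ indexed by $\mathrm{Hom}_{\Delta}([n],[1])$, and reindexing along the above operators — compatibly with the co-simplicial structure maps of $\C^{\bullet}$ — produces maps $h^{n,\alpha}\colon\Map([1],\C^{\bullet})^{n}\to\Map([1],\C^{\bullet})^{n}$. One then checks, purely from the universal property of $\Map(-,\C^{\bullet})$ as a fibrewise limit and from the relations $\alpha\circ f$ among maps in $\Delta$, that the $h^{n,\alpha}$ satisfy the compatibilities demanded in Definition \ref{defi:infty_homotopy}, and hence assemble into a morphism $\Map([1],\C^{\bullet})\to\Map\big([1],\Map([1],\C^{\bullet})\big)$ realising the desired homotopy. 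This shows $c$ and $\mathrm{ev}_{1}$ are mutually inverse up to co-simplicial homotopy, completing the proof modulo the fact — which I would invoke as a general property of limits of $\infty$-categories — that $|\cdot|$ sends co-simplicial homotopies to homotopies of functors, so that $|c|\circ|\mathrm{ev}_{1}|\simeq|c\circ\mathrm{ev}_{1}|\simeq\mathrm{id}$.

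The step I expect to be the main obstacle is precisely the construction in the previous paragraph: checking that the levelwise operators $h^{n,\alpha}$ glue to an honest morphism of co-simplicial objects into the iterated cotensor $\Map([1],\Map([1],\C^{\bullet}))$, with all the coherence built into Definition \ref{defi:infty_homotopy}. Here one leans on the fact that both $\Map([1],-)$ and $|\cdot|$ are purely limit-theoretic constructions on the $\infty$-category of small $\infty$-categories, so that producing the homotopy reduces to producing, and matching up, the retraction operators on the index category $\Delta$.
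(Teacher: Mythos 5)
There is a genuine gap, and it is a circularity in the step you explicitly flag as the one to ``invoke as a general property of limits of $\infty$-categories.'' The assertion that totalisation $|\cdot|=\lim_{\Delta}$ carries co-simplicial homotopies to homotopies of functors is \emph{not} a formal property of limits: unwinding Definition \ref{defi:infty_homotopy}, a homotopy from $f$ to $g$ is a map $h\colon U^{\bullet}\to\Map([1],V^{\bullet})$ with $f=\mathrm{ev}_0\circ h$ and $g=\mathrm{ev}_1\circ h$, so to conclude $|f|\simeq|g|$ you need $|\mathrm{ev}_0|\simeq|\mathrm{ev}_1|$ on $|\Map([1],V^{\bullet})|$. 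Since $\mathrm{ev}_0$ and $\mathrm{ev}_1$ are both retractions of $c$, this holds precisely when $|c|$ is an equivalence --- which is the statement of the lemma. Your argument therefore uses the lemma to prove the lemma; the same circularity infects the final remark that $|\phi|\circ|\psi|\simeq|\phi\circ\psi|\simeq\mathrm{id}$ for a homotopy inverse $\psi$, since $\phi\circ\psi$ is only \emph{homotopic} to the identity. The combinatorial retraction of $\Delta^1$ onto its terminal vertex and the resulting operators $h^{n,\alpha}$ are fine (the paper uses the same device in Lemma \ref{lemma:homotopy_equivalence}), but they are not where the difficulty lies.

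The paper supplies exactly the missing content by a different route: it first quotes the non-trivial theorem of Meyer and Bousfield that the statement holds for co-simplicial objects in $\Grpd$, and then bootstraps to small $\infty$-categories --- fully faithfulness of $|c|$ follows by applying the groupoid case to the co-simplicial mapping spaces $\Map(X^{\bullet},Y^{\bullet})$ via Lemma \ref{lemma:limits}, and essential surjectivity by applying it to the maximal sub-groupoids $(\C^{\bullet})^{\times}$. If you want to salvage a direct argument in your style, you would need an independent proof that the two evaluations agree after totalisation, e.g.\ by showing that the projection $\Delta_{/\Delta^1}\to\Delta$ is an initial functor (so that the $\Delta^1$-cotensored limit agrees with the ordinary one); but that is a genuine cofinality argument that your proposal does not contain, not something that can be waved through as ``purely limit-theoretic.''
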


\begin{proof}
For co-simplicial objects in the full subcategory of small $\infty$-groupoids $\Grpd$ this assertion is well-known, and follows for instance from Meyer's \cite[Theorem 2.4]{meyer1990cosimplicial}, or Bousfield's \cite[Proposition 2.13]{bousfield2003cosimplicial}. We will apply their result in our proof of the analogous assumption for co-simplicial objects in small $\infty$-categories.

We will show first that the functor $|c|\colon |\C^{\bullet}| \hookrightarrow |\Map([1],\C^{\bullet})|$ is fully faithful. Given $X,\text{, Y} \in |\C^{\bullet}|$, we recall Lemma \ref{lemma:limits}, which asserts that $\Map(X,Y)$ is itself the totalisation of a co-simplicial object in $\Grpd$, $\Map(X^{\bullet},Y^{\bullet})$. It follows directly from the definitions that $\Map(c(X)^{\bullet},c(Y)^{\bullet}) \simeq |\Map([1],\Map(X^{\bullet},Y^{\bullet}))|$. We may therefore conclude that $\Map(X,Y) \simeq \Map(c(X),c(Y))$, and thus that $c$ is fully faithful.

It remains to show that $c$ is essentially surjective. For a small $\infty$-category $\D$, we denote by $\D^{\times}$ the maximal $\infty$-groupoid in $\D$, obtained by discarding all non-equivalences in $\D$. In fact we have a functor $(-)^{\times} \colon \inftyCat \rightarrow \Grpd$. We therefore see, that the morphism of co-simplicial objects in $\Grpd$, 
$$c^{\times}\colon (\C^{\bullet})^{\times} \rightarrow \Map([1],\C^{\bullet})^{\times} \simeq \Map([1],(\C^{\bullet})^{\times})$$ 
induces an equivalence after totalisation. This implies that $c$ is essentially surjective.
\end{proof}

\section{Notation}

\begin{tabular}{p{1.75cm}p{10cm}}
$|X|$ & underlying topological space of a scheme $X$ \\
$|X|_{\bullet}$ & simplicial set of chains of points in $|X|$, ordered by specialisation \\
$\Ab_{X}^{\bullet}$ & co-simplicial ring of ad\`eles \\
$\A_X^{\bullet}$ & co-simplicial sheaf of algebras of ad\`eles \\
$\Sch^{\N}$ & category of Noetherian schemes \\
$\Sch^{\qcqs}$ & category of quasi-compact and quasi-separated schemes \\
$BG$ & stack of $G$-bundles \\
$\Mod(R)$ & category of $R$-modules, where $R$ is a ring, or a sheaf of rings \\
$\P(R)$ & exact category of projective $R$-modules ($R$ is a ring) \\
$\P(\A)$ & exact category of direct summands of free $\A$-modules ($\A$ is a sheaf of algebras) \\
$\D^-(\A)$ & see Definition \ref{defi:Dminus}\\
$\D(\C)$ & derived $\infty$-category of an exact category $\C$ \\
$\DMod(\A)$ & derived $\infty$-category of $\Mod(\A)$, where $\A$ is a ring or a sheaf of algebras \\
$\Perf(\A)$ & full subcategory of $\D(\Mod(\A))$ consisting of perfect complexes \\
$\QC(\A)$ & $\Ind \Perf(\A)$. If $\A$ is a ring (on a point), it is equivalent to $\D(\Mod(\A))$ \\
$\C^c$ & full subcategory of compact objects of a stable $\infty$-category $\C$ \\
$\APerf(\A)$ & full subcategory of $\QC(\A)$ corresponding to pseudo-coherent complexes \\
$\inftyCat$ & $\infty$-category of small $\infty$-categories \\
$\Cat$ &  $2$-category of small categories \\
$\Grpd$ & the $\infty$-category of small $\infty$-groupoids, or Kan complexes, or spaces\\
$\Gpd$ & the $2$-category of groupoids \\
$\St$ & the $\infty$-category of small stable $\infty$-categories \\
$\Map_{\C}(X,Y)$ & mapping space between two objects $X$ and $Y$ in an $\infty$-category $\C$\\
$\Hom_{\C}(X,Y)$ & mapping spectrum between two objects $X$ and $Y$ in a stable $\infty$-category $\C$, or chain complex, if $\C$ is $R$-linear\\
$(-)_{\otimes}$ & subscript used to refer to symmetric monoidal $\infty$-categories, or also $\infty$-categories of symmetric monoidal $\infty$-categories\\
$|X^{\bullet}|$ & limit (that is, geometric realisation, or totalisation) of the co-simplicial diagram $X^{\bullet}$ in an $\infty$-category $\C$ \\
$|X_{\bullet}|$ & colimit (that is, geometric realisation, or totalisation) of the simplicial diagram $X_{\bullet}$ in an $\infty$-category $\C$ \\
\end{tabular}

\bibliographystyle{amsalpha}
\bibliography{adeles.bib}
\end{document}